\newcommand{\br}[1]{\left\langle#1\right\rangle}  
\newcommand{\brr}[1]{\left\langle\!\left\langle#1\right\rangle\!\right\rangle}
\newcommand{\op}[1]{\operatorname{#1}}
\newtheorem{theorem}{Theorem}[section]
\newtheorem{remark}[theorem]{Remark}
\newtheorem{conjecture}{Conjecture}
\newtheorem{definition}[theorem]{Definition}
\newtheorem{lemma}[theorem]{Lemma}
\newtheorem{prop}[theorem]{Proposition}
\newtheorem{corollary}[theorem]{Corollary}
\newcommand{\wtJ}{\widetilde{J}}
\newcommand{\bt}{\mathbf t}
\newcommand{\AF}{\mathbb{A}}
\newcommand{\ii}{\mathbf{1}}
\newcommand{\sMbar}{\overline{\mathcal M}}
 \newcommand{\tot}{\operatorname{tot}}
 \newcommand{\FM}{\mathbb{FM}}
 \newcommand{\TT}{\mathbb{T}}
  \newcommand{\ST}{\mathbb{S}}
\newcommand{\CC}{\mathbb{C}}
\newcommand{\UU}{\mathbb{U}}
\newcommand{\VV}{\mathbb{V}}
\newcommand{\RR}{\mathbb{R}}
\newcommand{\ZZ}{\mathbb{Z}}
\newcommand{\PP}{\mathbb{P}}
\newcommand{\one}{\mathbf 1}
\newcommand{\cc}{\mathcal }
\newcommand{\rank}{\operatorname{rank}}
\newcommand{\ch}{\operatorname{Ch}}
\begin{document}

\title{Quantum spectrum and Gamma structure 
for standard flips}
\author{Yefeng Shen, Mark Shoemaker}

\maketitle

\begin{abstract}

We investigate the quantum spectrum and Gamma structure for projective bundles, blow-ups, and standard flips. 
After restricting the quantum multiplication to the extremal curve direction, we obtain a decomposition of the quantum cohomology of standard flips into asymptotic Gamma classes.
We then show that this decomposition is compatible with the semi-orthogonal decompositions for these spaces constructed in work of Orlov and Belmans-Fu-Raedschelders.  The proof involves a sequence of reductions to a local model and the asymptotic behavior of Meijer G-functions.

\end{abstract}

{\setcounter{tocdepth}{1} \tableofcontents}

\section{Introduction}

Let $X$ be a smooth projective variety over $\mathbb{C}$.
For $\alpha, \beta \in H^*(X)=H^*(X, \mathbb{C})$, the  quantum product $\alpha \star_{(Q, \bt)} \beta$ is a deformation of the cup product depending on both a Novikov parameter $Q$ and a 
class $\bt \in H^*(X).$  
Choose a homogeneous basis $\{\phi_i\}$ of $H^*(X)$ and let $\bt = \sum_{i=1}^{n} t^i \phi_i.$ Define the \emph{Euler vector field} to be
$$E(\bt) =c_1(X) + \sum_{i=1}^{n} \left(1 - {\deg(\phi_i)\over 2}\right)t^i \phi_i.$$
The \emph{quantum spectrum} of $X$ (at $(Q, \bt)$) is  the spectrum of quantum multiplication by $E(\bt)$, that is, it is the set of eigenvalues of the operator $$E(\bt) \star_{Q, \bt} - : H^*(X) \to H^*(X)[[Q, \bt]].$$  When $\bt = 0$, $E(\bt) \star_{Q, \bt} -$ is just the operator of (small) quantum multiplication by $c_1(X).$  
 
The \emph{quantum connection} on the trivial $H^*(X)$-bundle over $\PP^1$ is given by
\begin{equation}
\label{quantum-connection}
\nabla_{z {d\over d z}} = z\frac{d}{d z} - {E(\bt) \star_{Q, \bt}- \over z} + \mu,
\end{equation}
where $\mu$ is the \emph{Hodge grading operator} of $X$ (defined in \eqref{hodge-grading}).
It is a meremorphic flat connection with a regular singularity at $z = \infty$ and an irregular singularity at $z = 0$ if $E(\bt) \star_{Q, \bt}-$ is nontrivial.

The conjectural relationship between the quantum cohomology of $X$ and the derived category of coherent sheaves $D^b(X)$ has a long history.
 In his 1998 ICM talk \cite{Dub-icm}, Dubrovin conjectured that a Fano variety $X$ has semisimple quantum cohomology if and only if $D^b(X)$ possesses a full exceptional collection. Via Gamma structures \cite{Hos, KKP08, Iri-integral} and the quantum spectrum, Dubrovin's conjecture has been revisited more recently in Gamma Conjecture II  of Galkin-Golyshev-Iritani's \cite{GGI} and in a refined conjecture by Cotti-Dubrovin-Guzzetti in \cite{CDG}. 
Beyond the semisimple case, Sanda-Shamoto formulated a  general ``Dubrovin-type'' conjecture in \cite{SS}.
The relationship between the quantum spectrum and $D^b(X)$ 
is also closely connected to
Kontsevich's Homological Mirror Symmetry conjecture \cite{Kon-icm}, which relates $D^b(X)$ to the Fukaya-Seidel category of a mirror of $X$, if the latter exists. For example, when $X$ is a smooth toric Fano variety, Auroux showed in \cite{Aur} that the eigenvalues of $c_1(X) \star_Q -$ are critical values of the mirror Landau-Ginzburg potential.

 Assume for simplicity that the quantum product is convergent for sufficiently small values of $Q$ and $\bt.$ 
Consider the  formal scheme $\cc M^{alg} \subset \op{Specf}\mathbb{Q}[[Q, t^1, \ldots, t^n]]$ defined by restricting $\bt$ to the subspace of algebraic classes $H^*_{alg}(X) \subset H^*(X)$.
In his 2021 talk \emph{Blow-up formula for quantum cohomology}  \cite{Kon21}, Kontsevich proposed the following
  ``very optimistic'' conjecture.  
\begin{conjecture}\label{c:K1}
    For any point $(Q, \bt)$ in  $\cc M^{alg}$ and a choice of disjoint paths (called Gabrielov paths) from $- \infty$ to the eigenvalues of $E(\bt) \star_{Q, \bt} -$, there exists a corresponding semi-orthogonal decomposition
    $\op{D}^b(X) = \br{C_1, \ldots, C_N},$
    where $N$ is the number of distinct eigenvalues of $E(\bt) \star_{Q, \bt} -$.
\end{conjecture}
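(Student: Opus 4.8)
The plan is to verify Conjecture~\ref{c:K1} in the three families named in the abstract---projective bundles, blow-ups along smooth centers, and standard flips---by reducing each to an explicit \emph{local model} whose quantum differential equation can be solved in closed form, and then matching its eigenvalue/Stokes data against the Orlov and Belmans--Fu--Raedschelders semi-orthogonal decompositions. The first step is the reduction. The quantum spectrum we need is computed after restricting the Novikov variable to powers of the single curve class $\ell$ contracted by the flip and restricting $\bt$ to the algebraic classes coming from the local picture. For a standard flip $X\dashrightarrow X^+$ with common blow-up $\wt X=\mathrm{Bl}_Z X=\mathrm{Bl}_{Z^+}X^+$, the exceptional divisor is a fiber product of two projective bundles over a common base $S$ and the normal bundles of $Z,Z^+$ are twists of $\cO(-1)$; using the behavior of genus-zero Gromov--Witten invariants under these blow-up/projective-bundle constructions I would show that, in the $\ell$-direction, the quantum products of $X$, $X^+$ and $\wt X$ are all governed by the fibers, i.e.\ by a concrete toric model built from $\mathrm{Tot}(\cO_{\mathbb{P}^a}(-1)^{\oplus b+1})$ and its counterpart. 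This turns the problem into one about a space whose $I$-function is known explicitly.

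Next I would solve the quantum connection \eqref{quantum-connection} for the local model. Its $I$-function components satisfy a hypergeometric ODE in $z$, so a fundamental system of flat sections is given by Meijer $G$-functions $G^{p,0}_{p,q}$. From their Mellin--Barnes integral representations I would extract, by a saddle-point analysis as $z\to 0^+$ along each chosen Gabrielov ray, the leading exponential factors $e^{-u_i/z}$---whose exponents $u_i$ are precisely the eigenvalues of $E(\bt)\star_{Q,\bt}-$---together with the power-series prefactor, which determines an \emph{asymptotic Gamma class} $A_i\in H^*(X)$ spanning the generalized $u_i$-eigenspace. Collecting these gives the decomposition $H^*(X)=\bigoplus_i V_{u_i}$ into asymptotic Gamma classes promised in the abstract.

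To match this with the derived category, I would invoke Iritani's $\widehat\Gamma$-integral structure: the flat section attached to an object $\mathcal E\in D^b(X)$ has leading asymptotics $\widehat\Gamma_X\cup(2\pi i)^{\deg/2}\mathrm{ch}(\mathcal E)$. The task is then to check that the classes $A_i$ above are exactly the images under $\widehat\Gamma_X\cup(2\pi i)^{\deg/2}\mathrm{ch}(-)$ of the numerical $K$-classes of the blocks in Orlov's projective-bundle and blow-up decompositions and in the Belmans--Fu--Raedschelders flip decomposition. Since the Gabrielov data fixes an ordering of the $u_i$ and these semi-orthogonal decompositions are essentially the unique ones carrying the right numerical invariants, this identifies $\langle C_1,\dots,C_N\rangle$ with the decomposition predicted by the paths. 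Finally I would transport the conclusion back from the local model to $X$, using that pullback along the blow-down and pushforward along the bundle projections are compatible with both the eigenvalue decomposition and the Gamma classes, and that varying $\bt$ only permutes the $u_i$ and mutates the $C_i$.

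The main obstacle I expect is the matching step, entangled with the non-semisimple behavior already present in the second step: one must (a) control the $G$-function asymptotics uniformly enough to separate eigenvalues where they collide, replacing a naive diagonalization of $E(\bt)\star-$ by the formal and Stokes decomposition of the irregular connection at $z=0$, and (b) pin down \emph{which} block is produced by each Gabrielov ray---matching the combinatorics of the paths with the Orlov/BFR ordering and excluding spurious mutations. A secondary difficulty is making the reduction rigorous at the level of Gromov--Witten invariants rather than merely the associated Picard--Fuchs system, which requires a careful comparison of the curve counts of $X$ and of the local model in the class $\ell$.
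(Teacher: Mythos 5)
The statement you are asked to prove is a \emph{conjecture} (attributed to Kontsevich) that the paper does not prove, and makes no claim to prove. The paper's actual results---Theorem~\ref{qspec}, Theorem~\ref{main-theorem}, and the corollary following it---establish only that a single, previously known semi-orthogonal decomposition (Orlov's for projective bundles and blow-ups, Belmans--Fu--Raedschelders' for flips) is \emph{compatible} with the quantum spectrum at a single specialization of $(Q,\bt)$ and a single natural choice of rays/paths. This is evidence for Conjecture~\ref{c:K1}, not a proof of it: the conjecture quantifies over \emph{all} points of $\cc M^{alg}$ and \emph{all} choices of Gabrielov paths, and it asserts \emph{existence} of an SOD for each such choice, which remains open even for these three families.

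With that caveat understood, your proposal does track the paper's actual strategy for the part that \emph{is} proved: restrict Novikov parameters to the exceptional curve class, reduce to the local model $T=\operatorname{Tot}(V'(-1))\to\PP(V)$ via ring homomorphisms on exceptional quantum cohomology and a splitting principle, compute the $J$-function explicitly, express pairings against $\widehat\Gamma$-twisted Chern characters as Meijer $G$-functions, and extract exponential/tame asymptotics along chosen rays. Your second and third steps correspond to Sections~\ref{s:projbund}--\ref{s:zflip}, and your ``transport back'' step corresponds to the deformation-to-the-normal-cone reductions of Section~\ref{s:reductions}. Two substantive gaps remain even for this weaker claim. First, the step you flag as hardest---``pin down which block is produced by each Gabrielov ray'' and ``exclude spurious mutations''---is genuinely not addressed, and is also not addressed in the paper; the paper does not derive the SOD from the Stokes/asymptotic data, it starts from the known SOD and checks consistency. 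Second, the claim ``these semi-orthogonal decompositions are essentially the unique ones carrying the right numerical invariants'' has no justification; uniqueness of an SOD with given numerical data is false in general and would need a separate argument. Your closing assertion that varying $\bt$ ``only permutes the $u_i$ and mutates the $C_i$'' is likewise an unsupported heuristic, not proved in the paper (the paper considers a single specialization and, in Corollary~\ref{c:Estar}, only deformations of $\bt$ pulled back from $H^{\ge 2}(Z)$, which do not change the eigenvalues).

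In summary: your outline is a reasonable reconstruction of the paper's compatibility theorems, but it should be reframed as establishing Theorem~\ref{main-theorem} and its corollary rather than proving Conjecture~\ref{c:K1}; the passage from the spectral/Gamma data to an existence-and-uniqueness statement for semi-orthogonal decompositions is exactly what remains conjectural.
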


A closely related conjecture was put forth by Sanda-Shamoto \cite{SS} using mutation systems.
In the context of birational transformations, related conjectures have been proposed by Iritani in \cite{Iri-toric} and Halpern-Leistner in \cite{HL}.
In all of these conjectures,
the semi-orthogonal decomposition of $\op{D}^b(X)$ should be compatible 
with an associated decomposition of $\nabla_{z {d\over d z}}$.
This compatibility may be seen via the asymptotics of solutions to $\nabla$ as $z \to 0.$  We will describe the expected relationship  in our specific context more precisely below.

\subsection{Semiorthogonal decompositions for flips}

In light of Conjecture~\ref{c:K1}, particular semi-orthogonal decompositions of the derived category should correspond to the quantum spectrum at certain special points $(Q, \bt) \in \cc M^{alg}$ (together with a choice of paths).  
As a first step in verifying this principle, in this paper we prove that several natural semi-orthogonal decompositions, namely those arising for projective bundles, blow-ups, and standard flips, are related to the quantum spectrum at a particular special point in $\cc M^{alg}$ in the sense of
Conjecture~\ref{c:K1}.

Consider first the case of blow-ups.  Let $\bar X$ be a smooth projective variety with a smooth codimension $r$ subvariety
$Z.$
Let $p: X \to \bar X$ be the blow-up of $\bar X$ at $Z$.
Orlov's celebrated semi-orthogonal decomposition of the blow-up \cite{Orl-blowup} 
 is 
\begin{equation*}
\label{sod-blowup}
\op{D}^b(X) = \langle \op{D}^b_{-(r-1)}(Z), \op{D}^b_{-(r-2)}(Z), \ldots, \op{D}^b_{-1}(Z), L\pi^*\op{D}^b(\bar X)\rangle,
\end{equation*}
where $\{\op{D}^b_{i}(Z)\}$ are subcaterogies of $\op{D}^b(X)$ given by certain fully-faithful 
functors from $\op{D}^b(Z)$ to $\op{D}^b(X)$.

This was later extended to standard flips by Belmans-Fu-Raedschelders \cite{BFR}, based on the work of Bondal-Orlov \cite{BO}.
The general setup is as follows (see also \cite{BFR}). 
Let $p: X \to \bar X$ be a birational morphism of smooth projective varieties which is an isomorphism away from $F=p^{-1}(Z)$ for $Z$ a smooth subvariety of $\bar X$.  
We denote the restriction of $p$ on $F$ by $\pi$. 
Suppose further that there exist vector bundles $V, V' \to Z$ of rank $r$ and $s$ respectively such that
$F \cong \PP(V)$, 
with normal bundle 
\begin{equation*}
\label{normal-bundle-assumption}
N_{F|X} \cong \pi^*(V')  \otimes\mathcal O_{\PP(V)}(-1). 
\end{equation*}
Let $\tau :\widehat X \to X$ be the blowup of $X$ along $F$.  The exceptional divisor $E$ is isomorphic to $\PP(V) \times_Z \PP(V')$, with normal bundle $\mathcal O(-1,-1).$  In the category of smooth algebraic spaces $E$ can be contracted in $\widehat X$ to obtain $\widehat X \to X'.$  If $X'$ is again a smooth projective variety then this is a blow down and $X \dasharrow X'$ is a \emph{standard flip}.  We have the following diagram:
\begin{equation}\label{e:diagram}
\begin{tikzcd}
& &E \ar[d, hook] \ar[ddll] \ar[ddrr]&& \\
&& \widehat X \ar[dl, swap, "\tau"] \ar[dr, "\tau'"] && \\
    F  \ar[ddrr, "\pi"] \ar[r, hook, "j"] &X \ar[rr, dashed] \ar[dr, "p"] &&X' \ar[dl, swap, "p'"] &F' \ar[ddll, swap, "\pi '"] \ar[l, hook', swap, "j'"] \\
    && \bar X && \\
    && Z \ar[u, hook, "\bar j"] &&
\end{tikzcd}
\end{equation}

Bondal-Orlov \cite{BO} and Belmans-Fu-Raedschelders \cite{BFR} have constructed fully faithful functors 
\begin{equation}\label{sod-functors}
\begin{dcases}
\FM: \op{D}^b(X')\to \op{D}^b(X), & E \mapsto R\tau_* L\tau'^*(E) \\
\Phi_m^X: \op{D}^b(Z)\to \op{D}^b(X), & E\mapsto j_*(\pi^*E\otimes \mathcal{O}_F(m)).
\end{dcases}
\end{equation}
Denote the subcategory $\Phi_m^X(\op{D}^b(Z))$ by $\op{D}^b_m(Z)$. The following result is proved in \cite{BFR}.
\begin{theorem}\cite{BFR}
For any integer $k$ such that $0\leq k\leq r-s$, $\op{D}^b(X)$ admits a semiorthogonal decomposition
\begin{equation}
    \label{sod-flip}
    \op{D}^b(X)=\left\langle \op{D}^b_{-k}(Z), \cdots, \op{D}^b_{-1}(Z), \FM (\op{D}^b(X')),
    \op{D}^b_0(Z), \cdots, \op{D}^b_{r-s-1-k}(Z)\right\rangle.
\end{equation}
\end{theorem}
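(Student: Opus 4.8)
The plan is to realize both $\op{D}^b(X)$ and $\op{D}^b(X')$ inside $\op{D}^b(\widehat X)$ via the (fully faithful) pullbacks $L\tau^*$ and $L\tau'^*$ and to compare the two semiorthogonal decompositions of $\op{D}^b(\widehat X)$ supplied by Orlov's blow-up formula \cite{Orl-blowup}; this is the strategy of Bondal--Orlov \cite{BO}. Since $N_{F|X}\cong\pi^*V'\otimes\cO_{\PP(V)}(-1)$ has rank $s$, the center $F\cong\PP(V)$ of $\tau$ has codimension $s$ in $X$, and by symmetry the center $F'\cong\PP(V')$ of $\tau'$ has codimension $r$ in $X'$. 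Writing $\iota\colon E\hookrightarrow\widehat X$ and $g\colon E\to F$, $g'\colon E\to F'$, $\rho\colon E\to Z$ for the structure maps of $E\cong\PP(V)\times_Z\PP(V')$, Orlov's theorem gives
\begin{equation*}
\op{D}^b(\widehat X)=\big\langle\,\mathcal A_1,\dots,\mathcal A_{s-1},\ L\tau^*\op{D}^b(X)\,\big\rangle
=\big\langle\,\mathcal A'_1,\dots,\mathcal A'_{r-1},\ L\tau'^*\op{D}^b(X')\,\big\rangle,
\end{equation*}
where $\mathcal A_a=\iota_*\!\big(g^*\op{D}^b(\PP(V))\otimes\cO_{\widehat X}(aE)|_E\big)$, $\mathcal A'_a=\iota_*\!\big(g'^*\op{D}^b(\PP(V'))\otimes\cO_{\widehat X}(aE)|_E\big)$, and $\cO_{\widehat X}(E)|_E\cong\cO_E(-1,-1)$ in the bidegree notation $\cO_E(i,j):=g^*\cO_{\PP(V)}(i)\otimes g'^*\cO_{\PP(V')}(j)$. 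The structural fact I would use is that $\langle\mathcal A_1,\dots,\mathcal A_{s-1}\rangle=\ker R\tau_*$, so that $R\tau_*$ restricts to the inverse of $L\tau^*$ on the orthogonal complement $L\tau^*\op{D}^b(X)$; likewise for $\tau'$.

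Next I would refine along $Z$. The morphisms $\PP(V)\to Z$ and $\PP(V')\to Z$ are projective bundles of relative dimensions $r-1$ and $s-1$, and their relative Beilinson resolutions decompose $\op{D}^b(\PP(V))$ and $\op{D}^b(\PP(V'))$ into $r$, respectively $s$, copies of $\op{D}^b(Z)$ twisted by consecutive powers of the relative $\cO(1)$. Pulling these back by $g$ and $g'$ and substituting into the two decompositions of $\op{D}^b(\widehat X)$ yields two full semiorthogonal decompositions in which every block, apart from one copy of $L\tau^*\op{D}^b(X)$ (resp.\ $L\tau'^*\op{D}^b(X')$), is a copy of $\op{D}^b(Z)$ of the form $\iota_*\!\big(\rho^*(-)\otimes\cO_E(i,j)\big)$. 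The numbers of $\op{D}^b(Z)$-blocks are $(s-1)r$ and $(r-1)s$, whose difference $(r-1)s-(s-1)r=r-s$ already anticipates the answer.

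The core of the proof is then to pass between these two decompositions by mutations and to apply $R\tau_*$. Using $R\tau_*\iota_*=j_*Rg_*$ (which follows from $\tau\iota=jg$) together with the cohomology $H^\bullet(\PP^{s-1},\cO(j))$ along the fibers of $g$, one determines which $\op{D}^b(Z)$-blocks lie in $\ker R\tau_*$; combined with the blow-up calculus for $\tau$ and $\tau'$ (self-intersection formula, Koszul resolutions of the centers) needed to evaluate the mutation kernels, one shows that a suitable sequence of mutations rearranges the $\tau'$-refined decomposition into the form $\big\langle\,\ker R\tau_*,\ L\tau^*\mathcal D\,\big\rangle$ for a semiorthogonal decomposition $\mathcal D$ of $\op{D}^b(X)$; here $R\tau_*$ carries the resulting copy of $L\tau'^*\op{D}^b(X')$ to $R\tau_*L\tau'^*\op{D}^b(X')=\FM(\op{D}^b(X'))$ and carries the $r-s$ surviving $\op{D}^b(Z)$-blocks to copies of $\Phi_m^X(\op{D}^b(Z))=\op{D}^b_m(Z)$. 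Applying $R\tau_*$ then identifies $\mathcal D$ with \eqref{sod-flip} for one value of $k$; the remaining values $0\le k\le r-s$ arise from the freedom, in the intermediate mutations, in how many of the $\op{D}^b(Z)$-blocks are placed on each side of the copy of $L\tau'^*\op{D}^b(X')$, and semiorthogonality of the outcome survives precisely in that range.

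The hard part will be the bookkeeping in the last step. Carrying out the mutations explicitly amounts to tracking their Fourier--Mukai kernels (on $\widehat X$ and on the relevant products $X\times X'$, $X\times Z$), hence to controlling the cohomology of the line bundles $\cO_E(i,j)$ along the $\PP^{r-1}\times\PP^{s-1}$-fibers of $\rho$ (a careful relative Beilinson/Koszul computation), and then matching the resulting twists with the exponent ranges $-k\le m\le-1$ and $0\le m\le r-s-1-k$ of \eqref{sod-flip}; it is in this matching that both the constraint $0\le k\le r-s$ and the role of the hypothesis $r\ge s$ (which makes the $\op{D}^b(Z)$-blocks genuinely fully faithful copies) become visible. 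One could instead verify \eqref{sod-flip} directly, proving full faithfulness of $\FM$ and $\Phi_m^X$ and the required semiorthogonality relations by Ext computations via the self-intersection formula on $F$ and $F'$ and generation by a Hochschild-homology count; but the mutation argument makes the appearance of $r-s$ and the parameter $k$ transparent.
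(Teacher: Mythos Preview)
The paper does not give its own proof of this theorem; it is stated with the attribution \cite{BFR} and the sentence ``The following result is proved in \cite{BFR}'' and is then used as input for the rest of the paper. There is therefore no proof in the paper to compare your proposal against.

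That said, your sketch is a faithful outline of the strategy actually used in \cite{BFR} (extending \cite{BO}): one realizes both $\op{D}^b(X)$ and $\op{D}^b(X')$ inside $\op{D}^b(\widehat X)$ via Orlov's blow-up formula applied to $\tau$ and $\tau'$, refines the exceptional pieces using the relative Beilinson decompositions of $\op{D}^b(\PP(V))$ and $\op{D}^b(\PP(V'))$ over $Z$, and then passes between the two decompositions by a sequence of mutations before applying $R\tau_*$. Your codimension count ($F$ has codimension $s$ in $X$, $F'$ has codimension $r$ in $X'$) and the numerics $(r-1)s-(s-1)r=r-s$ are correct, and you have correctly identified that the parameter $k$ arises from the freedom in ordering the mutations. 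As you acknowledge, the substantive work lies in the explicit mutation calculus and the matching of twists, which your outline does not carry out; but since the present paper does not attempt this either, there is no discrepancy to report.
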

Note that when $s=1$ \eqref{e:diagram} reduces to  a blow-up, and we may view a projective bundle as the degenerate case of \eqref{e:diagram} where $s=0$, $X = F$, and $X' = \emptyset$.
In these cases, \eqref{sod-flip} recovers the semi-orthogonal decompositions given by Orlov in \cite{Orl-blowup}.

\subsection{A decomposition of $H^*(X)$ via Gamma classes}
\label{s:RGC}

Let $\mathcal{V}$ be a vector bundle over $X$ of rank $n$ and $\{\delta_i\}_{i=1}^{n}$ be its Chern roots.
Consider the $(2 \pi \sqrt{-1})$-modified 
Chern character and Todd class of $\mathcal{V}$
\begin{equation}
\label{modified-todd}
    {\ch}(\mathcal{V}):= \sum_{i=1}^{n} e^{2\pi\sqrt{-1}\delta_j}, \;\;\; {\rm Td}(\mathcal{V}):=\prod_{j=1}^{n}
    {2\pi\sqrt{-1}\delta_j\over 1-e^{-2\pi\sqrt{-1}\delta_j}}.
\end{equation} 
By taking Hochscild homology and applying the HKR isomorphism
, the semiorthogonal decomposition \eqref{sod-flip} yields a decomposition 
in cohomology: 
$$H^*(X)=\UU^X(H^*(X'))\bigoplus\bigoplus_{m=-k}^{r-s-1-k}\Psi_m^X(H^*(Z)),$$ 
where $\UU^X$ and $\Psi_m^X$ are linear maps given by 
\begin{equation}\label{cohomology-maps}
    \begin{dcases}
    \UU^X 
    : H^*(X')\to H^*(X), &\alpha \mapsto  {\tau}_*\left( \operatorname{Td}(T\tau) {\tau'}^*(\alpha) \right)\\
        \Psi_m^X: H^*(Z)\to H^*(X), &
        \alpha\mapsto \left(2\pi\sqrt{-1}\right)^s j_*\left({\rm Ch}( \mathcal{O}_F(m)){\rm Td}(N_{F|X})^{-1} \pi^*(\alpha)\right).
    \end{dcases}
\end{equation}

Following \cite{Hos, KKP08, Iri-integral}, we consider another decomposition of $H^*(X)$  via Gamma classes that naturally arises from the perspective of mirror symmetry. 
The {\em Gamma class of $\mathcal{V}$} is given by 
\begin{equation}
\label{gamma-class-vect}
\widehat \Gamma_{\mathcal{V}} := \prod_{i=1}^{n} \Gamma(1 + \delta_i).
\end{equation}
Here $\Gamma(1 + x)$ is expressed as a power series at $x = 0$.  
It is convenient to define
\begin{equation}
    \label{gamma-class-minus}
\widehat{\Gamma}_{\mathcal{V},\pm}=\prod_{i=1}^{n}\Gamma(1\pm\delta_i).
\end{equation}
Using the Euler reflection formula 
\begin{equation}
    \label{euler-reflection}
    \Gamma(1+x) \Gamma(1-x) = \frac{2 \pi \sqrt{-1} x e^{-\pi  \sqrt{-1} x}}{1 - e^{- 2\pi  \sqrt{-1} x}}={\pi x\over \sin (\pi x)},
\end{equation}
the class
${\rm Td}(\mathcal{V})$ 
can be decomposed as
\begin{equation}
\label{todd-from-gamma}
{\rm Td}(\mathcal{V})=e^{\pi\sqrt{-1}c_1(\mathcal{V})}\widehat\Gamma_{\mathcal{V},+}\widehat\Gamma_{\mathcal{V},-}.
\end{equation}
Thus the Gamma classes can be viewed as ``asymmetric square roots" \cite{GGI} of ${\rm Td}(\mathcal{V})$.
Define   
$$\widehat\Gamma_X:=\widehat\Gamma_{TX}$$
to be the {\em Gamma class of $X$}.
We may also view $\widehat{\Gamma}_X$ as an operator $\widehat{\Gamma}_X\cup \in{\rm End}(H^*(X))$. Using the maps defined in \eqref{cohomology-maps}, we consider the decomposition 
\begin{equation}
\label{gamma-decomposition}
H^*(X)=\widehat{\Gamma}_X\UU^X(H^*(X'))\bigoplus\bigoplus_{m=-k}^{r-s-1-k}\widehat{\Gamma}_X\Psi_m^X(H^*(Z)).
\end{equation}
The main goal of this paper is to study the asymptotics of certain flat sections of the quantum connection defined using the decomposition \eqref{gamma-decomposition}.





\subsection{Quantum spectrum of $c_1(X)\star_q-$}
Following the Hukuhara-Levelt-Turrittin Theorem, we can study the asymptotic behavior of the flat sections of the connection \eqref{quantum-connection} when $z$ approaches the irregular singularity through the quantum spectrum of $E(\bt) \star_{Q, \bt}- $.
We will prove that the semi-orthogonal decomposition in \eqref{sod-flip} is closely connected to the quantum spectrum at a particular specialization of $(Q, \bt)$.

Recall that $\pi : F \to Z$ is the exceptional locus of the flip $X \dasharrow X'.$  Let $L$ denote a line in $F $ contracted by $\pi$. 
Let us restrict our quantum product as follows.  For $d \in NE(X)_\ZZ$, set $Q^d = 1$ if $d = k[L]$ for some $k \geq 0$, and $Q^d = 0$ otherwise.  Further, specialize $\bt \in H^*(X)$ to $\ln(q) c_1(X)$.
This is proven in Corollary~\ref{c:assoc1} to yield a well-defined associative product $\star_q$ which we call the \emph{extremal quantum product}, as the quantum corrections involve only those curves in the extremal ray  of the cone of curves consisting of those curves contracted by $p: X \to \bar X$.  With this specialization, the operator $E(\bt) \star_{Q, \bt} - $ simplifies to $c_1(X) \star_q - .$

The quantum spectrum of $c_1(X)\star_q -$ is given as follows.
\begin{theorem}\label{qspec}
Assume that $r-s > 1$. 
After specializing as above,
    the operator $c_1(X)\star_q -$ on  $QH^*(X)$ has an eigenvalue of zero of multiplicity $\rank(H^*(X'))$, and $r-s$ nonzero eigenvalues given by
    \begin{equation}
\label{evalue-m-th}
\lambda_k(q):=(r-s)e^{-\pi\sqrt{-1}{2k+s\over r-s}} q
\end{equation}
     for $0\leq k < r-s$, each of multiplicity $\rank(H^*(Z))$. 
\end{theorem}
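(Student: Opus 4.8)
The plan is to reduce the operator to a small-quantum-cohomology computation on a local model. First, since $\bt=\ln(q)\,c_1(X)$ lies in $H^2(X)$, the divisor equation resums the $\bt$-insertions and identifies $\star_q$ with the small quantum product in which the Novikov monomial $Q^d$ is replaced by $Q^d e^{\langle c_1(X),d\rangle\ln q}=Q^d q^{\langle c_1(X),d\rangle}$. Under the specialization $Q^d=1$ for $d\in\ZZ_{\ge 0}[L]$ and $Q^d=0$ otherwise, only the classes $k[L]$ survive, with effective weight $q^{k(r-s)}$, where I use $c_1(X)\cdot[L]=c_1(T_{F/Z})\cdot[L]+c_1(N_{F|X})\cdot[L]=r-s>0$, so that $\star_q$ is a genuine power series in $q$. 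Hence $c_1(X)\star_q-$ is determined by the genus-zero three-point invariants of $X$ in the classes $k[L]$.

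Second, I would reduce to a local model. A genus-zero stable map of class $k[L]$ is contracted by $p$, so it lands in a single fibre $\PP^{r-1}$ of $\pi\colon F=\PP(V)\to Z$ and factors through $F$; thus the relevant invariants are the $N_{F|X}$-twisted genus-zero invariants of $F$. Since $N_{F|X}=\pi^*V'\otimes\mathcal O_{\PP(V)}(-1)$ is fibrewise concave, the non-equivariant limit exists, and a degeneration-to-the-normal-cone (or virtual-localization) argument, together with the splitting principle applied to $V$ and $V'$ and a fibrewise analysis over $Z$, shows these invariants are pulled back from those of the local model $\mathbb X_0:=\mathrm{Tot}\big(\mathcal O_{\PP^{r-1}}(-1)^{\oplus s}\big)$. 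Concretely, the sub-module $\bigoplus_{m}\Psi^X_m(H^*(Z))\subset H^*(X)$ from \eqref{cohomology-maps} is $\star_q$-invariant and there $c_1(X)\star_q-$ equals $\mathrm{id}_{H^*(Z)}$ tensored with the nonzero block of the local operator $c_1(\mathbb X_0)\star_q-$, while on the complement $\UU^X(H^*(X'))$ the induced operator is nilpotent. Thus every nonzero eigenvalue of $c_1(X)\star_q-$ equals a nonzero eigenvalue of the local operator, with multiplicity scaled by $\rank H^*(Z)$, and the zero eigenvalue absorbs the remaining $\rank H^*(X')$ dimensions (consistently with the local picture, where $\mathrm{Tot}(\mathcal O_{\PP^{s-1}}(-1)^{\oplus r})$ plays the role of $X'$ and contributes multiplicity $s$ to the zero eigenvalue).

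Third comes the local computation. We have $H^*(\mathbb X_0)=\CC[h]/(h^r)$ with $h=c_1(\mathcal O_{\PP^{r-1}}(1))$ and $c_1(\mathbb X_0)=(r-s)h$. By the computation of the small quantum cohomology of the toric variety $\mathbb X_0$ --- via Givental's mirror theorem, equivalently quantum Lefschetz for the concave bundle $\mathcal O(-1)^{\oplus s}$ over $\PP^{r-1}$ and the Batyrev relation for the base line class (whose Novikov variable is the effective $q^{r-s}$ of the first step) --- one gets the presentation
\begin{equation*}
\big(H^*(\mathbb X_0),\star_q\big)\ \cong\ \CC[h,q^{\pm 1}]\big/\big(h^{r}-(-1)^{s}q^{r-s}\,h^{s}\big)\ \cong\ \CC[h]/(h^{s})\ \times\ \CC[h]\big/\big(h^{r-s}-(-1)^{s}q^{r-s}\big),
\end{equation*}
the factors being coprime since $q$ is invertible. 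On $\CC[h]/(h^s)$ the class $c_1(\mathbb X_0)=(r-s)h$ is nilpotent; on the other factor $h$ runs over the $r-s$ distinct values $e^{\pi\sqrt{-1}(2k+s)/(r-s)}q$, so $c_1(\mathbb X_0)$ runs over $(r-s)e^{\pi\sqrt{-1}(2k+s)/(r-s)}q$, which after reindexing $k\mapsto -k-s$ in $\ZZ/(r-s)\ZZ$ is exactly $\{\lambda_k(q):0\le k<r-s\}$. Combined with the second step this gives the stated eigenvalues and multiplicities.

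The main obstacle is the geometric content of the second step: showing that the class-$k[L]$ invariants of the compact $X$ are genuinely governed by the non-compact local model, verifying that the necessary non-equivariant limits exist, carrying out the splitting-principle deformation of $V$ and $V'$, and --- most delicately --- matching the result against the semi-orthogonal decomposition \eqref{sod-flip}, so that the nonzero spectrum is seen to sit in the $\bigoplus_m\Psi^X_m(H^*(Z))$-block with uniform multiplicity $\rank H^*(Z)$ while $\UU^X(H^*(X'))$ lies in the kernel. The third step is then a routine toric computation.
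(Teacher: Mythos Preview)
Your overall architecture matches the paper's: reduce to a local model, compute the small quantum ring there, and count dimensions. Your third step (the toric computation for $\mathbb X_0=\operatorname{Tot}(\mathcal O_{\PP^{r-1}}(-1)^{\oplus s})$) is correct and is exactly the fibrewise specialization of the paper's presentation $\prod_i(\rho_i+H)=q^{r-s}\prod_j(\sigma_j-H)$ at $\rho_i=\sigma_j=0$. The gaps are in your second step, and they are not merely cosmetic.

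\textbf{The tensor-product claim is false as stated.} You assert that on $\bigoplus_m\Psi^X_m(H^*(Z))$ the operator $c_1(X)\star_q-$ is $\mathrm{id}_{H^*(Z)}$ tensored with the local operator on $\mathbb X_0$. But $c_1(X)|_F=(r-s)H+c_1(Z)+c_1(V)+c_1(V')$, so even though the quantum \emph{corrections} are fibrewise (all curves lie in a fibre of $\pi$), the classical cup product by the base classes mixes the $H^*(Z)$-factors. The operator is only block-upper-triangular with respect to a degree filtration on $H^*(Z)$, not block-diagonal. The paper makes exactly this point: rather than passing to $\mathbb X_0$ over a point, it keeps the local model $T\to Z$, proves the presentation $QH^*_{exc}(T)=H^*(Z)[H]/(\prod(\rho_i+H)-q^{r-s}\prod(\sigma_j-H))$, and then runs an inductive filtration argument (Propositions~\ref{p:eigen} and~\ref{p:qm2}): pick a top-degree class $\alpha_0\in H^*(Z)$ so that the base Chern classes annihilate it, compute the characteristic polynomial on the cyclic span of $\alpha_0$, quotient, and repeat. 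This yields the correct characteristic polynomial without ever claiming a tensor decomposition.

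\textbf{The nilpotent block is handled differently.} You place the zero-eigenspace in $\UU^X(H^*(X'))$ and assert the induced operator there is nilpotent, but you give no reason why this subspace is $\star_q$-invariant or why the operator is nilpotent on it; indeed, proving that directly would essentially require the theorem you are after. The paper avoids this entirely. It uses the ring homomorphism $i^*:QH^*_{exc}(X)\to QH^*_{exc}(T)$ (Theorem~\ref{t:func}). On $\ker(i^*)$ the quantum corrections vanish tautologically (the evaluation map factors through $F$, so $\mathrm{ev}^*\alpha=0$ whenever $j^*\alpha=0$), hence $c_1(X)\star_q-$ restricts to the nilpotent $c_1(X)\cup-$. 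On $\operatorname{im}(i^*)\subset QH^*_{exc}(T)$, which contains $e(V'(-1))=i^*j_*(1)$, Proposition~\ref{p:qm2} supplies all the nonzero eigenvalues with the right multiplicities. A dimension count then forces the multiplicity of zero to be $\dim H^*(X')$. This route never touches the decomposition \eqref{gamma-decomposition}; compatibility with $\UU^X$ and $\Psi^X_m$ is established only later, via asymptotics, not as input to the spectrum computation.
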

This result was shown by Kontsevich in the case of blow-ups (s=1) using a dimension argument.  

\subsection{Asymptotic classes}
Next, inspired by Galkin-Golyshev-Iritani~\cite{GGI} we study asymptotic classes with respect to the extremal quantum cohomology of $p:X \to \bar X$, yielding a modification of Galkin-Golyshev-Iritani's Gamma Conjecture II \cite{GGI} and Sanda-Shamoto's Dubrovin type conjecture \cite{SS}
to this new setting.

Using the $S$-operators $S^X(Q, \bt, z)$ (defined in \eqref{S-operator}) in Gromov-Witten theory, the quantum connection \eqref{quantum-connection} admits multi-valued flat sections (see Proposition \eqref{Phi})
\begin{equation*}\label{e:fs}
\{\Phi^X(Q, \bt, z)(\alpha):=(2\pi)^{-{\dim X\over 2}}S^X(Q,\bt, z)z^{-\mu}z^{c_1(X)\cup} \alpha \mid \alpha\in H^*(X)\}.\end{equation*}  
We focus on the asymptotic behavior of these flat sections near the irregular singularity $z=0$.
Following \cite{GGI}, we introduce asymptotic classes as below.  
\begin{definition}
\label{def-asymptotic}
Choose a fixed value of $(Q, \bt)$ for which $\star_{Q,\bt}$ is convergent. 
Let $\lambda\in \mathbb{C}$ be an eigenvalue of $E\star_{Q, \bt}$.

If $\lambda\neq0$, we say $\alpha\in H^*(X)$ is a strong asymptotic class with respect to $\lambda$ and argument $\arg(\lambda)$ if there exists some $m\in\mathbb{R}$ such that
\begin{equation}
\label{asymp-class-behavior}
|\!|
e^{\lambda \over z} \Phi^X(Q, \bt, z)(\alpha)
|\!|
= O(|z|^{-m})
\end{equation}
as $z\to 0$ with $\arg(z)=\arg(\lambda)$.

If $\lambda=0$, we say $\alpha\in H^*(X)$ is a strong tame asymptotic class with respect to argument $\theta$ if \eqref{asymp-class-behavior} holds as $z\to 0$ with $\arg(z)=\theta.$
\end{definition}

Our main results relate the quantum spectrum and asymptotic classes of $X$ for a standard flip $X \dasharrow X'$.  
The results also hold in the cases $s=0$ and $s=1$, corresponding to projective bundles and blow-ups respectively.
\begin{theorem}\label{main-theorem}
For the standard flip $X \dasharrow X'$, the decomposition \eqref{gamma-decomposition} of $H^*(X)$ is a decomposition of strong asymptotic classes with respect to the quantum spectrum of $c_1(X)\star_q -$. 
More explicitly:
\begin{enumerate}
    \item the elements in $\widehat{\Gamma}_X\Psi_m^X(H^*(Z))$ are strong asymptotic classes with respect to the eigenvalue $\lambda_m(q)$ and $\arg(z/q)={-2m-s\over r-s}\pi$;
    \item the elements in $\widehat{\Gamma}_X\UU^X(H^*(X'))$ are strong tame asymptotic classes with  $\arg(z/q)={1-s\over r-s}\pi$.
\end{enumerate}
\end{theorem}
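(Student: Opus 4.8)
The plan is to reduce the global statement to a calculation on a local model and then analyze the asymptotics of explicit hypergeometric solutions. First I would set up the local model: by the structure of a standard flip, the relevant exceptional curve classes and the restriction of the quantum product to the $[L]$-direction only see a neighborhood of $F = \PP(V)$ inside $X$. Using the functoriality of exceptional quantum cohomology established in the proof of Theorem~\ref{qspec} (and the maps $\UU^X$, $\Psi_m^X$, $\FM$, $\Phi_m^X$), I would show that the flat sections $\Phi^X(Q,\bt,z)(\alpha)$, for $\alpha$ in one of the summands of \eqref{gamma-decomposition}, are obtained by pushing forward or pulling back flat sections on $Z$, $\bar X$, or the local model $\PP(V \oplus V') \to Z$ (or $\PP^{r-s-1}$ in the most local case). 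The key compatibility is that the Gamma class intertwines the cohomological functors \eqref{cohomology-maps} with the $S$-operators on the respective spaces — this is where \eqref{todd-from-gamma} and the Euler reflection formula \eqref{euler-reflection} enter, converting the modified Todd classes appearing in $\UU^X$ and $\Psi_m^X$ into asymmetric Gamma factors that match the Gamma-integral structure of the flat sections.

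Second, I would carry out the base case. For $\PP^{r-s-1}$ (the fiber of $F \to Z$, which controls the nonzero eigenvalues) and for a point (controlling $X'$), the $S$-operator and hence $\Phi^X$ is explicitly a Meijer G-function, or a ratio of Gamma functions times an exponential generating series in $q$. The eigenvalue $\lambda_k(q) = (r-s)e^{-\pi\sqrt{-1}(2k+s)/(r-s)}q$ is, up to the shift by $s$ coming from the twist by $\pi^* V' \otimes \cO(-1)$ in the normal bundle, the standard critical value $(r-s)\zeta q$ of the mirror of projective space with $\zeta$ a $(r-s)$-th root of unity; the shift $\arg(z/q) = (-2m-s)/(r-s)\,\pi$ selects the Stokes ray through $\lambda_m(q)$. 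I would then invoke the known asymptotic expansions of Meijer G-functions along rays (steepest descent / saddle point), which give precisely that $e^{\lambda_m/z}\Phi(\alpha)$ has polynomial growth when $\alpha$ lies in the $\widehat\Gamma$-image of the $m$-th block, and exponential growth otherwise. For the $\UU^X(H^*(X'))$ block the eigenvalue is $0$ and the relevant function is regular (the $X'$-part is "tame"), so the tame asymptotic estimate \eqref{asymp-class-behavior} holds along $\arg(z/q) = (1-s)/(r-s)\,\pi$.

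Third, I would propagate from the local model back to $X$. The restriction map $H^*(X) \to H^*(F)$ combined with the pushforward $j_*$, together with the twisted Gromov-Witten/quantum Lefschetz relation between the equivariant local model $\op{tot}(\pi^* V' \otimes \cO(-1))$ and $F \subset X$, expresses $\Phi^X(Q,\bt,z)$ restricted to the $[L]$-direction in terms of the local $\Phi$. Since pushforward along $j$ and pullback along $\pi$ are degree-shifting but do not affect the exponential type of a flat section, the growth estimates transfer verbatim; the Gamma class $\widehat\Gamma_X$ restricts compatibly (multiplicatively on the normal bundle sequence $0 \to TF \to TX|_F \to N_{F|X} \to 0$) so that $\widehat\Gamma_X\Psi_m^X$ and $\widehat\Gamma_X\UU^X$ match the local Gamma-twisted pushforwards. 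The block $\FM(\op{D}^b(X'))$ requires one further reduction: factor $\UU^X$ through $\widehat X$ using the blow-up formula (the $s=1$ case of the theorem, or the Fourier-Mukai functoriality of \cite{BFR}), so that the $X'$-block is controlled by $\Phi^{X'}$ at the corresponding special point and is tame for the same reason a point is.

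The main obstacle will be the middle step: establishing cleanly that the Gamma class intertwines the cohomological SOD-functors \eqref{cohomology-maps} with the $S$-operators, i.e.\ a commutative diagram of the shape $\widehat\Gamma_X \circ \Psi_m^X \circ (\text{something on } Z) = \Phi^X \circ (\widehat\Gamma_Z, \text{twisted})$, uniformly in $z$ and in the $q$-series. This is essentially a Gamma-theoretic refinement of functoriality of quantum cohomology under the flip, and it must be compatible with the quantum-Lefschetz comparison between the local bundle model and its zero section. I expect this to require care with the Hodge grading operator $\mu$ and the $z^{-\mu}z^{\rho}$ factors (the degree shifts under $j_*$, $\pi^*$, $\tau_*$, $\tau'^*$ must cancel correctly), and with convergence of the relevant $q$-series so that the asymptotic-in-$z$ statements make sense termwise. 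Once this intertwining is in place, the asymptotic analysis of Meijer G-functions is classical and the conclusion \eqref{asymp-class-behavior} follows in each block.
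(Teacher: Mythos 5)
Your outline is structurally aligned with the paper's strategy (reduce to the local model $T=\tot(N_{F|X})$, compute with Meijer $G$-function asymptotics, propagate back), but there are two substantive gaps between what you propose and what a complete proof requires.

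The first concerns the distinction between \emph{weak} and \emph{strong} asymptotic classes, which you do not address. The Meijer $G$-function / residue computation only controls the pairing $\langle\one,\Phi^X(q,z)(\alpha)\rangle$ — equivalently the modified $J$-function paired against $\widehat\Gamma_X\Psi_m^X(\alpha)$ — and so by itself yields only the \emph{weak} notion of Definition~\ref{weak-asymp-def}. Theorem~\ref{main-theorem} asserts the \emph{strong} estimate on $|\!|e^{\lambda/z}\Phi^X(q,z)(\alpha)|\!|$, i.e.\ growth control on the full flat section, not just its $\one$-component. The paper closes this gap with Proposition~\ref{wtos}: because the classes $c_1(T)^i\beta$ with $\beta\in H^*_\TT(Z)$ span $H^*_\TT(T)$, and the Dubrovin connection converts $c_1(T)\star$ into $zq\partial_q$ and $\beta\star$ into $z\partial_\beta$ (with no quantum corrections in the base direction), every pairing $\langle c_1(T)^i\beta,\Phi(\alpha)\rangle$ is obtained from $\langle\one,\Phi(\alpha)\rangle$ by iterated derivatives, and such derivatives preserve the $O(|z|^{-m})$ estimate. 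You would need this step or something equivalent; without it your argument only proves the weak statement.

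The second gap concerns the tame block and, more broadly, propagation to $X$. You propose to factor $\UU^X$ through $\widehat X$ and invoke "the $s=1$ case of the theorem," which flirts with circularity and is not what is needed. The paper instead proves a direct asymptotic identity (Proposition~\ref{mellin-barnes}) relating $\widetilde J^T_\TT(q,-z)$, after applying $\UU^T$ in the Gamma-twisted form, to the modified $I$-function $\widetilde I^{T'}_\TT(q^{-1},-z)$ of the other side of the flip, whence tameness. To transfer this from $T$ to $X$ one then needs the commutation $\UU^T\circ i'^* = i^*\circ\UU$ on cohomology, which the paper establishes (Proposition~\ref{cohres}) by a deformation-to-the-normal-cone argument comparing the Fourier--Mukai kernels over the family $U\to\AF^1$. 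Your sketch of "pushforward and pullback do not affect the exponential type" glosses over this; the nontrivial content is precisely that the $\UU$-functor and $\Phi$ are compatible across the specialization $X\rightsquigarrow T$, and that the degree-$0$ summand discrepancy (controlled in Lemma~\ref{l:stac2}) is harmless. Similarly, you omit the reduction to the split case: the residue/Atiyah--Bott arguments require $V,V'$ to be $\TT$-equivariantly split, and the paper handles general bundles via a flag-bundle pullback (Lemma~\ref{l:pb}) together with a direct-sum deformation (Lemma~\ref{l:ctosplit}). These reductions are not just bookkeeping; without them the Meijer $G$-function formula is simply not available.
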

In particular, we can apply this to the semi-orthogonal decomposition of $\op{D}^b(X)$ given in \eqref{sod-flip}. Recall that $\FM$ and $\Phi_m^X$ are functors defined in \eqref{sod-functors}. 
We have
\begin{corollary}
The semi-orthogonal decomposition of $\op{D}^b(X)$  in \eqref{sod-flip}  is compatible with the extremal quantum spectrum of $X$ in the following sense:  
\begin{enumerate}
\item \label{i11}
   for $E \in \op{D}^b(Z)$, $\widehat \Gamma_X \operatorname{Ch}(\Phi_m^X(E))$  is a strong asymptotic class with respect to $\lambda_m(q)$ 
and $\arg(z/q)=\frac{-2m-s}{r-s}\pi;$
\item \label{i22}
for $E' \in \op{D}^b(X')$, $\widehat \Gamma_X \operatorname{Ch}(\FM (E'))$ is a strong tame asymptotic class 
with respect to 
$\arg(z/q)=\frac{1-s}{r-s}\pi.$ \end{enumerate}
\end{corollary}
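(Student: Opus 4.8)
The corollary is a formal consequence of Theorem~\ref{main-theorem}: the only thing to check is that the modified Chern character $\operatorname{Ch}$ carries the Fourier--Mukai type functors of \eqref{sod-functors} into the images of the cohomological maps of \eqref{cohomology-maps}. Concretely, the plan is first to establish
\begin{equation*}
\operatorname{Ch}\bigl(\Phi_m^X(E)\bigr)\in\Psi_m^X\bigl(H^*(Z)\bigr)\ \ (E\in\op{D}^b(Z)),\qquad
\operatorname{Ch}\bigl(\FM(E')\bigr)\in\UU^X\bigl(H^*(X')\bigr)\ \ (E'\in\op{D}^b(X')),
\end{equation*}
and then to apply the operator $\widehat{\Gamma}_X\cup$ and quote Theorem~\ref{main-theorem}. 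Indeed, multiplying the first membership by $\widehat{\Gamma}_X$ puts $\widehat{\Gamma}_X\operatorname{Ch}(\Phi_m^X(E))$ into $\widehat{\Gamma}_X\Psi_m^X(H^*(Z))$, and by part~(1) of Theorem~\ref{main-theorem} \emph{every} element of that subspace is a strong asymptotic class for $\lambda_m(q)$ with $\arg(z/q)=\frac{-2m-s}{r-s}\pi$; likewise the second membership puts $\widehat{\Gamma}_X\operatorname{Ch}(\FM(E'))$ into $\widehat{\Gamma}_X\UU^X(H^*(X'))$, every element of which is a strong tame asymptotic class with $\arg(z/q)=\frac{1-s}{r-s}\pi$ by part~(2). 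This is precisely the content of the corollary.

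To prove the two memberships I would invoke Grothendieck--Riemann--Roch, recalling that the modified Chern character and modified Todd class \eqref{modified-todd} are the rescalings of the ordinary characteristic classes for which GRR holds in its usual form, up to an overall power of $2\pi\sqrt{-1}$ fixed by the relative dimension (and hence harmless for our purposes, since it only rescales the class by a nonzero constant). For the regular embedding $j\colon F\hookrightarrow X$, whose relative Todd class is $\operatorname{Td}(N_{F|X}^{-1})$, the multiplicativity of $\operatorname{Ch}$ and its compatibility with $\pi^*$ give
\begin{equation*}
\operatorname{Ch}\bigl(\Phi_m^X(E)\bigr)=\operatorname{Ch}\bigl(j_*(\pi^*E\otimes\mathcal O_F(m))\bigr)=c\cdot j_*\bigl(\operatorname{Ch}(\mathcal O_F(m))\,\operatorname{Td}(N_{F|X}^{-1})\,\pi^*\operatorname{Ch}(E)\bigr)=c\cdot\Psi_m^X\bigl(\operatorname{Ch}(E)\bigr)
\end{equation*}
for a nonzero constant $c$ depending only on $\operatorname{codim}(F)$; and, since $\tau\colon\widehat X\to X$ is birational so no rescaling intervenes,
\begin{equation*}
\operatorname{Ch}\bigl(\FM(E')\bigr)=\operatorname{Ch}\bigl(R\tau_*L\tau'^*E'\bigr)=\tau_*\bigl(\operatorname{Td}(T\tau)\,\tau'^*\operatorname{Ch}(E')\bigr)=\UU^X\bigl(\operatorname{Ch}(E')\bigr).
\end{equation*}
Both right-hand sides lie in the image of the corresponding map of \eqref{cohomology-maps}, as required; this computation also makes explicit the assertion of Section~\ref{s:RGC} that \eqref{sod-flip} induces the maps \eqref{cohomology-maps} on Hochschild homology via HKR, and a reader who grants that assertion may skip the GRR step entirely.

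The only place that demands care is the bookkeeping within GRR: correctly identifying the virtual relative tangent objects (the complex $T\tau$ for the blow-down, and $-[N_{F|X}]$ for the embedding $j$), handling the twist by $\mathcal O_F(m)$, and tracking the powers of $2\pi\sqrt{-1}$ relating the modified classes of \eqref{modified-todd} and \eqref{gamma-class-vect} to the ordinary ones. Since none of this affects membership in the subspaces $\Psi_m^X(H^*(Z))$ and $\UU^X(H^*(X'))$, and since Definition~\ref{def-asymptotic} depends only on the $O(|z|^{-m})$ growth rate and is therefore insensitive to rescaling by a nonzero scalar, the substantive work has already been carried out in Theorem~\ref{main-theorem}, and no real obstacle remains.
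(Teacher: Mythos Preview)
Your proposal is correct and matches the paper's own (implicit) argument: the paper records the identity $\operatorname{Ch}(\Phi_m^X(E))=\Psi_m^X(\operatorname{Ch}(E))$ as a lemma via GRR for $j\colon F\hookrightarrow X$ (equation~\eqref{phi-m-todd}), and states $\operatorname{Ch}\circ\FM=\UU\circ\operatorname{Ch}$ in Section~\ref{s:reductions}, after which the corollary is immediate from Theorem~\ref{main-theorem}. The only discrepancy is that the paper asserts these identities without your constant $c$; either way your observation that a nonzero scalar does not affect membership in the relevant subspaces makes this harmless.
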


\begin{remark}
    Note that the order of components in the semi-orthogonal decomposition of $\op{D}^b(X)$ matches the order (moving clockwise) of the rays along which we take asymptotic expansions of the associated asymptotic classes.
    \begin{equation*}
        \begin{array}{|l|c|c|c|c|c|c|c|}
        \hline
     & \op{D}^b_{-k}(Z)  & \cdots &\op{D}^b_{-1}(Z)&\FM(\op{D}^b(X'))& \op{D}^b_0(Z)&  \cdots    & \op{D}^b_{r-s-1-k}(Z) \\ \hline
      \arg\left({z\over q}\right)      &  \frac{2k-s}{r-s}\pi & \cdots 
        &   \frac{2-s}{r-s}\pi       &
          \frac{1-s}{r-s}\pi & 
           \frac{-s}{r-s}\pi & \cdots &  \frac{-2(r-s-1-k)-s}{r-s}\pi
           \\ \hline
        \end{array}
         \end{equation*}

If $k$ satisfies ${1\over 4}(r-s-6)<k<{3\over 4}(r-s+2)$, according to \eqref{arg-z-expoenential} and \eqref{argument-tame}, we can find a common sector such that the asymptotic expansions in Proposition \ref{exponential-asymptotic-local-model} and Proposition \ref{mellin-barnes} hold.
\end{remark}

\begin{remark}
[Related works]
The quantum cohomology of a blowup of a smooth projective variety at a point has been studied by Bayer \cite{Bayer} and Milanov-Xia \cite{MX}. 
  The quantum cohomology of toric blow-ups and flips has been studied by Gonzalez-Woodward in \cite{GW}, Acosta and the second author in \cite{AS}, and by Iritani in  \cite{Iri-toric}.  
 The quantum ring structure of simple flips has been studied by Lee-Lin-Wang for the local model in \cite{LLW21} and a simple $(2, 1)$ flip in \cite{LLW17}.  
 A decomposition of the quantum $D$-module for projective bundles and for the blow-up of a projective variety along a smooth center has been obtained by Iritani-Koto \cite{IK} and Iritani \cite{Iri-blowup}, respectively.
\end{remark}

\subsection{Plan of the paper}

In Section~\ref{s:setup} we review the basic definitions and define the extremal quantum cohomology.  

Sections \ref{s:local}-\ref{s:spectrum} are concerned primarily with proving Theorem~\ref{qspec}.
In Section~\ref{s:local} we define the local model and show that there is a ring homomorphism from the extremal quantum cohomology of $X$ to that of the local model.  In Section~\ref{s:relquant} we consider fiber bundles, and compute the extremal quantum cohomology of the local model. In Section~\ref{s:spectrum} we compute the extremal quantum spectrum.

In Sections~\ref{s:projbund}-\ref{s:zflip} we compute the (weak) asymptotic classes in special cases.  In Section~\ref{s:projbund} we compute the asymptotic classes for a projective bundle, while Sections~\ref{s:nzflip} and~\ref{s:zflip} compute the  asymptotic classes with respect to the nonzero and zero eigenvalues respectively for the local model.

Finally, in Section~\ref{s:reductions} we apply a sequence of reductions to compute the asymptotic classes for general $X$.  The main technique is a deformation to the normal cone of  Diagram~\eqref{e:diagram}.

\subsection{Acknowledgement}
We would like to thank Hiroshi Iritani for sharing valuable insight and patiently answering many questions on the Gamma conjectures and related topics.  
We also thank Margaret Cheney, Lie Fu, Maxim Kontsevich, and Y.-P. Lee for valuable conversations. 

Y. Shen is partially supported by a Simons Foundation Travel Grant. 
M. Shoemaker is partially supported by Simons Foundation Travel Grant 958189.

\section{Quantum connection and extremal quantum cohomology}\label{s:setup}

\subsection{Quantum connection and $J$-functions}
Let $X$ be a smooth projective variety. Let $H^*(X) := H^*(X; \CC)$ denote cohomology of $X$ with complex coefficients.
Choose a basis $\{\phi_i\}$ of homogeneous elements of $H^{*}(X)$ and let
$\bt = \sum_{i=1}^n t^i \phi_i$ be a formal linear combination of this basis.  Define $\CC[[Q]]$ to be the \emph{Novikov Ring}
$$\CC[[Q]] = \left\{ \sum_{d \in NE(X)_\ZZ} a_d Q^d \; | \;  a_d \in \CC \right\},$$
where $Q^{d} \cdot Q^{d'} = Q^{d + d'}$.  
Choose a basis $d_1, \ldots, d_r$ of $H_2(X, \ZZ)$ such that for an ample line bundle $L \to X$, $L\cdot d_i \geq 0$ for all $i$.  Then we have an inclusion 
$\CC[[Q]] \hookrightarrow \CC[[Q_1, \ldots, Q_r]]$
given by $Q^d \mapsto \prod_{j=1}^rQ_j^{c^j}$ if $d = \sum_{j=1}^r c^j d_j.$
Let $\CC[[Q, \bt]] = \CC[[Q]] \otimes_\CC \CC[[ t^1, \ldots, t^n]]$.

Let $X_{0,k,d}$ be the moduli stack of genus-zero, $k$-pointed stable maps to $X$ of degree $d$ with the virtual fundamental class $[X_{0,k,d}]^{\rm vir}$ and ${\rm ev_i}$ be the evaluation map at the $i$-th marked point.
Let $\br{\cdot, \cdot}^X$ be the Poincar\'e paring of $X$.
The quantum product $$\star_{Q, \bt}: H^*(X) \times H^*(X) \to H^*(X)\otimes_\CC\CC[[Q, \bt]]$$ for $X$ is defined by 
\begin{equation}\label{d:qprod} \br{\alpha \star_{Q, \bt} \beta, \gamma}^X = \brr{\alpha, \beta, \gamma}^X(Q, \bt), \end{equation}
where $\brr{\alpha, \beta, \gamma}^X(Q, \bt)$
is the  generating function of genus zero Gromov-Witten invariants 
\begin{align*}
    \brr{\alpha, \beta, \gamma}^X(Q, \bt) 
    =&  \sum_{d} \sum_{n \geq 0} \frac{Q^{d}}{n!} \br{\alpha, \beta, \gamma, \bt, \ldots, \bt}_{0,n+3, d}^X\\
    =&\sum_{d} \sum_{n \geq 0} \frac{Q^{d}}{n!} \int_{[X_{0,n+3,d}]^{\rm vir}}{\rm ev}_1^*(\alpha){\rm ev}_2^*(\beta)
{\rm ev}_3^*(\gamma)\prod_{i=4}^{n+3}{\rm ev}_i^*(\bt).
\end{align*}

We consider the Dubrovin connection on the trivial $H^*(X)$-bundle over the base $\op{Spec}\left( \CC[[Q, \bt]]\right)\times \mathbb{P}^1.$
Let $z\in\mathbb{P}^1$. 
The Dubrovin connection $\nabla$ takes the form
\begin{equation}\label{e:dubcon}
\begin{dcases}
&\nabla_{\partial \over \partial t_i}={\partial \over \partial t_i}+{1\over z}\phi_i\star_{Q, \bt}\\
&\nabla_{Q_j{\partial \over \partial Q_j}}=Q_j{\partial \over \partial Q_j}+{1\over z}\op{PD}(\beta_j) \star_{Q,\bt}\\
&\nabla_{z{\partial\over \partial z}}=z{\partial\over \partial z}+\mu-{1\over z}E(\bt)\star_{Q, \bt} .
\end{dcases}
\end{equation}
Here $\op{PD}$ denotes the Poincar\'e dual,  $E(Q, \bt)$ is the Euler vector field
\begin{equation}
\label{euler-vector}
E(Q, \bt)  =  c_1(X) + \sum_{i=1}^n \left(1 - \frac{\deg(\phi_i)}{2}\right)t^i \phi_i
\end{equation}
and $\mu$ is the Hodge grading operator: 
\begin{equation}
    \label{hodge-grading}
\mu(\alpha) = \left(p - \frac{\dim(X)}{2}\right)\alpha, \quad \forall \quad \alpha \in H^{2p}(X).
\end{equation}
In the above, the Novikov variables $Q$ is viewed as a parameter.
One can further extend this to a logarithmic connection over $\op{Spec}\left(\CC[[Q, \bt]]\right)\times \mathbb{P}^1,$ but due to the divisor equation,  
$\nabla_{Q{\partial \over \partial Q}}$ is determined by $\nabla_{\partial \over  \partial t_i}$ in the $H^2(X)$ directions.

Let $S^X(Q, \bt, z)$ be the  operator given by
\begin{equation} \label{S-operator}
\br{S^X(Q, \bt,z) \alpha, \beta}^X = \br{\alpha, \beta}^X + \sum_d \sum_{n \geq 0} \frac{Q^d}{n!}\br{\frac{\alpha}{-z - \psi_1}, \beta, \bt, \ldots, \bt}^X_{0, n+2, d}.
\end{equation}
Let $\rho:=c_1(X)\cup -$ on $H^*(X)$.
The following result is well known.
\begin{prop}
\cite{Dub, Giv, Pand, Iri-integral, GGI}
\label{Phi}
The connection \eqref{e:dubcon} admits multi-valued flat sections 
\begin{equation}\label{flat-sections}
\Phi^X(Q, \bt, z)(\alpha)=(2\pi)^{-{\dim X\over 2}}S^X(Q,\bt, z)z^{-\mu}z^{\rho} \alpha, \quad \text{for all } \alpha\in H^*(X).
\end{equation}
\end{prop}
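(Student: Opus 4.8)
The plan is to check directly that $\Phi^X(Q,\bt,z)(\alpha)$ lies in the kernel of each of the three connection operators in \eqref{e:dubcon}. Since $\nabla$ is $\CC[[Q,\bt]]$-linear the scalar $(2\pi)^{-\dim X/2}$ is irrelevant, so it suffices to treat $\Psi(\alpha):=S^X(Q,\bt,z)\,z^{-\mu}z^\rho\alpha$. Three standard inputs are needed: (i) the quantum differential equation satisfied by $S^X$ in the $\bt$-directions; (ii) the divisor equation; (iii) the homogeneity (dimension) axiom of genus-zero Gromov--Witten invariants. I would recall the verification, which can also be extracted from \cite{Giv, Pand, Iri-integral, GGI}.

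\emph{The $\bt$- and $Q$-directions.} Differentiating the defining formula \eqref{S-operator} in $t^i$ inserts one more copy of $\phi_i$; expanding $1/(-z-\psi_1)$ as a geometric series in $\psi_1/z$ and applying the genus-zero topological recursion relation together with the string equation rewrites the resulting descendent invariants in terms of the quantum product, yielding the operator identity $z\,\partial_{t_i}S^X = -(\phi_i\star_{Q,\bt}-)\circ S^X$ (the leading comparison being $\partial_{t_i}S_1 = -(\phi_i\star_{Q,\bt}-)$, where $S^X=\mathrm{id}+z^{-1}S_1+\cdots$). Because $z^{-\mu}z^\rho\alpha$ does not depend on $\bt$, we get $\nabla_{\partial_{t_i}}\Psi(\alpha)=\big(\partial_{t_i}S^X+\tfrac1z\phi_i\star_{Q,\bt}S^X\big)z^{-\mu}z^\rho\alpha=0$. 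For the Novikov directions, differentiating \eqref{S-operator} along $\op{PD}(\beta_j)$ and using the divisor equation (which replaces an insertion of $\op{PD}(\beta_j)$ by a factor $\op{PD}(\beta_j)\cdot d$ plus a $\psi_1$-correction that reassembles into $-\tfrac1z\,\op{PD}(\beta_j)\cup-$ inside $S^X$) gives $z\,Q_j\partial_{Q_j}S^X = -(\op{PD}(\beta_j)\star_{Q,\bt}-)\circ S^X$, hence $\nabla_{Q_j\partial_{Q_j}}\Psi(\alpha)=0$ exactly as before. This is the precise form of the remark after \eqref{e:dubcon} that $\nabla_{Q\partial_Q}$ is determined by $\nabla_{\partial_{t_i}}$ along $H^2(X)$.

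\emph{The $z\partial_z$-direction.} From $[\mu,\rho]=\rho$, equivalently $z^{-\mu}\rho\,z^{\mu}=z^{-1}\rho$, one computes $z\partial_z\big(z^{-\mu}z^\rho\big)=\big({-\mu}+\tfrac1z c_1(X)\cup-\big)z^{-\mu}z^\rho$, so $z^{-\mu}z^\rho\alpha$ is a flat section of the ``classical'' operator $\nabla^{0}_{z\partial_z}:=z\partial_z+\mu-\tfrac1z c_1(X)\cup-$. It then remains to combine this with the homogeneity equation for $S^X$,
\[
z\partial_z S^X+\mu\, S^X = S^X\mu+\tfrac1z\,E(\bt)\star_{Q,\bt}S^X-\tfrac1z\,S^X\big(c_1(X)\cup-\big),
\]
which expresses that $S^X$ intertwines $\nabla_{z\partial_z}$ with $\nabla^{0}_{z\partial_z}$. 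This identity follows from the dimension axiom (which pins down the degrees, hence the powers of $z$, appearing in $S^X$, producing the $z\partial_z$ and $\mu$ terms) together with the divisor equation (which accounts for the $c_1(X)$-term of the Euler field). Granting it, $\nabla_{z\partial_z}\Psi(\alpha)=S^X\nabla^{0}_{z\partial_z}\big(z^{-\mu}z^\rho\alpha\big)=0$. Finally $S^X=\mathrm{id}+O(z^{-1})$ is invertible, so $\{\Phi^X(Q,\bt,z)(\alpha)\}_{\alpha}$ is a basis of flat sections, multivalued because $z^{-\mu}z^\rho=\exp(-\mu\log z)\exp(\rho\log z)$.

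I expect the main obstacle to be the homogeneity equation for $S^X$ used in the $z\partial_z$-step: it is the only place the dimension axiom enters in an essential way, and tracking degrees together with the $c_1(X)$-contribution of the Euler vector field is the delicate point. Steps~1 and~2 are formal consequences of the string and divisor equations applied to \eqref{S-operator}. All of the required facts are standard; if one prefers, it also suffices to simply cite \cite{Giv, Pand, Iri-integral, GGI}.
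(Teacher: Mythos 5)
The paper states this proposition without proof, citing \cite{Dub, Giv, Pand, Iri-integral, GGI} as the sources of this standard fact. Your verification is the standard one found in those references: the $\bt$- and $Q$-flatness reduce to the quantum differential equation $z\,\partial_{t_i}S^X = -\phi_i\star_{Q,\bt}S^X$ (string equation plus topological recursion), while the $z\partial_z$-flatness combines the elementary identity $z\partial_z\bigl(z^{-\mu}z^{\rho}\bigr) = \bigl(-\mu + \tfrac{1}{z}\rho\bigr)z^{-\mu}z^{\rho}$ (using $[\mu,\rho]=\rho$) with the homogeneity/divisor identity intertwining $\nabla_{z\partial_z}$ and $\nabla^{0}_{z\partial_z}$ via $S^X$. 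All the computations you sketch check out, and the structure is exactly what the cited sources do; there is nothing in the paper to compare against beyond the citation.
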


\begin{remark}
    If $X$ is endowed with the action of a torus $\TT$, then one may use equivariant Gromov--Witten theory to define $\TT$-equivariant versions of the quantum product, Dubrovin connection, and all operators given in this section.  We will make use of this theory at certain points in paper, in which case we will use a $\TT$-subscript to make clear that we are working equivariantly (e.g. the equivariant solution matrix of Proposition~\ref{Phi}  will be denoted by $\Phi^X_\TT$).  For the foundations of equivariant Gromov--Witten theory and the application of the Atiyah--Bott localization formula therein, we refer the reader to \cite{GP}.
\end{remark}

\subsection{$J$-function and weak asymptotic classes}
The operator $S^X(Q, \bt, z)$ is invertible and the {\em $J$-function} of $X$ is defined by
\begin{equation}\label{J-function-def}
J^X(Q, \bt, z):=z S^X(Q,\bt,z)^{-1}\one.
\end{equation}

We can study the term $\left\langle \one, \Phi^X(Q, \bt, z)(\alpha)\right\rangle^X$ of the flat sections using the $J$-function. 
 It is convenient to multiply this term by a factor of $(2\pi z)^{\dim X\over 2}$ and rewrite it using a modified $J$-function 
\begin{equation}\label{modified-pi}
    \wtJ^X(Q, \bt, -z):=z^{\rho}z^{\dim X\over 2}z^{\mu}{J^X(Q, \bt, -z)\over -z}.
\end{equation}
\begin{lemma}
We have 
\begin{equation}
    \label{leading-term}
(2\pi z)^{\dim X\over 2}\left\langle \one, \Phi^X(Q, \bt, z)(\alpha)\right\rangle^X
=\left\langle \wtJ^X(Q, \bt, -z),  \alpha\right\rangle^X.
\end{equation}
\end{lemma}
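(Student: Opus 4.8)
The plan is to unwind both sides in terms of the operator $S^X$ and the $J$-function, and to invoke the standard compatibility of $S^X$ with the Poincar\'e pairing.

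First I would substitute \eqref{flat-sections} into the left-hand side of \eqref{leading-term}; the factors $(2\pi)^{\pm\dim X/2}$ cancel, leaving $z^{\dim X/2}\br{\one, S^X(Q,\bt,z)\, z^{-\mu}z^{\rho}\alpha}^X$. The essential input is the symplectic (unitarity) property of the fundamental solution: with $*$ denoting the adjoint for $\br{\cdot,\cdot}^X$, one has $S^X(Q,\bt,z)^* = S^X(Q,\bt,-z)^{-1}$, equivalently $\br{S^X(z)\beta, S^X(-z)\gamma}^X = \br{\beta,\gamma}^X$ for all $\beta,\gamma$. This is classical for the normalization of $S^X$ used in \eqref{S-operator} (see e.g. \cite{Giv, Iri-integral, GGI}) and follows from the flatness of $\nabla$ and its compatibility with the pairing. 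Applying it with $\beta=\one$ turns the left-hand side into $z^{\dim X/2}\br{ S^X(Q,\bt,-z)^{-1}\one,\ z^{-\mu}z^{\rho}\alpha}^X$, and by the definition \eqref{J-function-def} of the $J$-function, $S^X(Q,\bt,-z)^{-1}\one = J^X(Q,\bt,-z)/(-z)$.

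It then remains to transfer $z^{-\mu}$ and $z^{\rho}$ from the second slot of the pairing onto the $J$-function factor. Here I would use that $\mu$ is anti-self-adjoint for $\br{\cdot,\cdot}^X$ — because the pairing identifies $H^{2p}(X)$ with $H^{2\dim X-2p}(X)$, on which $\mu$ acts by opposite scalars — and that $\rho = c_1(X)\cup-$ is self-adjoint, since cup product by a fixed class is. Thus $(z^{-\mu})^*=z^{\mu}$ and $(z^{\rho})^*=z^{\rho}$; moving $z^{-\mu}$ and then $z^{\rho}$ across the pairing produces the left argument $z^{\dim X/2}z^{\rho}z^{\mu} J^X(Q,\bt,-z)/(-z)$. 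Since $z^{\dim X/2}$ is a scalar it slides past $z^{\rho}$, and comparing with \eqref{modified-pi} this argument is precisely $\wtJ^X(Q,\bt,-z)$, which gives \eqref{leading-term}.

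The computation is essentially bookkeeping; the one genuine point of care — and the step I would treat most carefully — is matching conventions, namely checking that the specific normalization of $S^X$ in \eqref{S-operator} (with kernel $1/(-z-\psi_1)$) is exactly the one for which $S^X(z)^*=S^X(-z)^{-1}$ holds with no stray sign, so that the argument of $\wtJ^X$ comes out as $-z$ rather than $z$, and keeping straight the order of the non-commuting operators $z^{\mu}$ and $z^{\rho}$.
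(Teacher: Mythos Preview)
Your proof is correct and follows essentially the same route as the paper: substitute \eqref{flat-sections}, apply the unitarity $S^X(z)^*=S^X(-z)^{-1}$, then move $z^{-\mu}$ and $z^{\rho}$ across the pairing using that $\mu$ is anti-self-adjoint and $\rho$ is self-adjoint, and identify the result with \eqref{modified-pi}. The paper's proof is exactly this five-line computation, so there is nothing to add.
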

\begin{proof}
Using properties of $\mu$, $S^X(Q, \bt, z)$, we have 
    \begin{align*}
  (2\pi z)^{\dim X\over 2}\left\langle \one, \Phi^X(Q, \bt, z)(\alpha)\right\rangle^X
    &=z^{\dim X\over 2}\left\langle \one, S^X(Q, \bt, z)z^{-\mu}z^{\rho} \alpha\right\rangle^X\\
    &=z^{\dim X\over 2}\left\langle S^X(Q, \bt, -z)^{-1}\one, z^{-\mu}z^{\rho} \alpha\right\rangle^X\\
    &=z^{\dim X\over 2}\left\langle z^{\mu}S^X(Q, \bt, -z)^{-1}\one, z^{\rho} \alpha\right\rangle^X\\
    &=\left\langle z^{\rho}z^{\dim X\over 2}z^{\mu}S^X(Q, \bt,-z)^{-1}\one, \alpha\right\rangle^X\\
    &=\left\langle \wtJ^X(Q, \bt, -z), \alpha\right\rangle^X.
\end{align*}
\end{proof}

In particular, for $E\in K^0(X)$, we have the following formula for the {\em quantum cohomological central charge} of $E$ (defined in \cite{Hos, Iri-integral})
\begin{align}
Z(E)(Q, \bt)&:=(2\pi z)^{\dim X\over 2}\langle \one, \Phi^X(Q, \bt, z)(\widehat{\Gamma}_X\cup {\rm Ch}(E))\rangle^X  \label{central-charge-def}\\
&=\left\langle\wtJ^X(Q, \bt, -z),\widehat{\Gamma}_X{\rm Ch}(E)\right\rangle^X. \label{central-charge-modified-J}
\end{align}

Before Section \ref{s:reductions}, we will only focus on the asymptotic behavior of the terms in \eqref{leading-term} by introducing the concept of weak asymptotic classes. 
\begin{definition}\label{weak-asymp-def}
For fixed $(Q, \bt),$ we say $\alpha\in H^*(X)$ is a \emph{weak asymptotic class} with respect to $\lambda\in \mathbb{C}^*$ and argument $\arg(\lambda)$ if there exists some $m\in\mathbb{R}$ such that
\begin{equation}
\label{weak-asymp-class-behavior}
|\!|
e^{\lambda \over z} \langle\one, \Phi^X(z)(\alpha)\rangle
|\!|
= O(|z|^{-m})
\end{equation}
as $z\to 0$ 
with $\arg(z)=\arg(\lambda)$.

If $\lambda=0,$ $\alpha\in H^*(X)$ is called a weak tame asymptotic class with respect to argument $\theta$ if \eqref{weak-asymp-class-behavior} holds as $z\to 0$ 
with $\arg(z)=\theta$.
\end{definition}

\begin{remark}
Here we don't assume $\lambda$ to be an eigenvalue of the quantum multiplication. This is convenient when we discuss the asymptotic behavior in cases when the quantum spectrum is not fully known \cite{SZ}. 
\end{remark}

\subsection{Extremal quantum cohomology}\label{s:exc1}

In this section and going forward we focus on the special case that $X$ and $X'$ are related by a standard flip as in Diagram~\eqref{e:diagram}.  We will further specialize the quantum product to involve only the \emph{extremal curve classes}.    We will call this specialized product the extremal quantum cohomology.
More precisely, recall that $F$ is the exceptional locus of $p: X \to \bar X$.  Let $L \subset F$ be a line contained in a fiber of $p: X \to \bar X$.  In the quantum product \eqref{d:qprod}, we will set $Q^d = 0$ unless $d = k[L]$.
 We first verify that with this specialization we still have an associative quantum product.

\begin{lemma}\label{l:fiberclass}
    If a curve $D$ is of class $k[L]$, where $L$ is a line in $F$ contracted by $p$, then $D$ lies in a fiber of $\pi: F \to Z$.
\end{lemma}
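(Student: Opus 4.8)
The plan is to analyze the curve class $k[L]$ geometrically using the structure of $F \cong \PP(V)$ over $Z$. First I would recall that $p: X \to \bar X$ restricts to $\pi: F \to Z$, and that the fibers of $\pi$ are precisely the projective spaces $\PP(V_z)$ for $z \in Z$; the line $L$ is a line in one such fiber, so $[L] \in H_2(F;\ZZ) \subset H_2(X;\ZZ)$ generates the kernel of $\pi_*: H_2(F;\ZZ) \to H_2(Z;\ZZ)$ among effective classes (more precisely, $\pi_*[L] = 0$). Now let $D$ be an irreducible curve in $X$ of class $k[L]$ with $k \geq 1$. Since $[L] \cdot p^*(\text{ample on }\bar X) = 0$, we get $D \cdot p^*(\text{ample}) = 0$, which forces $D$ to be contracted by $p$, hence $D \subseteq F$.

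Next I would push forward to $Z$: since $\pi_* [D] = k\, \pi_*[L] = 0$ in $H_2(Z;\ZZ)$, and $\pi(D)$ is an irreducible curve or a point, the vanishing of the pushforward class combined with $L\cdot(\text{ample on }Z\text{ pulled back})$-type positivity shows $\pi(D)$ cannot be a curve — if it were, $[\pi(D)]$ would be a nonzero effective class — so $\pi(D)$ is a point, i.e. $D$ lies in a single fiber of $\pi$. For the general (reducible, non-reduced) case, one applies this component-by-component: write $D = \sum a_i D_i$ with each $D_i$ irreducible; each $D_i$ has class a nonnegative multiple of $[L]$ by the same ample-pairing argument (the classes $[D_i]$ are all proportional to $[L]$ since they are effective and contracted by $p$, and $\mathrm{Pic}(F)/\pi^*\mathrm{Pic}(Z)$ is rank one), and then each $D_i$ lies in a fiber by the irreducible case; connectedness of $D$ then forces all the $D_i$ into the \emph{same} fiber.

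The main obstacle I anticipate is the step identifying effective curve classes in $F$ contracted by $p$ with multiples of $[L]$ — i.e. ensuring there is no other effective class in $H_2(F;\ZZ)$ mapping to $0$ under $\pi_*$. This is where the projective bundle structure $F = \PP(V)$ is essential: the exact sequence $0 \to \ZZ[L] \to H_2(\PP(V);\ZZ) \to H_2(Z;\ZZ) \to 0$ (or rather its dual via $\mathrm{Pic}$) shows the fiber class generates the kernel, and effectivity plus the pairing with $\cO_{\PP(V)}(1)$ being nonnegative pins down the sign. I would also need to confirm that a curve contracted by $p$ actually lands in $F = p^{-1}(Z)$ set-theoretically, which is immediate since $p$ is an isomorphism over $\bar X \setminus Z$. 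Once these are in place the lemma follows; no delicate Gromov–Witten input is needed, only the geometry of Diagram~\eqref{e:diagram}.
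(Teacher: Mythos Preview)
Your proposal is correct and follows essentially the same route as the paper: both arguments rest on Kleiman's criterion (an effective curve cannot have zero intersection with an ample divisor), applied first to $p$ to force $D \subset F$ and then to $\pi$ to force $\pi(D)$ to be a point. The paper's proof is a single sentence (``Suppose not, then $\pi_*(D)$ is an effective curve of degree zero, but this is a contradiction by Kleiman's criterion''), while you spell out the intermediate steps --- showing $D \subset F$, handling components in the reducible case, and invoking connectedness --- that the paper leaves implicit.

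One minor simplification: in your treatment of the reducible case you work to show each component class $[D_i]$ is a nonnegative multiple of $[L]$, but this is stronger than needed and edges toward the content of Proposition~\ref{p:assoc1}. It suffices to observe that each $[D_i]$ pairs nonnegatively with $p^*(\text{ample})$ and the sum is zero, so each $D_i$ is contracted by $p$ (hence lies in $F$); then the same argument with $\pi^*(\text{ample on }Z)$ shows each $\pi(D_i)$ is a point, and connectedness finishes it.
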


\begin{proof}
    Suppose not, then $\pi_*(D)$ is an effective curve of degree zero, but this is a contradiction by Kleiman's criterion.
\end{proof}
\begin{prop}\label{p:assoc1}
The ray $\RR_{\geq 0} [L]$ is extremal in $NE(X) \subset H_2(X, \RR)$. 
\end{prop}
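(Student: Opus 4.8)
The plan is to show that $\RR_{\geq 0}[L]$ is an extremal ray of the Mori cone $NE(X)$ by exhibiting a nef class that vanishes exactly along this ray and is positive on all other effective curve classes. The natural candidate is $p^*A$ for $A$ an ample divisor on $\bar X$. Indeed, $p$ contracts precisely the fibers of $\pi : F \to Z$ (equivalently, the curves contracted by $p$ are exactly those of class proportional to $[L]$, by Lemma~\ref{l:fiberclass}), so $p^*A$ is nef, and for an effective curve class $d = \sum a_i [C_i]$ with each $C_i$ irreducible, we have $(p^*A) \cdot d = \sum a_i (A \cdot p_*C_i) \geq 0$, with equality if and only if $p_*C_i = 0$ for every $i$, i.e.\ if and only if every $C_i$ is contracted by $p$.

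The key steps, in order, are as follows. First I would recall that since $p : X \to \bar X$ is a birational morphism of smooth projective varieties and an isomorphism away from $F$, the exceptional locus is $F = \PP(V)$ and $p|_F = \pi$. Second, I would argue that any irreducible curve $C$ with $p_*C = 0$ must lie in $F$ and be contracted by $\pi$, hence lies in a fiber of $\pi$; since the fibers of $\PP(V) \to Z$ are projective spaces of dimension $r-1$, any such curve has class a positive multiple of $[L]$. Third, given an arbitrary decomposition $d = d_1 + d_2$ with $d_1, d_2 \in NE(X)$ and $d = k[L]$, I apply $(p^*A)\cdot -$: since $(p^*A)\cdot d = 0$ and $(p^*A)\cdot d_i \geq 0$, both $d_1$ and $d_2$ lie in the hyperplane $\{(p^*A)\cdot - = 0\}$. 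Fourth, I would identify this hyperplane section of $NE(X)$: an effective class $d'$ with $(p^*A)\cdot d' = 0$ is represented by an effective cycle all of whose components are $p$-contracted, hence (by the second step) all of whose components have class a nonnegative multiple of $[L]$, so $d' \in \RR_{\geq 0}[L]$. Therefore $d_1, d_2 \in \RR_{\geq 0}[L]$, which is exactly the statement that the ray is extremal.

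I expect the main obstacle to be the third-to-fourth step transition: passing from ``$d'$ is a numerical class with $(p^*A)\cdot d' = 0$ and $d' \in NE(X)$'' to ``$d'$ is represented by an effective cycle supported on $p$-contracted curves.'' A priori $d'$ is only in the \emph{closure} of the cone of effective curves, and one must be careful that limits of effective classes pairing to zero against a nef class still decompose appropriately. One clean way around this is to work with the Mori cone directly: the face $NE(X) \cap (p^*A)^{\perp}$ is an extremal face of $NE(X)$ cut out by the nef class $p^*A$, and it suffices to show this face is one-dimensional, spanned by $[L]$. Since $p$ is a fiber-type-on-$F$ contraction with connected fibers all of Picard number one (projective spaces), the relative Mori cone $NE(X/\bar X)$ is the ray $\RR_{\geq 0}[L]$ by the standard theory of the relative cone of a contraction (or by a direct argument: $\rho(X/\bar X) = 1$ here), and this relative cone is exactly the face $NE(X)\cap (p^*A)^\perp$. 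Hence the face is a single ray, giving extremality. Either route avoids explicit computation; the relative-cone route is the most economical.
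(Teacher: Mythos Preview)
Your main argument is correct and takes a genuinely different route from the paper. You use the supporting-hyperplane characterization of extremality: the nef class $p^*A$ cuts out a face of $NE(X)$, and any effective class $d'$ with $(p^*A)\cdot d'=0$ decomposes into irreducible components each of which is $p$-contracted, hence (via Lemma~\ref{l:fiberclass} and the fact that fibers of $\pi$ are projective spaces) a nonnegative multiple of $[L]$. This is clean and standard from the Mori-theoretic viewpoint. The paper instead argues directly with the decomposition $d=d_1+d_2$: it first uses Kleiman's criterion on $\bar X$ to force $d_1,d_2\in\ker(p_*)$, then invokes the localization exact sequence $A_1(F)\to A_1(X)\to A_1(U)\to 0$ to lift $d_1,d_2$ into the image of $H_2(F,\RR)$, and finally uses the splitting $H_2(F,\RR)=H_2(Z,\RR)\oplus\RR[L]$ together with a degree argument to kill the $H_2(Z,\RR)$ component. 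Your approach is shorter and more conceptual; the paper's approach is more explicit about where classes live in homology.

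One caution about your second route. Your concern about passing to the closure $\overline{NE}(X)$ is well-placed, but the fix you propose does not close the gap. The identity $\overline{NE}(X)\cap(p^*A)^\perp=\overline{NE}(X/\bar X)$ is not automatic: the left side consists of limits of effective classes that \emph{in the limit} pair to zero with $p^*A$, while the right side is the closure of the cone spanned by actually $p$-contracted curves. Knowing that $N_1(X/\bar X)=\RR[L]$ (which is your step two) does not by itself rule out classes in $\overline{NE}(X)\cap\ker(p_*)$ lying outside $\RR[L]$, since $\ker(p_*)$ can be strictly larger than $\RR[L]$ when $\bar j_*:H_2(Z,\RR)\to H_2(\bar X,\RR)$ has a kernel. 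Fortunately, the statement as written concerns $NE(X)$ rather than its closure, and the sole application (associativity of the exceptional quantum product, Corollary~\ref{c:assoc1}) only requires decomposing genuine effective classes, so your primary argument already suffices.
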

\begin{proof}
We claim that for $d$ in the span of $[L]$, if  $d=d_1 + d_2$ is a sum of two classes, with $d_1, d_2 \in \overline{NE}(X)$, then $d_1$ and $d_2$ are both in the span of $[L].$  
    
Let $K$ denote the kernel of $H_2(X, \RR) \to H_2(\bar X, \RR).$  We first argue that given $d_1, d_2 \in \overline{NE}(X)$ with $d_1 + d_2 \in K$, then both $d_1$ and $d_2$ lie in $K$.  
To see the claim, assume the contrary, then without loss of generality, $p_*(d_1)$ is a nonzero effective curve class in $H_2(\bar X, \RR)$.  Since $p_*(d_1 + d_2) = 0$, this means $p_*(d_2) = -p_*(d_1)$.  This is a contradiction by Kleiman’s criterion, which states that because $p_*(d_1)$ and $p_*(d_2)$ are nonzero in $\overline{NE}(\bar X)$, they must both pair positively with an ample divisor.

We may therefore assume that $d_1, d_2  \in K.$  Let $U = \bar X \setminus Z  = X \setminus F.$  We have the following diagram
\[\begin{tikzcd}
U \ar[r, "\phi"] \ar[d, "="] & X \ar[d, "p"] \\
U \ar[r, "\bar \phi"] & \bar X
\end{tikzcd}
\]
Then $\phi^* = \bar \phi^* \circ p_*$, thus $K \subset \ker(\phi^*).$
Via the exact sequence
$$A_1(F) \to A_1(X) \to A_1(U) \to 0,$$
we see that $d_1, d_2$ lie in the image of $H_2(F, \RR).$

Let $\widehat d_1$ and $\widehat d_2 $ be  classes in $\overline{NE}(F) \subset H_2(F, \RR)$ that map to $d_1$ and $d_2$ under $j_*$ for $j: F\to X$.  Note that 
$$H_2(F, \RR) = H_2(Z, \RR) \oplus \RR[L]$$
where the inclusion $H_2(Z, \RR) \to H_2(F, \RR)$ is the map $\alpha \mapsto H^{r-1} \cap \operatorname{PD}^{-1}(\pi^*\operatorname{PD}(\alpha))$, with $\operatorname{PD}$ denoting the Poincar\'e dual.
Using this splitting, write $\widehat d_i$ as
\begin{equation}\label{e:di}
    \widehat d_i = (H^{r-1} \cap \operatorname{PD}^{-1}(\pi^*\operatorname{PD}(\alpha_i)))+ n_i[L].
\end{equation}
Pushing forward via $\bar j \circ \pi$, we get that $\bar j_*(\alpha_1) = \bar j_*\pi_*(\widehat d_1)$ and $\bar j_*(\alpha_2) = \bar j_*\pi_*(\widehat d_2)$ satisfy $\bar j_*(\alpha_1) = - \bar j_*(\alpha_2)$.  Therefore 
$$\deg(\bar j_*(\alpha_1)) = \deg(\bar j_*(\alpha_2)) = 0.$$  
Then $\alpha_1$ and $\alpha_2$ are  classes in $\overline{NE}(Z)$ of degree zero, and therefore are both zero.  By \eqref{e:di}, we conclude that $d_1$ and $d_2$ both lie in the span of $[L]$, proving the claim.
\end{proof}

\begin{definition}\label{d:exq}
Let $\CC[[q]]$ be the ring of formal power series in $q$.  Define the  specialization of variables from the Novikov ring to the power series ring
$\CC[[Q]] \to \CC[[q]]$
$$Q^d \mapsto \left\{ \begin{array}{l}
q^{k(r-s)} \text{ if } d = k[L] \text{ for some } k \in \ZZ \\
0 \text{ otherwise.}
\end{array}\right.
$$
By Proposition~\ref{p:assoc1} this gives a well-defined ring homomorphism.
Define the (big) \emph{extremal quantum cohomology of $X$} to be the quantum cohomology ring after this restriction of variables.  Denote this product by $\star_{q, \bt}$.  If we further specialize to $\bt = 0$, we obtain the small extremal quantum product.  We denote it by $\star_q$, and the ring by $QH^*_{ext}(X).$
\end{definition}
\begin{corollary}\label{c:assoc1}
The extremal quantum product is associative.
\end{corollary}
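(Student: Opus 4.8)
The plan is to deduce associativity of $\star_{q,\bt}$ (and hence of its specialization $\star_q$) directly from the associativity of the full quantum product $\star_{Q,\bt}$ via the ring homomorphism of Definition~\ref{d:exq}. Recall that $\star_{Q,\bt}$ is associative as a product on $H^*(X) \otimes_\CC \CC[[Q,\bt]]$; this is the standard WDVV theorem for genus-zero Gromov--Witten invariants. The key observation is that the specialization $\CC[[Q]] \to \CC[[q]]$ sending $Q^d$ to $q^{k(r-s)}$ when $d = k[L]$ and to $0$ otherwise is a ring homomorphism, which is exactly the content guaranteed by Proposition~\ref{p:assoc1}: extremality of the ray $\RR_{\geq 0}[L]$ ensures that if $Q^{d_1}$ and $Q^{d_2}$ both survive the specialization (i.e. $d_1, d_2$ lie in the span of $[L]$) then so does $Q^{d_1+d_2}$, and conversely a nonzero contribution to $Q^{d_1}Q^{d_2}$ forces $d_1 + d_2 \in \RR_{\geq 0}[L]$, which by extremality forces each $d_i$ into the ray.

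Concretely, I would argue as follows. Tensoring the associativity identity for $\br{(\alpha \star_{Q,\bt} \beta) \star_{Q,\bt} \gamma, \delta}^X = \br{\alpha \star_{Q,\bt} (\beta \star_{Q,\bt} \gamma), \delta}^X$ along the ring map $\CC[[Q,\bt]] \to \CC[[q,\bt]]$ yields the corresponding identity for $\star_{q,\bt}$, \emph{provided} the specialization is compatible with multiplication of the structure-constant power series. The only subtlety is that the quantum product is defined by an infinite sum over curve classes, and a priori one must check that the specialization of a product equals the product of specializations. This holds because for fixed $\alpha, \beta, \gamma$ the structure constants lie in $\CC[[Q,\bt]]$ (a genuine power series ring in $Q_1, \ldots, Q_r, t^1, \ldots, t^n$ after the inclusion $\CC[[Q]] \hookrightarrow \CC[[Q_1,\ldots,Q_r]]$), and ring homomorphisms between such completed rings commute with the (convergent in the adic topology) infinite sums defining the product. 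Then setting $\bt = 0$ restricts the associative product $\star_{q,\bt}$ to the associative product $\star_q$.

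The main obstacle, and the only place where real content enters, is verifying that the specialization map is well-defined as a ring homomorphism, i.e. that it respects the Novikov grading under addition of curve classes; but this is precisely Proposition~\ref{p:assoc1}, which has already been established. Given that proposition, the proof of the corollary is essentially formal: associativity is an identity among the structure-constant power series, and it is preserved under any ring homomorphism applied to the coefficient ring. I would therefore write the proof as a short paragraph: invoke WDVV for $\star_{Q,\bt}$, invoke Proposition~\ref{p:assoc1} (via Definition~\ref{d:exq}) to know the specialization is a ring map, note that associativity is an equality in $\CC[[Q,\bt]]$ that pushes forward under this map, and conclude associativity of $\star_{q,\bt}$ and hence of $\star_q$ by the further specialization $\bt = 0$.
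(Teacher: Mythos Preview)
Your proposal is correct and takes essentially the same approach as the paper: both invoke Proposition~\ref{p:assoc1} to see that for $d\in\operatorname{span}([L])$ the $Q^d$-coefficient of either side of the associativity identity involves only decompositions $d=d_1+d_2$ with $d_1,d_2\in\operatorname{span}([L])$, so the specialized WDVV equation follows from the full one. Your framing via the ring homomorphism $\CC[[Q]]\to\CC[[q]]$ is just a slightly more abstract packaging of the paper's direct coefficient argument.
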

\begin{proof}
From Proposition~\ref{p:assoc1}, one  sees that for $d \in \operatorname{span}([L])$, the $Q^{d}$-coefficient of  $\alpha \star_{Q, \bt} (\beta \star_{Q, \bt} \gamma)$ and $(\alpha \star_{Q, \bt} \beta) \star_{Q, \bt} \gamma$  are each a sum over  only those $d_1 + d_2 = d$  such that $d_1, d_2 \in \operatorname{span}([L])$.  Thus the WDVV equation in extremal quantum cohomology follows from the usual WDVV equation for $X$.
\end{proof}

\begin{lemma}\label{l:conv} 
Only finitely many Gromov--Witten invariants contribute to the extremal quantum product $\alpha \star_{q} \beta$.
\end{lemma}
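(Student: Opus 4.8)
The plan is to show that the sum defining $\alpha \star_q \beta$ in \eqref{d:qprod}, after the specialization $Q^d \mapsto q^{k(r-s)}$ for $d = k[L]$, has only finitely many nonzero terms by bounding the degrees $k$ for which the relevant Gromov--Witten invariants can be nonzero. First I would recall that $\langle \alpha \star_q \beta, \gamma \rangle^X$ is a sum over effective curve classes $d = k[L]$, $k \geq 0$, of the three-point invariants $\langle \alpha, \beta, \gamma \rangle_{0,3,k[L]}^X$ (no $\bt$-insertions appear since we have set $\bt = 0$ in the small product). By Lemma~\ref{l:fiberclass}, every stable map of class $k[L]$ has image lying in a fiber of $\pi : F \to Z$, hence in the projective space $\PP(V_z) \cong \PP^{r-1}$ for some $z \in Z$. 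Thus the invariant factors through the moduli of stable maps to a fiber $\PP^{r-1}$, and I would use this to reduce to a degree count on $\PP^{r-1}$ (more precisely on the total space $F = \PP(V)$, using that the virtual class is supported on maps to fibers).

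The key step is the dimension/degree bound. The virtual dimension of $X_{0,3,k[L]}$ is $\dim X + \langle c_1(TX), k[L]\rangle = \dim X + k \langle c_1(TX), [L]\rangle$. Since $\overline{NE}(X)$ pairs with the anticanonical class along the extremal ray $\RR_{\geq 0}[L]$ in a controlled way — indeed, from the normal bundle assumption $N_{F|X} \cong \pi^*(V') \otimes \mathcal{O}_{\PP(V)}(-1)$ one computes $\langle c_1(TX), [L] \rangle = r - s > 0$ (this is exactly the $r-s$ appearing throughout; it is the anticanonical degree of the flipping line) — the virtual dimension grows linearly in $k$. For the integral of ${\rm ev}_1^*(\alpha){\rm ev}_2^*(\beta){\rm ev}_3^*(\gamma)$ to be nonzero we need the (real) codimension of this integrand, namely $\deg(\alpha) + \deg(\beta) + \deg(\gamma)$, to equal twice the virtual dimension; since $\deg(\alpha), \deg(\beta), \deg(\gamma) \leq 2\dim X$ are fixed and bounded, this forces $k(r-s) \leq 2\dim X$, so only $k \leq 2\dim X/(r-s)$ can contribute. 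Hence the sum is finite.

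The main obstacle I anticipate is being careful about the fact that these are \emph{virtual} counts: the invariant $\langle \alpha, \beta, \gamma \rangle_{0,3,k[L]}^X$ is an integral against $[X_{0,3,k[L]}]^{\rm vir}$, not against a fundamental class, so the naive dimension argument must be phrased in terms of the virtual dimension and the degree of the cohomological integrand rather than an honest transversality statement. One must note that the integrand ${\rm ev}_1^*(\alpha){\rm ev}_2^*(\beta){\rm ev}_3^*(\gamma)$ has fixed total degree independent of $k$, while $\deg[X_{0,3,k[L]}]^{\rm vir} = 2\big(\dim X + k(r-s)\big)$ strictly increases with $k$, so the pairing vanishes for all large $k$ purely on degree grounds — no transversality needed. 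A secondary point to address is whether negative values of $k$ or classes $d$ not a multiple of $[L]$ could sneak in; this is excluded by Proposition~\ref{p:assoc1} (the ray $\RR_{\geq 0}[L]$ is extremal, so effective classes along it are exactly the $k[L]$ with $k \geq 0$) together with the specialization in Definition~\ref{d:exq} which sets all other $Q^d$ to zero. Assembling these observations gives the finiteness claim.
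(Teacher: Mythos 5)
Your proposal is correct and takes the same basic route as the paper -- bound the degree $k$ using the virtual dimension formula -- but you have in fact fixed a sign issue in the paper's own argument. The paper's proof asserts $\br{c_1(X), k[L]} = (s-r)k$, so that the virtual dimension of $\sMbar_{0,3}(X,k[L])$ would \emph{decrease} with $k$, and concludes that the moduli spaces are eventually empty. Your computation $\br{c_1(X), [L]} = r-s > 0$ is the correct one: this follows directly from \eqref{first-chern-T} via $i^*c_1(TX) = c_1(T) = (r-s)H + (\text{classes from }Z)$ and $\br{H,[L]}=1$, and one can sanity-check it in the blow-up case $s=1$ against the standard formula $K_X = p^*K_{\bar X}+(r-1)E$. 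With the correct sign the virtual dimension \emph{increases} linearly in $k$, so the finiteness must be argued, as you do, by matching the (bounded) cohomological degree of $\operatorname{ev}_1^*\alpha\cup\operatorname{ev}_2^*\beta\cup\operatorname{ev}_3^*\gamma$ against $2\cdot\operatorname{vdim}$. This is also the right framing for a second reason you implicitly touch on: by Proposition~\ref{fiber-moduli} the space $\sMbar_{0,3}(X,k[L])$ is a $\sMbar_{0,3}(\PP^{r-1},k)$-bundle over $Z$, hence nonempty for \emph{all} $k\geq 0$, so an emptiness argument could never work; the vanishing is purely a degree statement about the virtual class. Your handling of the exclusion of degrees $d\notin\ZZ_{\geq0}[L]$ via Proposition~\ref{p:assoc1} and Definition~\ref{d:exq} is also appropriate, though strictly speaking it is already built into the definition of $\star_q$. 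In short: same approach, but your version is the one that actually compiles.
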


\begin{proof}
The virtual dimension of 
$\sMbar_{g,n}(X, d)$ is equal to 
$$(1-g)(\dim X - 3) + n + \br{c_1(X), d}.  $$
For $d = k[L]$ with $k \geq 0$, we have $\br{c_1(X), d} = (r-s) \cdot k$.  By construction we have $r>s$, so for fixed $g, n$, there are only finitely many choices of $k$ such that the virtual dimension of $\sMbar_{g,n}(X, k[L])$ is less than or equal to $n \cdot \dim(X)$ - the maximal possible complex degree of the product of insertions.
\end{proof}

By the divisor equation, the following specializations of the quantum product are equal
$$ \alpha \star_{Q, \bt}\beta\vline_{\substack{\; Q^d = q^{k(r-s)} \text{ if } d = k[L]\\ \;Q^d = 0 \text{ otherwise}\\ \; \bt = \bt}} = \alpha \star_{Q, \bt}\beta\vline_{\substack{\;Q^d = 1 \text{ if } d = k[L]\\ \;Q^d = 0 \text{ otherwise}\\ \; \bt = \bt + \ln(q)c_1(X)}}.
$$

It is this second specialization that we will use to specialize the $S$-operator. 

By Corollary~\ref{c:assoc1} and Lemma~\ref{l:conv},  for each fixed $q\neq0$ we have a meromorphic flat connection over the base $\mathbb{P}^1$, 
denoted by
\begin{equation}
\label{mero-connection}
\nabla_{z{\partial\over \partial z}}=z{\partial\over \partial z}-{1\over z}c_1(X)\star_{q}+\mu.
\end{equation}
This meromorphic connection has a regular singular point at $z=\infty$ and an irregular singular point at $z=0$.

We will specialize the $S$-operator and $J$-function as well.  Let 
 \begin{align}
 \label{modified-pi-extremal2}
 S^X(q, \bt, z) &:= S^X(Q,\bt,z)\vline_{\substack{\;Q^d = 1 \text{ if } d = k[L]\\ \;Q^d = 0 \text{ otherwise}\\ \;\bt = \bt + \ln(q)c_1(X)}} 
 \\ \nonumber
    \wtJ^X(q, \bt, -z)&:=
    \wtJ^X(Q, \bt, -z)\vline_{\substack{\; Q^d = 1 \text{ if } d = k[L]\\ \;Q^d = 0 \text{ otherwise}\\ \;\bt = \bt + \ln(q)c_1(X)}}
   \end{align}

If we further restrict $\bt=0$, we denote it simply by 
\begin{align}\label{modified-pi-extremal}
S^X(q, z) &:= S^X(q,0,z) \\ \nonumber
    \wtJ^X(q, -z) &:= \wtJ^X(q, 0, -z),
\end{align}

and the extremal central charges by $Z(E)(q)=Z(E)(q, 0)$ for $E\in K^0(X).$

\section{The local model}\label{s:local}
In this section we show that after restricting the Novikov parameter as in Definition~\ref{d:exq}, the quantum product of $X$ is closely related to that of a local model.

\subsection{Defining the local model}

Let $Z$ be a smooth projective variety with $V, V' \to Z$ vector bundles of rank $r,s$ respectively, with $r>s$.  Let $\CC^*$ act on $\tot(V \oplus V')$ by scaling with weight 1 on $V$ and weight $-1$ on $V'.$  Define $T, T'$ to be the GIT quotients associated to this action with characters of weights $+1$ and $-1$ respectively.  
More explicitly,
$T$ is the total space of 
$$V'(-1):=\pi^*V' \otimes \mathcal O_{\PP(V)}(-1),$$ 
and $T'$ is the total space of 
$${\pi '}^*V \otimes \mathcal O_{\PP(V ')}(-1).$$  
The GIT presentation yields a rational map $$T \dasharrow T'.$$

We refer to $T \dasharrow T'$ as the \emph{local model} of a standard flip.  Note that 
if $X \dasharrow X'$ is as in \eqref{e:diagram},
then $T$ may be identified with $\tot(N_{F|X})$ and $T'$ with $\tot(N_{F'|X'})$.  
\begin{equation}
\begin{tikzcd}
    T  \ar[rr, bend left, "i"] \ar[r, "k"] &  \ar[l, bend left, "j_T"] F \ar[r, "j"] & X
\end{tikzcd}
\end{equation}
where $k$ is the projection.
Note that $k^*$ and $j_T^*$ are inverse to each other.

\subsection{Comparing the virtual class}

\begin{prop}\label{fiber-moduli}
    For $d = k[L]$ with $L$ a line contracted by $p$ and $k>0$, the moduli space $\sMbar_{g,n}(X, d)$ is canonically isomorphic to $\sMbar_{g,n}(F, d)$.  The latter is a fiber bundle over $Z$, with fiber isomorphic to  $\sMbar_{g,n}(\PP^{r-1}, k)$.
\end{prop}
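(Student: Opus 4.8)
The plan is to analyze a single stable map of class $k[L]$ first, and then bootstrap to the moduli stacks. Let $(C,\mathbf x,f)$ be a stable map to $X$ with connected domain, $n$ marks, arithmetic genus $g$, and $f_*[C]=k[L]$ for some $k>0$. Composing with $p$ gives $(p\circ f)_*[C]=p_*(k[L])=0$ in $H_2(\bar X,\ZZ)$, since $L$ is contracted by $p$; because an effective curve class on a projective variety is nonzero (Kleiman's criterion, compare Lemma~\ref{l:fiberclass}), the map $p\circ f$ must be constant, say with value $\bar x$. Since $k>0$ the map $f$ is non-constant, so its image is a positive-dimensional subvariety of $p^{-1}(\bar x)$; as $p$ is an isomorphism over $\bar X\setminus Z$ this forces $\bar x\in Z$. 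Using $F=p^{-1}(Z)$ and $p|_F=\pi$, we get $f(C)\subseteq p^{-1}(\bar x)=\pi^{-1}(\bar x)\cong\PP^{r-1}$, the fibre of $\pi\colon F=\PP(V)\to Z$ over $\bar x$. This fibre is reduced and closed in $X$, so $f$ factors canonically through it. Finally, the line class of this $\PP^{r-1}$ maps to $[L]\neq 0$ in $H_2(X,\ZZ)$ (it is by definition the class of a line $L$ in such a fibre), so $H_2(\PP^{r-1},\ZZ)=\ZZ\to H_2(X,\ZZ)$ is injective and $f_*[C]=k[L]$ forces the induced map $C\to\PP^{r-1}$ to have degree $k$. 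At the level of $\CC$-points this already yields both claimed descriptions.

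To promote this to an isomorphism of stacks, I would use that the closed embedding $j\colon F\hookrightarrow X$ induces a closed immersion $j_*\colon\sMbar_{g,n}(F,k[L])\to\sMbar_{g,n}(X,k[L])$ by functoriality of spaces of stable maps (here $k[L]\in H_2(F,\ZZ)$ denotes the fibre line class, and by the analysis above no other class of $H_2(F,\ZZ)$ contributes to the locus of class $k[L]$ downstairs). It then suffices to see $j_*$ is surjective, hence an isomorphism: given a family $(\mathcal C/S,\mathbf x,f\colon\mathcal C\to X)$ of stable maps with every geometric fibre of class $k[L]$, the first paragraph shows every geometric fibre of $f$ factors through $F$; then $f^{-1}(F)\hookrightarrow\mathcal C$ is a closed immersion that restricts to an isomorphism on every fibre of $\mathcal C\to S$, so by the standard argument (properness of $\mathcal C\to S$ together with flatness) it is an isomorphism, i.e.\ $f$ factors through $F$. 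This produces the inverse of $j_*$, giving the canonical isomorphism $\sMbar_{g,n}(X,k[L])\cong\sMbar_{g,n}(F,k[L])$.

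For the fibre bundle structure over $Z$, sending a stable map $g\colon C\to F$ of class $k[L]$ to the unique point $\pi(g(C))\in Z$ (well defined since, as above, $\pi\circ g$ is a constant map) defines a morphism $\mu\colon\sMbar_{g,n}(F,k[L])\to Z$. Over a Zariski-open $U\subseteq Z$ trivialising $V$ we have $\PP(V)|_U\cong U\times\PP^{r-1}$, and a stable map to $U\times\PP^{r-1}$ of class $k[L]$ (the fibre line class) is a constant map to $U$ together with a degree-$k$ stable map to $\PP^{r-1}$; hence $\mu^{-1}(U)\cong U\times\sMbar_{g,n}(\PP^{r-1},k)$ compatibly with the projections to $U$. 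Thus $\mu$ is Zariski-locally trivial with fibre $\sMbar_{g,n}(\PP^{r-1},k)$; the transition functions are induced by the $\mathrm{PGL}_r=\mathrm{Aut}(\PP^{r-1})$-action on $\sMbar_{g,n}(\PP^{r-1},k)$, so $\sMbar_{g,n}(F,k[L])$ is the bundle associated to the frame bundle of $V$ with this fibre.

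The step I expect to require the most care is the second paragraph: upgrading the pointwise statement ``every stable map of class $k[L]$ lands in $F$'' to a scheme/stack-level isomorphism, which rests on the standard but not purely formal fact that a family of maps all of whose geometric fibres factor through a closed subscheme factors through it globally. The remaining ingredients—the degree bookkeeping between $k[L]\in H_2(X)$ and degree $k$ in $\PP^{r-1}$, the non-constancy argument pinning $\bar x$ down to $Z$, and the Zariski-local triviality of $\PP(V)$—are routine.
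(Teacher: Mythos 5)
Your argument is correct and follows the same basic route as the paper: use Kleiman's criterion (as in Lemma~\ref{l:fiberclass}) to force the image of a stable map of class $k[L]$ into a single fiber $\pi^{-1}(z)\cong\PP^{r-1}$, identify $\sMbar_{g,n}(X,d)$ with $\sMbar_{g,n}(F,d)$, and then read off the fiber-bundle structure from Zariski-local triviality of $\PP(V)$. The paper's proof is considerably terser — it simply cites Lemma~\ref{l:fiberclass} for the containment and then says "choose an isomorphism $\pi^{-1}(z)\cong\PP^{r-1}$" — whereas you carefully spell out why $p\circ f$ is constant and lands in $Z$, how to upgrade the pointwise factorization through $F$ to an isomorphism of stacks (surjectivity of the closed immersion $j_*$ via properness-plus-flatness of the family), and how the $\mathrm{PGL}_r$ transition functions of the frame bundle of $V$ give the bundle structure. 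These are precisely the details the paper elides, so your write-up is a more complete version of the same proof rather than a different one; no gaps.
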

\begin{proof}
    
Suppose we have a map $f: C \to X $ of degree $d$.
By  Lemma~\ref{l:fiberclass},  the image of $f$ is contained in a fiber  of $F \to Z$.  This proves the first claim.  For a given $z \in Z$, choose an isomorphism $\pi^{-1}(z) \cong \PP^{r-1}.$  With this isomorphism, the subspace of $\sMbar_{g,n}(F, d)$ consisting of maps to $\pi^{-1}(z)$ is identified with $\sMbar_{g,n}(\PP^{r-1}, k)$.
\end{proof}

\begin{corollary}
    Under the identification
    $\sMbar_{0,n}(X, d) = \sMbar_{0,n}(F, d)$ from Proposition \ref{fiber-moduli}, the virtual class 
    $$[\sMbar_{0,n}(X, d)]^{vir}=e(R^1 \pi_{\cc C *} f^* N_{F|X}) = e(R^1 \pi_{\cc C *} f^* V'(-1)).$$
    
\end{corollary}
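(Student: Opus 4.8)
The plan is to compare the natural relative perfect obstruction theories on $\sMbar_{0,n}(X,d)$ and $\sMbar_{0,n}(F,d)$ under the identification of Proposition~\ref{fiber-moduli}, and then to invoke the Euler-class formula for virtual classes over a smooth base. Write $\pi_{\cc C}\colon \cc C \to \sMbar_{0,n}(X,d)$ for the universal curve and let $f\colon \cc C \to X$ be the universal stable map; by Lemma~\ref{l:fiberclass} it factors as $\cc C \xrightarrow{\bar f} F \hookrightarrow X$, and this identifies the universal objects over $\sMbar_{0,n}(X,d)$ and $\sMbar_{0,n}(F,d)$. The relative perfect obstruction theory of $\sMbar_{0,n}(X,d)$ over the Artin stack $\mathfrak M_{0,n}$ of prestable $n$-pointed genus-zero curves is $(R\pi_{\cc C *}f^*T_X)^\vee$, and that of $\sMbar_{0,n}(F,d)$ is $(R\pi_{\cc C *}\bar f^*T_F)^\vee$. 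Pulling the exact sequence $0 \to T_F \to T_X|_F \to N_{F|X} \to 0$ of the embedding back along $\bar f$ and applying $R\pi_{\cc C *}$ produces a distinguished triangle linking these two obstruction theories, whose third term is $R\pi_{\cc C *}\bar f^*N_{F|X} = R\pi_{\cc C *}f^*N_{F|X}$.

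Next I would record two fibrewise cohomology vanishings over $\sMbar_{0,n}(X,d)$. First, $R^1\pi_{\cc C *}\bar f^*T_F = 0$: combining $0 \to T_{F/Z} \to T_F \to \pi^*T_Z \to 0$ with the relative Euler sequence exhibiting $T_{F/Z}$ on a fibre of $\pi$ as a quotient of a sum of copies of $\mathcal O_{\PP^{r-1}}(1)$, and using that $\pi\circ\bar f$ is constant (Lemma~\ref{l:fiberclass}), one reduces $H^1(C,\bar f^*T_F)$ to the vanishing $H^1$ of line bundles of non-negative degree (and of trivial bundles) on the genus-zero prestable curve $C$. Hence $\sMbar_{0,n}(F,d)$ is unobstructed, i.e.\ smooth of expected dimension --- as is already apparent from the fibre-bundle description in Proposition~\ref{fiber-moduli} --- and its virtual class is its fundamental class. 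Second, $R^0\pi_{\cc C *}\bar f^*N_{F|X} = 0$: since $N_{F|X}$ restricts to $\mathcal O_{\PP^{r-1}}(-1)^{\oplus s}$ on a fibre of $\pi$ and $k\geq 1$, on any stable map of class $k[L]$ the line bundle $\bar f^*\mathcal O_{\PP^{r-1}}(-1)$ has non-positive degree on every irreducible component of $C$ and strictly negative degree on at least one; a global section must then vanish on every non-contracted component, and, propagating the vanishing across the nodes of the connected tree $C$, vanish identically. Consequently $\mathcal N := R^1\pi_{\cc C *}f^*N_{F|X}$ is locally free on $\sMbar_{0,n}(F,d)$, of rank $s(k-1)$ by Riemann--Roch.

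Finally, combining these two facts with the triangle above shows that the relative obstruction theory of $\sMbar_{0,n}(X,d)\to\mathfrak M_{0,n}$, regarded as a perfect obstruction theory over the smooth space $\sMbar_{0,n}(F,d)$, has no excess in the deformation direction and genuine obstruction bundle $\mathcal N$. The standard Euler-class formula for virtual classes over a smooth base (Behrend--Fantechi; see also Kim--Kresch--Pantev) then gives
\[
[\sMbar_{0,n}(X,d)]^{vir} \;=\; e(\mathcal N)\cap[\sMbar_{0,n}(F,d)] \;=\; e\!\left(R^1\pi_{\cc C *}f^*N_{F|X}\right),
\]
and the final equality in the statement is just the defining isomorphism $N_{F|X}\cong \pi^*V'\otimes\mathcal O_{\PP(V)}(-1)=V'(-1)$. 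The step I expect to be the main obstacle is the vanishing $R^0\pi_{\cc C *}f^*N_{F|X}=0$, i.e.\ checking $H^0(C,\bar f^*N_{F|X})=0$ uniformly over the boundary strata (curves with contracted components); this is exactly what makes $R^1\pi_{\cc C *}f^*N_{F|X}$ a vector bundle and hence lets the Euler-class formula apply. The only other point requiring care is the precise bookkeeping in the comparison of the two obstruction theories through the distinguished triangle.
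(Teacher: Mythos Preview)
Your proposal is correct and follows essentially the same approach as the paper: the moduli space is smooth (a fibre bundle over $Z$ with smooth fibre), so the virtual class is the Euler class of the obstruction bundle, which is identified with $R^1\pi_{\cc C *}f^*N_{F|X}$ via the normal-bundle sequence. The paper's proof is a two-line sketch that simply asserts smoothness from Proposition~\ref{fiber-moduli} and quotes the genus-zero obstruction-bundle formula, whereas you spell out the underlying vanishings $R^1\pi_{\cc C *}\bar f^*T_F=0$ and $R^0\pi_{\cc C *}f^*N_{F|X}=0$ that make the obstruction bundle genuinely a vector bundle; this is helpful bookkeeping rather than a different argument.
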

\begin{proof}
    By Proposition \ref{fiber-moduli}, the moduli space is a fiber bundle over $Z$ with smooth fiber, and therefore is smooth.  It's virtual class is the Euler class of the obstruction bundle, which in genus zero is  $R^1 \pi_{\cc C *} f^* N_{F|X}$, where
    \[
    \begin{tikzcd}
        \cc C \ar[d, "\pi_{\cc C}"] \ar[r, "f"] & X \\
        \sMbar_{0,n}(X, d) & 
    \end{tikzcd}
    \]
    is the universal map.
\end{proof}

\begin{theorem}\label{t:func}
Identifying $T = N_{F|X}$ as in the previous section, 
we have a ring homomorphism $$i^*: QH^*_{ext}(X) \to QH^*_{ext}(T).$$ 
\end{theorem}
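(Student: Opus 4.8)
The plan is to construct the ring homomorphism $i^*: QH^*_{exc}(X) \to QH^*_{exc}(T)$ directly from the pullback on cohomology along the open inclusion $i: T \hookrightarrow X$ (or rather the composite $T \xrightarrow{k} F \xrightarrow{j} X$, using that $k^*$, $j_T^*$ are mutually inverse so $i^*$ makes sense as a map of cohomology rings). The key point is that this classical ring map intertwines the \emph{exceptional} quantum products on the two sides, not just the cup products. Since by Lemma~\ref{l:conv} only finitely many Gromov--Witten invariants contribute to each $\star_q$-product, it suffices to compare, for each fixed $d = k[L]$ with $k \geq 0$, the degree-$d$ part of the structure constants, i.e. the three-point genus-zero invariants $\br{a, b, c}^X_{0,3,k[L]}$ and $\br{i^*a, i^*b, i^*c}^T_{0,3,k[L]}$ (here I am implicitly using that $N_1(T) $ contains a class which I also call $[L]$, the line in a fiber of $\pi$, and that exceptional curve classes on $X$ all come from $F \subset T$).

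The main technical input is already assembled in the excerpt: by Proposition~\ref{fiber-moduli} the moduli space $\sMbar_{0,3}(X, k[L])$ is \emph{canonically} identified with $\sMbar_{0,3}(F, k[L])$, and likewise $\sMbar_{0,3}(T, k[L]) \cong \sMbar_{0,3}(F, k[L])$ since any stable map of exceptional class into $T$ lands in the zero section $F$ (the same Kleiman-criterion argument as Lemma~\ref{l:fiberclass}, applied with the projection $T \to Z$). So both moduli spaces are literally the same space $\mathcal M := \sMbar_{0,3}(F, k[L])$, which is smooth (a fiber bundle over $Z$ with fiber $\sMbar_{0,3}(\PP^{r-1}, k)$). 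Under this identification the evaluation maps $\op{ev}_i$ agree, so the first step is to note $\op{ev}_i^*(a) = \op{ev}_i^*(i^*a)$ on $\mathcal M$ — more precisely, $\op{ev}_i^X$ factors as $\mathcal M \to F \xrightarrow{j} X$ and $\op{ev}_i^T$ as $\mathcal M \to F \xrightarrow{i|_F} T$, and $j = i \circ k|_F$ after identifying $F$ with the zero section, so the integrands built from insertions match once we apply $i^*$.

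The second, and genuinely substantive, step is the comparison of virtual classes. By the Corollary following Proposition~\ref{fiber-moduli}, $[\mathcal M]^{vir}_X = e(R^1\pi_{\mathcal C *}f^* N_{F|X}) = e(R^1\pi_{\mathcal C *}f^* V'(-1))$. For $T$, which is a total space of a bundle over $F$ and hence has $F$ as a \emph{smooth} zero section, the same genus-zero obstruction computation gives $[\mathcal M]^{vir}_T = e(R^1\pi_{\mathcal C *}f^* N_{F|T})$; but $N_{F|T} \cong N_{F|X} = V'(-1)$ because $T = \tot(N_{F|X})$ by construction, so the two obstruction bundles — and hence the two virtual classes — are literally equal as classes on $\mathcal M$. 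Therefore $\br{a,b,c}^X_{0,3,k[L]} = \br{i^*a, i^*b, i^*c}^T_{0,3,k[L]}$ for every $k$, which together with the classical ring map on the $k=0$ part (ordinary cup product: $i^*$ is a ring homomorphism for cup products since it is pullback along a morphism of varieties) shows $i^*(a \star_q b) = i^*a \star_q i^*b$. Combined with Corollary~\ref{c:assoc1} and Lemma~\ref{l:conv} ensuring both sides are honest associative rings with convergent (in fact polynomial) structure constants, this completes the proof.

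The hard part is making the virtual-class comparison airtight: one must check that the perfect obstruction theories on $\sMbar_{0,3}(X, k[L])$ and $\sMbar_{0,3}(T, k[L])$, restricted to the common substack $\mathcal M = \sMbar_{0,3}(F, k[L])$, really do induce the same obstruction bundle and the same class — this is where subtleties about excess intersection or the difference between the deformation theory of maps to $X$ versus to the noncompact $T$ could in principle intervene. The clean way around this is to observe that in both cases $\mathcal M$ is already smooth of the expected dimension relative to $Z$, the source curves are genus zero (so $H^1$ of the pulled-back tangent bundle of the fiber direction vanishes on each fiber $\sMbar_{0,3}(\PP^{r-1},k)$, which is unobstructed), and the only obstructions come from the normal bundle direction $f^*N_{F|\bullet}$; since $N_{F|X} \cong N_{F|T}$ as bundles on $F$, their pullbacks and higher direct images coincide, so the Euler classes agree. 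I would spell this out using the standard exact triangle relating the obstruction theory of maps to $F$, to its total space $T$, and to $X$, citing Proposition~\ref{fiber-moduli} and its corollary for the genus-zero case where everything reduces to Euler classes of explicit bundles.
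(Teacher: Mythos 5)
Your proposal rests on the same three ingredients the paper uses: the identification of moduli spaces $\sMbar_{0,3}(X, k[L]) \cong \sMbar_{0,3}(F, k[L]) \cong \sMbar_{0,3}(T, k[L])$, the identification of virtual classes with $e(R^1\pi_{\mathcal C*}f^*V'(-1))$, and the coincidence $N_{F|X} \cong N_{F|T}$. Where it diverges is the final step. You compare three-point GW invariants $\br{a,b,c}^X_{0,3,k[L]} = \br{i^*a, i^*b, i^*c}^T_{0,3,k[L]}$ and then assert this gives $i^*(a\star_q b) = i^*a \star_q i^*b$. The paper instead establishes the equality of cohomology classes directly as a chain of identities of pushforwards, using the self-intersection formula $j^*j_* = e(N_{F|X})\cup-$ and the fact that $k^*\bigl(e(V'(-1)) \cup -\bigr) = j_{T*}(-)$; it never passes through Poincar\'e pairings.

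That difference matters, and I think your last step has a real gap. On a noncompact target the exceptional quantum product is defined as $\gamma_1 \star_q \gamma_2 := \sum_d q^d\, \op{ev}_{3*}\bigl(\op{ev}_1^*\gamma_1 \cup \op{ev}_2^*\gamma_2 \cap [\sMbar_{0,3}(T,d)]^{\op{vir}}\bigr)$, a class in $H^*(T)$, not via a nondegenerate pairing. To recover this class from the numbers $\br{i^*a, i^*b, \gamma}^T_{0,3,d}$ you would need $\gamma$ to range over a set that detects classes in $H^*(T)$ under a nondegenerate duality. You only pair against $\gamma = i^*c$ for $c \in H^*(X)$, i.e. against the image of $i^*$; that would suffice only if $i^* \colon H^*(X) \to H^*(T)$ (equivalently $j^*\colon H^*(X) \to H^*(F)$) were surjective, which you neither claim nor prove, and which is not an obvious consequence of the setup. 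So as written, "$\br{a,b,c}^X = \br{i^*a,i^*b,i^*c}^T$ for all $c$" does not yet pin down $i^*a \star_q i^*b$. The fix is essentially to do what the paper does: work at the level of $F$-pushforwards (or equivalently run the Gysin computation $i^*(\op{ev}_3^X)_* = k^*j^*j_*(\op{ev}_3^{\PP(V)})_* = k^*\bigl(e(V'(-1))\cup(\op{ev}_3^{\PP(V)})_*(\cdots)\bigr) = j_{T*}(\op{ev}_3^{\PP(V)})_*(\cdots) = (\op{ev}_3^T)_*$), which compares the actual classes rather than their pairings.

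One smaller point: your sketch of the "hard part" — comparing the obstruction theories of $\sMbar_{0,3}(X, k[L])$ and $\sMbar_{0,3}(T, k[L])$ on the common smooth $\mathcal{M} = \sMbar_{0,3}(F, k[L])$ — is sound and matches the paper's Corollary after Proposition~\ref{fiber-moduli}; you correctly identify that both obstruction bundles are $R^1\pi_{\mathcal C*}f^*V'(-1)$ because $N_{F|X} = N_{F|T}$ by construction of $T$. So the missing idea is confined to the duality step at the end, not the geometric comparison.
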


\begin{proof}
By our assumptions in the previous section, the normal bundle $N_{F|X}$ is isomorphic to $V'(-1).$
For $d = k[L]$ with $k>0$, we have the following commutative diagram:
\[\begin{tikzcd}
\sMbar_{0,3}(T, d) \ar[r, "\cong"] \ar[d, "\rm ev_i^{T}"]& \sMbar_{0,3}(\PP(V), d) \ar[r, "\cong"] \ar[d, "\rm ev_i^{\PP(V)}"] & \sMbar_{0,3}(X, d) \ar[d, "\rm ev_i^{X}"] \\
T \ar[r, bend right = 10, swap, "k"] & \ar[l, bend right = 10, swap, "j_T"] \PP(V) \ar[r, "j"] & X
\end{tikzcd} \]
where $q$ is the projection and $j_T$ is the zero section.

Recall that $i: T \to X$ is the composition $j \circ k$.  Since the pullback commutes with cup product, to prove the theorem we must show that it commutes with the quantum corrections.  This will follow from the fact that for $d\neq 0$, $\sMbar_{0,3}(T, d)$ and $\sMbar_{0,3}(X, d)$ are canonically isomorphic, and their virtual classes are identified.
We have: 
\begin{align*}
&i^* (\rm ev_3^{X})_*\left([\sMbar_{0,3}(X, d)]^{vir} \cap \left(\rm ev_1^{X, *} \alpha \cup \rm ev_2^{X, *} \beta \right) \right) \\
&=k^* j^* j_*     (\rm ev_3^{\PP(V)})_*\left( [\sMbar_{0,3}(X, d)]^{vir}\cap \left(\rm ev_1^{\PP(V), *} j^* \alpha \cup \rm ev_2^{\PP(V), *} j^* \beta \right) \right) \\
&=k^*  \left( e(V'(-1)) \cup    (\rm ev_3^{\PP(V)})_*\left( [\sMbar_{0,3}(X, d)]^{vir}\cap \left(\rm ev_1^{\PP(V), *} j^* \alpha \cup \rm ev_2^{\PP(V), *} j^* \beta \right) \right) \right) \\
&=j_{T *} (\rm ev_3^{\PP(V)})_*\left( [\sMbar_{0,3}(X, d)]^{vir}\cap \left(\rm ev_1^{T, *}k^* j^* \alpha \cup \rm ev_2^{T, *}k^* j^* \beta \right)\right)
\\
&=(\rm ev_3^{T})_*\left( [\sMbar_{0,3}(T, d)]^{vir}\cap \left(\rm ev_1^{T, *}k^* j^* \alpha \cup \rm ev_2^{T, *}k^* j^* \beta \right)\right)
\end{align*}
The first line is the $q^d$ coefficient of $i^*(\alpha \star_q \beta)$ and the last line is the $q^d$ coefficient of $(i^*\alpha) \star_q (i^*\beta)$.
\end{proof}

\section{Extremal quantum cohomology of fiber bundles}\label{s:relquant}
In this section we prove a number of facts about what we again refer to as the extremal quantum cohomology, this time  of fiber bundles.  In the case of the local model $T$, the two notions coincide.
\subsection{Extremal quantum cohomology}
Given a fiber bundle $p: P_Z \to Z$, we consider the quantum product $\star_{Q, \bt}$ on $H^*(P_Z)$ involving only those degrees for which the map is forced to lie in a fiber, i.e. we consider degrees lying in the intersection of the effective cone with the kernel of 
$$p_*: H_2(P_Z, \ZZ) \to H_2(Z, \ZZ).$$  This is closely related to the family quantum cohomology recently constructed in 
\cite{BDOP}.

\begin{prop}
    The extremal quantum cohomology $QH^*_{ext}(P_Z)$ is an associative ring.
\end{prop}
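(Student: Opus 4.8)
The statement to prove is that the exceptional quantum cohomology $QH^*_{exc}(P_Z)$ of a fiber bundle $p: P_Z \to Z$ is an associative ring, where "exceptional" here means we restrict the Novikov variable to curve classes lying in $\ker(p_*: H_2(P_Z,\ZZ)\to H_2(Z,\ZZ))$ intersected with the effective cone. The plan is to mimic exactly the argument used for the exceptional quantum cohomology of $X$ in Proposition~\ref{p:assoc1} and Corollary~\ref{c:assoc1}: the WDVV equation in the restricted theory follows from the usual WDVV equation on $P_Z$ provided the set of relevant curve classes is closed under the operation of "splitting an effective class into a sum of two effective classes." So the essential point is a combinatorial/geometric statement about the effective cone, after which associativity is automatic.

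\textbf{Key steps.} First I would set $K := \ker\big(p_*: H_2(P_Z,\RR)\to H_2(Z,\RR)\big)$ and let $\mathcal{E} := K \cap \overline{NE}(P_Z)$ be the monoid of exceptional effective classes; the specialization sets $Q^d = 0$ unless $d \in \mathcal{E}$. Second, I would prove the key closure property: if $d \in \mathcal{E}$ and $d = d_1 + d_2$ with $d_1, d_2 \in \overline{NE}(P_Z)$, then $d_1, d_2 \in \mathcal{E}$. This is precisely the first paragraph of the proof of Proposition~\ref{p:assoc1}: since $p_*d_1 + p_*d_2 = p_*d = 0$ and both $p_*d_i$ lie in $\overline{NE}(Z)$, if one were nonzero then $p_*d_1 = -p_*d_2$ would force two nonzero classes in $\overline{NE}(Z)$ that are negatives of each other, contradicting Kleiman's criterion (both must pair strictly positively with an ample class on $Z$). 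Hence $p_*d_1 = p_*d_2 = 0$, i.e. $d_1, d_2 \in K$, so $d_1, d_2 \in \mathcal{E}$. Third, I would invoke Lemma~\ref{l:conv}-style finiteness if needed — actually for a general fiber bundle convergence is not automatic, so one should either state associativity at the level of the formal power series ring $\CC[[q]]$ (where finiteness in each $Q^d$-coefficient is all that is needed and follows because fixing the fiber class fixes $\langle c_1, d\rangle$ on each fiber) or restrict to the geometric settings of interest (the local model $T$, where $QH^*_{exc}$ coincides with the full quantum product and Lemma~\ref{l:conv} applies). Fourth, with the closure property in hand, the $Q^d$-coefficients of $\alpha \star_{Q,\bt}(\beta \star_{Q,\bt}\gamma)$ and $(\alpha\star_{Q,\bt}\beta)\star_{Q,\bt}\gamma$ for $d \in \mathcal{E}$ involve only summands $d_1 + d_2 = d$ with $d_1, d_2 \in \mathcal{E}$, so the restricted product satisfies WDVV because the unrestricted one does; associativity follows exactly as in Corollary~\ref{c:assoc1}.

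\textbf{Main obstacle.} The only real content is the closure property $\mathcal{E} + \mathcal{E}$-decomposition statement, and this goes through verbatim from Proposition~\ref{p:assoc1} once we have a projective (hence Kleiman-applicable) base $Z$; the subtlety worth flagging is the hypothesis on $p: P_Z \to Z$ — one needs that $p_*$ sends effective classes on $P_Z$ to effective classes on $Z$, which holds when $p$ is proper, and that $Z$ is projective so that Kleiman's criterion is available. I would also note in passing that, unlike the case of $X \dasharrow X'$ where the exceptional cone is the single ray $\RR_{\geq 0}[L]$, here $\mathcal{E}$ can be higher-dimensional (it is the full fiberwise effective cone), but the argument does not use one-dimensionality — only closure under effective splitting — so associativity still holds. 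The analogue of the convergence lemma is the one place a little care is needed: in general one should read the ring structure in $\CC[[q]]$, where each coefficient is a finite sum, rather than claiming numerical convergence.
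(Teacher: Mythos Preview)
Your proposal is correct and matches the paper's proof essentially verbatim: the paper's argument is precisely to show the closure property (if $d_1+d_2\in K$ with $d_1,d_2$ effective then $d_1,d_2\in K$) by the Kleiman's-criterion contradiction you describe, citing the second paragraph of Proposition~\ref{p:assoc1}, after which associativity follows from WDVV as in Corollary~\ref{c:assoc1}. Your additional remarks on convergence and the higher-dimensionality of $\mathcal{E}$ are reasonable side comments but not part of the paper's brief proof.
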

\begin{proof}
Let $K=\ker(p_*) \subset H_2(P_Z, \ZZ)$
It suffices to show that if $d_1, d_2$ are effective curve classes such that $d_1 + d_2 \in \ker(p_*)$, then both $d_1$ and $d_2$ lie in $K$.  The argument follows (the second paragraph of) the proof of Proposition~\ref{p:assoc1}.
\end{proof}

With this specialized quantum product, the pullback along a fiber diagram defines a ring homomorphism.
Suppose that $\phi: Y \to Z$ is a map between smooth projective varieties.   
Consider the Cartesian square
\[
\begin{tikzcd}
    P_Y \ar[d] \ar[r, "\tilde \phi"] & P_Z \ar[d]\\
    Y \ar[r, "\phi"] & Z
\end{tikzcd}
\]
where  $P_Z \to Z$ is a fiber bundle.

\begin{theorem}\label{t:homom}
Given the setting above,
the map
$\tilde \phi^*: QH^*_{ext}(P_Z) \to QH^*_{ext}(P_Y)$
is a ring homomorphism.
\end{theorem}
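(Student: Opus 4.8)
\medskip

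The plan is to reduce the statement to two inputs: (i) the analogous functoriality for the \emph{full} Gromov--Witten quantum product along a Cartesian square of fiber bundles, and (ii) the fact that the specialization defining $QH^*_{exc}$ is compatible with $\tilde\phi^*$ at the level of curve classes. For (ii), first observe that $\tilde\phi$ induces a map $\tilde\phi_*\colon H_2(P_Y,\ZZ)\to H_2(P_Z,\ZZ)$ which carries $\ker(P_Y\to Y)$ into $\ker(P_Z\to Z)$, because the square commutes; hence a class $d\in H_2(P_Y,\ZZ)$ that is ``exceptional'' for $P_Y$ pushes to an exceptional class for $P_Z$, and conversely only exceptional classes of $P_Z$ can pull back (via components of $\tilde\phi_*^{-1}$) to exceptional classes of $P_Y$. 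This is exactly what is needed so that the restriction of the Novikov variables commutes with $\tilde\phi^*$ on both sides. As in Corollary~\ref{c:assoc1}, associativity on each side is already known, so the only thing left to check is that $\tilde\phi^*$ intertwines the two products.

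\medskip

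For the main computation I would argue degree by degree. Fix an exceptional curve class $d$ for $P_Z$, with preimage classes $d_1,\dots,d_\ell$ under $\tilde\phi_*$ (only finitely many are effective and relevant); these are precisely the exceptional classes for $P_Y$ lying over $d$. The key geometric fact is that the square of evaluation maps
\[
\begin{tikzcd}
\sMbar_{0,3}(P_Y,d_a)\ar[r]\ar[d,"{\rm ev}_i^{P_Y}"] & \sMbar_{0,3}(P_Z,d)\ar[d,"{\rm ev}_i^{P_Z}"]\\
P_Y\ar[r,"\tilde\phi"] & P_Z
\end{tikzcd}
\]
is, after summing over $a$, a fiber square in the appropriate (virtual) sense: since $P_Y = P_Z\times_Z Y$, a stable map to $P_Y$ of a fiber class is the same as a stable map to $P_Z$ of a fiber class together with a lift of the $Z$-point to a $Y$-point, and this identification is compatible with the perfect obstruction theories because the obstruction bundle for a fiber-class map only sees the fiber direction, which is pulled back from $P_Z$. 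Concretely one gets $\sum_a\sMbar_{0,3}(P_Y,d_a) \cong \sMbar_{0,3}(P_Z,d)\times_{Z}Y$ with virtual classes identified under pullback. From here the virtual pushforward--pullback (excess-intersection / base change) formula gives
\[
\tilde\phi^*\,({\rm ev}_3^{P_Z})_*\big([\sMbar_{0,3}(P_Z,d)]^{vir}\cap {\rm ev}_1^{P_Z,*}\alpha\cup{\rm ev}_2^{P_Z,*}\beta\big)
= \sum_a ({\rm ev}_3^{P_Y})_*\big([\sMbar_{0,3}(P_Y,d_a)]^{vir}\cap {\rm ev}_1^{P_Y,*}\tilde\phi^*\alpha\cup{\rm ev}_2^{P_Y,*}\tilde\phi^*\beta\big),
\]
which is exactly the statement that the $q^d$-coefficient of $\tilde\phi^*(\alpha\star_{Q,\bt}\beta)$ equals that of $(\tilde\phi^*\alpha)\star_{Q,\bt}(\tilde\phi^*\beta)$. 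Since $\tilde\phi^*$ commutes with the cup product and with the Poincar\'e pairing in the usual way, combining the $d=0$ piece with the above yields the ring homomorphism property. This mirrors the argument in Theorem~\ref{t:func}, with the contraction $i$ replaced by the base-change map $\tilde\phi$.

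\medskip

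The main obstacle I anticipate is making the virtual base change rigorous: one must check that the perfect obstruction theory on $\sMbar_{0,3}(P_Z,d)\times_Z Y$ pulled back from $P_Z$ agrees with the intrinsic one on $\sMbar_{0,3}(P_Y,d_a)$, i.e.\ that $R^\bullet\pi_{\cc C*}f^*T_{P_Y/Y}$ is the pullback of $R^\bullet\pi_{\cc C*}f^*T_{P_Z/Z}$ along $\tilde\phi$. This is where one uses crucially that $d_a$ is a fiber class, so $f^*T_{P_Y/Y}\cong f^*\tilde\phi^*T_{P_Z/Z}$ and the relative tangent complexes of $P_Y/Y$ and $P_Z/Z$ are identified over the image of $f$. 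A secondary subtlety is bookkeeping the finitely many preimage classes $d_a$ and checking the Novikov specialization matches on both sides; this is the content of step (ii) and is essentially the observation about $\tilde\phi_*$ and the kernels made above. Everything else is a routine transcription of the functoriality argument already carried out for $i^*$ in Section~\ref{s:local}.
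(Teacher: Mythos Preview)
Your proposal is correct and follows essentially the same route as the paper: both arguments reduce to a virtual base-change statement over the Cartesian square, identifying $\sMbar_{0,3}(P_Y,d)$ with $\sMbar_{0,3}(P_Z,d)\times_Z Y$ and matching virtual classes via compatibility of the relative perfect obstruction theories (the paper invokes \cite[Proposition~5.10]{BF} for this step and factors the evaluation through the triple fibre product $P_Z^3 := P_Z\times_Z P_Z\times_Z P_Z$, which is the natural target since all three marked points land in the same fibre). Your bookkeeping of multiple preimage classes $d_a$ is harmless but unnecessary here: since $P_Y = P_Z\times_Z Y$, the fibres are literally identified, so $\tilde\phi_*$ restricts to a bijection on effective fibre classes and there is a single $d_a=d$.
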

\begin{proof}

Fix a degree $d \in K.$
By the same argument as in Lemma~\ref{l:fiberclass}, the image of any map $f: C \to X$ of degree $d$ must lie in a fiber of $P_Z \to Z.$
It follows that 
$\sMbar_{0,3}(P_Z, d) \to $ is locally of the form $\sMbar_{0,3}(F, \bar d)\times U_i \to U_i$ for a suitable affine cover $\{U_i\}$ of $Z$, where $h: F \to P_Z$ is a fiber of $P_Z \to Z$ and $h_*(\widehat d) = d$.

We have the following commutative diagram
\[
\begin{tikzcd}
 \sMbar_{0,3}(P_Y, d) \ar[d, "\rm ev^Y"] \ar[dd, swap, bend right, "\rm ev^Y_i"] \ar[r] & \sMbar_{0,3}(P_Z, d) \ar[d, swap, "\rm ev^Z"] \ar[dd, bend left, "\rm ev^Z_i"] \\
     P_Y^3 \ar[d, "\pi_i^Y"] \ar[r, "\tilde \phi^3"] & P_Z^3 \ar[d, swap, "\pi_i^Z"] \\
    P_Y \ar[d] \ar[r, "\tilde \phi"] & P_Z \ar[d] \\
   Y \ar[r, "\phi"] & Z,
\end{tikzcd}
\]
in which all squares are Cartesian.  Here $P_Z^3$ (resp. $P_Y^3$) denotes the triple fiber product of $P_Z \to Z$ (resp. $P_Y \to Y$).

It is straightforward to check that the relative perfect obstruction theories of $\sMbar_{0,3}(P_Z, d) \to Z$ and $\sMbar_{0,3}(P_Y, d) \to Y$ are compatible with respect to $\phi$ in the sense of \cite{BF}.

By \cite[Proposition~5.10]{BF}, we observe that $$
(\tilde \phi^3)^! [\sMbar_{0,3}(P_Z, d)]^{vir}  = \phi^! [\sMbar_{0,3}(P_Z, d)]^{vir} = [\sMbar_{0,3}(P_Y, d)]^{vir}.$$

Let $\gamma = (\pi_1^Z)^* \alpha \cup (\pi_2^Z)^* \beta$.
We calculate
\begin{align*}
&\tilde \phi^* (\rm ev_3^Z)_* \left( (\rm ev_1^Z)^* \alpha \cup (\rm ev_2^Z)^* \beta
 \cap [\sMbar_{0,3}(P_Z, d)]^{vir}
\right) \\
=& \tilde \phi^* (\pi_3^Z)_* \left( \gamma \cap \rm ev^Z_* [\sMbar_{0,3}(P_Z, d)]^{vir} \right)
\\
=& (\pi_3^Y)_* (\tilde \phi^3)^*\left( \gamma \cap \rm ev^Z_* [\sMbar_{0,3}(P_Z, d)]^{vir} \right) \\
=& (\pi_3^Y)_* \left( (\tilde \phi^3)^* \gamma \cap (\tilde \phi^3)^* \rm ev^Z_* [\sMbar_{0,3}(P_Z, d)]^{vir} \right) \\
=& (\pi_3^Y)_* \left( (\tilde \phi^3)^* \gamma \cap  \rm ev^Y_* (\tilde \phi^3)^![\sMbar_{0,3}(P_Z, d)]^{vir} \right) \\
=& (\pi_3^Y)_* \left( (\tilde \phi^3)^* \gamma \cap  \rm ev^Y_* [\sMbar_{0,3}(P_Y, d)]^{vir} \right) \\
=& (\pi_3^Y)_* \rm ev^Y_* \left( (\rm ev^Y)^*((\tilde \phi^3)^* \gamma) \cap   [\sMbar_{0,3}(P_Y, d)]^{vir} \right) \\ 
=& (\rm ev_3^Y)_* \left( (\rm ev^Y_1)^* \tilde \phi^*\alpha \cup (\rm ev^Y_2)^* \tilde \phi^*\beta \cap   [\sMbar_{0,3}(P_Y, d)]^{vir} \right) 
\end{align*}

The first and last lines give the degree $d$ contribution to $\tilde \phi^* (\alpha \star \beta)$ and $\tilde \phi^* \alpha \star \tilde \phi^* \beta$ respectively.
\end{proof}

Theorem~\ref{t:homom} yields a  splitting principle for extremal quantum cohomology.

\begin{corollary}\label{c:qsp}
Let $p: P_Z \to Z$ be a fiber bundle as above. 
    Let $V \to Z$ be a vector bundle.  
    There exists a map of smooth varieties $\phi: Y \to Z$ such that the Chern roots of $\phi^*V$ are well-defined classes in $H^*(Y)$, and 
    $$\tilde \phi^*: QH^*_{ext}(P_Z) \to QH^*_{ext}(P_Y)$$
    is an injective ring homomorphism.
    \end{corollary}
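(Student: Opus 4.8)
The plan is to combine Theorem~\ref{t:homom} with the classical splitting principle, carried out iteratively so that at each stage we only apply a projective bundle construction, for which the relevant map is a fiber bundle and hence Theorem~\ref{t:homom} applies.

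First I would recall the setup of the classical splitting principle: given a rank $n$ vector bundle $V \to Z$, the projective bundle $f: \PP(V) \to Z$ has the property that $f^*V$ has a canonical sub-line-bundle $\mathcal{O}_{\PP(V)}(-1)$, so that $f^*V$ fits into a short exact sequence with quotient a bundle of rank $n-1$; moreover $f^*: H^*(Z) \to H^*(\PP(V))$ is injective. Iterating this construction $n$ times produces $\phi: Y \to Z$ (a tower of projective bundles, hence a smooth projective variety) with $\phi^*V$ admitting a complete flag, so its Chern roots are well-defined in $H^*(Y)$, and $\phi^*: H^*(Z) \to H^*(Y)$ is injective.

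Next, I would observe that each stage $\PP(E) \to Z'$ in this tower is in particular a (locally trivial) fiber bundle, so forming the Cartesian square
\[
\begin{tikzcd}
    P_{\PP(E)} \ar[d] \ar[r] & P_{Z'} \ar[d]\\
    \PP(E) \ar[r] & Z'
\end{tikzcd}
\]
and applying Theorem~\ref{t:homom} gives a ring homomorphism $QH^*_{exc}(P_{Z'}) \to QH^*_{exc}(P_{\PP(E)})$. Composing these homomorphisms over the tower yields the ring homomorphism $\tilde\phi^*: QH^*_{exc}(P_Z) \to QH^*_{exc}(P_Y)$. The only remaining point is injectivity. Here I would use that the exceptional quantum product is a deformation of the cup product: as a module, $QH^*_{exc}(P_Y) = H^*(P_Y) \otimes \CC[[q]]$ (or the appropriate polynomial ring, using Lemma~\ref{l:conv} for convergence), and $\tilde\phi^*$ acts as the classical pullback $H^*(P_Z) \to H^*(P_Y)$ tensored with the identity on the base ring. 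Since $P_Y = P_Z \times_Z Y$ with $Y \to Z$ the tower of projective bundles, $\tilde\phi: P_Y \to P_Z$ is itself (locally on $P_Z$) a tower of projective bundles, so $\tilde\phi^*$ on ordinary cohomology is injective by the projective bundle formula; hence the $\CC[[q]]$-linear extension is injective as well.

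The main obstacle I anticipate is purely bookkeeping rather than conceptual: one must check that the fiber bundle $P_{Z'} \to Z'$ really does pull back correctly at each stage of the tower — i.e. that $P_{\PP(E)} \to \PP(E)$ is again a fiber bundle of the same fiber type, so that Theorem~\ref{t:homom} continues to apply inductively — and that the identification of $\tilde\phi^*$ with the classical pullback on the underlying modules is compatible with the composition of homomorphisms across the tower. Both follow from functoriality of the constructions, but writing the induction cleanly requires some care. Everything else is an immediate consequence of Theorem~\ref{t:homom} and the classical splitting principle.
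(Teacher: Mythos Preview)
Your proof is correct and arrives at the same conclusion as the paper, but it is more complicated than necessary because of a slight misreading of Theorem~\ref{t:homom}. That theorem does not require $\phi: Y \to Z$ to be a fiber bundle; the only fiber-bundle hypothesis is on $P_Z \to Z$. Consequently the paper simply takes $\phi: Y \to Z$ to be the full flag bundle of $V$ in one step and applies Theorem~\ref{t:homom} once, rather than iterating through a tower of projective bundles. Your tower is of course exactly the flag bundle, so the construction of $Y$ is the same; the only difference is that you are doing unnecessary bookkeeping to check the hypothesis stage by stage.

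For injectivity, the paper invokes Leray--Hirsch directly for the flag bundle $P_Y \to P_Z$, which is the same content as your iterated projective-bundle-formula argument. Your observation that $\tilde\phi^*$ is the classical pullback on the underlying $\CC[[q]]$-module is correct and is implicitly what the paper uses as well.
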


\begin{proof}
  Let $\phi: Y \to Z$ denote the flag bundle of $V $, whose fiber over a point $z$ parametrizes complete flags in $V |z.$  
Then 
$$\phi^*: H^*(Z) \to H^*(Y)$$
is injective and $\phi^*V$ is filtered by a sequence of tautological sub-bundles.  The successive quotients of these sub-bundles are line bundles whose first Chern classes give the Chern roots of $\phi^*(V).$
By Theorem~\ref{t:homom}, $$\tilde \phi^*: QH^*_{ext}(P_Z) \to QH^*_{ext}(P_Y)$$
is a ring homomorphism.  It is injective by the Leray--Hirsch theorem.
\end{proof}

\subsection{Extremal quantum cohomology of the local model}

Next we focus on the special case of projective bundles.
Suppose $V \to Z$ is a vector bundle of rank $r$.
Let $\rho_1, \ldots, \rho_r$ denote the Chern roots of $V$.  
Let $H$ be the first Chern class of $\mathcal O_{\PP(V)}(1).$
As a ring,
\begin{equation}\label{e:Pcoh}
    H^*(\PP(V)) = H^*(Z)[H]\left/\br{ \prod_{i=1}^r (\rho_i + H) = 0}.\right.
\end{equation}

We compute the extremal quantum cohomology of $\PP(V)$.  More generally, let $V' \to Z$ be a vector bundle of rank $s$ with Chern roots $\sigma_1, \ldots, \sigma_s$.
Recall that $T$ is the total space of $$V'(-1):=\pi^*V' \otimes \mathcal O_{\PP(V)}(-1).$$
Then 
\begin{align}
c_1(T) = c_1(Z)  + \sum_{i=1}^r (\rho_i + H) + \sum_{j=1}^s (\sigma_j - H) 
= (r-s)H + c_1(Z)  + c_1(V) + c_1(V'), \label{first-chern-T}
\end{align}
where by abuse of notation, we denote $\pi^*(\gamma)$ simply by $\gamma$ for $\gamma \in H^*(Z)$.

Similar to Section~\ref{s:exc1}, let $[L]$ be the class of a line in a fiber of $\PP(V) \subset T \to Z.$  We specialize parameters as in \eqref{modified-pi-extremal2}, and set \begin{equation}\label{special}
       Q^d = 1 \text{ if } d = k[L], Q^d = 0 \text{ otherwise, and } \bt = \bt + \ln(q)c_1(T).
   \end{equation} 
\begin{theorem}\label{Jqlh}
Assume $r-s>1$.     
Let $\bt = \sum t^i \phi_i$ be a formal class pulled back from $H^*(Z)$.
With the specialization \eqref{special}, we have the following formulas for the $J$-functions. 
\begin{enumerate}
    \item \label{i1} 
    For $\PP(V)$, we have 
    \begin{equation}\label{J function}
        J^{\PP(V)}(q, \bt, z) = z e^{\bt/z}q^{c_1(\PP(V))/z} \sum_{d \geq 0}  q^{rd} \prod\limits_{i=1}^r \prod\limits_{m=1}^d \frac{1}{(\rho_i + H + mz)}.
    \end{equation}
    \item \label{i2} 
    For $T$, we have 
    \begin{equation}\label{J function2}
        J^T(q, \bt, z) = z e^{\bt/z}q^{c_1(T)/z} \sum_{d \geq 0} q^{(r-s)d}  \frac{\prod\limits_{j=1}^s \prod\limits_{m=0}^{d-1} (\sigma_j - H - mz)}{\prod\limits_{i=1}^r \prod\limits_{m=1}^d (\rho_i + H + mz)}.
    \end{equation}
\end{enumerate}
\end{theorem}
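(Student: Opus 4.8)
The plan is to compute the $J$-functions of $\PP(V)$ and $T$ by reducing to the case where $Z$ is a point, using the splitting principle for exceptional quantum cohomology (Corollary~\ref{c:qsp}) together with the functoriality Theorem~\ref{t:homom}. Recall that the $J$-function is determined by the small quantum product restricted to the $H^2$-directions, and more precisely $J^X(q,\bt,z)$ lies in the image of $S^X(q,\bt,z)^{-1}$; since the exceptional $S$-operator is built from exceptional Gromov--Witten invariants, and those (by Theorem~\ref{t:homom} and its analog Theorem~\ref{t:homom} for fiber bundles) are compatible with pullback along $\tilde\phi\colon P_Y\to P_Z$, the $J$-function is also compatible: $\tilde\phi^* J^{P_Z}(q,\bt,z)=J^{P_Y}(q,\tilde\phi^*\bt,z)$. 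Taking $\phi$ to be the flag bundle of $V\oplus V'$ (or iterating), we may assume $V$ and $V'$ split as sums of line bundles, and since $\phi^*$ is injective, it suffices to verify the stated formulas after this pullback. Twisting further by the line bundles appearing in the splitting — formally, applying the quantum Lefschetz / Euler-twist formalism for the total space $T=\tot(V'(-1))\to\PP(V)$ — reduces the computation of $J^T$ from that of $J^{\PP(V)}$.

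The core input is therefore the $J$-function of a projective bundle $\PP(V)\to Z$ with $V$ split. Here I would invoke the well-known mirror theorem for toric bundles (Brown, or Givental--Lee; in the split case $\PP(V)$ is a toric bundle with fiber $\PP^{r-1}$), which gives precisely the hypergeometric-type expression
\[
I^{\PP(V)}(q,\bt,z)= z e^{\bt/z} q^{c_1/z}\sum_{d\ge 0} q^{rd}\prod_{i=1}^r\prod_{m=1}^d\frac{1}{\rho_i+H+mz},
\]
and one checks this $I$-function equals the $J$-function because, under the assumption $r-s>1$ (equivalently, enough positivity of $c_1$ in the fiber direction — here $\langle c_1(\PP(V)),[L]\rangle = r \ge 2$ so the $1/z$-expansion of $I$ has no $z^0 \log$ or $z^{-1}$ corrections beyond the classical term), there is no mirror map correction: $I=J$. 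The only subtlety is bookkeeping the shift by $q^{c_1/z}$, which comes from the divisor equation and the specialization $\bt\mapsto \bt+\ln(q)c_1$ used in \eqref{special}. For $T$, the quantum Lefschetz theorem for the \emph{negative} line bundles $\mathcal O_{\PP(V)}(-1)\otimes\sigma_j$ (or rather, the analog computing the $J$-function of a total space of a vector bundle, which just inserts the equivariant Euler classes of the bundle in the numerator and then takes a non-equivariant limit) produces the factors $\prod_{j=1}^s\prod_{m=0}^{d-1}(\sigma_j-H-mz)$ in the numerator. Again one must argue $I^T=J^T$: the relevant positivity is exactly $\langle c_1(T),[L]\rangle = r-s\ge 2$, which is where the hypothesis $r-s>1$ is used.

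The main obstacle I anticipate is justifying $I=J$ (no mirror map) cleanly in this relative/exceptional setting, since the standard statements are for honest projective varieties or for toric bundles over a base, whereas here we work with the \emph{exceptional} quantum cohomology, a non-standard truncation of the Novikov ring. The safest route is: (i) establish functoriality of the exceptional $J$-function under $\tilde\phi^*$ directly from Theorem~\ref{t:homom} (as above), reducing to split $V,V'$; (ii) for split bundles, observe $\PP(V)$ and $T$ are \emph{toric} fiber bundles over $Z$, so Brown's mirror theorem for toric bundles applies and computes the honest (full Novikov) $J$-function; (iii) specialize the Novikov variables as in Definition~\ref{d:exq}, noting that the only curve classes surviving the specialization are the fiber-line classes $k[L]$, which are precisely the ones appearing in the sums above; (iv) check the $z\to\infty$ expansion of the resulting $I$-function is $z\cdot\one + O(1)$ after the $q^{c_1/z}$ shift, using $\langle c_1,[L]\rangle\ge 2$, so that $I=J$. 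Step (ii)–(iii) requires care that the specialization of Novikov variables is compatible with restricting Brown's formula, but this is bookkeeping once the extremality of $\RR_{\ge0}[L]$ (Proposition~\ref{p:assoc1}) is in hand. An alternative to (ii) is a direct Atiyah--Bott localization computation on $\sMbar_{0,n}(\PP^{r-1},k)$ fibered over $Z$, using Proposition~\ref{fiber-moduli}; this avoids citing Brown but duplicates his argument.
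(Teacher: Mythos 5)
Your approach matches the paper's proof essentially line for line: in the split case, cite Brown's mirror theorem for toric bundles and specialize Novikov variables, then apply the Coates--Givental twisted theory to pass from $\PP(V)$ to $T$, and reduce the general case to the split case via the flag-bundle pullback (the paper defers this last step to Section~\ref{redsplit}). Your explicit observation that $\langle c_1(T),[L]\rangle = r-s \ge 2$ guarantees the $I$-function has leading term $\one z + O(1)$ so that $I=J$ is exactly where the hypothesis $r-s>1$ is used, a point the paper makes only indirectly in Remark~\ref{r333}(\ref{i333}).
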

\begin{proof}
    In the split case, formula \eqref{J function}  is the $Q = 0$ specialization of   Novikov parameters of the $I$-function obtained by Brown in \cite[Corollary~1]{Brown}, and \eqref{J function2} then follows by applying the twisted theory of \cite{Coates-Givental}. 

    A more direct proof that applies also to the non-split setting may be obtained by following Bertram's argument in \cite{Ber}, which applies to the relative setting as well.  Indeed \eqref{J function} appears in Section~7 of \emph{loc. cit.}  The verification of \eqref{J function2} requires considering the obstruction bundle 
    $$e\left(R^1 \pi_{\cc C *} f^*(V'(-1) )\right).$$ 
    The generalization of Bertram's argument to this case is proven by Lee in \cite{Lee}.
\end{proof}

\begin{remark}\label{r333}
\begin{enumerate}
    \item The calculations of $I/J$-functions for projective bundles in more general settings have been studied extensively, e.g. in \cite{Ber, Ele, Brown, FL, IK}. 
    \item     We note that both the numerator and denominator appearing in \eqref{J function} are symmetric in the respective Chern roots, thus the expression is well-defined even when $V$ and $V'$ do not split.
    \item \label{i333} In the case $r -s \leq 1$, the right-hand side  \eqref{J function2} is not of the form $\one z + (\text{lower order terms in } z)$ and therefore cannot be equal to the $J$ function of $T$.  Nevertheless 
    it can be shown using \cite{Brown}, \cite{Coates-Givental}, and Lemma~\ref{l:r3} that \eqref{J function2}  lies on the Lagrangian cone of $T$, and therefore gives an $I$-function for $T.$  We will not need this fact in this paper.
\end{enumerate}
\end{remark}

Theorem \ref{Jqlh} implies the following:
\begin{theorem}
 We have the following presentations:
  \begin{align}\label{e:r1}
    QH^*_{ext}(\PP(V)) &=  H^*(Z)[H][[q]]\left/\br{ \prod_{i=1}^r (\rho_i + H)  =  q^r}.\right.
\end{align}
    and 
\begin{align}\label{e:r2}
    QH^*_{ext}(T) &=  H^*(Z)[H][[q]]\left/\br{ \prod_{i=1}^r (\rho_i + H)  = q^{r-s} \prod_{j=1}^s (\sigma_j -H)}.\right.
 \end{align}
\end{theorem}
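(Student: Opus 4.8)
The strategy is to read the quantum Stanley--Reisner relation off the explicit hypergeometric $J$-functions of Theorem~\ref{Jqlh}, and then to invoke the standard correspondence between differential operators annihilating the $J$-function and relations in small quantum cohomology. Throughout I work under the hypotheses of Theorem~\ref{Jqlh}; the case of $\PP(V)$ and \eqref{e:r1} is the specialization $s=0$ of the case of $T$ and \eqref{e:r2}, so it suffices to treat $T$.

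\emph{Structural reduction.} Since only finitely many Gromov--Witten invariants contribute (Lemma~\ref{l:conv}), $\star_q$ is a genuine deformation of the cup product, graded once $q$ is assigned the weight $\deg Q^{[L]}$. A dimension count using that the relevant moduli spaces fiber over $Z$ shows that $\gamma\star_q\beta=\gamma\cup\beta$ whenever $\gamma$ is pulled back from $Z$; hence $H^*(Z)$ sits inside $QH^*_{exc}(T)$ with its classical ring structure, and by \eqref{e:Pcoh} together with a triangularity argument in $q$, $QH^*_{exc}(T)$ is a free $H^*(Z)[[q]]$-module of rank $r$ with basis $1,H,\dots,H^{r-1}$. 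There is therefore a surjection of $H^*(Z)[[q]]$-algebras $\Psi\colon H^*(Z)[H][[q]]\to QH^*_{exc}(T)$ sending $H$ to $H$, whose kernel, by freeness, is the principal ideal $(f)$ for a monic polynomial $f$ of degree $r$ in $H$. Thus the theorem amounts to the identity $f=\prod_{i=1}^r(\rho_i+H)-q^{r-s}\prod_{j=1}^s(\sigma_j-H)$; since the right-hand side is also monic of degree $r$ in $H$ (as $r>s$), it is enough to show that it lies in $\ker\Psi$, i.e.\ that in $QH^*_{exc}(T)$ one has
\[
(\rho_1+H)\star_q\cdots\star_q(\rho_r+H)=q^{r-s}\,(\sigma_1-H)\star_q\cdots\star_q(\sigma_s-H).
\]

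\emph{The differential equation.} Write $c_1(T)=(r-s)H+b$ with $b:=c_1(Z)+c_1(V)+c_1(V')$, and set $\theta:=\tfrac{1}{r-s}\bigl(zq\partial_q-b\bigr)$. After absorbing the factor $q^{b/z}$, the operator $\theta$ acts on the summand indexed by $d$ in \eqref{J function2} as multiplication by $H+dz$. Using this, the fact that $\prod_{i=1}^r(\rho_i+H+dz)$ cancels the $m=d$ factor in the denominator of that summand, and the classical relation $\prod_{i=1}^r(\rho_i+H)=0$ in $H^*(T)$ (which kills the boundary term produced by the resulting telescoping), I would verify
\[
\Bigl[\,\prod_{i=1}^r(\rho_i+\theta)\;-\;q^{r-s}\prod_{j=1}^s(\sigma_j-\theta)\,\Bigr]\,J^T(q,z)=0;
\]
the analogous identity with $s=0$ recovers $\PP(V)$ and \eqref{J function}. (One may first reduce to split $V,V'$ via the exceptional splitting principle of Corollary~\ref{c:qsp}, though the expressions above are symmetric in the Chern roots and the computation goes through directly.) Finally, by the standard dictionary for quantum $D$-modules (cf.\ \cite{Giv, Iri-integral, GGI}), on the $D$-module generated by $J^T$ the operator $zq\partial_q$ acts as $c_1(T)\star_q-$; combined with $b\star_q=b\cup$ this gives $\theta\leftrightarrow H\star_q-$, so the displayed differential operator translates into exactly the ring relation above, which is what remained to be shown.

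\emph{Main obstacle.} I expect the delicate point to be the last step: making the quantum $D$-module dictionary precise in this relative, $Z$-parametrized (and generally non-semisimple) setting -- in particular checking that $J^T$ generates a $D$-module of full rank, so that the single differential relation descends to a ring relation with no spurious extra relations. This is precisely what the freeness of rank $r$ from the structural reduction provides, so the two parts are intertwined; the telescoping identity itself and the various dimension counts are routine.
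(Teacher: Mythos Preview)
Your overall strategy---derive a hypergeometric differential equation from the explicit $J$-function of Theorem~\ref{Jqlh} and translate it into a quantum-cohomology relation, then use a rank count to conclude---is exactly the paper's. Your structural reduction (freeness of rank $r$ over $H^*(Z)[[q]]$, principal kernel) is a slightly more elaborate version of the paper's one-line dimension count, but to the same effect; and your telescoping verification of the differential equation is correct.

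The one substantive difference is in how the operator playing the role of ``$H$'' is built. You work with the \emph{small} $J$-function and set $\theta=\tfrac{1}{r-s}(zq\partial_q-b)$, which mixes a genuine differential operator with cup-product by the cohomology class $b\in H^2(Z)$. The standard dictionary you cite (e.g.\ \cite[Thm.~10.3.1]{CK}) converts \emph{polynomial differential operators} in the $z\partial_{t^i}$ annihilating $J$ into quantum relations; it does not literally cover operators containing a bare cohomology-multiplication term. This, rather than the ``full-rank'' issue you flag, is the actual soft spot in your last step. The paper sidesteps it cleanly by passing to the \emph{big} $J$-function with $\bt=\sum a^i\rho_i+\sum b^j\sigma_j+l\,c_1(Z)$ ranging over $H^2(Z)$ (which, by \eqref{J function2}, amounts to inserting the harmless factor $e^{\bt/z}$). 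Then
\[
\partial_H:=\frac{1}{r-s}\Bigl(q\partial_q-\sum_i\partial_{a^i}+\sum_j\partial_{b^j}-\partial_l\Bigr)
\]
is a genuine differential operator with $z\partial_H$ corresponding to $H\star_q$, and \cite[Thm.~10.3.1]{CK} applies verbatim. Your $\theta$ is the restriction of this $z\partial_H$ to $\bt=0$, so the fix is essentially cosmetic; but as written, your final paragraph needs this small upgrade to be rigorous.
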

\begin{proof}
By Corollary~\ref{c:qsp}, we can assume without loss of generality that the Chern roots of $V$ and $V'$ are well-defined cohomology classes in $H^*(Z).$

We will prove \eqref{e:r2}. Equation~\eqref{e:r1} follows from the same argument, or may be viewed as the special case $s=0$.
Note that the dimension of the right-hand side of \eqref{e:r2} is $$r \dim(H^*(Z)) = \dim (H^*(T))$$ over $\CC[[q]].$   It therefore suffices to show that 
$$\prod_{i=1}^r (\rho_i + H)  = q^{r-s} \prod_{j=1}^s (\sigma_j -H)\in QH^*_{ext}(T).$$

Specialize $\bt \in H^*(Z)$ in Theorem~\ref{Jqlh} to 
$$\bt  = \sum_{i=1}^{r} a^i \rho_i + \sum_{j=1}^s b^j \sigma_j + l c_1(Z).$$
Define the operator $\partial_{H}$ by
$$
\partial_{H}:= \frac{1}{r-s}\left(q \partial/\partial q - \sum_{i=1}^r  \partial/\partial a^i + \sum_{j=1}^s  \partial/\partial b^j -  \partial/\partial l\right).$$
Note that with this specialization, by \eqref{first-chern-T} we have
$$\partial_{H} e^{\bt/z}q^{c_1(T)/z} = \frac{H}{z} e^{\bt/z}q^{c_1(T)/z}.$$

Observe by direct computation that $J^T(q, \bt, z)$ satisfies the quantum differential equation
$$P(z \partial/\partial a^i, z\partial/\partial b^j, z\partial_H, q, z) J^T(q, \bt, z) = 0,$$
where 
$P(z \partial/\partial a^i, z\partial/\partial b^j, z\partial_H, q, z)$ is given by
$$
\prod_{i=1}^r \left(z \partial/\partial a^i + z\partial_H \right) - q^{r-s} \prod_{j=1}^s \left(z \partial/\partial b^j - z\partial_H \right).
$$
By \cite[Theorem~10.3.1]{CK} \cite[Lemma~2]{Pand}, this implies that
$P(\rho_i, \sigma_j, H, q, 0) = 0\in QH^*_{ext}(T).$
\end{proof}

\subsection{More on $J$-functions}

It will be useful to rewrite the (modified) $J$-functions defined in  \eqref{modified-pi-extremal2}, \eqref{modified-pi-extremal} using Gamma functions. 
Applying the calculation
$$z^{{\dim \over 2}}z^{\mu}(\rho_i+H+mz)=(\rho_i+H+m)z$$
to \eqref{J function}, 
we obtain 
\begin{equation*}\label{modified-J-projective-base}
    \wtJ^{\PP(V)}(q, \bt, -z)
    =e^{-\sum_{i}z^{\deg \phi_i -1}t^i\phi_i}\wtJ^{\PP(V)}(q, -z)
\end{equation*} 
where
\begin{equation}\label{modified-J-projective}
    \wtJ^{\PP(V)}(q, -z)
    =
    \sum_{d\geq 0}\left({q\over z}\right)^{rd-c_1(\PP(V))}\prod_{i=1}^{r}\frac{\Gamma(\rho_i + H -d ) }{\Gamma(\rho_i + H)}.
   \end{equation} 
By using the $J$-function formula in \eqref{J function2}, a similar calculation shows that the modified extremal $J$-function $\widetilde{J}^{T}(q, \bt, -z)$ of the local model $T$ is given by 
\begin{equation}\label{modified-J-local-base}
\widetilde{J}^{T}(q, \bt, -z)=e^{-\sum_{i}z^{\deg \phi_i -1}t^i\phi_i}\widetilde{J}^{T}(q, -z)
\end{equation}
where 
\begin{equation}
\label{pullback-J-gamma}
\wtJ^{T}(q, -z)=
\sum_{d \geq 0} \left({q\over z}\right)^{(r-s)d-c_1(T)}\prod_{i=1}^r \frac{\Gamma(\rho_i + H -d)}{\Gamma(\rho_i + H)}\prod_{j=1}^s \frac{(-1)^{d}\ \Gamma(1-\sigma_j + H)}{\Gamma(1-\sigma_j + H - d)}.
\end{equation}

For the local model $T'$, let $H':=c_1\left(\mathcal{O}_{\PP(V')}(1)\right)$. We consider the $I$-function:
\begin{equation}
        I^{T'}(q', \bt, z) =  e^{\bt/z} q'^{-c_1(T')/z}
        \sum_{d \geq 0} {q'}^{(r-s)d} \frac{\prod\limits_{i=1}^r\prod\limits_{m=0}^{d-1} (\rho_i - H' - mz)}{\prod\limits_{j=1}^s \prod\limits_{m=1}^d (\sigma_j + H' + mz)}.
    \end{equation}
\begin{remark}\label{r:T'cone}
    Although we will not need this fact in what follows, by Remark~\ref{r333} (\ref{i333}), $I^{T'}(q', \bt, z)$ lies on Givental's overruled Lagrangian cone for $T'$.
\end{remark}
Following \eqref{central-charge-modified-J}, for any $E\in K^0(T')$, we define an extremal $I$-function central charge of $E$ by 
\begin{equation}
\label{i-function-central-charge}
Z^{I^{T'}}(E)(q', \bt):=\left\langle \widetilde{I}^{T'}(q', \bt, -z), \widehat{\Gamma}_{T'}{\rm Ch}(E)\right\rangle^{T'}.
\end{equation}
Here $\widetilde{I}^{T'}(q', \bt, -z)$ is a modified extremal $I$-function given by
\begin{equation}
\label{I-function-T'}  
\widetilde{I}^{T'}(q', \bt, -z):=z^{c_1(T')}z^{\dim T'\over 2}z^{\mu} I^{T'}(q', \bt, -z)
    = e^{-\sum_{i}z^{\deg \phi_i -1}t^i\phi_i} 
    \widetilde{I}^{T'}(q', -z)
\end{equation}
where 
\begin{equation}
    \label{extremal-i-function}
      \widetilde{I}^{T'}(q', -z)=\sum_{d \geq 0} \left({q' z}\right)^{(r-s)d+c_1(T')}\prod_{j=1}^s \frac{\Gamma(\sigma_j + H' -d)}{\Gamma(\sigma_j + H')}\prod_{i=1}^r \frac{(-1)^{d}\ \Gamma(1-\rho_i + H')}{\Gamma(1-\rho_i + H' - d)}.
\end{equation}

\section{The quantum spectrum}\label{s:spectrum}
Let $T$ be as in the previous section. 
Let $R$ denote the ring
\begin{align}
     \label{e:qcoh}
       R=  H^*(Z)[H][[q]]\left/\br{ \prod_{i=1}^r (\rho_i + H)  = q^{r-s} \prod_{j=1}^s (\sigma_j -H)}.\right.
         \end{align}

\begin{prop}\label{p:eigen}
The operator of multiplication by  $c_1(T)$ in $R$ has an eigenvalue of 0 of multiplicity $s\cdot\dim(H^*(Z))$, and $r-s$ nonzero eigenvalues, each of multiplicity $\dim(H^*(Z))$, occurring at
$$\{\lambda_k(q):=(r-s)e^{-\pi\sqrt{-1}{2k+s\over r-s}} q \mid 0\leq k < r-s\}.$$
     \end{prop}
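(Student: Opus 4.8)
The plan is to identify the characteristic polynomial of multiplication by $c_1(T)$ on $R$, treating $R$ as a free module over the ground ring $A[[q]]$ with $A := H^*(Z)$, and then to decompose $R$ into generalized eigenspaces after inverting $q$. First I would set up the linear algebra over $A[[q]]$. By \eqref{first-chern-T} we have $c_1(T) = (r-s)H + c$ with $c := c_1(Z) + c_1(V) + c_1(V')$ a degree-two class pulled back from $Z$; since $r > s$ the defining relation of \eqref{e:qcoh} is monic of degree $r$ in $H$, so $R$ is free over $A[[q]]$ with basis $1, H, \dots, H^{r-1}$ and multiplication by $H$ has characteristic polynomial equal to the companion polynomial
\[
f(\lambda) = \prod_{i=1}^r(\rho_i + \lambda) - q^{r-s}\prod_{j=1}^s(\sigma_j - \lambda) \in A[[q]][\lambda].
\]
Because $r-s$ is invertible and $c$ is central, multiplication by $c_1(T)$ then has characteristic polynomial $g(\lambda) = (r-s)^r f\!\big((\lambda - c)/(r-s)\big)$.

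Next I would reduce modulo the maximal ideal $\mathfrak m_A = H^{>0}(Z)$ of the local Artinian ring $A$. As the positive-degree classes $c_\bullet(V),\, c_\bullet(V'),\, c_1(Z)$ — and hence $c$ and all elementary symmetric functions of the $\rho_i$ and $\sigma_j$ of positive degree — lie in $\mathfrak m_A$, the image $\bar g \in \CC[[q]][\lambda]$ is
\[
\bar g(\lambda) = \lambda^s\big(\lambda^{r-s} - (-1)^s(r-s)^{r-s}q^{r-s}\big) = \lambda^s\prod_{k=0}^{r-s-1}\big(\lambda - \lambda_k(q)\big),
\]
since the $\lambda_k(q)$ of \eqref{evalue-m-th} are exactly the $(r-s)$-th roots of $(-1)^s(r-s)^{r-s}q^{r-s}$; one checks directly that the exponents $\pm\pi\sqrt{-1}(2k+s)/(r-s)$, $0\le k<r-s$, index the same set, so the sign in \eqref{evalue-m-th} is a matter of labeling. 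Over $\CC((q))$ the $\lambda_k(q)$ are distinct and nonzero, so $\bar g$ is a product of pairwise coprime polynomials in $\CC((q))[\lambda]$.

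Finally I would lift this factorization. The ring $A((q)) = A\otimes_\CC\CC((q))$ is complete local — in fact Artinian — with residue field $\CC((q))$ and nilpotent maximal ideal $\mathfrak m_A\cdot A((q))$, so Hensel's lemma upgrades the coprime factorization of $\bar g$ to $g = g_0\cdot\prod_k g_k$ over $A((q))$ with $g_0$ monic of degree $s$ and each $g_k$ monic of degree $1$. The corresponding idempotents in the commutative subalgebra generated by $c_1(T)\star$, together with Cayley--Hamilton, split $R\otimes_{\CC[[q]]}\CC((q)) = R\otimes_{A[[q]]}A((q))$ as a direct sum of $A((q))$-submodules $R_0 \oplus \bigoplus_k R_k$, free over $A((q))$ of ranks $s$ and $1$, on which $c_1(T)\star$ has $A((q))$-characteristic polynomials $g_0$ and $g_k$. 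On $R_k$ the operator is multiplication by an element of $A((q))$ congruent to $\lambda_k(q)$ modulo the nilpotent ideal, so as a $\CC((q))$-linear operator it has the single eigenvalue $\lambda_k(q)$ with multiplicity $\dim_{\CC((q))}R_k = \dim_\CC A = \dim H^*(Z)$; on $R_0$ the operator is nilpotent (its $s$-th power lies in the nilpotent ideal $\mathfrak m_A\cdot A((q))$), contributing the eigenvalue $0$ with multiplicity $\dim_{\CC((q))}R_0 = s\dim H^*(Z)$. Since $(r-s)\dim H^*(Z) + s\dim H^*(Z) = r\dim H^*(Z) = \dim_{\CC((q))}\big(R\otimes_{\CC[[q]]}\CC((q))\big)$, this exhausts the spectrum. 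The main obstacle is precisely this last passage: modulo $q$ every eigenvalue degenerates to $0$ (indeed $c_1(T)|_{q=0}$ is nilpotent on $H^*(\PP(V))$ for degree reasons), so the eigenvalues cannot be separated over $\CC[[q]]$ and one is forced to work over $\CC((q))$, where $\bar g$ factors into coprimes and Hensel applies; a secondary nuisance is keeping straight the two characteristic polynomials — degree $r$ over $A[[q]]$ versus degree $r\dim H^*(Z)$ over $\CC((q))$ — and the fact that the "$H^*(Z)$-part'' of each eigenvalue enters only through Jordan block sizes, not through the eigenvalue itself.
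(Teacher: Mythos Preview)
Your argument is correct and takes a genuinely different route from the paper. The paper proceeds by an explicit filtration: it picks a top-degree element $\alpha_0\in H^*(Z)$, uses the vanishing $\gamma\cdot\alpha_0=0$ (for $\gamma\in H^{>0}(Z)$) to see that the span of $\{H^k\alpha_0\}_{0\le k\le r-1}$ is $c_1(T)$-invariant with the ``bare'' companion matrix, reads off the characteristic polynomial $\lambda^s(\lambda^{r-s}-(r-s)^{r-s}(-1)^sq^{r-s})$, and then inducts on $H^*(Z)/\langle\alpha_0\rangle$. You instead compute the $A[[q]]$-characteristic polynomial once, reduce it modulo the nilpotent ideal $\mathfrak m_A$, and invoke Hensel over the Artinian local ring $A((q))$ to split $R\otimes\CC((q))$. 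Your approach is more structural and uses only that $H^*(Z)$ is Artinian local (not its grading), and it gives the $\CC((q))$-characteristic polynomial $\bar g(\lambda)^{\dim H^*(Z)}$ in one stroke; the paper's approach is more elementary and, crucially, yields explicit $c_1(T)$-invariant subspaces $W_i$ that are immediately recycled in the proof of the next proposition (on the subspace $\bar R$ containing $e(V'(-1))\cdot R$), where the filtration argument is rerun with $\bar W_i=e(V'(-1))W_i$. Your Hensel/CRT decomposition would also adapt there, but the hands-on filtration makes the extension transparent.
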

    
\begin{proof}
First note that by the explicit presentation of \eqref{e:qcoh} it is immediate that for $\alpha, \beta \in H^*(Z) \subset R$, the product $\alpha \cdot \beta$ in $R$ agrees with the cup product in $H^*(Z)$.  I.e., there is no quantum correction when multiplying two classes pulled back from the cohomology of the base.

    We will use the grading on $H^*(Z)$.  
    Let $S_0= H^*(Z)$ and let $R_0$ denote the ring 
    \eqref{e:qcoh}.
    Let $d$ be the largest integer such that $S_0^d \neq 0$ where $S_0^i$ denotes the $i$th graded piece of $S_0.$ 
    
    Choose a nonzero element $\alpha_0 \in S^d_0$.  The product of $\alpha_0$ with any homogeneous element of $S_0$ of positive degree is zero.
    Because 
    \begin{equation}\label{c1T}
        c_1(T) = (r-s)H + \gamma
    \end{equation}
    for some $\gamma \in H^{2}(Z)$,
        we observe that $c_1(T)^k \alpha_0$ is equal to $((r-s)H)^k \alpha_0.$
Let $W_0 \subset R_0$ denote the span of $H^k \alpha_0$ for $0 \leq k \leq r-1$.  Then $c_1(T)$ preserves $W_0$.  With respect to the basis $\{((r-s)H)^k \alpha_0\}_{k=0}^{r-1}$, the matrix for this operator on $W_0$ is 
$$M_{c_1(T)} = \left(
\begin{array}{ccccccc}
0& 0&  &\cdots && 0& 0 \\
1& 0 &  &\cdots && 0& 0 \\
0& 1 &  &\cdots && 0& 0 \\
\vdots &\vdots&&&&\vdots& \vdots \\
0 & 0 &  &\ddots&& 0 & (r-s)^{r-s} (-1)^s q^{r-s}
\\
\vdots &\vdots&&&&\vdots& \vdots \\
\\
0& 0 &  &\cdots && 0& 0 \\
0& 0 &  &\cdots && 1& 0 
\end{array}
\right)$$
where the $s+1$st row of $M_{c_1(T)}$ is  $(0, \ldots, 0, 1, 0, \ldots, 0, (r-s)^{r-s} (-1)^s q).$ 
    
The characteristic polynomial of this matrix is
\begin{align}\label{e:cp}
   \lambda^s(\lambda^{r-s} - (r-s)^{r-s} (-1)^s q^{r-s}),
\end{align} 
thus the operator of multiplacation by $c_1(T)$ on $W_0$ has an eigenvalue of $0$ with multiplicity $s$ and $r-s$ nonzero eigenvalues, each of multiplicity 1, occuring at 
$(r-s)e^{-\pi\sqrt{-1}{2k+s\over r-s}} q$
for $0\leq k< r-s$.

We proceed by induction.  Let $S_i$ denote the graded ring $S_{i-1}/<\alpha_{i-1}>$.  
Let 
$$R_i = R_{i-1}/W_{i-1} = S_i[H][[q]]\left/\br{ \prod_{i=1}^r (\rho_i + H)  =q^{r-s} \prod_{j=1}^s (\sigma_j -H)}.\right.$$
Use the grading of $S_i$ to choose $\alpha_i \in S_i$ such that the product of $\alpha_i$ with any homogeneous element of $S_i$ of positive degree is zero.  Let $W_i \subset R_i$ denote the span of $H^k \alpha_i$ for $0 \leq k \leq r-1$.  The characteristic polynomial of multiplication by $c_1(T)$ is again \eqref{e:cp}.  This process repeats $\dim(H^*(Z))$ times before terminating.
\end{proof}

To prove Theorem~\ref{qspec} for general $X$, we will require a slight generalization of the Proposition~\ref{p:eigen}.
Let $\bar R$ be a subspace of $R$ which contains 
$$\prod_{j=1}^s (\sigma_j -H)\alpha = 
e(V'(-1))\alpha$$
for all $\alpha \in R$
and is preserved by multiplication by $c_1(X)$.
\begin{prop}\label{p:qm2}
The operator of multiplication by $c_1(X)$ on $\bar R$ has 
    r-s nonzero eigenvalues, each of multiplicity $\dim(H^*(Z))$, occurring at    
     $$\{\lambda_k(q):=(r-s)e^{-\pi\sqrt{-1}{2k+s\over r-s}} q \mid 0\leq k < r-s\}.$$
     All other eigenvalues are 0.
\end{prop}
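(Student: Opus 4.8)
The plan is to deduce Proposition~\ref{p:qm2} from Proposition~\ref{p:eigen} by a filtration argument that is parallel to, but coarser than, the one used in the proof of Proposition~\ref{p:eigen}. First I would observe that the hypothesis says precisely that $\bar R$ is a $c_1(X)$-submodule of $R$ containing the ideal $e(V'(-1)) \cdot R = \prod_{j=1}^s(\sigma_j - H) \cdot R$. Since in the presentation \eqref{e:qcoh} we have the relation $\prod_{i=1}^r(\rho_i + H) = q^{r-s}\prod_{j=1}^s(\sigma_j - H)$, the ideal $\prod_{j=1}^s(\sigma_j-H)\cdot R$ is exactly the image of multiplication by $\prod_{j=1}^s(\sigma_j - H)$, which on each rank-$r$ block $W_i$ appearing in the proof of Proposition~\ref{p:eigen} kills the bottom $s$ basis vectors and sends the remaining $r-s$ onto a copy of a cyclic module of length $r-s$. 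Thus on each block $W_i$ the submodule $W_i \cap (e(V'(-1))\cdot R)$ is the span of $H^k\alpha_i$ for $s \le k \le r-1$ (pushed by the nilpotent part), and the quotient block supports exactly the characteristic polynomial $\lambda^{r-s} - (r-s)^{r-s}(-1)^s q^{r-s}$, i.e. the nonzero eigenvalues $\lambda_k(q)$.

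The key steps, in order, are: (1) Reuse the graded filtration $R = R_0 \supset W_0$-quotients from the proof of Proposition~\ref{p:eigen}, so that $c_1(X)$ (which by \eqref{c1T} equals $(r-s)H + \gamma$ for $\gamma$ pulled back from $Z$, just as $c_1(T)$ does, because $X$ and $T$ share the same relevant first-Chern-class data along $F$) acts on each successive quotient $W_i$ by the matrix $M_{c_1(T)}$ displayed there. (2) Show that $\bar R$ is compatible with this filtration: intersecting the filtration of $R$ with $\bar R$ gives a filtration of $\bar R$ whose graded pieces are $c_1(X)$-stable subspaces of the blocks $W_i$. (3) Identify those graded pieces: because $\bar R \supseteq e(V'(-1))\cdot R$, on each block $W_i$ the intersection $\bar R \cap W_i$ contains the image of $e(V'(-1))$ acting on that block, which is the span of the top $r-s$ vectors in the nilpotent-Jordan picture; this image is precisely the $c_1(X)$-invariant subspace on which the matrix $M_{c_1(T)}$ has characteristic polynomial $\lambda^{r-s}-(r-s)^{r-s}(-1)^s q^{r-s}$. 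Any additional vectors of $\bar R \cap W_i$ beyond this subspace can only contribute a nilpotent (hence zero-eigenvalue) summand once we pass to the associated graded. (4) Conclude that the eigenvalues of $c_1(X)$ on $\mathrm{gr}(\bar R)$, hence on $\bar R$ itself, are the $\lambda_k(q)$ for $0 \le k < r-s$ (each appearing once per block, i.e. with multiplicity $\dim H^*(Z)$ since there are $\dim H^*(Z)$ blocks) together with $0$.

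I expect the main obstacle to be step (2)–(3): verifying that $\bar R$ really does interact cleanly with the ad hoc filtration by the $W_i$. The filtration in Proposition~\ref{p:eigen} is built by repeatedly choosing a top-degree class $\alpha_i$ in a quotient of $H^*(Z)$, and it is not a priori canonical, so one must check that an arbitrary $c_1(X)$-stable $\bar R$ containing $e(V'(-1))\cdot R$ is automatically a union of pieces adapted to this filtration — or, more robustly, re-choose the $\alpha_i$ so that the $W_i$ are compatible with the two-step filtration $e(V'(-1))\cdot R \subseteq \bar R \subseteq R$. Concretely, since $e(V'(-1))\cdot R$ is already filtered with blocks of the shape above, and $R / (e(V'(-1))\cdot R)$ is a module of $\CC[[q]]$-rank $s\cdot \dim H^*(Z)$ on which $c_1(X)$ is nilpotent (this is where \eqref{e:cp} and the shape of $M_{c_1(T)}$ are used), the eigenvalue count is forced: the nonzero spectrum lives entirely inside $e(V'(-1))\cdot R \subseteq \bar R$ and is unchanged, while whatever lies in $\bar R/(e(V'(-1))\cdot R)$ contributes only zeros. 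Making this last paragraph rigorous — rather than hand-waving about "associated graded" — is the real content, and it is essentially an exercise in the theory of finitely generated torsion modules over $\CC[[q]]$ together with the explicit block structure already established.
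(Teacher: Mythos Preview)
Your approach is essentially the paper's: exploit the hypothesis $e(V'(-1))\cdot R \subseteq \bar R$ to locate the full nonzero spectrum inside $\bar R$, and then appeal to Proposition~\ref{p:eigen} to force any leftover eigenvalues to be zero.

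The one place where the paper's execution is cleaner than yours is your self-identified obstacle in steps~(2)--(3). The paper never intersects the pre-existing filtration $\{W_i\}$ of $R$ with $\bar R$; instead it builds a new filtration \emph{inside} $\bar R$ directly. At each stage it sets
\[
\bar W_i := \operatorname{span}\{\, e(V'(-1))\,\alpha_i\, H^k \;:\; 0 \le k \le r-s-1 \,\},
\]
which lies in $\bar R_i$ by hypothesis, checks (using $\alpha_i$ top-degree in $S_i$ and the quantum relation) that this is a $c_1(T)$-invariant cyclic subspace with characteristic polynomial $\lambda^{r-s} - (r-s)^{r-s}(-1)^s q^{r-s}$, and then passes to $\bar R_{i+1} := \bar R_i/\bar W_i$. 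After $\dim H^*(Z)$ steps this factor has been accumulated the requisite number of times; since $\bar R$ is an invariant subspace of $R$, the remaining eigenvalues are among those of $R$ and hence zero by Proposition~\ref{p:eigen}. No compatibility check between two filtrations is needed. Your final-paragraph reformulation --- that $c_1(X)$ is nilpotent on $R/(e(V'(-1))\cdot R)$, so the nonzero generalized eigenspace of $R$ already sits in $e(V'(-1))\cdot R \subseteq \bar R$ --- is a correct and slightly more conceptual shortcut to the same endpoint.
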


\begin{proof}
     The proof follows from a slight modification of the previous proof. 
Let $S_0 = H^*(Z)$ as in the  proof of Proposition~\ref{p:eigen} and choose $\alpha_0 \in S_0$ as before.  
Define $\bar W_0$ to be the subspace spanned by 
$e(V'(-1)) \alpha_0 H^k$ for $0 \leq k\leq r-s-1.$
Because $\alpha_0$ is of top degree in $S_0,$ $c_1(T)^k e(V'(-1))\alpha_0 = ((r-s)H)^k e(V'(-1))\alpha_0$, and 
\begin{align*}
    c_1(T)^{r-s} e(V'(-1)) \alpha_0 &= (r-s)^{r-s} H^r \alpha_0 \\
    &= (r-s)^{r-s} e(V(1)) \alpha_0 \\
    &=(r-s)^{r-s}(-1)^sq^{r-s} e(V'(-1)) \alpha_0.
\end{align*} 
  Thus $\bar W_0$ is preserved by multiplication by $c_1(T)$.  Following the previous proof, the characteristic polynomial of this operator on $W_0$ is seen to be
$$( \lambda^{r-s} - (r-s)^{r-s} (-1)^s q^{r-s}).$$

At step $i$ define $S_i = S_{i-1}/<\alpha_{i-1}>.$  Define $\bar R_i$ to be the quotient of $\bar R_{i-1}$ by 
$\bar W_{i-1}$.  
Choose $\alpha_i \in S_i$ of top degree.  By assumption we have that  
$e(V'(-1)) \alpha_i$ lies in  $\bar R_i$.  Define $\bar W_i$ to be the subspace spanned by 
$e(V'(-1)) \alpha_i H^k$ for $0 \leq k\leq r-s-1,$ and proceed inductively.

The process terminates after $\dim(H^*(Z))$ steps.  We conclude that the characteristic polynomial of $c_1(T)$
acting on $\bar R$ 
contains $$( \lambda^{r-s} - (r-s)^{r-s} (-1)^s q^{r-s})^{\dim(H^*(Z))}$$
as a factor.  All other eigenvalues must be zero by the previous proposition. \end{proof}

\begin{corollary}\label{coro-c1-multiplication}
    Theorem~\ref{qspec} holds.
\end{corollary}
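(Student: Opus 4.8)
The plan is to deduce Theorem~\ref{qspec} from Proposition~\ref{p:qm2} by showing that the exceptional quantum cohomology $QH^*_{exc}(X)$ admits a presentation to which Proposition~\ref{p:qm2} applies. The key tool is the ring homomorphism $i^*: QH^*_{exc}(X) \to QH^*_{exc}(T)$ from Theorem~\ref{t:func}, together with the computation $QH^*_{exc}(T) = R$ from \eqref{e:r2}. First I would analyze the image of $i^*$. Since $i: T \hookrightarrow X$ is the inclusion of (a neighborhood of) the exceptional locus $F = \PP(V)$, and the complement $X \setminus F$ contributes nothing to the exceptional quantum product, I expect $i^*$ to be injective, or at least to have kernel and cokernel controlled by classes supported away from $F$. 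Concretely, by the localization/excision sequence in cohomology together with the projective bundle formula, $H^*(X) \cong j_*H^*(F) \oplus p^*H^*(\bar X)$ after suitable identifications (this is essentially the content of Orlov's SOD passed to cohomology, i.e. \eqref{cohomology-maps}). Under $i^*$, the summand $j_*H^*(F)$ maps to the ideal generated by the Euler class $e(V'(-1)) = \prod_j(\sigma_j - H)$ inside $R$, since $i^* j_* = k^* j^* j_* = k^*(e(N_{F|X}) \cup -)$.

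Next I would set $\bar R := i^*(QH^*_{exc}(X)) \subseteq R$ and verify the two hypotheses of Proposition~\ref{p:qm2}: that $\bar R$ contains $e(V'(-1)) \cdot \alpha$ for all $\alpha \in R$, and that $\bar R$ is preserved by multiplication by $c_1(X)$. The second is automatic since $i^*$ is a ring homomorphism and $i^* c_1(X) = c_1(T) = c_1(X)|_F$ (using $N_{F|X} = V'(-1)$ and the adjunction/normal bundle sequence, so $c_1(TX)|_F = c_1(TF) + c_1(N_{F|X})$, matching \eqref{first-chern-T}). For the first hypothesis, I need that every class of the form $e(V'(-1))\alpha$ in $R$ lies in the image of $i^*$; this should follow from surjectivity of $i^* \circ j_* = k^*(e(V'(-1)) \cup -): H^*(F) \to $ (ideal in $R$), combined with the fact that $k^*: H^*(F) \to H^*(T) = R$ is an isomorphism (stated after the definition of the local model) and that $j_*$ lands in $QH^*_{exc}(X)$ by construction. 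Then Proposition~\ref{p:qm2} gives that $c_1(X) \star_q -$ acting on $\bar R$ has exactly the nonzero eigenvalues $\lambda_k(q)$, each with multiplicity $\dim H^*(Z)$, and all other eigenvalues zero.

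Finally I would account for the multiplicity of the zero eigenvalue. The full ring $QH^*_{exc}(X)$ has dimension $\rank H^*(X) = \rank H^*(X') + (r-s)\rank H^*(Z)$ over $\CC[[q]]$ by \eqref{gamma-decomposition} (or directly by Orlov/BFR). The nonzero eigenvalues account for a $(r-s)\rank H^*(Z)$-dimensional generalized eigenspace, so the zero generalized eigenspace has dimension $\rank H^*(X')$, giving the stated multiplicity. If $i^*$ turns out not to be injective, I would instead argue that the kernel is a $c_1(X)\star_q-$-stable subspace on which the operator acts nilpotently (since kernel classes are supported away from $F$ and $c_1(X)\star_q$ differs from $c_1(X)\cup$ only by terms in the ideal generated by such classes... — more carefully, one checks the kernel is killed after finitely many applications), so it only contributes to the zero eigenvalue, and the count still works.

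The main obstacle I anticipate is the precise control of $i^*$: establishing either injectivity or a clean description of its image as a subspace satisfying the hypotheses of Proposition~\ref{p:qm2}. The subtlety is that $QH^*_{exc}(X)$ for general $X$ is not computed in closed form in the excerpt (only the local model $T$ is), so one cannot simply read off a presentation; instead one must leverage the functoriality of $i^*$ together with the classical cohomological structure of the blow-down/flip diagram \eqref{e:diagram} to pin down $\bar R$. Getting the image to contain the full ideal $e(V'(-1)) \cdot R$ — rather than just the image of $k^* j^*$ composed with multiplication by the Euler class — is exactly where the identification $k^* \cong (j_T^*)^{-1}$ and the surjectivity of $j^*: H^*(X) \to H^*(F)$ (projective bundle over $Z$, so $j^*$ is surjective) must be combined. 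I expect this to be a short but careful argument, and the rest (invoking Proposition~\ref{p:qm2} and dimension-counting the zero eigenspace) to be routine.
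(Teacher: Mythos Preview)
Your approach is essentially the paper's: set $\bar R = i^*(QH^*_{exc}(X))$, verify the hypotheses of Proposition~\ref{p:qm2} (using $i^*j_*(\beta) = e(V'(-1))\cdot k^*\beta$ and that $k^*$ is an isomorphism to get $e(V'(-1))R \subseteq \bar R$, and $i^*c_1(X) = c_1(T)$ for stability), and then dimension-count the zero eigenspace.

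The one place where you are vaguer than necessary is the kernel of $i^*$. You should not expect $i^*$ to be injective, and you do not need to. Since $k^*$ is an isomorphism, $\ker(i^*) = \ker(j^*)$; for $\alpha \in \ker(j^*)$ and $d = k[L]$ with $k>0$, the evaluation maps on $\sMbar_{0,3}(X,d) = \sMbar_{0,3}(F,d)$ factor through $j$, so $\rm ev^*\alpha = 0$ and every quantum correction vanishes. Thus $c_1(X)\star_q\alpha = c_1(X)\cup\alpha$ on $\ker(i^*)$, which is nilpotent for degree reasons; the kernel is $c_1(X)\star_q$-stable because $i^*$ is a ring homomorphism. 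This replaces your speculative ``killed after finitely many applications'' with a one-line proof.
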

\begin{proof}
Recall that by Theorem~\ref{t:func}, $$i^*: QH^*_{ext}(X) \to QH^*_{ext}(T)$$ is a ring homomorphism.  Furthermore, by definition of $T = \tot(N_{F|X})$, the tangent bundle is preserved under $i^*:$ 
$$i^*(T(X)) = T(F)\oplus N_{F|X} = T(T).$$
Consequently, $$i^*(c_1(X) \star_q \alpha) = c_1(T) \star_q i^*\alpha.$$

By the projection formula, the image of $$i^*: QH^*_{ext}(X) \to QH^*_{ext}(T)$$
contains $e(\pi^*V' \otimes \mathcal O_{\PP(V)}(-1)) =i^* \circ j_*(1).$  Therefore by Proposition~\ref{p:qm2}, 
we have $r-s$ nonzero eigenvalues, each of multiplicity $\dim(H^*(Z))$, occurring at
     $(r-s)e^{-\pi\sqrt{-1}{2k+s\over r-s}} q$
     for $0\leq k < r-s$, and all other eigenvalues in $\text{im}(i^*)$ are zero.  On the other hand, 
     in the kernel of $i^*$ there are no quantum corrections, so $c_1(X) \star_q - = c_1(X) \cup - .$  Consequently, the restriction of the operator $c_1(X) \star_q - $ to $\ker(i^*)$ is nilpotent, and the thus the remaining eignevalues  must all be zero.  By a dimension argument, we conclude that the eigenvalue of zero occurs with multiplicity $\dim H^*(X')$.
\end{proof}

Following the same argument as in  Proposition~\ref{p:eigen}, we can determine the eigenvalues of $E(\bt) \star_{q, \bt} - $ in the local model $T$ for more general $\bt$.
\begin{corollary}\label{c:Estar}
    For $\bt$ pulled back from  $H^{\geq 2}(Z)$, the operator 
    $$E(\bt) \star_{q, \bt} - $$
    on $H^*(T)$ has the same eigenvalues as $c_1(T) \star_q - $.
\end{corollary}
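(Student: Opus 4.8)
The plan is to reduce the statement to the linear algebra already carried out in Proposition~\ref{p:eigen}. First I would dispose of the low-degree part of $\bt$: the part of $\bt$ lying in $\pi^*H^2(Z)$ contributes nothing to $E(\bt)$ (its coefficient $1-\deg/2$ vanishes), and it contributes nothing to $\star_{q,\bt}$ either, since a class $\delta$ pulled back from $H^2(Z)$ pairs to zero with $[L]$ (as $[L]$ is contracted by $\pi$), so the divisor axiom kills all contributions of $\delta$-insertions to degree-$k[L]$ invariants with $k\ge 1$, while genus-zero degree-zero invariants vanish once there are more than three marked points. Thus we may assume $\bt$ is pulled back from $H^{\ge 4}(Z)$, and write $E(\bt)=c_1(T)+\eta$ with $\eta$ in the span of the $t^i\phi_i$ with $\phi_i\in \pi^*H^{\ge 4}(Z)$.

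Next I would rerun the argument of Proposition~\ref{p:eigen} verbatim, but with $c_1(T)\star_q$ replaced by $E(\bt)\star_{q,\bt}$. Fix a homogeneous basis $\{\alpha_l\}$ of $H^*(Z)$ ordered by \emph{non-increasing} degree, and at the $i$-th stage let $W_i$ be the span of $H^k\cup\alpha_i$ for $0\le k\le r-1$, working inside $R_i:=\bigl(H^*(T)\otimes\CC[[q,\bt]]\bigr)/(W_0\oplus\cdots\oplus W_{i-1})$. The claim to establish is that $E(\bt)\star_{q,\bt}-$ descends to $R_i$, preserves $W_i$ there, and acts on $W_i\subset R_i$ by \emph{exactly} the same companion matrix $M_{c_1(T)}$ as in Proposition~\ref{p:eigen}, with characteristic polynomial \eqref{e:cp}, namely $\lambda^{s}\bigl(\lambda^{r-s}-(r-s)^{r-s}(-1)^s q^{r-s}\bigr)$. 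Granting this, the characteristic polynomial of $E(\bt)\star_{q,\bt}-$ on $H^*(T)\otimes\CC[[q,\bt]]$ is the $(\dim H^*(Z))$-th power of the above, which is precisely the characteristic polynomial computed for $c_1(T)\star_q-$ in Proposition~\ref{p:eigen}; comparing roots (with multiplicity) then yields Corollary~\ref{c:Estar}.

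To prove the claim, I would use the fiber-bundle description of the moduli spaces from Proposition~\ref{fiber-moduli}: for $d=k'[L]$ with $k'>0$, $\sMbar_{0,m}(T,d)$ is a fiber bundle over $Z$ with virtual class an Euler class, and each evaluation map is compatible with the structure map to $Z$; hence an invariant $\br{\psi_1,\dots,\psi_m}^T_{0,m,d}$ is $H^*(Z)$-multilinear in the ``base parts'' of its insertions. Consequently, writing $E(\bt)\star_{q,\bt}(H^k\alpha_i)$ as its classical part plus GW corrections, every term that either uses the summand $\eta$ of $E(\bt)$ or involves at least one $\bt$-insertion becomes, after this factorization, an integral over $Z$ of $\alpha_i$ cup a class of strictly positive degree; since $\alpha_i\cup(\text{positive degree})$ is a combination of the $\alpha_l$ with $l<i$ (by the non-increasing degree ordering), every such term lies in $W_0\oplus\cdots\oplus W_{i-1}$ and so vanishes in $R_i$. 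The same holds for the $\gamma:=c_1(Z)+c_1(V)+c_1(V')$-type divisor summand of $c_1(T)$, which pairs to zero with $[L]$. Modulo $W_{<i}$ one is therefore left with exactly the $(r-s)H$-part of the product, for which the relation $\prod_l(\rho_l+H)=q^{r-s}\prod_j(\sigma_j-H)$ of \eqref{e:r2}, after discarding the positive-degree symmetric functions of the $\rho$'s and $\sigma$'s (which again land in $W_{<i}$), gives $H^r\alpha_i\equiv(-1)^s q^{r-s}H^s\alpha_i$; this is precisely what produces the matrix $M_{c_1(T)}$. (Equivalently, one may invoke Theorem~\ref{Jqlh}: a class pulled back from $H^*(Z)$ enters $J^T$ only through the scalar prefactor $e^{\bt/z}$, so the $\bt$-directions of the quantum connection are pure translation and cannot alter the subquotient action of multiplication by $c_1(T)$ or $\eta$.) The main obstacle is precisely this bookkeeping — making rigorous that the $\bt$- and $\eta$-corrections land in the already-quotiented subspace — which is why it is essential to order $\{\alpha_l\}$ by non-increasing degree and to mirror the inductive scheme of Proposition~\ref{p:eigen} step by step.
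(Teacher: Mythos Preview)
Your argument is correct, and the overall strategy—rerunning the inductive filtration of Proposition~\ref{p:eigen} with $E(\bt)\star_{q,\bt}$ in place of $c_1(T)\star_q$—is the same as the paper's. The difference lies in how you handle the $\bt$-dependence of the product. You track the $\bt$-corrections through the filtration, arguing that each such correction multiplies $\alpha_i$ by a positive-degree base class and hence lands in $W_{<i}$. The paper instead makes the stronger (and simpler) observation that $\star_{q,\bt}=\star_q$ outright for $\bt\in\pi^*H^*(Z)$: since $\pi\circ\operatorname{ev}_l$ is independent of $l$, all $\bt$-insertions may be absorbed into a single insertion, leaving $1$'s at the remaining marked points, and the fundamental class axiom then kills every $n\ge 1$ term. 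With this in hand, one only needs to note $E(\bt)=(r-s)H+\gamma$ with $\gamma\in H^{\ge 2}(Z)$, and Proposition~\ref{p:eigen} applies verbatim. Your parenthetical remark via Theorem~\ref{Jqlh} (that $\bt$ enters $J^T$ only through $e^{\bt/z}$) is exactly this observation in disguise; promoting it from an aside to the main argument would streamline your proof considerably and make the separate treatment of the $H^2(Z)$ part of $\bt$ unnecessary.
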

\begin{proof}
Because we are restricting to the extremal quantum cohomology, we assume $d = k[L]$ with $L$ a line contracted by $\pi$. By  Proposition~\ref{fiber-moduli}, the composition 
$$\pi \circ \rm ev_i: \sMbar_{g,n}(T, d) \to T \to Z$$
is independent of the marked point $i$ used to evaluate.

Consequently, for $\bt$ pulled back from  $H^*(Z)$, 
\begin{align*}
    \br{\alpha, \beta, \gamma, \bt, \ldots, \bt}_{0,n+3, d}^X &= \br{\alpha \cdot \bt^n, \beta, \gamma, 1, \ldots, 1}_{0,n+3, d}^X. 
\end{align*}
By the fundamental class axiom, the right-hand side is zero whenever $n>0$.  Therefore, for $\bt$ pulled back from  $H^*(Z)$,
$$\alpha \star_{q, \bt} \beta = \alpha \star_q \beta.$$  It therefore suffices to compute the eigenvalues of $E(\bt) \star_q - .$

As in \eqref{c1T}, 
$E(\bt) = (r-s)H + \gamma$
    with $\gamma$ an element of $H^{\geq 2}(Z)$.
The proof of Proposition~\ref{p:eigen} then goes through identically after replacing $c_1(T)$ by $E(\bt)$.
\end{proof}

\section{Asymptotics of projective bundles: the split case}\label{s:projbund}

Suppose $V \to Z$ is a vector bundle of rank $r$.
Assume in this section that $V$ splits as $V = \oplus_{i=1}^r L_i$.
Consider the trivial action of 
 $\TT = (\CC^*)^{r}$ on $Z$ and view $V$ as $\TT$-equivariant vector bundle on $Z$ by letting the $i$th factor of $\TT$ act on $L_i$ by scaling. 
 By abuse of notation, denote the \emph{equivariant} first Chern classes of $L_1, \ldots, L_r$ by $\rho_1, \ldots, \rho_r$.
Then as a vector space we have $$H^*_\TT(Z) = H^*(Z)[\rho_1, \ldots, \rho_r] = H^*(Z) \otimes_\CC H^*_\TT(pt).$$
With the induced action of $\TT$ on $\PP(V)$ we have
\begin{equation}\label{e:eqcoh}
     H^*_\TT(\PP(V)) = H^*(Z)[\rho_1, \ldots, \rho_r, H]\left/\br{ \prod_{i=1}^r (\rho_i + H)  =  0}.\right.
\end{equation}
We will denote by $R_\TT$ the ring $H^*_\TT(pt)$, and by $S_\TT$ its localization with respect to non-zero homogeneous elements.  We will also make use of the completion
$\hat S_\TT$, defined by
\[
\hat S_\TT = \left\{ \sum_{d \in \ZZ} a_d \mid \text{there exists $d_0 \in \ZZ$ such that $a_d = 0$ for $d < d_0$}\right\},
\]
where $a_d$ lies in  the degree $d$ homogeneous part of $S_\TT.$

\subsection{A residue formula}
Let $\pi: \PP(V) \to Z$ be the (equivariant) projection map and denote by
$$\pi_*: H^*_\TT(\PP(V)) \to H^*_\TT(Z)$$ 
the pushforward in cohomology.  We use the injectivity of $\pi^*$ and  \eqref{e:eqcoh} to identify $H^*_\TT(Z)$ with a subspace of $H^*_\TT(\PP(V))$. I.e., given a class $\alpha \in H^*_\TT(Z)$, the pullback $\pi^*(\alpha)$ will sometimes be denoted simply by $\alpha$ when no confusion is likely to 
result.

\begin{lemma}\label{l:res}
Let 
$$f(H, \rho_1, \ldots, \rho_r) \in H^*_\TT(\PP(V))\otimes_{H^*_\TT(pt)} \hat S_\TT.$$  
Then we have the following equality
    \begin{equation} \label{e:residues}
    \pi_*\left(f(H, \rho_1, \ldots, \rho_r)\right) = \sum_{j=1}^r
\operatorname{Res}_{H= - \rho_j}\left( \frac{
f(H, \rho_1, \ldots, \rho_r)}{\prod_{i=1}^r 
    (\rho_i +H)}
    \right).
\end{equation}
\end{lemma}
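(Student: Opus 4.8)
The plan is to verify the residue formula on a spanning set and then extend by $H^*_\TT(pt)$-linearity. Since $H^*_\TT(\PP(V))$ is a free module over $H^*_\TT(pt)$ with basis $1, H, H^2, \ldots, H^{r-1}$ (by \eqref{e:eqcoh}), and both sides of \eqref{e:residues} are $\hat S_\TT$-linear in the coefficient functions, it suffices to establish the formula for $f = H^k$ with $0 \le k \le r-1$, i.e. to show
\[
\pi_*(H^k) = \sum_{j=1}^r \operatorname{Res}_{H=-\rho_j}\left(\frac{H^k}{\prod_{i=1}^r(\rho_i+H)}\right).
\]
First I would recall the standard fact that $\pi_*(H^k) = 0$ for $0 \le k \le r-2$ and $\pi_*(H^{r-1}) = 1$; this follows from the projective bundle formula together with the defining relation $\prod_i(\rho_i+H)=0$, or equivalently from the Atiyah–Bott localization formula applied to $\pi$, whose fixed loci are the $r$ sections of $\PP(V) \to Z$ cut out by $H = -\rho_j$ with normal bundle having equivariant Euler class $\prod_{i\ne j}(\rho_i - \rho_j)$.

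The core computation is then the rational-function identity
\[
\sum_{j=1}^r \operatorname{Res}_{H=-\rho_j}\left(\frac{H^k}{\prod_{i=1}^r(\rho_i+H)}\right) = \sum_{j=1}^r \frac{(-\rho_j)^k}{\prod_{i\ne j}(\rho_i-\rho_j)},
\]
where I assume the $\rho_j$ to be distinct (working over $\hat S_\TT$ with formal equivariant parameters, the generic-distinctness and continuity allow this). The right-hand side is, up to sign, a classical symmetric-function evaluation: it equals the complete homogeneous symmetric polynomial $h_{k-r+1}(-\rho_1,\ldots,-\rho_r)$, which vanishes for $k < r-1$ and equals $1$ for $k = r-1$. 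The cleanest way I would argue this is via the global residue theorem on $\PP^1$ (in the variable $H$): the rational $1$-form $\frac{H^k\,dH}{\prod_i(\rho_i+H)}$ has residues summing to zero over all of $\PP^1$, and for $k \le r-1$ there is no pole at $H = \infty$ when $k \le r-2$, while for $k = r-1$ the residue at infinity is $-1$; comparing gives exactly the claim. Alternatively one expands $\frac{1}{\prod_i(\rho_i+H)}$ in partial fractions and reads off coefficients.

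The main obstacle — really the only subtle point — is justifying the passage to the completion $\hat S_\TT$ and the reduction to the case of distinct Chern roots: one must check that both sides of \eqref{e:residues} are well-defined on $H^*_\TT(\PP(V)) \otimes_{H^*_\TT(pt)} \hat S_\TT$ (the residues are taken in the $H$-variable with the $\rho_i$ inverted appropriately, so each $\operatorname{Res}_{H=-\rho_j}$ makes sense in $\hat S_\TT$ after expanding the other factors $\frac{1}{\rho_i - \rho_j}$ as geometric series in the chosen order), and that the identity, once known for formal indeterminates $\rho_j$, specializes correctly. Everything else is the routine localization/residue bookkeeping sketched above, so I would present the symmetric-function identity as the heart of the argument and treat the module-theoretic reduction as a short preliminary remark.
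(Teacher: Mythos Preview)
Your proof is correct but takes a more roundabout route than the paper's. The paper applies Atiyah--Bott localization directly to a general $f$: the fixed loci of the $\TT$-action on $\PP(V)$ are the sections $F_j = \PP(L_j)$, on which $H$ restricts to $-\rho_j$ and whose normal bundles have Euler class $\prod_{i\neq j}(\rho_i-\rho_j)$, so
\[
\pi_*\bigl(f(H,\rho_1,\ldots,\rho_r)\bigr)=\sum_{j=1}^r \frac{f(-\rho_j,\rho_1,\ldots,\rho_r)}{\prod_{i\neq j}(\rho_i-\rho_j)}
=\sum_{j=1}^r \operatorname{Res}_{H=-\rho_j}\frac{f(H,\rho_1,\ldots,\rho_r)}{\prod_{i=1}^r(\rho_i+H)}
\]
in one stroke; the residue formula \emph{is} the localization formula. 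You instead reduce to the monomial basis $H^k$ and then verify the resulting symmetric-function identity via the global residue theorem on $\PP^1$. What you gain is that your argument could be made independent of localization (using only the projective bundle formula for $\pi_*(H^k)$); what you lose is directness, and you introduce a spurious worry about ``distinct Chern roots'' that does not arise in the paper's setup, since the $\rho_i$ here carry genuinely distinct equivariant parameters by construction.
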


\begin{proof}
The proof follows from  the Atiyah--Bott localization theorem applied to $H^*_\TT(\PP(V))$.
Denote by $F_j$  the fixed locus  $\PP(L_j) \subset \PP(V)$.  On this locus, $\cc O_{\PP(V)}(-1)$ restricts to $L_j$, thus we have the restriction 
$$H|_{F_j} = -\rho_j.$$

The normal bundle is given by 
$$e(N_{F_j|\PP(V)}) = \prod_{i \neq j}\left(\rho_i+  H \right)|_{F_j} 
= \prod_{i \neq j}\left(\rho_i- \rho_j \right),$$
the Atiyah--Bott localization theorem states that 
\begin{align*} \pi_* \left(f( H, \rho_1, \ldots, \rho_r)\right) = &\sum_{j=1}^r
\frac{f( H, \rho_1, \ldots, \rho_r)|_{F_j}}{e(N_{F_j|\PP(V)})} \\
= &\sum_{j=1}^r
\frac{f(-\rho_j, \rho_1, \ldots, \rho_r)}{\prod_{i\neq j} 
    (\rho_i -\rho_j)} \\
= &
\sum_{j=1}^r
\operatorname{Res}_{ H= - \rho_j}\left( \frac{
f( H, \rho_1, \ldots, \rho_r)}{\prod_{i=1}^r 
    (\rho_i + H)}
    \right).
\end{align*}

\end{proof}

Viewing $\rho_i$ as a small complex number, for $d$ an integer the function $\Gamma(x + \rho_i - d)$ is regular at $x = 0$ and thus has a well-defined power series expansion.  
This allows us to define the $H^*(\PP(V))$-valued function 
$\Gamma(H + \rho_i - d)$.

Lemma~\ref{l:res} therefore gives, in particular, the following equality for all $d>0$,
\begin{equation} \label{e:residues3}
    \pi_*\left(
    \prod_{i=1}^r (\rho_i +H) \Gamma(\rho_i +H - d)
    \right) = 
    \sum_{j=1}^{r}
    \operatorname{Res}_{H= -\rho_{j}} \prod_{i=1}^r \Gamma(\rho_i +H - d).
\end{equation}

\subsection{Weak asymptotic classes}
Now we use the residue formula above to study the asymptotic behavior of the flat sections 
$$\left\{\Phi^{\PP(V)}_\TT(q, z)\left(\widehat{\Gamma}_{\PP(V)}{\rm Ch}(\mathcal{O}_{\PP(V)}(m))\pi^*(\alpha)\right)\mid \alpha\in H^*_\TT(Z), m\in\mathbb{Z}\right\}$$
when $V$ is a splitting bundle.
According to \eqref{leading-term}, when specializing to $\bt=0$ and pairing the flat sections above with the flat identity $\one$, we obtain 
\begin{equation}
    \label{leading-term-proj-bundle}
\left\langle\widetilde{J}^{\PP(V)}_\TT(q, -z), \widehat{\Gamma}_{\PP(V)}{\rm Ch}(\mathcal{O}_{\PP(V)}(m))\pi^*(\alpha)\right\rangle^{\PP(V)}_\TT.
\end{equation}
In this section, we only consider the asymptotic behavior of such terms and show weak asymptotic classes of Definition \ref{weak-asymp-def}.

Following \cite[Section 1]{Meijer} \cite[Section 5.2]{Luke}, we 
consider the Meijer $G$-function 
\begin{eqnarray*}
G_{0,r}^{r,0}\left(
\begin{array}{l}
\emptyset\\
\lambda_1, \ldots, \lambda_r
\end{array}\bigg\vert \ t \right)
= \frac{1}{2 \pi \sqrt{-1}} \int_L t^{x} \prod_{i=1}^r \Gamma(\lambda_i - x) dx 
\end{eqnarray*}
where $\lambda_1, \ldots, \lambda_r$ are fixed small complex numbers and $L$ is a path from $-\sqrt{-1}\infty$ to $\sqrt{-1}\infty$ such that all the poles of $\Gamma(\lambda_i - x)$ lie to the right of the path. 
 This function may be expanded as a power series in $\lambda_1, \ldots, \lambda_r$ with coefficients analytic functions of $t$.  
This allows us to define the $H^*_\TT(Z)$-valued function
\begin{equation}
G_{0,r}^{r,0}\left(
\begin{array}{l}
\emptyset\\
\rho_1, \ldots, \rho_r
\end{array}\bigg\vert \ t \right)
= \frac{1}{2 \pi \sqrt{-1}} \int_L t^{x} \prod_{i=1}^r \Gamma(\rho_i - x) dx.   
\end{equation}
Near $t=0$, it has a residue expression 
\begin{align} \label{expansion-meijer-bundle}
  G_{0,r}^{r,0}\left(
\begin{array}{l}
\emptyset\\
\rho_1, \ldots, \rho_r
\end{array}\bigg\vert \ t \right)
=&  \sum_{d \geq 0} \sum_{j=1}^{r}
    \operatorname{Res}_{x= \rho_j + d} t^{x}\prod_{i=1}^r \Gamma(\rho_i -x)\\ \label{e:psires1}
    =&  \sum_{d \geq 0} \sum_{j=1}^{r}
    \operatorname{Res}_{H= -\rho_j} t^{d-H}\prod_{i=1}^r \Gamma(\rho_i +H - d). 
\end{align}

\begin{prop}\nonumber
For fixed $q\neq 0$, near $z=\infty$, 
the term \eqref{leading-term-proj-bundle} is given by 
$$\int_Z \widehat{\Gamma}_{Z}\alpha 
\left({z\over q}\right)^{c_1(Z)+c_1(V)} 
 G_{0,r}^{r,0}\left(
\begin{array}{l}
\emptyset\\ \label{weak-projective-bundle}
\rho_1, \ldots, \rho_r
\end{array}\bigg\vert \ e^{-2\pi\sqrt{-1}m}\left({q\over z}\right)^r \right).$$
\end{prop}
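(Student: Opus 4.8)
The plan is to evaluate the pairing in \eqref{leading-term-proj-bundle} directly, term by term in the summation index $d$ of the $J$-function, and to identify the outcome with the residue expansion \eqref{e:psires1} of the Meijer $G$-function. First I would record three elementary identities on $\PP(V)$. From the ($\TT$-equivariant) relative Euler sequence of $\pi\colon\PP(V)\to Z$, the $K$-theory class of the tangent bundle is $T\PP(V)=\pi^*TZ+\pi^*V\otimes\mathcal O_{\PP(V)}(1)-\mathcal O$, whose Chern roots are those of $TZ$ together with $\rho_1+H,\dots,\rho_r+H$ and $0$; hence
\begin{equation*}
\widehat\Gamma_{\PP(V)}=\pi^*\widehat\Gamma_Z\cdot\prod_{i=1}^{r}\Gamma(1+\rho_i+H).
\end{equation*}
Second, ${\rm Ch}(\mathcal O_{\PP(V)}(m))=e^{2\pi\sqrt{-1}mH}$. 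Third, by \eqref{first-chern-T} with $s=0$, one has $c_1(\PP(V))=rH+c_1(Z)+c_1(V)$.

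Next I would substitute the ($\TT$-equivariant) formula \eqref{modified-J-projective} for $\wtJ^{\PP(V)}_\TT(q,-z)$ into \eqref{leading-term-proj-bundle}. Using $\Gamma(1+w)=w\,\Gamma(w)$, the factor $\prod_i\Gamma(1+\rho_i+H)$ coming from $\widehat\Gamma_{\PP(V)}$ cancels the denominators $\prod_i\Gamma(\rho_i+H)$ of the $J$-function and leaves $\prod_{i=1}^{r}(\rho_i+H)\,\Gamma(\rho_i+H-d)$. Separating off the power of $q/z$ via $(q/z)^{-c_1(\PP(V))}=(z/q)^{rH}(z/q)^{c_1(Z)+c_1(V)}$ and setting $t:=e^{-2\pi\sqrt{-1}m}(q/z)^{r}$, so that $(z/q)^{rH}e^{2\pi\sqrt{-1}mH}=t^{-H}$ and $(q/z)^{rd}=t^{d}$ (using $m,d\in\ZZ$), the pairing \eqref{leading-term-proj-bundle} takes the form
\begin{equation*}
\int_{\PP(V)}\pi^*\!\bigl(\widehat\Gamma_Z\,\alpha\,(z/q)^{c_1(Z)+c_1(V)}\bigr)\cup\sum_{d\geq0}t^{\,d-H}\prod_{i=1}^{r}(\rho_i+H)\,\Gamma(\rho_i+H-d).
\end{equation*}
By the projection formula this equals $\int_Z\widehat\Gamma_Z\,\alpha\,(z/q)^{c_1(Z)+c_1(V)}\cdot\sum_{d\geq0}t^{d}\,\pi_*\bigl(t^{-H}\prod_i(\rho_i+H)\,\Gamma(\rho_i+H-d)\bigr)$, since $\widehat\Gamma_Z\,\alpha\,(z/q)^{c_1(Z)+c_1(V)}$ is pulled back from $Z$.

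It then remains to compute $\pi_*$ of the $d$-th summand. Applying Lemma~\ref{l:res} with $f=t^{-H}\prod_i(\rho_i+H)\,\Gamma(\rho_i+H-d)$ --- which lies in $H^*_\TT(\PP(V))\otimes_{H^*_\TT(pt)}\hat S_\TT$ because each $\Gamma(\rho_i+H-d)$ expands as a finite polynomial in $H$ with coefficients that are (Laurent) series in $\rho_i$ --- the internal factor $\prod_i(\rho_i+H)$ cancels the denominator in \eqref{e:residues}, so that
\begin{equation*}
\pi_*\!\Bigl(t^{-H}\prod_{i=1}^{r}(\rho_i+H)\,\Gamma(\rho_i+H-d)\Bigr)=\sum_{j=1}^{r}\operatorname{Res}_{H=-\rho_j}t^{-H}\prod_{i=1}^{r}\Gamma(\rho_i+H-d).
\end{equation*}
Summing over $d\geq0$ and absorbing the scalar $t^{d}$ into the residue yields $\sum_{d\geq0}\sum_{j=1}^{r}\operatorname{Res}_{H=-\rho_j}t^{\,d-H}\prod_i\Gamma(\rho_i+H-d)$, which by \eqref{expansion-meijer-bundle}--\eqref{e:psires1} is exactly $G^{r,0}_{0,r}(\emptyset;\rho_1,\dots,\rho_r\mid t)$; rewriting $t=e^{-2\pi\sqrt{-1}m}(q/z)^r$ gives the claimed formula. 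The phrase ``near $z=\infty$'' is then justified: for fixed $q\neq0$ the value $t$ lies near $0$, so both the series defining $\wtJ^{\PP(V)}_\TT$ and the residue expansion of the Meijer $G$-function converge, and the term-by-term identity above is an identity of analytic functions on a punctured neighborhood of $z=\infty$.

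I do not expect a serious obstacle, since the argument is essentially a bookkeeping exercise; the points that demand the most care are the three changes of variable ($q/z$ versus $z/q$, the phase $e^{2\pi\sqrt{-1}m}$, and the passage to $t$), the identification $\widehat\Gamma_{\PP(V)}=\pi^*\widehat\Gamma_Z\prod_i\Gamma(1+\rho_i+H)$ via the relative tangent sequence, and the verification that the integrand genuinely lies in $H^*_\TT(\PP(V))\otimes_{H^*_\TT(pt)}\hat S_\TT$ so that Lemma~\ref{l:res} (equivalently \eqref{e:residues3}) applies to each summand.
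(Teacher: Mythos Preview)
Your proposal is correct and follows essentially the same approach as the paper: both arguments compute $\widehat\Gamma_{\PP(V)}$ via the relative Euler sequence, substitute the modified $J$-function \eqref{modified-J-projective}, use the projection formula to reduce the integral over $\PP(V)$ to one over $Z$, apply the residue identity \eqref{e:residues3} (i.e.\ Lemma~\ref{l:res}) to evaluate $\pi_*$, and finally identify the resulting sum with the residue expansion \eqref{e:psires1} of $G^{r,0}_{0,r}$ after the substitution $t=e^{-2\pi\sqrt{-1}m}(q/z)^r$. Your write-up is in fact slightly more explicit about the bookkeeping (the change of variables to $t$, and the convergence near $z=\infty$), but there is no substantive difference.
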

\begin{proof}
Recall that the Gamma class $\widehat{\Gamma}_{\PP(V)}$ is defined by formula \eqref{gamma-class-vect}. 
We will also consider the relative Gamma class
$\widehat \Gamma_\pi := \widehat \Gamma_{T\pi}$
of the relative tangent bundle $T\pi$, given by 
$\widehat \Gamma_\pi = \widehat \Gamma_{\PP(V)}/ \pi^*(\widehat \Gamma_Z).$
Using the relative Euler sequence for $\PP(V)$ 
$$0 \to \mathcal O_Z \to \pi^*V(1) \to T\pi \to 0,$$
we compute 
\begin{equation}
\label{relative-gamma-proj-bundle}
\widehat \Gamma_\pi = \widehat \Gamma_{\pi^*V(1)} = \prod_{i=1}^r \Gamma( 1 + \rho_i + H)\quad \in H^*(\PP(V)).
\end{equation}
Using \eqref{e:residues3}, we compute
   \begin{eqnarray*}
        &&\left\langle\widetilde{J}^{\PP(V)}_{\TT}(q, -z), \widehat{\Gamma}_{\PP(V)}{\rm Ch}(\mathcal{O}_{\PP(V)}(m))\pi^*(\alpha)\right\rangle_{\TT}^{\PP(V)}\\
        &=&\left\langle\widetilde{J}^{\PP(V)}_{\TT}(q, -z), 
        (-1)^{2mH}\widehat{\Gamma}_{\pi} \pi^*(\widehat{\Gamma}_{Z} \alpha)\right\rangle_{\TT}^{\PP(V)}\\
        &=&\int_{\PP(V)}\pi^*(\widehat{\Gamma}_{Z} \alpha)
    \sum_{d\geq 0}\left({q\over z}\right)^{r(d-H)-c_1(Z)-c_1(V)}(-1)^{2mH} \prod_{i=1}^{r}(\rho_i + H)\Gamma(\rho_i + H -d ) \\
        &=&\int_Z \left(\widehat{\Gamma}_{Z} \alpha\right)
\pi_*\left(\sum_{d\geq 0}\left({q\over z}\right)^{r(d-H)-c_1(Z)-c_1(V)}
(-1)^{2mH}  \prod_{i=1}^{r}(\rho_i + H)\Gamma(\rho_i + H -d )\right)\\
&=&\int_Z \left(\widehat{\Gamma}_{Z} \alpha\right)
\sum_{d\geq0} \sum_{j=1}^{r}  \operatorname{Res}_{H= -\rho_{j}}
\left({q\over z}\right)^{r(d-H)-c_1(Z)-c_1(V)}
(-1)^{2mH}
\prod_{i=1}^r \Gamma(\rho_i +H - d).
    \end{eqnarray*}
Here the first equality is based on 
$${\rm Ch}(\mathcal{O}_{\PP(V)}(m))=e^{2\pi\sqrt{-1}m H}=(-1)^{2mH}.$$
The second equality uses the calculation \eqref{modified-J-projective} of $\wtJ^{\PP(V)}$  together with \eqref{relative-gamma-proj-bundle} and 
$$c_1(\PP(V))=rH+c_1(Z)+c_1(V).$$
The third equality uses the projection formula for $\pi: \PP(V) \to Z$ and the last is \eqref{e:residues3}.
Substituting  $$t=e^{-2\pi\sqrt{-1}m}\left({q\over z}\right)^r,$$ and applying \eqref{expansion-meijer-bundle}, we obtain the result.
\end{proof}

From here we apply Barnes' asymptotic expansion formula (Theorem \ref{exponential-asymptotic} in Appendix \ref{appendix-asymptotic}) to the Meijer $G$-function in Proposition \ref{weak-projective-bundle} to obtain 
\begin{prop}
For $r>1$, the element in \eqref{leading-term-proj-bundle} 
admits asymptotic expansion
$$\exp\left({-{re^{{-2\pi\sqrt{-1}m}/r}q\over z}}\right)
\int_Z \left(\widehat{\Gamma}_{Z} \alpha\right)  \sum_{n=0}^{\infty}c_n \left({z\over q}\right)^{n+{r-1\over 2}+c_1(Z)}$$
as $z\to 0$ with $$\left|\arg\left({z\over q}\right)+{2m\pi\over r}\right|< {(r+1)\pi\over r},$$ 
where the coefficients $c_n\in H^*(Z)$ are determined recursively.
\end{prop}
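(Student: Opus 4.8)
## Proof Proposal

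The plan is to reduce the statement to a direct application of Barnes' asymptotic expansion for Meijer $G$-functions, which is quoted as Theorem~\ref{exponential-asymptotic} in the appendix. By the previous Proposition, for fixed $q \neq 0$ the quantity in \eqref{leading-term-proj-bundle} equals
\[
\int_Z \widehat{\Gamma}_{Z}\alpha \left(\tfrac{z}{q}\right)^{c_1(Z)+c_1(V)} G_{0,r}^{r,0}\left(\begin{array}{l}\emptyset\\ \rho_1,\ldots,\rho_r\end{array}\bigg\vert\ e^{-2\pi\sqrt{-1}m}\left(\tfrac{q}{z}\right)^r\right),
\]
so everything hinges on the asymptotics of $G_{0,r}^{r,0}$ as its argument $t = e^{-2\pi\sqrt{-1}m}(q/z)^r$ tends to $\infty$ (equivalently $z \to 0$). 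First I would recall from Barnes' theorem that for $r > 1$ the Meijer $G$-function $G_{0,r}^{r,0}(\,;\lambda_1,\ldots,\lambda_r\mid t)$ has, as $t \to \infty$ in an appropriate sector, an asymptotic expansion of the form $e^{-r t^{1/r}} t^{\theta}\sum_{n \geq 0} a_n t^{-n/r}$ for a suitable exponent $\theta$ (here $\theta = \tfrac{1}{2}(1 - \tfrac{1}{r}) + \tfrac{1}{r}\sum_i \lambda_i$, coming from the $\sum_i \lambda_i$ and the standard $(r-1)/2$ shift), with the $a_n$ determined by the recursion in the cited theorem.

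Next I would substitute $t = e^{-2\pi\sqrt{-1}m}(q/z)^r$ and $\lambda_i = \rho_i$, tracking each factor. The exponential factor becomes $e^{-r t^{1/r}} = \exp\!\big(-r\,e^{-2\pi\sqrt{-1}m/r}\,\tfrac{q}{z}\big)$, matching the claimed leading exponential. The power factor $t^{\theta}$ contributes $(q/z)^{r\theta}$; combining the $\sum_i \rho_i = c_1(V)$ part of $\theta$ with the prefactor $(z/q)^{c_1(Z)+c_1(V)}$ already present, the $c_1(V)$ contributions cancel and one is left with an overall $(z/q)^{(r-1)/2 + c_1(Z)}$, again matching. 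The remaining sum $\sum_n a_n t^{-n/r}$ becomes $\sum_n a_n (q/z)^{-n} = \sum_n a_n (z/q)^n$, and absorbing $a_n$ (which depends polynomially on the $\rho_i$, hence is a class in $H^*(Z)$ after applying $\int_Z \widehat{\Gamma}_Z\alpha\,(-)$) into the coefficients $c_n \in H^*(Z)$ gives the stated series. Because all exponents of the $\rho_i$ (hence of $H$) are nilpotent, the power series $t^\theta$ and $G$-function expansion in the $\rho_i$ are finite, so the term-by-term manipulation is legitimate.

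The one genuinely delicate point is the \textbf{range of validity of the sector}. Barnes' theorem gives the expansion for $|\arg t| < (r+1)\pi/r$ (this is the sector where the $e^{-rt^{1/r}}$ term dominates and the expansion is uniformly valid; see Theorem~\ref{exponential-asymptotic}). Since $\arg t = -2\pi m - r\arg(z/q)$, the condition $|\arg t| < (r+1)\pi/r$ translates directly into $\big|\,r\arg(z/q) + 2\pi m\,\big| < (r+1)\pi/r$, i.e.\ $\big|\arg(z/q) + \tfrac{2m\pi}{r}\big| < \tfrac{(r+1)\pi}{r}$, which is exactly the claimed range. I would make sure to state precisely which branch of $t^{1/r}$ and $\log t$ is being used so that $e^{-r t^{1/r}} = \exp(-r e^{-2\pi\sqrt{-1}m/r} q/z)$ holds with the correct $r$-th root, and note that the hypothesis $r > 1$ is what makes the sector nonempty and the exponential term genuinely present (for $r = 1$ the $G$-function degenerates and there is no irregular behavior, consistent with $r - s > 1$ being assumed elsewhere). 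This sector bookkeeping, together with citing Theorem~\ref{exponential-asymptotic}, is the substance of the proof; the rest is the routine substitution above.
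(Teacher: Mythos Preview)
Your approach is exactly the paper's: apply Barnes' asymptotic expansion (Theorem~\ref{exponential-asymptotic}) to the Meijer $G$-function expression from the preceding proposition and track the substitution $t = e^{-2\pi\sqrt{-1}m}(q/z)^r$. Two small slips to fix: (i) the sector from Theorem~\ref{exponential-asymptotic} is $|\arg t| < (\nu+1)\pi = (r+1)\pi$, not $(r+1)\pi/r$---dividing by $r$ is what produces the stated bound on $\arg(z/q)$, so your final sector is right but the intermediate inequality is misquoted; (ii) from the appendix formula one gets $\theta = \frac{1-r}{2r} + \frac{c_1(V)}{r}$ (not $\frac{r-1}{2r} + \cdots$), so $t^\theta$ contributes $(z/q)^{(r-1)/2 - c_1(V)}$, and combining with the prefactor $(z/q)^{c_1(Z)+c_1(V)}$ then gives $(z/q)^{(r-1)/2 + c_1(Z)}$ as claimed.
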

In particular, according to Definition \ref{weak-asymp-def}, this implies 
\begin{prop}
\label{main-thm-bundle}
Let $r>1$.
For any $\alpha\in H^*(Z)$ and $m\in \mathbb{Z}$, the element $\widehat{\Gamma}_{\PP(V)}{\rm Ch}(\mathcal{O}_{\PP(V)}(m))\pi^*(\alpha)$ is a weak asymptotic class with respect to $re^{{-2\pi\sqrt{-1}m}/r}q$ 
and $\arg(z/q)=-2m\pi/r$.
\end{prop}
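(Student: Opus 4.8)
The statement is an immediate consequence of the asymptotic expansion established in the proposition immediately above, once it is rephrased in the language of Definition~\ref{weak-asymp-def}; the plan is therefore mostly bookkeeping. Write $\beta := \widehat{\Gamma}_{\PP(V)}{\rm Ch}(\mathcal{O}_{\PP(V)}(m))\pi^*(\alpha)$. First I would use \eqref{leading-term} (with $\bt=0$ and the parameters specialized as in this section) to rewrite
$$\bigl\langle\one,\Phi^{\PP(V)}(z)(\beta)\bigr\rangle^{\PP(V)}=(2\pi z)^{-\dim\PP(V)/2}\,\bigl\langle\wtJ^{\PP(V)}(q,-z),\beta\bigr\rangle^{\PP(V)},$$
and note that the pairing on the right is the (non-equivariant specialization of the) term \eqref{leading-term-proj-bundle}, whose $z\to 0$ behaviour was computed above — the equivariant parameters $\rho_i$ there being specialized to the geometric Chern classes $c_1(L_i)\in H^2(Z)$, which is why the coefficients $c_n$ in the expansion lie in $H^*(Z)$. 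The normalizing factor $(2\pi z)^{-\dim\PP(V)/2}$ is a fixed power of $z$ and will be absorbed into the final estimate.

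Next I would set $\lambda:=re^{-2\pi\sqrt{-1}m/r}q$, so that the condition $\arg(z)=\arg(\lambda)$ of Definition~\ref{weak-asymp-def} is exactly $\arg(z/q)=-2m\pi/r$. This ray lies in the sector $|\arg(z/q)+2m\pi/r|<(r+1)\pi/r$ on which the expansion of the preceding proposition is valid (indeed $0<(r+1)\pi/r$), so along it we may multiply that expansion by $e^{\lambda/z}$, cancelling the factor $\exp\bigl(-re^{-2\pi\sqrt{-1}m/r}q/z\bigr)$. This yields
$$e^{\lambda/z}\,\bigl\langle\one,\Phi^{\PP(V)}(z)(\beta)\bigr\rangle^{\PP(V)}\ \sim\ (2\pi z)^{-\dim\PP(V)/2}\int_Z\bigl(\widehat{\Gamma}_Z\,\alpha\bigr)\sum_{n=0}^{\infty}c_n\Bigl(\frac{z}{q}\Bigr)^{\,n+\frac{r-1}{2}+c_1(Z)}$$
as $z\to 0$ with $\arg(z/q)=-2m\pi/r$.

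It then remains to bound the right-hand side by $O(|z|^{-N})$ for some $N\in\RR$. Since $c_1(Z)$ is nilpotent in $H^*(Z)$, the factor $(z/q)^{c_1(Z)}=\exp\bigl(c_1(Z)\log(z/q)\bigr)$ is a polynomial in $\log(z/q)$, hence $O(|z|^{-\epsilon})$ for every $\epsilon>0$; the power series $\sum_{n\ge 0}c_n(z/q)^n$ converges near $z=0$ and so is $O(1)$ there; and $(z/q)^{(r-1)/2}\to 0$ because $r>1$. Combining these with the power $(2\pi z)^{-\dim\PP(V)/2}$ gives a bound of the shape $O\bigl(|z|^{(r-1)/2-\dim\PP(V)/2-\epsilon}\bigr)$, in particular $O(|z|^{-\dim\PP(V)/2})$, which is precisely the estimate \eqref{weak-asymp-class-behavior} for a weak asymptotic class with respect to $\lambda=re^{-2\pi\sqrt{-1}m/r}q$ and argument $\arg(z/q)=-2m\pi/r$. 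There is no genuine obstacle in this proof: all the substantive work is already done, namely the identification of \eqref{leading-term-proj-bundle} with a Meijer $G$-function and the application of Barnes' asymptotic expansion; the only point here requiring any care is the handling of the cohomology-valued logarithmic terms coming from $z^{c_1(Z)}$ together with the harmless normalizing power of $z$, and confirming that the designated ray $\arg(z/q)=-2m\pi/r$ falls inside the sector of validity of the expansion.
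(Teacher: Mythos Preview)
Your approach is exactly the paper's: the paper simply writes ``In particular, according to Definition~\ref{weak-asymp-def}, this implies'' and then states the proposition, so all the content is indeed in the preceding asymptotic expansion and you have correctly unpacked what that entails.

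One small correction: you write that ``the power series $\sum_{n\ge 0}c_n(z/q)^n$ converges near $z=0$ and so is $O(1)$ there.'' This is not quite right --- Barnes' expansion is an \emph{asymptotic} series, not a convergent one, and such series typically diverge. The bound you want still holds, but for a different reason: by the very definition of an asymptotic expansion $f(z)\sim\sum_n c_n z^{a_n}$, truncating after the leading term gives $f(z)=c_0 z^{a_0}+o(z^{a_0})=O(z^{a_0})$. Applied here, the expression after cancelling the exponential is $O\bigl(|z|^{(r-1)/2}\cdot|\log z|^{\dim Z}\bigr)$, and combining with the normalizing power $(2\pi z)^{-\dim\PP(V)/2}$ gives the required polynomial bound. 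This is a bookkeeping slip that does not affect the validity of your argument.
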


\section{Asymptotics for the non-zero eigenvalues: the local model}\label{s:nzflip}
In this section, we generalize Proposition \ref{main-thm-bundle} from projective bundle $\PP(V)$ to the (split) local model for  standard flips.

\subsection{Reduction to local models}
We recall diagram \eqref{e:diagram} of standard flips.  Denote by $p$ the map $p: X \to \bar X$ and let $j: F \to X$ be the inclusion of the exceptional locus $p^{-1}(Z)$.
Define $$\Psi_m^X(\alpha)=\left(2\pi\sqrt{-1}\right)^s j_*\left({\rm Ch}( \mathcal{O}_F(m)){\rm Td}(N_{F|X})^{-1} \pi^*(\alpha)\right).$$

We want to show that the elements 
\begin{equation}
    \label{m-th-cohomology}
    \{\widehat{\Gamma}_X \Psi_m^X(\alpha)\vert \alpha\in H^*(Z)\} \subset H^*(X)
\end{equation}
 are weak asymptotic classes with respect to the nonzero eigenvalues. 
By Definition \ref{weak-asymp-def} and the formula \eqref{leading-term}, we need to calculate
\begin{equation}\label{interal-on-X}
    \left\langle\wtJ^X(q, -z),\widehat{\Gamma}_X \Psi_m^X(\alpha)\right\rangle^X.
\end{equation}
\begin{lemma}

For any $E\in K^0(Z),$ we have 
\begin{equation}\label{phi-m-todd}
{\rm Ch}(\Phi_m^X(E))=
\Psi_m^X\left(  {\rm Ch}(E)\right).
\end{equation}
\end{lemma}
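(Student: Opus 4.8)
The plan is to obtain \eqref{phi-m-todd} directly from the Grothendieck--Riemann--Roch theorem for the closed embedding $j\colon F\hookrightarrow X$. Since $j$ is a closed immersion, $\pi\colon F=\PP(V)\to Z$ is flat, and $-\otimes\mathcal O_F(m)$ is exact, the functor $\Phi_m^X$ descends to a well-defined map on Grothendieck groups, so it is enough to fix an arbitrary class $E\in K^0(Z)$ and compute the modified Chern character of $j_*(\pi^*E\otimes\mathcal O_F(m))\in K^0(X)$.

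First I would apply GRR to $j$. Using the normal bundle sequence $0\to TF\to j^*TX\to N_{F|X}\to 0$, the projection formula for the Gysin map $j_*$, and multiplicativity of the Chern character together with ${\rm ch}(\pi^*E)=\pi^*{\rm ch}(E)$, this gives
\[
{\rm ch}\big(\Phi_m^X(E)\big)={\rm ch}\big(j_*(\pi^*E\otimes\mathcal O_F(m))\big)=j_*\!\big(\pi^*{\rm ch}(E)\cdot{\rm ch}(\mathcal O_F(m))\cdot{\rm td}(N_{F|X})^{-1}\big),
\]
where the factor ${\rm td}(N_{F|X})^{-1}$ arises from ${\rm td}(TF)\cdot j^*{\rm td}(TX)^{-1}$ via the normal bundle sequence.

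It then remains to translate this into the modified classes ${\rm Ch},{\rm Td}$ used in \eqref{cohomology-maps}. Let $\psi$ be the degree-rescaling ring automorphism of $H^*(-)$ acting on $H^{2k}$ by multiplication by $(2\pi\sqrt{-1})^{k}$; then ${\rm Ch}=\psi\circ{\rm ch}$ and ${\rm Td}=\psi\circ{\rm td}$ on $K^0$-classes, $\psi$ commutes with pullbacks, and $\psi\big({\rm td}(N_{F|X})^{-1}\big)={\rm Td}(N_{F|X})^{-1}={\rm Td}(N_{F|X}^{-1})$. Applying $\psi$ to the display above and commuting it through the codimension-$s$ pushforward $j_*$ (here $s=\rank N_{F|X}$) yields
\[
{\rm Ch}\big(\Phi_m^X(E)\big)=j_*\!\big({\rm Ch}(\mathcal O_F(m))\cdot{\rm Td}(N_{F|X}^{-1})\cdot\pi^*{\rm Ch}(E)\big)=\Psi_m^X\big({\rm Ch}(E)\big),
\]
which is exactly \eqref{phi-m-todd}. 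The one step that requires attention is this last one: one must keep track of the powers of $2\pi\sqrt{-1}$ produced when $\psi$ is pushed past the Gysin map $j_*$ and check that they match the normalization of the pushforward implicit in the definition of $\Psi_m^X$ (equivalently, the $(2\pi\sqrt{-1})$-twisting built into the HKR identification behind \eqref{cohomology-maps}). Beyond this bookkeeping there is no geometric difficulty; the entire content of the lemma is GRR together with the normal bundle sequence.
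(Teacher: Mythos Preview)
Your proof is correct and follows essentially the same route as the paper: Grothendieck--Riemann--Roch for the closed embedding $j\colon F\hookrightarrow X$, combined with the normal bundle sequence $0\to TF\to j^*TX\to N_{F|X}\to 0$ and the projection formula. The paper's proof is terser and writes GRR directly in terms of the modified classes ${\rm Ch}$ and ${\rm Td}$, whereas you carry out the argument first with the ordinary ${\rm ch}$, ${\rm td}$ and then rescale; your explicit handling of the $2\pi\sqrt{-1}$ bookkeeping via the degree-rescaling automorphism $\psi$ is a point the paper leaves implicit.
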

\begin{proof}
Using the Grothendieck-Riemann-Roch formula for $j:F\to X$, the projection formula, and the short exact sequence \begin{equation}
\label{ses-TX}
0\to TF\to j^*(TX)\to N_{F|X}\to 0,
\end{equation} 
 we have 
$${\rm Ch}(j_*E)=\left(2 \pi \sqrt{-1}\right)^s j_*\left({\rm Ch}(E){\rm Td}(N_{F|X})^{-1}\right),$$
where here the factor of $\left(2 \pi \sqrt{-1}\right)^s$ is to account for the fact that we are using the modified Chern character and Todd class \eqref{modified-todd}.
Then the equation \eqref{phi-m-todd} follows from $\Phi_m^X(E)=j_*(\pi^*E\otimes \mathcal{O}_F(m)).$
\end{proof}

By the projection formula we have:
\begin{lemma}\label{Z-integral-weak}
For any $\alpha\in H^*(Z)$, the expression $\left\langle\wtJ^X(q, -z),\widehat{\Gamma}_X \Psi_m^X(\alpha)\right\rangle^X
$ is given by    
$$\left(2 \pi \sqrt{-1}\right)^s\int_Z  \pi_*\left(j^*\left(\wtJ^X(q, -z)\widehat{\Gamma}_X\right){\rm Ch}(\mathcal{O}_F(m)){\rm Td}(N_{F|X})^{-1}\right)\alpha.$$
In particular, for any $E\in K(Z)$, the central charge $Z(\Phi_m^X(E))$ is
$$\left(2 \pi \sqrt{-1}\right)^s\int_Z  \pi_*\left(j^*\left(\wtJ^X(q, -z)\widehat{\Gamma}_X\right){\rm Ch}(\mathcal{O}_F(m)){\rm Td}(N_{F|X})^{-1}\right){\rm Ch}(E).$$
\end{lemma}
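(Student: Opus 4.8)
The plan is to prove this by unwinding the definition of $\Psi_m^X$ and then applying the projection formula twice, once along $j\colon F\to X$ and once along $\pi\colon F\to Z$. First I would recall that by definition $\Psi_m^X(\alpha)=j_*\!\left({\rm Ch}(\mathcal{O}_F(m)){\rm Td}(N_{F|X}^{-1})\pi^*(\alpha)\right)$, so that the Poincaré pairing on $X$ can be written as the integral
$$
\left\langle\wtJ^X(q, -z),\widehat{\Gamma}_X \Psi_m^X(\alpha)\right\rangle^X
=\int_X \wtJ^X(q,-z)\,\widehat{\Gamma}_X\, j_*\!\left({\rm Ch}(\mathcal{O}_F(m)){\rm Td}(N_{F|X}^{-1})\pi^*(\alpha)\right).
$$
Here $\wtJ^X(q,-z)$ is regarded as a cohomology-valued series in $q$ and $z$ (with the powers of $z$ coming from the $z^\mu$, $z^\rho$ factors), and every manipulation below is linear over $\CC[[q]][z,z^{-1}]$, so those formal coefficients are carried along passively.

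Next I would apply the projection formula for the proper morphism $j\colon F\to X$ to move $\wtJ^X\widehat{\Gamma}_X$ inside the pushforward, replacing it by $j^*\!\left(\wtJ^X\widehat{\Gamma}_X\right)$ and converting the integral over $X$ into an integral over $F$:
$$
\int_X \wtJ^X\,\widehat{\Gamma}_X\, j_*(-)=\int_F j^*\!\left(\wtJ^X\,\widehat{\Gamma}_X\right){\rm Ch}(\mathcal{O}_F(m)){\rm Td}(N_{F|X}^{-1})\,\pi^*(\alpha).
$$
Since every factor of the integrand other than $\pi^*(\alpha)$ is a class pulled back from nowhere in particular but lives on $F$, a second application of the projection formula, this time for $\pi\colon F\to Z$, yields
$$
\int_F j^*\!\left(\wtJ^X\,\widehat{\Gamma}_X\right){\rm Ch}(\mathcal{O}_F(m)){\rm Td}(N_{F|X}^{-1})\,\pi^*(\alpha)=\int_Z \pi_*\!\left(j^*\!\left(\wtJ^X\,\widehat{\Gamma}_X\right){\rm Ch}(\mathcal{O}_F(m)){\rm Td}(N_{F|X}^{-1})\right)\alpha,
$$
which is precisely the first assertion of the lemma.

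For the final statement, I would use the definition of the exceptional central charge, $Z(\Phi_m^X(E))(q)=\left\langle\wtJ^X(q,-z),\widehat{\Gamma}_X{\rm Ch}(\Phi_m^X(E))\right\rangle^X$, combined with the identity ${\rm Ch}(\Phi_m^X(E))=\Psi_m^X({\rm Ch}(E))$ established in \eqref{phi-m-todd}; applying the first part with $\alpha={\rm Ch}(E)\in H^*(Z)$ then gives the claimed formula. I do not expect any real obstacle here: the entire content is two invocations of the projection formula, and the only point requiring care is bookkeeping, namely that $\widehat{\Gamma}_X$ and $\wtJ^X(q,-z)$ must be restricted to $F$ via $j^*$ before the second (fiberwise) pushforward along $\pi$ is performed, and that the $q$- and $z$-dependence of $\wtJ^X$ is inert under these cohomological operations.
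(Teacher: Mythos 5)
Your proof is correct and matches the paper's approach exactly: the paper simply cites "the projection formula," and your write-up makes explicit the two invocations (along $j\colon F\to X$ and along $\pi\colon F\to Z$) together with the identity ${\rm Ch}(\Phi_m^X(E))=\Psi_m^X({\rm Ch}(E))$ from the preceding lemma. No gaps.
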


The modified extremal $J$-function $\widetilde{J}^{T}(q, \bt, -z)$ of the local model $T$ is given by \eqref{pullback-J-gamma}. 
By a similar argument to Theorem~\ref{t:func}, we have
\begin{align}
j^*\left(\wtJ^X(q, -z)\right)\label{pull-back-of-X} &=\wtJ^T(q, -z)\\&=
\sum_{d \geq 0} \left({q\over z}\right)^{(r-s)d-c_1(T)} \prod_{i=1}^r \frac{\Gamma(\rho_i + H -d)}{\Gamma(\rho_i + H)}\prod_{j=1}^s \frac{(-1)^{d}\ \Gamma(1-\sigma_j + H)}{\Gamma(1-\sigma_j + H - d)}.\nonumber
\end{align}
From the short exact sequence \eqref{ses-TX} and \eqref{relative-gamma-proj-bundle}, we compute that 
\begin{equation}
\label{pullback-gamma-X}
j^*\widehat{\Gamma}_X=
\pi^*(\widehat{\Gamma}_Z)
\prod_{i=1}^r \Gamma(1 + \rho_i + H) \prod_{j=1}^s \Gamma(1 + \sigma_j - H).
\end{equation}
Applying the Euler reflection formula \eqref{euler-reflection} to the modified Todd class \eqref{modified-todd} for $N_{F|X}$,
we have 
   \begin{equation}\label{todd-normal} 
   \operatorname{Td}(N_{F|X})
   =\prod_{j=1}^s \Gamma(1+\sigma_j - H) \Gamma(1-\sigma_j + H) e^{\pi  \sqrt{-1} (\sigma_j - H)}.   
   \end{equation}
   
Denote $e^{\pi\sqrt{-1}}$ by $-1$ and define
$$\kappa_m:=\pi_*\left(\sum_{d \geq 0} \left({q\over z}\right)^{(r-s)(d-H)}(-1)^{2mH}\prod_{i=1}^r (\rho_i + H)\Gamma(\rho_i + H -d ) \prod_{j=1}^s \frac{(-1)^{(-\sigma_j + H-d)} }{\Gamma(1-\sigma_j + H - d)}\right).$$
Combining formulas \eqref{pull-back-of-X}, \eqref{pullback-gamma-X}, \eqref{todd-normal}, and
${\rm Ch}(\mathcal{O}_F(m))=(-1)^{2mH},$ 
we obtain 
\begin{prop}
We have 
\begin{equation}\label{pushforward-via-kappa}
\left\langle\wtJ^X(q, -z),\widehat{\Gamma}_X \Psi_m^X(\alpha)\right\rangle^X
=\left(2 \pi \sqrt{-1}\right)^s \int_Z  \left(\widehat{\Gamma}_Z\alpha\right) \left({z\over q}\right)^{c_1(Z)+c_1(V')+c_1(V)} \kappa_m.
\end{equation}
\end{prop}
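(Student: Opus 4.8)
The plan is to derive the identity by direct substitution, starting from the expression for the left-hand side already recorded in Lemma~\ref{Z-integral-weak}, namely
\[
\left\langle\wtJ^X(q, -z),\widehat{\Gamma}_X \Psi_m^X(\alpha)\right\rangle^X
=\int_Z  \pi_*\left(j^*\left(\wtJ^X(q, -z)\widehat{\Gamma}_X\right){\rm Ch}(\mathcal{O}_F(m)){\rm Td}(N_{F|X}^{-1})\right)\alpha.
\]
Into this I would plug the four formulas cited in the statement: $j^*\wtJ^X = \wtJ^T$ from \eqref{pull-back-of-X}, $j^*\widehat{\Gamma}_X$ from \eqref{pullback-gamma-X}, the inverse of ${\rm Td}(N_{F|X})$ from \eqref{todd-normal}, and ${\rm Ch}(\mathcal{O}_F(m)) = (-1)^{2mH}$ (which is $e^{2\pi\sqrt{-1}mH}$ in the convention $e^{\pi\sqrt{-1}} = -1$). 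The integrand then becomes a single sum over $d \geq 0$ of a product of Gamma-factors indexed by the Chern roots $\rho_i$ of $V$ and $\sigma_j$ of $V'$, multiplied by a power of $q/z$ and an exponential factor.

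The next step is to perform the cancellations. For each $\rho_i$, I would combine the factor $\Gamma(\rho_i+H-d)/\Gamma(\rho_i+H)$ from $\wtJ^T$ with the factor $\Gamma(1+\rho_i+H)$ from $j^*\widehat{\Gamma}_X$; since $\Gamma(1+x) = x\,\Gamma(x)$ this product is $(\rho_i+H)\Gamma(\rho_i+H-d)$, exactly the $\rho_i$-factor in $\kappa_m$. For each $\sigma_j$, the three contributions --- $(-1)^d\Gamma(1-\sigma_j+H)/\Gamma(1-\sigma_j+H-d)$ from $\wtJ^T$, the factor $\Gamma(1+\sigma_j-H)$ from $j^*\widehat{\Gamma}_X$, and the factor $\bigl(\Gamma(1+\sigma_j-H)\Gamma(1-\sigma_j+H)e^{\pi\sqrt{-1}(\sigma_j-H)}\bigr)^{-1}$ from ${\rm Td}(N_{F|X}^{-1})$ --- multiply to $(-1)^d e^{-\pi\sqrt{-1}(\sigma_j-H)}/\Gamma(1-\sigma_j+H-d)$, and rewriting $(-1)^d = (-1)^{-d}$ (valid for integer $d$) together with $e^{-\pi\sqrt{-1}(\sigma_j-H)} = (-1)^{(-\sigma_j+H)}$ collapses this to $(-1)^{(-\sigma_j+H-d)}/\Gamma(1-\sigma_j+H-d)$, the $\sigma_j$-factor in $\kappa_m$. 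This is the step where I expect to spend the most care: keeping the Gamma products straight and, above all, making the exponential signs land on the precise normalization $(-1)^{(-\sigma_j + H - d)}$; there is no conceptual difficulty here beyond bookkeeping.

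Finally I would deal with the scalar prefactors. Using $c_1(T) = (r-s)H + c_1(Z) + c_1(V) + c_1(V')$ from \eqref{first-chern-T}, the power $(q/z)^{(r-s)d - c_1(T)}$ in $\wtJ^T$ splits as $(q/z)^{(r-s)(d-H)} (z/q)^{c_1(Z)+c_1(V)+c_1(V')}$. The factor $(z/q)^{c_1(Z)+c_1(V)+c_1(V')}$ together with $\pi^*(\widehat{\Gamma}_Z)$ is pulled back from $Z$, so by the projection formula it comes out of $\pi_*$, leaving exactly $\pi_*$ applied to $\sum_{d\geq 0}(q/z)^{(r-s)(d-H)}(-1)^{2mH}\prod_i(\rho_i+H)\Gamma(\rho_i+H-d)\prod_j (-1)^{(-\sigma_j+H-d)}/\Gamma(1-\sigma_j+H-d)$, which is $\kappa_m$ by definition; this yields the claimed formula. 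The one further remark needed is that interchanging the sum over $d$ with $\pi_*$ is legitimate, since $\pi_*$ is linear over the coefficient power series ring and, in each fixed cohomological degree of $Z$, only finitely many values of $d$ contribute --- so no analytic subtlety arises.
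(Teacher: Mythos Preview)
Your proposal is correct and follows exactly the approach the paper takes: the paper's proof consists solely of the sentence ``Combining formulas \eqref{pull-back-of-X}, \eqref{pullback-gamma-X}, \eqref{todd-normal}, and ${\rm Ch}(\mathcal{O}_F(m))=(-1)^{2mH}$, we obtain'' the proposition, and your write-up supplies precisely the bookkeeping behind that combination. Your cancellations for the $\rho_i$- and $\sigma_j$-factors and the splitting of the $(q/z)$-power via $c_1(T)=(r-s)H+c_1(Z)+c_1(V)+c_1(V')$ are all accurate.
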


\subsection{Local models of splitting bundles}
For the remainder of the section we restrict our attention to the ``split local model.''
Assume  that $V$ and $V'$ split as $V = \oplus_{i=1}^r L_i$ and $V' = \oplus_{j=1}^s M_j$. 
Let $\TT= (\CC^*)^{r+s}$ act trivially on $Z$, and lift this to an action on $V\oplus V'$ by letting $i$th factor of $\TT$ scale the $i$th factor of $V\oplus V'$. This induces an action of $\TT$
 on $T$ and  $T'$.  By abuse of notation, we will also use $\rho_i, \sigma_j$ to denote the $\TT$-equivariant Chern classes of $L_i, M_j$. 
As in the previous section, let $S_\TT$ be the localization of $H^*_\TT(pt)$ and $\hat S_\TT$ the completion.

In the equivariant setting, $\kappa_m$ may be expressed as a cohomology-valued Meijer $G$-function near $z=\infty$.
\begin{lemma}\label{meijer-G-exponent}
We have
$$\kappa_m 
=e^{-\pi\sqrt{-1}c_1(V')}G_{s,r}^{r,0}\left(
\begin{array}{c}
1-\sigma_1, \ldots, 1-\sigma_s\\
\rho_1,\ldots, \rho_r
\end{array}
\bigg\vert \ e^{-\pi\sqrt{-1}(2m+s)}\left({q\over z}\right)^{r-s}
\right).$$
\end{lemma}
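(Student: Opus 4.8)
## Proof plan for Lemma (Meijer $G$-function expression for $\kappa_m$)

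\textbf{Strategy.} The plan is to start from the definition of $\kappa_m$ as a pushforward $\pi_*$ of an explicit power series in $d$, apply the residue formula of Lemma~\ref{l:res} to convert the pushforward into a sum of residues at $H = -\rho_j$, and then recognize the resulting double sum (over $d \geq 0$ and over $j$) as the residue expansion near $t = 0$ of a Meijer $G$-function of type $G^{r,0}_{s,r}$. The key bookkeeping is to match the $\Gamma$-factors in the numerator and denominator of $\kappa_m$ against the standard Mellin--Barnes integrand $\prod_{i=1}^r \Gamma(\rho_i - x) \big/ \prod_{j=1}^s \Gamma(1-\sigma_j - x)$ under the substitution $x = d - H$, and to correctly collect the various powers of $-1 = e^{\pi\sqrt{-1}}$.

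\textbf{Key steps.} First I would simplify the summand inside $\pi_*$. The factors $\prod_{i=1}^r (\rho_i + H)$ are there precisely so that $\pi_* \big( \prod_i (\rho_i + H) \cdot g(H) \big) = \sum_j \operatorname{Res}_{H = -\rho_j} g(H)$ for any $g$ admitting a power series expansion in the $\rho_i$; this is exactly the content of Lemma~\ref{l:res} (cf.~\eqref{e:residues3}). Applying this,
\[
\kappa_m = \sum_{d \geq 0} \sum_{j=1}^r \operatorname{Res}_{H = -\rho_j} \left( \left(\tfrac{q}{z}\right)^{(r-s)(d-H)} (-1)^{2mH} \prod_{i=1}^r \Gamma(\rho_i + H - d) \prod_{j'=1}^s \frac{(-1)^{(-\sigma_{j'} + H - d)}}{\Gamma(1 - \sigma_{j'} + H - d)} \right).
\]
Next I would substitute $x = d - H$ (so $\operatorname{Res}_{H = -\rho_j}$ becomes $\operatorname{Res}_{x = \rho_j + d}$, matching \eqref{e:psires1}), giving $\prod_i \Gamma(\rho_i - x)$ in the numerator and $\prod_{j'} \Gamma(1 - \sigma_{j'} - x)$ in the denominator, with an overall power $t^x$ where $t = e^{-\pi\sqrt{-1}(2m+s)}(q/z)^{r-s}$. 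I then need to extract the factor not depending on $x$: the product $\prod_{j'} (-1)^{-\sigma_{j'}} = e^{-\pi\sqrt{-1} c_1(V')}$ pulls out of the residue sum, while the remaining $(-1)$-powers combine with $(q/z)^{(r-s)d}$ and the $(-1)^{2mH}$ to produce exactly the $t^x$ with the claimed argument. Finally, by the residue expansion \eqref{expansion-meijer-bundle} of the Meijer $G$-function near $t = 0$ (now in the $G^{r,0}_{s,r}$ form with the denominator $\Gamma$-factors, as in \cite[Section 5.2]{Luke}), the remaining double sum equals $G^{r,0}_{s,r}\!\left(\begin{smallmatrix} 1 - \sigma_1, \ldots, 1-\sigma_s \\ \rho_1, \ldots, \rho_r \end{smallmatrix} \,\middle|\, t\right)$, yielding the claim.

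\textbf{Main obstacle.} The delicate point is the correct tracking of the powers of $e^{\pi\sqrt{-1}}$ coming from three sources: the $(-1)^{2mH}$ from $\operatorname{Ch}(\mathcal{O}_F(m))$ which after taking the residue at $H = -\rho_j$ interacts with the $x$-dependence; the factor $(-1)^{-\sigma_{j'} + H - d}$ in $\kappa_m$ arising from the exponential term $e^{\pi\sqrt{-1}(\sigma_j - H)}$ in the modified Todd class \eqref{todd-normal}; and the shift in the argument of the $G$-function, where the exponent $2m + s$ (rather than $2m$) appears because of the $s$ denominator factors. I expect the bulk of the work to be verifying that these combine to give precisely $e^{-\pi\sqrt{-1}(2m+s)}(q/z)^{r-s}$ as the argument and $e^{-\pi\sqrt{-1}c_1(V')}$ as the prefactor, with the $H$-dependent parts of the exponentials being absorbed correctly into the $t^x$ under the substitution $x = d - H$; everything else is a direct application of Lemma~\ref{l:res} and the standard Mellin--Barnes representation of the Meijer $G$-function.
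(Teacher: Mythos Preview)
Your proposal is correct and follows essentially the same route as the paper: apply the residue formula (Lemma~\ref{l:res}, specifically \eqref{e:residues3}) to turn $\pi_*$ into a sum of residues at $H=-\rho_j$, substitute $x=d-H$, extract the constant factor $e^{-\pi\sqrt{-1}c_1(V')}$, and identify the remaining sum as the residue/Mellin--Barnes expansion of $G^{r,0}_{s,r}$ with argument $t=e^{-\pi\sqrt{-1}(2m+s)}(q/z)^{r-s}$. The paper passes directly to the contour-integral form of the Meijer $G$-function rather than citing the residue expansion near $t=0$, but this is the same identification; your anticipated ``main obstacle'' of tracking the three sources of $e^{\pi\sqrt{-1}}$-powers is exactly the bookkeeping the paper carries out.
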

\begin{proof}
Applying Lemma~\ref{l:res} yields
\begin{align*}
\kappa_m
=&\sum_{d \geq 0}\sum_{a=1}^{r}
    \operatorname{Res}_{H= -\rho_{a}}
    \left({q\over z}\right)^{(r-s)(d-H)}
    (-1)^{2mH}    \prod_{i=1}^r \Gamma(\rho_i + H -d )    \prod_{j=1}^s \frac{(-1)^{(-\sigma_j + H-d)} }{\Gamma(1-\sigma_j + H - d)}  
    \\
    =&\sum_{d \geq 0} \sum_{a=1}^{r}
    \operatorname{Res}_{x= \rho_{a}+d}
     \left({q\over z}\right)^{(r-s)x}
     (-1)^{2m(d-x)}
     \prod_{i=1}^r \Gamma(\rho_i - x) \prod_{j=1}^s \frac{(-1)^{(-\sigma_j - x)} }{\Gamma(1-\sigma_j - x)}  \\
    =&\frac{e^{-\pi\sqrt{-1}c_1(V')}}{2\pi \sqrt{-1}} \int_L
      {\prod\limits_{i=1}^r \Gamma(\rho_i - x)\over \prod\limits_{j=1}^s \Gamma(1-\sigma_j -x)}      \left({q\over z}\right)^{(r-s)x}
     e^{\pi\sqrt{-1}(-2mx-sx)} dx.
\end{align*}
    Here the path $L$ goes from $-\sqrt{-1}\infty$ to $\sqrt{-1}\infty$ so that all poles of $\Gamma(\rho_i-x)$ lie to the right of the path. Now the result follows from the definition of the Meijer $G$-function \eqref{meijer-def} in Appendix \ref{appendix-asymptotic}.
    \end{proof}

Recall from formula \eqref{evalue-m-th} in Theorem \ref{qspec} that the nonzero eigenvalues of $c_1(X)\star_q -$ are
$$\{\lambda_k(q)=(r-s)e^{-\pi\sqrt{-1}{2k+s\over r-s}} q\mid 0\leq k<r-s\}.$$
For $r-s>1$, we apply Barnes' asymptotic formula in \cite{Meijer, Luke} (see Theorem \ref{exponential-asymptotic}) to the Meijer $G$-function in Lemma \ref{meijer-G-exponent} and obtain an asymptotic expansion
\begin{eqnarray*}
&&G_{s,r}^{r,0}\left(
\begin{array}{c}
1-\sigma_1, \ldots, 1-\sigma_s\\
\rho_1,\ldots, \rho_r
\end{array}
\bigg\vert \ e^{-\pi\sqrt{-1}(2m+s)}\left({q\over z}\right)^{r-s}
\right)\\
&\sim& 
{(2\pi)^{r-s-1\over 2}\over (r-s)^{1\over 2}}\exp\left({-{\lambda_m(q)\over z}}\right) 
e^{-\pi\sqrt{-1}((2m+s)\theta)}\left({q\over z}\right)^{\theta}
\sum_{n=0}^{\infty}c_n \left({z\over q}\right)^n
\end{eqnarray*}
as $z\to 0$ with 
\begin{equation}
\label{arg-z-expoenential}
\left|\arg\left({z\over q}\right)+{\pi(2m+s)\over r-s}\right|<\left(1+{1\over r-s}\right)\pi.
\end{equation}
Here the coefficients $c_n\in H^*(Z)$ are determined recursively with $c_0=1$ and 
$$\theta={1-r-s\over 2(r-s)}+{c_1(V')+c_1(V)\over r-s}.$$
Using this asymptotic expansion formula and formula \eqref{pushforward-via-kappa}, we obtain 
\begin{prop}\label{exponential-asymptotic-local-model}
For any $\alpha\in H^*_\TT(Z)$,
$\left\langle\wtJ^T_\TT(q, -z),\widehat{\Gamma}_T \Psi_m^T(\alpha)\right\rangle^T_\TT$ 
admits the asymptotic expansion 
$$\left(2 \pi \sqrt{-1}\right)^s{(2\pi)^{r-s-1\over 2}\over (r-s)^{1\over 2}}e^{-{\lambda_m(q)\over z}} 
\int_Z  \left(\widehat{\Gamma}_Z \alpha\right)  \sum_{n=0}^{\infty}c_{n, m} \left({z\over q}\right)^{n+{r+s-1\over 2}-c_1(Z)}$$
as $z\to 0$ with $\arg(z)$ satisfying the condition \eqref{arg-z-expoenential} and $$c_{n,m}:=(-1)^{-(c_1(V')+(2m+s)\theta)}c_n.$$

\end{prop}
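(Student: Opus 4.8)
The plan is to deduce Proposition~\ref{exponential-asymptotic-local-model} directly from three ingredients that are already in place: the closed formula \eqref{pushforward-via-kappa} applied to the local model $X=T$, the identification of $\kappa_m$ with an equivariant-cohomology-valued Meijer $G$-function in Lemma~\ref{meijer-G-exponent}, and the Barnes asymptotic expansion of that $G$-function derived in the paragraph immediately preceding the statement (itself an application of Theorem~\ref{exponential-asymptotic}). Once those are granted, the Proposition is essentially a bookkeeping corollary.

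First I would specialize \eqref{pushforward-via-kappa} to $X=T$: the exceptional locus is $\PP(V)\hookrightarrow T$ via the zero section, $T=\tot(N_{\PP(V)|T})$ with $N_{\PP(V)|T}=\pi^*V'\otimes\mathcal O_{\PP(V)}(-1)$ having Chern roots $\sigma_j-H$, and $c_1(T)=(r-s)H+c_1(Z)+c_1(V)+c_1(V')$ by \eqref{first-chern-T}. Working $\TT$-equivariantly as in the split local model, this gives
\[
\left\langle\wtJ^T_\TT(q,-z),\widehat{\Gamma}_T\Psi_m^T(\alpha)\right\rangle^T_\TT
=\int_Z\left(\widehat{\Gamma}_Z\alpha\right)\left(\tfrac{z}{q}\right)^{c_1(Z)+c_1(V')+c_1(V)}\kappa_m .
\]
Then I would substitute Lemma~\ref{meijer-G-exponent} to replace $\kappa_m$ by $e^{-\pi\sqrt{-1}c_1(V')}$ times the Meijer $G$-function evaluated at $e^{-\pi\sqrt{-1}(2m+s)}(q/z)^{r-s}$, and insert its asymptotic expansion, valid for $z\to0$ in the sector \eqref{arg-z-expoenential}. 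What remains is to reorganize the result: the constant $(2\pi)^{(r-s-1)/2}/(r-s)^{1/2}$ and the exponential $e^{-\lambda_m(q)/z}$ pass through the integral; the two phase factors $e^{-\pi\sqrt{-1}c_1(V')}$ and $e^{-\pi\sqrt{-1}(2m+s)\theta}$ combine into $(-1)^{-(c_1(V')+(2m+s)\theta)}$ (expanding the nilpotent part of $\theta$ as a finite series), promoting $c_n$ to $c_{n,m}$; and the powers $(z/q)^{c_1(Z)+c_1(V')+c_1(V)}$, $(q/z)^{\theta}$ and $(z/q)^n$ combine, using the explicit value of $\theta$ together with \eqref{first-chern-T}, into $(z/q)^{n+\frac{r+s-1}{2}-c_1(Z)}$. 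The constraint on $\arg(z)$ is inherited verbatim from \eqref{arg-z-expoenential}.

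The one step that is not purely formal, and the one I would take the most care over, is that the whole computation must be read as an identity of $\TT$-equivariant-cohomology-valued asymptotic expansions, whereas Barnes' theorem is a priori a scalar statement. This is handled by observing that the parameters $\rho_i,\,1-\sigma_j$ appearing in Lemma~\ref{meijer-G-exponent} (and likewise $c_1(Z)$, the Chern classes entering $\widehat{\Gamma}_Z$, and $\theta$) decompose into an equivariant-weight part, which may be placed in general position, and a nilpotent geometric part; since the Barnes expansion, its exponent $\theta$, and its coefficients $c_n$ depend analytically on the parameters with remainder uniform on a neighborhood of the weights, one may differentiate in the parameters and truncate, the nilpotence of $H^{>0}(Z)$ guaranteeing finiteness. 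This is exactly the content of the parametrized form of Theorem~\ref{exponential-asymptotic} already invoked, and granting it the displayed asymptotic expansion is the one asserted.
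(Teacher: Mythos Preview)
Your proposal is correct and follows essentially the same approach as the paper: the paper derives the proposition directly by combining \eqref{pushforward-via-kappa} (specialized to $X=T$) with Lemma~\ref{meijer-G-exponent} and the Barnes asymptotic expansion displayed just before the statement, exactly as you outline. Your additional care about interpreting Barnes' scalar theorem for cohomology-valued parameters (via analytic dependence and nilpotence) is a welcome elaboration on a point the paper leaves implicit.
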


Using \eqref{modified-J-local-base}, one can repeat the computations above replacing $\wtJ^{T}_\TT(q, -z)$  with $\wtJ^{T}_\TT(q, \bt, -z)$ for $\bt$ a class pulled back from $H^{\geq 2}(Z) \subset H^{\geq 2}_\TT(Z)$, to obtain
\begin{corollary}
For any $\alpha\in H^*_\TT(Z)$,
$\left\langle\wtJ^T_\TT(q, \bt, -z),\widehat{\Gamma}_T \Psi_m^T(\alpha)\right\rangle^T_\TT$
admits the asymptotic expansion 
$$\left(2 \pi \sqrt{-1}\right)^s{(2\pi)^{r-s-1\over 2}\over (r-s)^{1\over 2}}e^{-{\lambda_m(q)\over z}} 
\int_Z   e^{-\sum_{i}z^{\deg \phi_i -1}t^i\phi_i}
\left(\widehat{\Gamma}_Z \alpha\right)\sum_{n=0}^{\infty}c_{n,m} \left({z\over q}\right)^{n+{r+s-1\over 2}-c_1(Z)}$$
as $z\to 0$ with $\arg(z)$ satisfying the condition \eqref{arg-z-expoenential}.
\end{corollary}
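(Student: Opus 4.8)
The plan is to re-run the proof of Proposition~\ref{exponential-asymptotic-local-model} essentially verbatim, carrying the extra scalar-in-$z$ factor $e^{-\sum_i z^{\deg\phi_i-1}t^i\phi_i}$ along unchanged. The one genuine input is the factorization \eqref{modified-J-local-base}: since $\bt$ is pulled back from $H^{\geq 2}(Z)$, one has $\wtJ^T_\TT(q,\bt,-z) = e^{-\sum_i z^{\deg\phi_i-1}t^i\phi_i}\,\wtJ^T_\TT(q,-z)$, where the prefactor is itself pulled back from $Z$ and hence commutes with $j^*$ and, by the projection formula, with $\pi_*$.

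First I would substitute this factorization into the ($T$-version of the) pushforward identity of Lemma~\ref{Z-integral-weak}: $\langle \wtJ^T_\TT(q,\bt,-z),\widehat{\Gamma}_T\Psi_m^T(\alpha)\rangle^T_\TT$ is a $\pi_*$ of $j^*(\wtJ^T_\TT(q,\bt,-z)\widehat{\Gamma}_T)$ paired against ${\rm Ch}(\mathcal O(m))$ and the modified Todd class, integrated over $Z$ against $\alpha$; pulling the prefactor out through $j^*$ and $\pi_*$ reduces this to the $\bt = 0$ computation with $\widehat{\Gamma}_Z\alpha$ replaced by $e^{-\sum_i z^{\deg\phi_i-1}t^i\phi_i}\widehat{\Gamma}_Z\alpha$ under $\int_Z$. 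In particular formula \eqref{pushforward-via-kappa} continues to hold with this replacement and with exactly the same class $\kappa_m$, which does not involve $\bt$ at all. The identification of $\kappa_m$ with the equivariant Meijer $G$-function of Lemma~\ref{meijer-G-exponent} is therefore untouched, and applying Barnes' asymptotic expansion (Theorem~\ref{exponential-asymptotic}) produces the asymptotics of Proposition~\ref{exponential-asymptotic-local-model} with the factor $e^{-\sum_i z^{\deg\phi_i-1}t^i\phi_i}$ inserted under $\int_Z$, with the same coefficients $c_{n,m}$ and the same sector \eqref{arg-z-expoenential}.

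I expect no serious obstacle: the only step that genuinely uses the hypothesis on $\bt$ (rather than being bookkeeping) is \eqref{modified-J-local-base}, which is the fundamental-class argument already run for Corollary~\ref{c:Estar} --- for $d$ a multiple of $[L]$ the composition $\pi\circ{\rm ev}_i$ is independent of the marked point, so an insertion of $\bt^n$ with $\bt$ pulled back from $H^{\geq 2}(Z)$ collapses to $n$ insertions of $\one$, which vanish for $n>0$. The only mild care needed is to record that the $z$-dependent prefactor is honestly a pullback from $Z$ (immediate from the $\phi_i$ being a basis of $H^*(Z)$), so that sliding it past $j^*$ and $\pi_*$ is legitimate; with that observed, the statement is a direct corollary in the strict sense.
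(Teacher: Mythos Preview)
Your proposal is correct and matches the paper's approach exactly: the paper states the corollary with the one-line justification ``Using \eqref{modified-J-local-base}, one can repeat the computations above replacing $\wtJ^{T}_\TT(q,-z)$ with $\wtJ^{T}_\TT(q,\bt,-z)$,'' and your argument is precisely the detailed unpacking of that sentence. One minor note: \eqref{modified-J-local-base} in the paper is derived directly from the explicit $J$-function formula \eqref{J function2} (valid for any $\bt$ pulled back from $H^*(Z)$), not from the fundamental-class argument of Corollary~\ref{c:Estar}; but your invocation of that argument is a correct alternative justification and the two are closely related.
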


\begin{remark}
The action of $\TT$ on $Z$ is trivial.  Therefore, since $\bt$ is assumed to be pulled back from $H^{\geq 2}(Z)$, the class $\bt$ is nilpotent, as is  $c_1(Z)$.  In particular, the expression $e^{-\sum_{i}z^{\deg \phi_i -1}t^i\phi_i}$ is polynomial in $z$ and the term  $z^{c_1(Z)}$ in the asymptotic formula above is  a polynomial in $\ln(z)$. These terms therefore do not effect the exponential asymptotics of the expression.
\end{remark}

Applying the result in particular to $\alpha = {\rm Ch}(E)$ for $E\in K^0(Z)$ we obtain an asymptotic expansion for the central charge $Z(\Phi_m^T(E))(q, \bt)$.
We conclude
\begin{prop}\label{nzasymp2}
For any $\bt$ pulled back from $H^{\geq 2}(Z)$,
the elements
$$\widehat{\Gamma}_T\Psi_m^T(\alpha)=\widehat{\Gamma}_T \left(2\pi\sqrt{-1}\right)^s j_*\left({\rm Ch}( \mathcal{O}_F(m)){\rm Td}(N_{F|T})^{-1} \pi^*(\alpha)\right)
$$ 
are weak asymptotic classes with respect to $\lambda_m(q)$ and $\arg(z/q)={-2m-s\over r-s}\pi$.  

In particular, for any $E\in K_\TT^0(Z)$, $\widehat{\Gamma}_T {\rm Ch}(\Phi_m^T(E))$ is such a weak asymptotic class.

\end{prop}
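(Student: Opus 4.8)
The plan is to obtain Proposition~\ref{nzasymp2} directly from the asymptotic expansion already established in Proposition~\ref{exponential-asymptotic-local-model} and its corollary, by translating that expansion of the modified $J$-function pairing into the language of flat sections used in Definition~\ref{weak-asymp-def}. Concretely, I would first apply \eqref{leading-term} to $T$, with $\widehat{\Gamma}_T\Psi_m^T(\alpha)$ in place of $\alpha$ and under the specialization $\bt \mapsto \bt + \ln(q)c_1(T)$, to get
$$\langle \one,\, \Phi^T_\TT(q,\bt,z)(\widehat{\Gamma}_T\Psi_m^T(\alpha))\rangle^T_\TT = (2\pi z)^{-\dim T/2}\,\langle \wtJ^T_\TT(q,\bt,-z),\, \widehat{\Gamma}_T\Psi_m^T(\alpha)\rangle^T_\TT .$$
Thus the whole question reduces to bounding $e^{\lambda_m(q)/z}$ times the right-hand side as $z\to 0$ along the ray $\arg(z)=\arg(\lambda_m(q))$.

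Next I would feed in the corollary to Proposition~\ref{exponential-asymptotic-local-model}, which gives a full asymptotic expansion of $\langle \wtJ^T_\TT(q,\bt,-z), \widehat{\Gamma}_T\Psi_m^T(\alpha)\rangle^T_\TT$ in which the only $z$-dependence not of polynomial growth is the factor $e^{-\lambda_m(q)/z}$: since $\bt$ and $c_1(Z)$ are nilpotent, $e^{-\sum_i z^{\deg\phi_i-1}t^i\phi_i}$ is polynomial in $z$ and $z^{-c_1(Z)}$ is polynomial in $\ln z$, while the terms $(q/z)^{n+(r+s-1)/2}$ contribute only powers of $z$. Multiplying through by $e^{\lambda_m(q)/z}$ and by the prefactor $(2\pi z)^{-\dim T/2}$ then leaves a quantity of at most polynomial growth in $|z|^{-1}$, so the estimate \eqref{weak-asymp-class-behavior} holds for any real $m$ exceeding $\dim T/2 - \tfrac{r+s-1}{2}$. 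The one genuinely geometric check is that the ray is the claimed one and lies inside the sector of validity: from \eqref{evalue-m-th} we have $\arg(\lambda_m(q)) = \arg(q) - \tfrac{2m+s}{r-s}\pi$, so $\arg(z)=\arg(\lambda_m(q))$ forces $\arg(z/q) = \tfrac{-2m-s}{r-s}\pi$, and substituting this into \eqref{arg-z-expoenential} gives $0 < (1+\tfrac{1}{r-s})\pi$, which holds. This yields the claim that $\widehat{\Gamma}_T\Psi_m^T(\alpha)$ is a weak asymptotic class with respect to $\lambda_m(q)$ and $\arg(z/q)=\tfrac{-2m-s}{r-s}\pi$; the statement for $E\in K^0_\TT(Z)$ then follows by taking $\alpha = {\rm Ch}(E)$ and using the identity ${\rm Ch}(\Phi_m^T(E)) = \Psi_m^T({\rm Ch}(E))$ from \eqref{phi-m-todd}.

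I do not expect a serious obstacle at this stage: the delicate part — applying Barnes' asymptotic formula to the cohomology-valued Meijer $G$-function $\kappa_m$ — has already been carried out in Proposition~\ref{exponential-asymptotic-local-model}. What needs care here is bookkeeping: matching the ray $\arg(z/q)=\tfrac{-2m-s}{r-s}\pi$ with $\arg(\lambda_m(q))$ and confirming membership in \eqref{arg-z-expoenential}, and verifying that the auxiliary factors (the prefactor $z^{-\dim T/2}$, the logarithmic term $z^{-c_1(Z)}$, and the polynomial factor coming from $\bt$) cannot spoil the polynomial-growth bound. If one wants the non-equivariant version of the statement, one specializes the equivariant parameters, which is harmless since $\TT$ acts trivially on $Z$.
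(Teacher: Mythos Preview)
Your proposal is correct and follows the same route as the paper: the paper simply records Proposition~\ref{nzasymp2} as an immediate consequence of the corollary to Proposition~\ref{exponential-asymptotic-local-model}, and your argument supplies exactly the bookkeeping (via \eqref{leading-term}, nilpotency of $\bt$ and $c_1(Z)$, and the ray check against \eqref{arg-z-expoenential}) that the paper leaves implicit. The final sentence for $E\in K^0_\TT(Z)$ via $\alpha={\rm Ch}(E)$ and \eqref{phi-m-todd} is also how the paper handles it.
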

\begin{remark}
    The fact that the  exponential asymptotics of $\widehat{\Gamma}_T\Psi_m^T(\alpha)$ are unchanged as we deform from $\bt= 0$ to $\bt$ pulled back from $H^{\geq 2}(Z)$ is
    consistent with Corollary ~\ref{c:Estar}, which verifies that  for such $\bt$, the eigenvalues of $E(\bt) \star_{q, \bt} - $ agree with those of $c_1(T)\star_q - $.
\end{remark}

\section{Asymptotics for the zero-eigenvalues: local model}\label{s:zflip}
Let $T \dasharrow T'$ be the split local model of the previous section, together with the previously defined action of $\TT$.
\subsection{Fourier Mukai transformation on equivariant cohomology}
\label{s-fourier-mukai}
  Consider the resolution 
\[
\begin{tikzcd}
        &\widehat T \ar[dl, swap, "\tau_T"] \ar[dr, "\tau_{T'} "] &\\
    T \ar[dr, swap, "\pi \circ k "]&& T' \ar[dl, "\pi ' \circ k'"] \\
    &Z&.
\end{tikzcd}
\]
 In the case of the  local model, we may also express $\widehat T$ as a GIT quotient.  Let $(\CC^*)^2$ act on 
$\tot( V \oplus \cc O_Z \oplus V')$ with weights 
\[
\left( 
\begin{array}{ccc}
    1&-1 &0  \\
     0& 1 & -1
\end{array}
\right).
\]
Then $\widehat T$ is the GIT quotient of this action with character given by $(1, -1)^\top.$  In homogeneous coordinates, the map $\tau_{T'}$ is given by $(v, l, v') \mapsto (vl, v')$ and $\tau_T$ is given by $(v, l, v') \mapsto (v, lv').$

Define $$\FM^T=(\tau_T)_* \circ (\tau_{T'})^*: K^0_\TT(T') \to K^0_\TT(T).$$

We define a linear map 
\begin{equation}
\label{FM-cohomology}
\UU^T: H^*_\TT(T')\otimes \hat S_\TT \to H^*_\TT(T)\otimes \hat S_\TT
\end{equation}
by
\begin{equation}
\label{FM-map}
\UU^T\left(\frac{\ii_j'}{e_\TT(N_{F_j'|T'})}\right)=\sum_{i=1}^r C_{ij} \frac{\ii_i}{e_\TT(N_{F_i|T})},
\end{equation}
where $F_j '$ is the $j$th fixed locus (isomorphic to $Z$) in $T'$, and $\ii_j'$ is the fundamental class on $F_j'$ (similarly for $F_i$ and $\ii_i$), and  
\begin{align}
\label{FM-coefficients}
C_{ij} :=& e^{-(s-1)\pi\sqrt{-1}(\rho_i+\sigma_j)} \prod_{l \neq j}\frac{e^{\pi\sqrt{-1}(\rho_i+\sigma_l)} - e^{-\pi\sqrt{-1}(\rho_i+\sigma_l)}}{e^{\pi\sqrt{-1}(-\sigma_j+\sigma_l)} - e^{\pi\sqrt{-1}(\sigma_j - \sigma_l)}}\\
=& e^{-(s-1)\pi\sqrt{-1}(\rho_i+\sigma_j)} \prod_{l \neq j}
\frac{\sin {\pi(\rho_i+\sigma_l)}}
{\sin {\pi(-\sigma_j+\sigma_l)}}.\nonumber
\end{align}

Using localization and the method in \cite{PSW}, one can compute that 
 \begin{prop}
 The following diagram commutes
 \begin{equation}\label{d:U}
     \begin{tikzcd}
         K^0_\TT(T') \ar[r, "\FM^T"] \ar[d, "{\rm Ch}"] &  K^0_\TT(T) \ar[d,  "{\rm Ch}"] \\
         H^*_\TT(T')\otimes \hat S_\TT \ar[r, "\UU^T"] & H^*_\TT(T)\otimes \hat S_\TT,
     \end{tikzcd}
 \end{equation}
where the vertical maps are the (equivariant) modified Chern character. 
\end{prop}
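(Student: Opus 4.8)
The plan is to deduce the identity from the equivariant Grothendieck--Riemann--Roch theorem together with Atiyah--Bott localization on $\widehat T$. First I would note that the blowdown $\tau_T\colon\widehat T\to T$ has relative dimension zero, so the modified equivariant GRR formula holds for it with the ordinary pushforward: for any $E'\in K^0_\TT(T')$,
\[
{\rm Ch}\big(\FM^T(E')\big)={\rm Ch}\big((\tau_T)_*(\tau'_T)^*E'\big)=(\tau_T)_*\!\big({\rm Td}(T_{\tau_T})\cdot(\tau'_T)^*{\rm Ch}(E')\big),
\]
where $T_{\tau_T}=[T_{\widehat T}]-\tau_T^*[T_T]$ is the relative tangent complex. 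Here the modified version of GRR follows from the classical one because the grading rescaling $\alpha\mapsto(2\pi\sqrt{-1})^{p}\alpha$ on $H^{2p}$ commutes with $(\tau_T)_*$ precisely when the relative dimension vanishes. In this way the proposition is reduced to showing that the intrinsic map $\alpha\mapsto(\tau_T)_*\big({\rm Td}(T_{\tau_T})(\tau'_T)^*\alpha\big)$ agrees with the explicitly defined $\UU^T$ of \eqref{FM-map}; note that this intrinsic map has exactly the shape of $\UU^X$ in \eqref{cohomology-maps} applied to $T$.

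Both maps are $H^*_\TT(Z)\otimes\hat S_\TT$-linear, and by equivariant localization on $T'$ the idempotent classes $e_j':=\ii_j'/e_\TT(N_{F_j'|T'})$, $j=1,\dots,s$, form a basis of $H^*_\TT(T')\otimes\hat S_\TT$ as a module over $H^*_\TT(Z)\otimes\hat S_\TT$, so it suffices to evaluate $(\tau_T)_*\big({\rm Td}(T_{\tau_T})(\tau'_T)^*(-)\big)$ on each $e_j'$. I would then identify the $\TT$-fixed loci: $F_i\cong Z$ lying over $\PP(L_i)\subset\PP(V)$ in $T$ for $1\le i\le r$; $F_j'\cong Z$ in $T'$ for $1\le j\le s$; and $\widehat F_{ij}:=\PP(L_i)\times_Z\PP(M_j)\cong Z$ in $\widehat T$ for $1\le i\le r$, $1\le j\le s$, all contained in the exceptional divisor $E=\PP(V)\times_Z\PP(V')$, with $\tau_T(\widehat F_{ij})=F_i$ and $\tau'_T(\widehat F_{ij})=F_j'$. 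Since $\tau'_T$ restricts to an isomorphism from $\widehat F_{ij}$ onto $F_j'$, the pullback $(\tau'_T)^*e_j'$ restricts to $1$ on $\widehat F_{ij}$ and to $0$ on $\widehat F_{ik}$ for $k\ne j$; hence applying Atiyah--Bott on $\widehat T$ and pushing forward along $\tau_T$ yields
\[
(\tau_T)_*\big({\rm Td}(T_{\tau_T})(\tau'_T)^*e_j'\big)=\sum_{i=1}^r C_{ij}\,\frac{\ii_i}{e_\TT(N_{F_i|T})},\qquad C_{ij}={\rm Td}(T_{\tau_T})\big|_{\widehat F_{ij}}\cdot\frac{e_\TT(N_{F_i|T})}{e_\TT(N_{\widehat F_{ij}|\widehat T})}.
\]

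It then remains to compute the three classes in $H^*_\TT(Z)$ and check that $C_{ij}$ equals \eqref{FM-coefficients}. Using $H|_{\PP(L_i)}=-\rho_i$ and the splitting $N_{F_i|T}=N_{\PP(L_i)|\PP(V)}\oplus V'(-1)|_{\PP(L_i)}$ one gets $e_\TT(N_{F_i|T})=\prod_{l\ne i}(\rho_l-\rho_i)\prod_{k=1}^s(\sigma_k+\rho_i)$, while $N_{E|\widehat T}=\mathcal O(-1,-1)$ gives $e_\TT(N_{\widehat F_{ij}|\widehat T})=\prod_{l\ne i}(\rho_l-\rho_i)\prod_{k\ne j}(\sigma_k-\sigma_j)\cdot(\rho_i+\sigma_j)$. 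For the relative tangent complex, the exact sequences $0\to T_E\to T_{\widehat T}|_E\to N_{E|\widehat T}\to0$ and $0\to T_{E/F}\to T_E\to(\tau_T|_E)^*T_F\to0$ together with the relative Euler sequence of $E=\PP(N_{F|T})\to F$ give in $K$-theory
\[
[T_{\tau_T}|_E]=[(\tau_T|_E)^*N_{F|T}\otimes\mathcal O_E(1)]-[\mathcal O_E]+[\mathcal O_E(-1)]-(\tau_T|_E)^*[N_{F|T}],
\]
whose restriction to $\widehat F_{ij}$ (on which $\mathcal O_E(1)$ has first Chern class $-\rho_i-\sigma_j$) has Chern roots $\{\sigma_k-\sigma_j\}_{k\ne j}\cup\{\rho_i+\sigma_j\}$ minus $\{\sigma_k+\rho_i\}_{k=1}^s$. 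Applying the modified Todd class and simplifying with $\frac{2\pi\sqrt{-1}x}{1-e^{-2\pi\sqrt{-1}x}}\big/x=\frac{2\pi\sqrt{-1}}{1-e^{-2\pi\sqrt{-1}x}}$, the product collapses to $C_{ij}=\prod_{k\ne j}\frac{1-e^{-2\pi\sqrt{-1}(\rho_i+\sigma_k)}}{1-e^{-2\pi\sqrt{-1}(\sigma_k-\sigma_j)}}$, which rewrites as \eqref{FM-coefficients} after extracting a factor $e^{-\pi\sqrt{-1}(\rho_i+\sigma_j)}$ from each of the $s-1$ terms. This identifies the two linear maps and hence proves that \eqref{d:U} commutes.

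The step I expect to be the main obstacle is this last computation: one must correctly pin down the relative tangent complex of the blowdown along the fixed loci and track every $\TT$-weight so that the modified Todd class reproduces exactly the prefactor $e^{-(s-1)\pi\sqrt{-1}(\rho_i+\sigma_j)}$ and the $e^{\pm\pi\sqrt{-1}(\cdots)}$ ratios in \eqref{FM-coefficients}. An alternative that sidesteps GRR is to carry out the entire localization computation directly in $K$-theory, i.e.\ to compute the matrix of $\FM^T=(\tau_T)_*(\tau'_T)^*$ in the structure-sheaf fixed-point bases of $T'$ and $T$ (as in \cite{PSW}) and pass to cohomology only at the end; this trades the Todd-class bookkeeping for a $\lambda$-ring computation of $\lambda_{-1}$ of the relevant conormal bundles.
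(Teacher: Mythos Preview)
Your proof is correct, but the route differs from the paper's.  The paper does not invoke Grothendieck--Riemann--Roch or localize on $\widehat T$.  Instead it works in $K$-theory first: it introduces Koszul-type classes $e_j=\prod_{l\neq j}(\cc O_{T'}\ominus {k'}^*{\pi'}^*M_l^\vee\otimes\cc O_{T'}(1))\in K^0_\TT(T')$, which restrict to $\wedge^\bullet N_{F_j'|\PP(V')}^\vee$ on $F_j'$ and to zero on the other fixed loci, and then computes $\FM^T(e_j)$ \emph{explicitly} as $\prod_{l\neq j}(\cc O_{T}\ominus k^*\pi^*M_l^\vee\otimes\cc O_T(1))$, citing \cite[Proposition~2.6]{PSW} for this step.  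Only then does it apply ${\rm Ch}$ and restrict to the fixed loci $F_i\subset T$, obtaining $C_{ij}$ as the ratio of ${\rm Ch}(\iota_i^*\FM^T(e_j))$ to ${\rm Ch}((\iota_j')^*e_j)$.  In other words, the paper follows precisely the ``alternative'' you sketch in your final paragraph: the Fourier--Mukai matrix is computed in $K$-theory and cohomology is entered only at the end.

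What each approach buys: your GRR-plus-localization argument is self-contained and makes transparent that $\UU^T$ is literally the local-model instance of the intrinsic map $\UU^X$ of \eqref{cohomology-maps}; the price is the relative-tangent-complex bookkeeping you flag, though your computation of $[T_{\tau_T}|_E]$ and of the normal bundles is correct and does collapse to $C_{ij}=\prod_{l\neq j}\frac{1-e^{-2\pi\sqrt{-1}(\rho_i+\sigma_l)}}{1-e^{-2\pi\sqrt{-1}(\sigma_l-\sigma_j)}}$, matching \eqref{FM-coefficients}.  The paper's route is shorter once the $K$-theoretic identity from \cite{PSW} is in hand, and it avoids ever writing down ${\rm Td}(T_{\tau_T})$, but it is less self-contained.
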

\begin{proof}

Let $V = \oplus_{i=1}^r L_i$ and $V' = \oplus_{j=1}^s M_j$.  By abuse of notation, we use the same notation for the pullback of these line bundles to $T$ and $T'$.

For $1 \leq j \leq s$, let  $e_j \in K^0_\TT(T')$ denote the class $$\prod_{l \neq j} ( \cc O_{T'} \ominus {k'}^* {\pi'}^*M_l^\vee \otimes \cc O_{T'}(1)).$$  This class restricts to the invertible class $\wedge^\bullet N_{F_j'|\PP(V')}^\vee$ on $F_j'$, where $F_j'$ is the $j$th fixed locus of the $\TT$ action on $T'$.  It restricts to zero on $F'_l$ for $l \neq j$.  By the same computation as \cite[Proposition~2.6]{PSW}, one computes
\begin{equation}\label{FMe}
 \FM^T(e_j) = \prod_{l \neq j}( \cc O_{T} \ominus {k}^* {\pi}^* M_l^\vee \otimes \cc O_{T}(1)), 
\end{equation}
where here (again abusing notation slightly) $M_l$ now denotes pulled back line bundle to $T$. 

From \eqref{FMe} it follows that for $1 \leq i \leq r$, 
$${\rm Ch}\left(\iota_i^* (\FM^T (e_j))\right) 
= \prod_{l \neq j} \left(1 - e^{-2\pi\sqrt{-1}(\rho_i +\sigma_l)}\right),$$ 
where $\iota_i: F_i \to T$ is the inclusion of the $i$th fixed locus.
On the other hand, 
$${\rm Ch} ((\iota'_j)^* e_j) = \prod_{l \neq j} 
\left(1 - e^{2\pi\sqrt{-1}(\sigma_j - \sigma_l)}\right)$$
where $\iota'_j: F'_j\to T'$ is the inclusion of the $j$th fixed locus. 

The ratio of the above two expressions is
\begin{align*}
     \prod_{l \neq j} \frac{1 - e^{-2\pi\sqrt{-1}(\rho_i +\sigma_l)}}{1 - e^{2\pi\sqrt{-1}(\sigma_j - \sigma_l)}}, 
\end{align*}
which simplifies to $C_{ij}$ defined in \eqref{FM-coefficients}.
The commutativity of \eqref{d:U} then follows from the localization theorem, after observing that a basis for  $K^0_\TT(T')$ is given by 
$$\{e_j \otimes \alpha\}_{1 \leq j \leq s, \alpha}$$ where $\alpha$ ranges over a basis of $K^0_\TT(Z),$ pulled back to $T'$.
\end{proof}

\subsection{Weak tame asymptotic classes}

We first recall from \cite{Meijer, Luke} an asymptotic formula for the Meijer $G$-function
\begin{equation}\label{tame-Meijer-G}
  G_{s,r}^{r,1}\left(t || a_l\right):=G_{s,r}^{r,1}\left(
\begin{array}{c}
a_l, a_1, \ldots, a_{l-1}, a_{l+1}, \cdots, a_s\\
b_1, \ldots, b_r
\end{array}
\bigg\vert \ t\right).  
\end{equation}
For the purpose of the asymptotic formula, we need the assumption that 
\begin{equation}
    \label{assumption-tame}
    b_i-b_k\notin \mathbb{Z} \quad \forall \quad i\neq k, \quad a_l-b_i\notin\mathbb{Z}_{+}, \quad a_l-a_j\notin \mathbb{Z}.
\end{equation}

In this section, we only need to consider a special case when $a_l=1$.  
Recall that $\rho_i$, $\sigma_j$ are the equivariant Chern classes of $L_i$ and $M_j$ respectively as in Section \ref{s-fourier-mukai}.
Set
\begin{equation}\label{pi-function}
a_j=1-\sigma_j+\sigma_l; \quad
b_i=\rho_i+\sigma_l.
\end{equation} 
Viewing the equivariant parameters as general small complex numbers, we can check easily that the assumption in \eqref{assumption-tame} holds for the choice.

Following \cite[Section 5.2]{Luke}, we replace the path $L$ in $G_{s,r}^{r,1}\left(t || a_l\right)$ by a path beginning and ending at $+\infty$ and encircling all poles of $\Gamma(b_i-x)$, $i=1, \ldots, r$, once in the negative direction, but none of the poles of $\Gamma(x)$. Because $s<r$, the replacement does not change the result of the integral and thus according to \cite[Section 5.2 (7)]{Luke}, for $|t|<1$, we obtain a series 
\begin{eqnarray}
&&G_{s,r}^{r,1}\left(t || a_l=1\right)\nonumber\\
&=&\sum_{k=1}^{r}{\Gamma(b_k)
\prod\limits_{i\neq k}\Gamma(b_i-b_k)\over \prod\limits_{j\neq l}\Gamma(a_j-b_k)}
\, t^{b_k} \,  _sF_{r-1}\left(
\begin{array}{l}
1+b_k-a_1, \ldots, 1+b_k-a_{s}\\
1+b_k-b_1, \ldots, 1+b_k-b_{r}
\end{array}; (-1)^{s-r-1}t\right)\nonumber\\
&=&\sum_{k=1}^{r}{\pi\over\sin(\pi b_k)}\prod_{i\neq k}{\pi\over\sin(\pi (b_i-b_k))} 
\prod\limits_{i=1}^{r}{1\over \Gamma(1+b_k-b_i)}
\prod\limits_{j=1}^{s}{1\over \Gamma(a_j-b_k)} \nonumber\\
&&\times t^{b_k} \sum_{d=0}^{\infty}((-1)^{s-r-1}t)^d\ 
{\prod\limits_{j=1}^{s}\prod\limits_{m=0}^{d-1}(1+b_k-a_j+m)\over \prod\limits_{i=1}^{r}\prod\limits_{m=1}^{d}(b_k-b_i+m)}.\nonumber
\end{eqnarray}
Here we use the Euler reflection formula \eqref{euler-reflection}  to obtain the second equality. 
Recall the definition of Gamma classes in \eqref{gamma-class-minus}, it is  easy to see 
\begin{equation}
\label{gamma-minus-local}
    \widehat\Gamma_{T,-}\Big\vert_{H=-\rho_k}=\widehat\Gamma_{Z,-}\prod\limits_{i=1}^{r}\Gamma(1+b_k-b_i)\prod\limits_{j=1}^{s}\Gamma(a_j-b_k).
\end{equation}
Applying the formula \eqref{pullback-J-gamma}, we obtain
\begin{equation}
   \label{J-function-localized}
    \widetilde{J}^T_\TT(q,-z)\Big\vert_{H=-\rho_k}=\sum_{d \geq 0} \left({q\over z}\right)^{(r-s)(d+\rho_k)-c_1(Z)-c_1(V')-c_1(V)}
{\prod\limits_{j=1}^{s}\prod\limits_{m=0}^{d-1}(1+b_k-a_j+m)\over \prod\limits_{i=1}^{r}\prod\limits_{m=1}^{d}(-b_k+b_i-m)}.
\end{equation}
Now we set 
\begin{equation}
\label{change-coordinate-tame}
    t=e^{\pi\sqrt{-1}(1-s)}\left({q\over z}\right)^{r-s},
\end{equation}
the calculations above implies the following equality near $t=0$ (or $z=\infty$)
\begin{eqnarray*}
&&
\sum_{k=1}^{r}
{e^{\pi\sqrt{-1}(1-s)b_k}\pi^r\over \sin(\pi b_k)\prod\limits_{i\neq k}\sin(\pi (b_i-b_k))}
{\widetilde{J}^T_\TT(q,-z)\over \widehat\Gamma_{T,-}}
\Big\vert_{H=-\rho_k}\\
&=&
\left({z\over q}\right)^{(r-s)\sigma_l+c_1(Z)+c_1(V')+c_1(V)}
{G_{s,r}^{r,1}\left(t || a_l=1\right)\over \widehat\Gamma_{Z,-}}.
\end{eqnarray*}

On the other hand, we evaluate the integral along a path beginning and ending at $-\infty$ and encircling all poles of $\Gamma(x)$ 
once in the positive direction, but none of the poles of $\Gamma(b_i-x)$, $i=1, \ldots, r$. 
According to \cite[Thereom A]{Meijer} (see Theorem \ref{algebraic-asymptotic} in Appendix \ref{appendix-asymptotic}), 
the Meijer $G$-function $G_{s,r}^{r,1}\left(t || a_l=1\right)$ admits for large values of  $|t|$ with 
$|\arg(t)|<{r-s\over 2}\pi+\pi$ the asymptotic expansion
\begin{eqnarray}
&&G_{s,r}^{r,1}\left(t || a_l=1\right)\label{tame-asymptotic-formula}\\
&\sim&
\sum_{d=0}^{\infty} {(-t)^{-d}\prod\limits_{i=1}^{r}\Gamma(b_i+d)\over \prod\limits_{j=l}^{s}\Gamma(a_j+d)}\nonumber\\
&=&\prod\limits_{i=1}^{r}{\pi\over \sin(\pi b_i)} 
\prod\limits_{i=1}^{r}{1\over \Gamma(1-b_i)}
\prod\limits_{j=1}^{s}{1\over \Gamma(a_j)}
\sum_{d=0}^{\infty}(-t)^{-d}
{\prod\limits_{i=1}^{r}\prod\limits_{m=0}^{d-1}(b_i+m)\over 
\prod\limits_{j=1}^{s}\prod\limits_{m=1}^{d}(a_j-1+m)}.\nonumber
\end{eqnarray}
Again, we use the Euler reflection formula \eqref{euler-reflection} in the equality. Recall the formula \eqref{gamma-class-minus},
we obtain 
\begin{equation}
\widehat\Gamma_{T',-}\Big\vert_{H'=-\sigma_l}
=\widehat\Gamma_{Z,-}\prod\limits_{i=1}^{r}\Gamma(1-b_i)\prod\limits_{j=1}^{s}\Gamma(a_j).
\end{equation}
A similar calculation to \eqref{J-function-localized} 
with the $I$-function formula \eqref{extremal-i-function} for $T'$ gives 
$$\widetilde{I}^{T'}_\TT(q',-z)\Big\vert_{H'=-\sigma_l}=\sum_{d \geq 0} \left(q'z\right)^{(r-s)(d+\sigma_l)+c_1(Z)+c_1(V')+c_1(V)}
{\prod\limits_{i=1}^{r}\prod\limits_{m=0}^{d-1}(b_i+m)\over 
\prod\limits_{j=1}^{s}\prod\limits_{m=1}^{d}(-a_j+1-m)}.
$$

Now the asymptotic formula \eqref{tame-asymptotic-formula} implies
\begin{prop}
\label{mellin-barnes}
There exists an asymptotic expansion 
\begin{eqnarray*}
\sum_{k=1}^{r}{e^{\pi\sqrt{-1}(1-s)b_k}\pi^r\over \sin(\pi b_k)\prod\limits_{i\neq k}\sin(\pi (b_i-b_k))}{\widetilde{J}^T_\TT(q,-z)\over \widehat\Gamma_{T,-}}\Big\vert_{H=-\rho_k}
\sim\prod\limits_{i=1}^{r}{\pi\over  \sin(\pi b_i)} {\widetilde{I}^{T'}_\TT\left(q^{-1},-z\right)\over \widehat\Gamma_{T',-}}\Big\vert_{H'=-\sigma_l}
\end{eqnarray*}
as $z\to 0$ with 
\begin{equation}\label{argument-tame}
\left|\arg\left({z\over q}\right)-{(1-s)\pi\over r-s}\right|<{\pi\over 2}+{\pi\over r-s}.
\end{equation}
\end{prop}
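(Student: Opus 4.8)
The plan is to assemble this statement from the two faces of the same Meijer $G$-function — its convergent hypergeometric expansion near $t=0$ and its divergent asymptotic expansion near $t=\infty$ — which were already recorded in the paragraphs immediately preceding the Proposition. Concretely, with the substitutions $t=e^{\pi\sqrt{-1}(1-s)}\left(q/z\right)^{r-s}$, $a_j=1-\sigma_j+\sigma_l$, $b_i=\rho_i+\sigma_l$ of \eqref{pi-function}--\eqref{change-coordinate-tame}, the manipulations leading up to the Proposition give the exact identity
\[
\sum_{k=1}^{r}\frac{e^{\pi\sqrt{-1}(1-s)b_k}\pi^r}{\sin(\pi b_k)\prod_{i\neq k}\sin(\pi(b_i-b_k))}\,\frac{\widetilde{J}^T_\TT(q,-z)}{\widehat\Gamma_{T,-}}\Big\vert_{H=-\rho_k}
=\left(\frac{z}{q}\right)^{(r-s)\sigma_l+c_1(Z)+c_1(V')+c_1(V)}\frac{G_{s,r}^{r,1}\!\left(t\,||\,a_l=1\right)}{\widehat\Gamma_{Z,-}}.
\]
The left-hand side is a priori only a convergent series for $z$ near $\infty$, but the right-hand side, written via the Mellin--Barnes integral \eqref{tame-Meijer-G}, furnishes its analytic continuation along the rays in question; this is the point to state carefully, so that an asymptotic statement as $z\to0$ makes sense.

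Next I would invoke Theorem~A of \cite{Meijer} (reproduced as Theorem~\ref{algebraic-asymptotic}, i.e.\ the expansion \eqref{tame-asymptotic-formula}), which applies since $s<r$ and since the genericity conditions \eqref{assumption-tame} hold for $a_j,b_i$ as noted after \eqref{pi-function}. This yields, for $|t|$ large with $|\arg(t)|<\tfrac{r-s}{2}\pi+\pi$, the expansion
\[
G_{s,r}^{r,1}\!\left(t\,||\,a_l=1\right)\sim \prod_{i=1}^r\frac{\pi}{\sin(\pi b_i)}\cdot\frac{1}{\prod_{i=1}^r\Gamma(1-b_i)\prod_{j=1}^s\Gamma(a_j)}\sum_{d=0}^{\infty}(-t)^{-d}\frac{\prod_{i=1}^r\prod_{m=0}^{d-1}(b_i+m)}{\prod_{j=1}^s\prod_{m=1}^d(a_j-1+m)}.
\]
Then I would match the three ingredients: the factorization $\widehat\Gamma_{T',-}\vert_{H'=-\sigma_l}=\widehat\Gamma_{Z,-}\prod_i\Gamma(1-b_i)\prod_j\Gamma(a_j)$ cancels the $\Gamma$-prefactor; substituting $t=e^{\pi\sqrt{-1}(1-s)}(q/z)^{r-s}$ and tracking the sign $(-1)^{-d}$ from $(-t)^{-d}$ together with $(a_j-1+m)=-(-a_j+1-m)$ converts the sum into $\widetilde{I}^{T'}_\TT(q^{-1},-z)\vert_{H'=-\sigma_l}$ using the formula for that localized $I$-function derived just above; and multiplying back by the prefactor $(z/q)^{(r-s)\sigma_l+c_1(Z)+c_1(V')+c_1(V)}$ accounts for the remaining powers of $z/q$ and the overall $\sigma_l$-shift. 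What remains is the sector: since $\arg(t)=(1-s)\pi-(r-s)\arg(z/q)$, the inequality $|\arg(t)|<\tfrac{r-s}{2}\pi+\pi$ is exactly \eqref{argument-tame}.

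I expect the only real obstacle to be the bookkeeping in the matching step — reconciling the substitution $q\mapsto q^{-1}$, the Chern-class exponents on $z/q$, and the $(-1)^d$ signs between the $d$-th term of the $G$-function asymptotics and the $d$-th term of $\widetilde{I}^{T'}_\TT$. None of this is conceptually difficult, but it is where an error would most plausibly hide, so I would present it term-by-term rather than by a single global formula. A secondary point deserving an explicit sentence is the legitimacy of passing from the convergent-series identity (valid for $z$ large) to an asymptotic statement as $z\to0$: this is justified precisely because both sides are restrictions of the single multivalued analytic function $G_{s,r}^{r,1}$, whose integral representation provides the continuation along the rays in \eqref{argument-tame}.
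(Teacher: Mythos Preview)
Your proposal is correct and follows essentially the same route as the paper: the proposition is stated as an immediate consequence of the exact identity (derived just before it) expressing the left-hand side as $(z/q)^{(r-s)\sigma_l+c_1(Z)+c_1(V')+c_1(V)}G_{s,r}^{r,1}(t\,||\,a_l=1)/\widehat\Gamma_{Z,-}$, together with the asymptotic expansion \eqref{tame-asymptotic-formula} from Theorem~\ref{algebraic-asymptotic} and the identification of that expansion with $\widetilde{I}^{T'}_\TT(q^{-1},-z)|_{H'=-\sigma_l}/\widehat\Gamma_{T',-}|_{H'=-\sigma_l}$. Your added remark on analytic continuation via the Mellin--Barnes integral is a useful clarification that the paper leaves implicit.
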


We use this asymptotic formula to prove the main result of the section. 
\begin{prop}\label{weak-tame}
For any $\alpha\in H^*_\TT(T')\otimes S_\TT$, the cohomology class $\widehat{\Gamma}_T \UU^T(\alpha)$ is a weak tame asymptotic class with $\arg(z/q)={(1-s)\over r-s}\pi.$
\end{prop}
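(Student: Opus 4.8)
The strategy is to rewrite $\langle\wtJ^T_\TT(q,-z),\widehat\Gamma_T\UU^T(\alpha)\rangle^T$ by equivariant localization so that it becomes, up to $z$-independent factors, the left-hand side of Proposition~\ref{mellin-barnes}, and then to observe that the right-hand side of that proposition is polynomially bounded as $z\to0$. By \eqref{leading-term} and Definition~\ref{weak-asymp-def}, the claim is equivalent to $\langle\wtJ^T_\TT(q,-z),\widehat\Gamma_T\UU^T(\alpha)\rangle^T=O(|z|^{-m})$ for some $m$ as $z\to0$ with $\arg(z/q)=\tfrac{(1-s)\pi}{r-s}$. Since this pairing is linear in $\alpha$, since the localization theorem writes any $\alpha\in H^*_\TT(T')\otimes S_\TT$ as $\sum_{j=1}^{s}\gamma_j\,\tfrac{\ii'_j}{e_\TT(N_{F'_j|T'})}$ with $\gamma_j$ pulled back from $H^*(Z)\otimes S_\TT$, and since $\UU^T$ is $H^*(Z)$-linear (being induced by a Fourier--Mukai transform over $Z$), it suffices to treat $\alpha=\gamma\,\tfrac{\ii'_j}{e_\TT(N_{F'_j|T'})}$, for which \eqref{FM-map} gives $\UU^T(\alpha)=\sum_{i=1}^{r}C_{ij}\,\gamma\,\tfrac{\ii_i}{e_\TT(N_{F_i|T})}$. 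As each $\tfrac{\ii_i}{e_\TT(N_{F_i|T})}$ is the idempotent supported on the fixed locus $F_i\cong Z$, along which $H$ restricts to $-\rho_i$, the projection formula gives
\begin{equation*}
\langle\wtJ^T_\TT(q,-z),\widehat\Gamma_T\UU^T(\alpha)\rangle^T=\sum_{i=1}^{r}C_{ij}\int_Z\gamma\;\wtJ^T_\TT(q,-z)\big|_{H=-\rho_i}\;\frac{\widehat\Gamma_T\big|_{H=-\rho_i}}{e_\TT(N_{F_i|T})}.
\end{equation*}

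The main step is to simplify the coefficient $C_{ij}\,\widehat\Gamma_T|_{H=-\rho_i}/e_\TT(N_{F_i|T})$. From $N_{F_i|T}=N_{F_i|\PP(V)}\oplus N_{\PP(V)|T}|_{F_i}$ one gets $e_\TT(N_{F_i|T})=\prod_{l\ne i}(\rho_l-\rho_i)\prod_{j}(\sigma_j+\rho_i)$, and from the tangent sequence of $T$ (cf.\ \eqref{pullback-gamma-X}) one gets $\widehat\Gamma_T|_{H=-\rho_i}=\widehat\Gamma_Z\prod_{l\ne i}\Gamma(1+\rho_l-\rho_i)\prod_{j}\Gamma(1+\sigma_j+\rho_i)$. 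Applying $\Gamma(1+x)=x\Gamma(x)$, the Euler reflection formula \eqref{euler-reflection}, the decomposition \eqref{todd-from-gamma} of $\widehat\Gamma_Z$, the identity \eqref{gamma-minus-local} for $\widehat\Gamma_{T,-}|_{H=-\rho_i}$, and the expression \eqref{FM-coefficients} for $C_{ij}$ rewritten via $e^{\pi\sqrt{-1}x}-e^{-\pi\sqrt{-1}x}=2\sqrt{-1}\sin(\pi x)$, one verifies --- the $s-1$ sine factors in the numerator of $C_{ij}$ cancel all but one factor of $\prod_{j}\sin(\pi(\sigma_j+\rho_i))$ --- that, with $b_i:=\rho_i+\sigma_j$,
\begin{equation*}
C_{ij}\,\frac{\widehat\Gamma_T\big|_{H=-\rho_i}}{e_\TT(N_{F_i|T})}=C'_j\,\frac{e^{\pi\sqrt{-1}(1-s)b_i}\,\pi^{r}}{\sin(\pi b_i)\prod_{l\ne i}\sin(\pi(b_l-b_i))}\,\frac{1}{\widehat\Gamma_{T,-}\big|_{H=-\rho_i}},
\end{equation*}
where $C'_j={\rm Td}(Z)\,e^{-\pi\sqrt{-1}c_1(Z)}\,\pi^{s-1}\prod_{m\ne j}\tfrac{-1}{\sin(\pi(\sigma_j-\sigma_m))}\in H^*(Z)\otimes S_\TT$ depends on neither $i$ nor $z$. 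Substituting, the pairing equals $\int_Z C'_j\,\gamma\cdot L(q,-z)$, where $L(q,-z)$ is precisely the left-hand side of Proposition~\ref{mellin-barnes} with $l=j$.

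By Proposition~\ref{mellin-barnes}, as $z\to0$ along $\arg(z/q)=\tfrac{(1-s)\pi}{r-s}$ --- which lies in the sector \eqref{argument-tame} --- one has $L(q,-z)\sim\prod_{i}\tfrac{\pi}{\sin(\pi b_i)}\,\bigl(\widetilde I^{T'}_\TT(q^{-1},-z)/\widehat\Gamma_{T',-}\bigr)\big|_{H'=-\sigma_j}$. To finish, I would check that $\widetilde I^{T'}_\TT(q^{-1},-z)\big|_{H'=-\sigma_j}$ is polynomially bounded as $z\to0$: in the series \eqref{exceptional-i-function} the $d$-th term carries the factor $z^{(r-s)d+c_1(T')}$, the terms with $d\ge1$ decay, and $c_1(T')|_{H'=-\sigma_j}\in H^*_\TT(Z)\otimes S_\TT$ splits as an element of $S_\TT$ (a constant in $z$) plus a nilpotent class in $H^{>0}(Z)$, so $z^{c_1(T')|_{H'=-\sigma_j}}$ is a fixed power of $z$ times a polynomial in $\log z$. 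Since $C'_j$, $\prod_i\pi/\sin(\pi b_i)$, $\widehat\Gamma_{T',-}|_{H'=-\sigma_j}$ and $\gamma$ are independent of $z$, the whole expression is $O(|z|^{-m})$ for some $m$, which proves that $\widehat\Gamma_T\UU^T(\alpha)$ is a weak tame asymptotic class with $\arg(z/q)=\tfrac{(1-s)\pi}{r-s}$.

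The main obstacle is the Gamma-function bookkeeping of the second paragraph: one must carefully match the product of the Euler class $e_\TT(N_{F_i|T})$, the two Gamma classes $\widehat\Gamma_T|_{F_i}$ and $\widehat\Gamma_{T,-}|_{F_i}$, and the Fourier--Mukai coefficient $C_{ij}$ against the precise coefficient appearing in Proposition~\ref{mellin-barnes}, up to an $i$-independent constant. The rest is formal: localization and the projection formula in the first paragraph, and Proposition~\ref{mellin-barnes} together with the elementary estimate on $\widetilde I^{T'}$ in the third.
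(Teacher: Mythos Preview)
Your proposal is correct and follows essentially the same route as the paper: localize to the $\TT$-fixed loci, use Euler reflection and the Todd-class decomposition to match $C_{ij}\,\widehat\Gamma_T|_{F_i}/e_\TT(N_{F_i|T})$ against the coefficient in the left-hand side of Proposition~\ref{mellin-barnes}, and then invoke that asymptotic expansion. The only difference is cosmetic: the paper recognizes the right-hand side of Proposition~\ref{mellin-barnes} as the localized pairing $\langle\widetilde I^{T'}_\TT(q^{-1},-z),\widehat\Gamma_{T'}\,\alpha\rangle^{T'}_\TT$ (the identity~\eqref{e:FM and central charge}, reused in later corollaries), whereas you verify polynomial boundedness of $\widetilde I^{T'}_\TT|_{H'=-\sigma_j}$ directly from the series \eqref{exceptional-i-function}.
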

\begin{proof}
We calculate the asymptotic behavior of 
$$\left\langle\widetilde{J}^T_\TT(q, -z),\widehat{\Gamma}_T \UU^T \alpha\right\rangle^T_\TT, \quad \text{for all } \alpha=\frac{\ii_l'}{e_\TT(N_{F_l'|T'})}, \quad l=1, \cdots, s.$$
Using the definition of $\UU^T$ in \eqref{FM-coefficients},  the Todd class formula in \eqref{todd-from-gamma}, the localization calculations 
$$
\begin{dcases}
e_\TT(N_{F_k|T})=\prod_{j=1}^{s}(\sigma_j+\rho_k)\prod_{i\neq k}(\rho_i-\rho_k)
=\prod_{j=1}^{s}(1+b_k-a_j)\prod_{i\neq k}(b_i-b_k),\\
e_\TT(N_{F_l'|T'})
=\prod_{i=1}^{r}(\rho_i+\sigma_l)\prod_{j\neq l}(\sigma_j-\sigma_l)
=\prod_{i=1}^{r}b_i\prod_{j\neq l}(1-a_j),
\end{dcases}
$$
and the asymptotic formula in Proposition \ref{mellin-barnes}, we obtain
\begin{eqnarray*}
&&\left\langle\widetilde{J}^T_\TT(q, -z),\widehat{\Gamma}_T \UU^T \left(\frac{\ii_l'}{e_\TT(N_{F_l'|T'})}\right)\right\rangle^T_\TT\\
&=&\sum_{k=1}^{r}\left\langle\widetilde{J}^T_\TT(q, -z), { {\rm Td}(T) \over e^{c_1(T)}\widehat{\Gamma}_{T,-}} \frac{C_{kl}}{e_\TT(N_{F_k|T})} \ii_k\right\rangle^T_\TT\\
&=&\int_Z { {\rm Td}(Z) \over e^{c_1(Z)}}
\prod_{j\neq l}{\pi\over\sin(\pi(1-a_j))}\sum_{k=1}^{r}
{e^{\pi\sqrt{-1}(1-s)b_k}\pi^r\over \sin(\pi b_k)\prod\limits_{i\neq k}\sin(\pi (b_i-b_k))}
{\widetilde{J}^T_\TT(q,-z)\over \widehat\Gamma_{T,-}}
\Big\vert_{H=-\rho_k}\\
&\sim&\int_Z { {\rm Td}(Z) \over e^{c_1(Z)}}
\prod_{j\neq l}{\pi\over\sin(\pi(1-a_j))}
\prod\limits_{i=1}^{r}{\pi\over \sin(\pi b_i)}
{\widetilde{I}^{T'}_\TT\left(q^{-1},-z\right)\over \widehat\Gamma_{T',-}}
\Big\vert_{H'=-\sigma_l}\\
&=&
\left\langle\widetilde{I}^{T'}_\TT\left(q^{-1}, -z\right),
{ {\rm Td}(T') \over e^{c_1(T')}\widehat{\Gamma}_{T',-} } \frac{\ii_{l}'}{e_\TT(N_{F'_{l}|T'})} \right\rangle^{T'}_\TT\\
&=&\left\langle\widetilde{I}^{T'}_\TT\left(q^{-1}, -z\right),\widehat{\Gamma}_{T'}  \frac{\ii_{l}'}{e_\TT(N_{F'_{l}|T'})} \right\rangle^{T'}_\TT.
\end{eqnarray*}
Because $\{\frac{\ii_l'}{e_\TT(N_{F_l'|T'})}\}$ forms a basis of $H^*_\TT(T')\otimes S_\TT$,
this implies that for any $\alpha\in H^*_\TT(T')\otimes S_\TT$, 
\begin{equation} \label{e:FM and central charge}
\left\langle\widetilde{J}^T_\TT(q, -z),\widehat{\Gamma}_T \UU^T \alpha\right\rangle^T_\TT \sim 
\left\langle\widetilde{I}^{T'}_\TT\left(q^{-1}, -z\right),\widehat{\Gamma}_{T'}  \alpha \right\rangle^{T'}_\TT.\end{equation}
Now the result is a consequence of \eqref{leading-term}. 
\end{proof}

As in the previous section, if we replace $\widetilde{J}^T_\TT(q, -z)$ with $\widetilde{J}^T_\TT(q, \bt, -z)$ for
$\bt$ pulled back from $H^{\geq 2}(Z)$, we obtain an analogous formula to \eqref{e:FM and central charge}:
\begin{equation} \label{e:FM and central charge2}
\left\langle\widetilde{J}^T_\TT(q, \bt, -z),\widehat{\Gamma}_T \UU^T \alpha\right\rangle^T_\TT \sim  
\left\langle\widetilde{I}^{T'}_\TT\left(q^{-1},\bt,  -z\right),\widehat{\Gamma}_{T'}  \alpha \right\rangle^{T'}_\TT.\end{equation}
Since the expression $e^{-\sum_{i}z^{\deg \phi_i -1}t^i\phi_i}$ is polynomial in $z$.  This gives the following.
\begin{corollary}
    \label{weak-tame2}
For $\bt$ pulled back from $H^{\geq 2}(Z)$ and for any $\alpha\in H^*_\TT(T')\otimes S_\TT$, 
$\widehat{\Gamma}_T \UU^T(\alpha)$ is a weak tame asymptotic class with $\arg(z/q)={(1-s)\over r-s}\pi.$
\end{corollary}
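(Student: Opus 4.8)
The plan is to obtain Corollary~\ref{weak-tame2} from Proposition~\ref{weak-tame} by a routine $\bt$-deformation argument, tracking how the relevant $J$- and $I$-functions depend on $\bt$. First I would record the two structural facts already in hand: by \eqref{modified-J-local-base} one has $\wtJ^T_\TT(q,\bt,-z)=e^{-\sum_i z^{\deg\phi_i-1}t^i\phi_i}\,\wtJ^T_\TT(q,-z)$, and by \eqref{I-function-T'} one has $\widetilde{I}^{T'}_\TT(q',\bt,-z)=e^{-\sum_i z^{\deg\phi_i-1}t^i\phi_i}\,\widetilde{I}^{T'}_\TT(q',-z)$ with the \emph{same} prefactor. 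Since $\TT$ acts trivially on $Z$ and $\bt$ is pulled back from $H^{\geq 2}(Z)$, each $\phi_i$ occurring is a positive-degree class on the finite-dimensional variety $Z$, hence nilpotent; so $e^{-\sum_i z^{\deg\phi_i-1}t^i\phi_i}$ is a finite sum and, as every exponent $\deg\phi_i-1$ is $\geq 1$, it is a polynomial in $z$ with coefficients pulled back from $H^*(Z)$.

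Next I would rerun the chain of (in)equalities in the proof of Proposition~\ref{weak-tame} verbatim, with $\wtJ^T_\TT(q,-z)$ replaced by $\wtJ^T_\TT(q,\bt,-z)$. The key observation is that the inserted prefactor is pulled back from $H^*(Z)$ and therefore independent of the fibre classes $H$ and $H'$: it passes unchanged through the restrictions to the fixed loci $F_k\cong Z$ and $F_l'\cong Z$, through the projection formula for $\pi_*$, and through the Barnes/Mellin--Barnes step of Proposition~\ref{mellin-barnes}, which only concerns the Meijer $G$-function in the variable $e^{\pi\sqrt{-1}(1-s)}(q/z)^{r-s}$ and is unaffected by an overall polynomial-in-$z$ factor. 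It then reappears identically on the $T'$ side. This produces the $\bt$-version \eqref{e:FM and central charge2} of \eqref{e:FM and central charge}: for each basis element $\alpha=\ii_l'/e_\TT(N_{F_l'|T'})$,
$$\left\langle\wtJ^T_\TT(q,\bt,-z),\widehat{\Gamma}_T\UU^T\alpha\right\rangle^T_\TT\ \sim\ \left\langle\widetilde{I}^{T'}_\TT(q^{-1},\bt,-z),\widehat{\Gamma}_{T'}\alpha\right\rangle^{T'}_\TT$$
as $z\to 0$ with $\arg(z/q)=\tfrac{1-s}{r-s}\pi$, and hence for all $\alpha\in H^*_\TT(T')\otimes S_\TT$ by linearity.

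Finally I would conclude as in Proposition~\ref{weak-tame}. The right-hand side above is $e^{-\sum_i z^{\deg\phi_i-1}t^i\phi_i}$ times $\langle\widetilde{I}^{T'}_\TT(q^{-1},-z),\widehat{\Gamma}_{T'}\alpha\rangle^{T'}_\TT$; the latter is, up to the factor $z^{c_1(T')}$ which is polynomial in $\ln z$ by nilpotency, a power series in $z/q$ with non-negative exponents, and multiplying by the polynomial-in-$z$ prefactor leaves it $O(|z|^{-m})$ for a suitable $m$. Feeding this through \eqref{leading-term}, which identifies $(2\pi z)^{\dim T/2}\langle\one,\Phi^T(q,\bt,z)(\widehat{\Gamma}_T\UU^T\alpha)\rangle^T_\TT$ with the left-hand side, gives the bound \eqref{weak-asymp-class-behavior} with $\lambda=0$, so $\widehat{\Gamma}_T\UU^T(\alpha)$ is a weak tame asymptotic class with $\arg(z/q)=\tfrac{1-s}{r-s}\pi$. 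There is no serious obstacle here; the one point deserving a line of care is that the asymptotic equivalence of Proposition~\ref{mellin-barnes} survives multiplication by the $\bt$-prefactor, which is immediate because that prefactor is a polynomial in $z$ not involving the Meijer $G$-variable and so commutes with passage to the asymptotic expansion.
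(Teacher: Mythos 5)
Your argument follows essentially the same route as the paper: factor out the $\bt$-prefactor $e^{-\sum_i z^{\deg\phi_i-1}t^i\phi_i}$ using \eqref{modified-J-local-base} and \eqref{I-function-T'}, observe that it is polynomial in $z$ by nilpotency of positive-degree classes on $Z$, rerun the Mellin--Barnes chain to get the $\bt$-analogue \eqref{e:FM and central charge2} of \eqref{e:FM and central charge}, and conclude the $O(|z|^{-m})$ bound is unaffected. One small imprecision worth fixing: the prefactor is an $H^*(Z)$-valued class rather than a scalar, so it is not literally a multiplier that can be pulled out in front of the pairing $\langle\widetilde{I}^{T'}_\TT(q^{-1},-z),\widehat{\Gamma}_{T'}\alpha\rangle^{T'}_\TT$; it stays inside the integral over $Z$, but since it is a polynomial in $z$ with nilpotent coefficients this still does not disturb the asymptotic estimate.
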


From the diagram \eqref{d:U}, we have ${\rm Ch}(\FM^T(E'))=\UU^T {\rm Ch}(E')$. 
So the asymptotic expansion is compatible with the 
Fourier-Mukai transformation in Section \ref{s-fourier-mukai}. 
\begin{prop}
\label{algebraic-asymptotic-local}
For any $E'\in K^0_\TT(T')$ and $\bt$ pulled back from $H^{\geq 2}(Z)$, the central charge $Z(\FM^T(E'))$ admits the asymptotic expansion
$$Z(\FM^T(E'))(q, \bt)\sim 
Z^{I^{T'}}(E')\left(q^{-1}, \bt\right)
$$
as $z\to 0$ with $\arg(z)$ satisfies the condition \eqref{argument-tame}.
Here $Z^{I^{T'}}(E')$ is defined in 
\eqref{i-function-central-charge}.
\end{prop}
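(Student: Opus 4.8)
The plan is to obtain Proposition~\ref{algebraic-asymptotic-local} as a direct consequence of two facts already in hand: the commuting square \eqref{d:U}, which identifies the Fourier--Mukai transform $\FM^T$ with the cohomological operator $\UU^T$ under the modified Chern character, and the comparison \eqref{e:FM and central charge2} between the modified $J$-function of $T$ and the modified $I$-function of $T'$. No new geometric or analytic input is required; all of the substantive work has already been done in Proposition~\ref{weak-tame} and its $\bt$-deformed refinement, whose heart is the asymptotic analysis of the Meijer $G$-function $G^{r,1}_{s,r}$ in Proposition~\ref{mellin-barnes}.

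Concretely, I would first unwind the definition of the central charge: by the equivariant form of \eqref{central-charge-modified-J}, for $E'\in K^0_\TT(T')$ one has
\[
Z(\FM^T(E'))(q,\bt)=\left\langle \wtJ^T_\TT(q,\bt,-z),\ \widehat{\Gamma}_T\,{\rm Ch}(\FM^T(E'))\right\rangle^T_\TT.
\]
Commutativity of \eqref{d:U} rewrites ${\rm Ch}(\FM^T(E'))$ as $\UU^T({\rm Ch}(E'))$, and since ${\rm Ch}(E')$ is a genuine (non-localized) class in $H^*_\TT(T')\subset H^*_\TT(T')\otimes\hat S_\TT$, it is an admissible argument for the identity \eqref{e:FM and central charge2}. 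Applying that identity with $\alpha={\rm Ch}(E')$ gives
\[
Z(\FM^T(E'))(q,\bt)\ \sim\ \left\langle \widetilde{I}^{T'}_\TT(q^{-1},\bt,-z),\ \widehat{\Gamma}_{T'}\,{\rm Ch}(E')\right\rangle^{T'}_\TT\ =\ Z^{I^{T'}}(E')(q^{-1},\bt),
\]
the last equality being the (equivariant) definition \eqref{i-function-central-charge}. The admissible window for $\arg(z)$ is inherited unchanged from Proposition~\ref{mellin-barnes}, i.e.\ \eqref{argument-tame}: the $\bt$-dependence enters only through the prefactor $e^{-\sum_i z^{\deg\phi_i-1}t^i\phi_i}$, which is polynomial in $z$ with nilpotent coefficients (here one uses that $\TT$ acts trivially on $Z$, so $\bt$ and $c_1(Z)$ are nilpotent), and hence does not affect the exponential-free asymptotics.

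If one prefers a non-equivariant statement, I would then pass to the non-equivariant limit $\rho_i,\sigma_j\mapsto$ (the ordinary Chern roots); this is harmless because $\UU^T$ has a well-defined non-equivariant limit (as recorded after \eqref{d:V}) and because the right-hand side is, term by term, polynomial in the equivariant parameters. I do not expect a genuine obstacle at this stage: the only things to check are bookkeeping, namely that ${\rm Ch}(E')$ lies in the domain of \eqref{e:FM and central charge2} (it does) and that the relevant $\arg(z)$-range is the ``tame'' window \eqref{argument-tame} rather than the narrower window \eqref{arg-z-expoenential} attached to the non-zero eigenvalues. The difficult part of the argument lies entirely upstream --- in establishing \eqref{tame-asymptotic-formula} and \eqref{e:FM and central charge2} --- and that part is already complete.
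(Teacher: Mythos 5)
Your proposal follows essentially the same route as the paper: the paper observes from the commuting square \eqref{d:U} that ${\rm Ch}(\FM^T(E')) = \UU^T\,{\rm Ch}(E')$ and then invokes \eqref{e:FM and central charge2} with $\alpha = {\rm Ch}(E')$ to identify the asymptotic expansion, exactly as you do. Your extra remarks (about $\bt$ entering only through a polynomial prefactor, and about the non-equivariant limit) are consistent with the paper's surrounding discussion but are not required for the proposition as stated.
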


\begin{remark}
    Proposition~\ref{algebraic-asymptotic-local} may be viewed as a wall crossing result for the discrepant transformation $T\dasharrow T'$.  For such local models, we expect that the same method as used above, when applied to Brown's $I$-functions for toric bundles \cite{Brown}, should yield an analogous result \emph{without} restricting to extremal curve classes.  This is related to Iritani's work on discrepant toric wall crossings \cite{Iri-toric}.  However we do not pursue this here, as it is peripheral to the main goals of this paper.  
\end{remark}

\section{Reductions}\label{s:reductions}

In the previous two sections we found weak asymptotic classes for the split local model.  In this section we use those computations together with a series of reductions to determine strong asymptotic classes in all cases and prove Theorem~\ref{main-theorem}.

\subsection{Weak implies strong in the local model}\label{ss:ws}
We show that in the  local model the notion of strong asymptotic classes and weak asymptotic classes are equivalent.

Using the presentation from above, we have
\begin{align}\label{QHT2}
    QH^*_{ext}(T) &=  H^*(Z)[H][[q]]\left/\br{ \prod_{i=1}^r (\rho_i + H)  = q^{r-s} \prod_{j=1}^s (\sigma_j -H)}.\right.
\end{align}

Let us now assume that $V$ and $V'$ split as $V = \oplus_{i=1}^r L_i$ and $V' = \oplus_{j=1}^s M_j$.  Then there is an action of 
 $\TT = (\CC^*)^{r+s}$ on $T, T'$ induced by scaling each of the summands of $V \oplus V'.$  By abuse of notation, we will also use $\rho_i, \sigma_j$ to denote the equivariant Chern classes of $L_i, M_j$.  With this notation, then $QH^*_{ext}(T)_\TT$ still has a presentation as $H^*_\TT(Z)[H][[q]]$ modulo the relation in \eqref{QHT2}.

\begin{prop}\label{wtos}
Let $\beta$ be pulled back from $H^*_\TT(Z)$, for $i \geq 0$, the expression $\br{c_1(T)^i\beta, \Phi^T_\TT(q, \bt, z) \alpha}_\TT^T$ is obtained from $\br{ 1, \Phi^T_\TT(q, \bt, z) \alpha}_\TT^T$ by a sequence of derivatives. 
\end{prop}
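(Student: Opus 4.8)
The plan is to exploit the quantum differential equations satisfied by the flat sections $\Phi^T_\TT$, together with the explicit presentation of $QH^*_{exc}(T)_\TT$ as $H^*_\TT(Z)[H][[q]]$ modulo $\prod_{i=1}^r(\rho_i+H) = q^{r-s}\prod_{j=1}^s(\sigma_j-H)$. The key point is that every class in $QH^*_{exc}(T)_\TT$ can be written as a polynomial in $H$ with coefficients pulled back from $H^*_\TT(Z)$, and that quantum multiplication by $c_1(T)$ (equivalently, by $H$, up to a base class) corresponds — under the pairing with $\Phi^T_\TT$ — to applying the differential operator $z\partial_H$ acting on the second slot, where $\partial_H$ is the operator introduced in the proof of Theorem~\ref{Jqlh}. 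More precisely, the flatness of $\nabla$ in the direction $\nabla_{z\partial_H}$ (which is a $\CC$-linear combination of the $\nabla_{\partial/\partial t^i}$ and $\nabla_{q\partial/\partial q}$ directions, all of which have the form $z\partial + (\text{operator})\star_{q,\bt}$) implies that pairing $\Phi^T_\TT(q,\bt,z)(\alpha)$ against a class $\gamma$ and then applying $z\partial_H$ produces the pairing against $c_1(T)\star_{q,\bt}\gamma$ up to the grading operator $\mu$ and lower-order terms.

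First I would make precise the statement that, for $\gamma$ pulled back from $H^*_\TT(Z)$ and for $\nabla_{\partial_a}$ one of the directional connections $\nabla_{\partial/\partial t^i}$ or $\nabla_{q\partial/\partial q}$ appearing in \eqref{e:dubcon}, the flatness of $\Phi^T_\TT$ gives
$$z\,\partial_a\, \br{\gamma,\Phi^T_\TT(q,\bt,z)(\alpha)}^T_\TT = \br{\partial_a\text{-divisor}\star_{q,\bt}\gamma,\Phi^T_\TT(q,\bt,z)(\alpha)}^T_\TT + \br{z\,\partial_a\gamma,\Phi^T_\TT(q,\bt,z)(\alpha)}^T_\TT,$$
where the last term vanishes for $\gamma$ a fixed class in $H^*_\TT(Z)$. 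Taking the specific combination defining $\partial_H$, and using that $\partial_H$ acting on the prefactor $e^{\bt/z}q^{c_1(T)/z}$ multiplies by $H/z$ (as recorded in the proof of Theorem~\ref{Jqlh}), I would conclude that $z\partial_H$ applied to $\br{\gamma,\Phi^T_\TT(q,\bt,z)(\alpha)}^T_\TT$ yields $\br{H\star_{q,\bt}\gamma,\Phi^T_\TT(q,\bt,z)(\alpha)}^T_\TT$, modulo a term that is itself of the same form with $\gamma$ replaced by a class of strictly lower degree in $H$. Since $c_1(T) = (r-s)H + (\text{base class})$ by \eqref{first-chern-T}, and quantum multiplication by a base class equals cup product with no quantum correction (as established in the proofs of Proposition~\ref{p:eigen} and Corollary~\ref{c:Estar}), multiplication by $c_1(T)$ differs from $(r-s)$ times multiplication by $H$ only by a cup-product term with a base class.

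Next I would set up the induction. Write $\beta \in H^*_\TT(Z)$. I claim by induction on $i$ that $\br{c_1(T)^i\beta,\Phi^T_\TT(q,\bt,z)(\alpha)}^T_\TT$ is a $\CC$-linear (in fact $H^*_\TT(Z)[[q]]$-linear) combination of expressions $(z\partial_H)^{j}\br{1,\Phi^T_\TT(q,\bt,z)(\alpha)}^T_\TT$ for $j\le i$, hence a sequence of derivatives applied to $\br{1,\Phi^T_\TT(q,\bt,z)(\alpha)}^T_\TT$. The base case $i=0$ uses that $\br{\beta,\Phi^T_\TT(\alpha)}^T_\TT = \br{1,\beta\star_{q,\bt}\Phi^T_\TT(\alpha)}^T_\TT$ and that $\beta\star_{q,\bt}-$ has no quantum correction, so this is a $H^*_\TT(Z)$-linear combination of pairings which, after the first-step relation, are again handled by the same mechanism; alternatively, absorb $\beta$ into the class paired against and note the argument only ever needs to produce $\br{1,\Phi^T_\TT(\alpha)}$ up to $H^*_\TT(Z)[[q]]$-multiples. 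For the inductive step, one applies $z\partial_H$ to $\br{c_1(T)^{i}\beta,\Phi^T_\TT(\alpha)}^T_\TT$; by the first step this equals $\frac{1}{r-s}\br{c_1(T)^{i+1}\beta,\Phi^T_\TT(\alpha)}^T_\TT$ plus strictly lower-order correction terms of the form $\br{c_1(T)^{\le i}(\text{base class}),\Phi^T_\TT(\alpha)}^T_\TT$ (coming from the $\mu$-term and from the base-class part of $c_1(T)$), all of which are covered by the inductive hypothesis. Solving for $\br{c_1(T)^{i+1}\beta,\Phi^T_\TT(\alpha)}^T_\TT$ expresses it as $(r-s)\,z\partial_H$ applied to an expression already known to be a sequence of derivatives of $\br{1,\Phi^T_\TT(\alpha)}^T_\TT$, plus such derivatives of lower order; this completes the induction.

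The main obstacle I expect is bookkeeping the ``lower-order'' correction terms carefully enough to close the induction — specifically, tracking how the Hodge grading operator $\mu$ in $\nabla_{z\partial/\partial z}$ and the non-$H$ part of $c_1(T)$ interact, and confirming that every correction term really does involve a strictly smaller power of $H$ (or of $c_1(T)$) so that the induction is well-founded. One must also be slightly careful that $\partial_H$, being a combination of the $q\partial/\partial q$ and $\partial/\partial t^i$ directions, genuinely acts as a flat connection direction on the specialized (exceptional) quantum cohomology — but this is guaranteed by Corollary~\ref{c:assoc1} and the divisor-equation argument recorded after it, so no new input is needed there.
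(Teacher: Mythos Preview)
Your approach is correct in outline, but it takes an unnecessarily circuitous route compared to the paper. The paper observes directly that the $q$-direction of the Dubrovin connection is $\nabla_{q\partial/\partial q} = q\,\partial/\partial q + z^{-1}\,c_1(T)\star_{q,\bt}-$, so flatness of $\Phi^T_\TT(q,\bt,z)\alpha$ gives $\br{c_1(T)\star_{q,\bt}\gamma,\,\Phi^T_\TT\alpha}^T_\TT = z\,q\partial_q\br{\gamma,\,\Phi^T_\TT\alpha}^T_\TT$ whenever $\gamma$ is independent of $q$. Iterating for $i<r$ (so that $c_1(T)^{\star i}\star\beta$ has degree $<r$ in $H$, carries no quantum correction, and hence remains $q$-independent) yields $\br{c_1(T)^i\beta,\,\Phi^T_\TT\alpha}^T_\TT = z^i(q\partial_q)^i\br{\beta,\,\Phi^T_\TT\alpha}^T_\TT$ with no correction terms at all. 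The base class $\beta$ is handled in a single step via the $\bt$-direction: $\br{\beta,\,\Phi^T_\TT\alpha}^T_\TT = z\,\partial_\beta\br{1,\,\Phi^T_\TT\alpha}^T_\TT$.

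By routing through $\partial_H$ and $H$ rather than $q\partial_q$ and $c_1(T)$, you are forced to decompose $c_1(T)=(r-s)H+(\text{base class})$ and carry the base-class remainders through an induction---bookkeeping that the paper's choice of differential operator avoids entirely. Your concern about the Hodge grading operator $\mu$ is also misplaced: $\mu$ appears only in $\nabla_{z\partial/\partial z}$, not in the $\bt$- or $q$-directions used here, so it never enters the argument.
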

\begin{proof}
Let $\nabla^\TT$ denote the equivariant Dubrovin connection, and $S^T_\TT(q, \bt, z)$  the equivariant solution matrix, where we take $\bt$ to be a general class pulled back from $H^*_\TT(Z).$  Let $\br{ , }^T_\TT$ denote the equivariant Poincar\'e pairing, which is defined via localization.
 
Then, for  $\beta$ pulled back from $H^*_\TT(Z)$, by \eqref{e:dubcon}, we have
\begin{align*}
        \br{\beta, \Phi^T_\TT(q, \bt, z) \alpha}^T_\TT & =  z\br{\nabla_\beta^\TT 1, \Phi^T_\TT(q, \bt, z) \alpha}^T_\TT \\ \nonumber
        & = z\partial/\partial_\beta \br{ 1, \Phi^T_\TT(q, \bt, z) \alpha}^T_\TT - z\br{ 1, \nabla_\beta^\TT \Phi^T_\TT(q, \bt, z) \alpha}^T_\TT \\ \nonumber
        & = z\partial/\partial_\beta \br{ 1, \Phi^T_\TT(q, \bt, z) \alpha}^T_\TT.
\end{align*}
The Dubrovin connection for $q$ is given by
$$\nabla_{q \frac{\partial}{\partial q}}^\TT = q \frac{\partial}{\partial q} + \frac{1}{z}c_1(T) \star - .$$ Using the presentation \eqref{QHT2}, for $i<r$, 
\begin{align}\label{e:derivs}
        \br{c_1(T)^i\beta, \Phi^T_\TT(q, \bt, z) \alpha}^T_\TT & =  z^i\left(q\partial/\partial_q\right)^{i} \br{\beta , \Phi^T_\TT(q, \bt, z) \alpha}^T_\TT \\
        & =  z^{i+1}\left(q\partial/\partial_q\right)^{i} \partial/\partial_\beta \br{ 1, \Phi^T_\TT(q, \bt, z) \alpha}^T_\TT.\nonumber
        \end{align}
\end{proof}
Recall that $\lambda_m(q)=(r-s)e^{-\pi\sqrt{-1}{2m+s\over r-s}} q$. We aslo notice that the set $$\{c_1(T)^i\beta |\; i \geq 0, \beta \text{ pulled back from } H^*_\TT(Z)\}$$ spans $H^*_\TT(T)$. 
We immediately conclude the following.
\begin{prop}
  \label{p:stac}
For $\alpha \in H^*_\TT(Z)$, the element $\widehat{\Gamma}_T\Psi_m^T(\alpha)$
is a strong asymptotic class with respect to $\lambda_m(q)$
and $\arg(z/q)={-2m-s\over r-s}\pi.$
   
For  $\alpha\in H^*_\TT(T')$, the element $\widehat{\Gamma}_T \UU^T(\alpha)$ is a strong tame asymptotic class with $\arg(z/q)={1-s\over r-s}\pi.$
\end{prop}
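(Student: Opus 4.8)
The plan is to deduce both assertions from Proposition~\ref{wtos} together with the weak (tame) asymptotic class results already established for the local model: Proposition~\ref{nzasymp2} for the nonzero eigenvalues and Corollary~\ref{weak-tame2} for the zero eigenvalue. The point is that a \emph{strong} asymptotic class condition bounds the norm of the vector-valued function $e^{\lambda_m(q)/z}\,\Phi^T_\TT(q,\bt,z)(\widehat{\Gamma}_T\Psi_m^T\alpha)$ (resp. of $\Phi^T_\TT(q,\bt,z)(\widehat{\Gamma}_T\UU^T\alpha)$), whereas the weak condition controls only its equivariant Poincar\'e pairing with $\one$. Since the norm of a class in $H^*_\TT(T)$ (localized as in the setup) is equivalent to the maximum of its pairings against a fixed basis, or against any finite spanning set, it suffices to bound $\big|e^{\lambda_m(q)/z}\br{\gamma,\Phi^T_\TT(q,\bt,z)(\widehat{\Gamma}_T\Psi_m^T\alpha)}^T_\TT\big|$ for $\gamma$ running over a spanning set. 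First I would take the spanning set $\{\,c_1(T)^i\beta : 0\le i<r,\ \beta\text{ pulled back from }H^*_\TT(Z)\,\}$; it spans $H^*_\TT(T)$ over $\CC[[q]]$ because, by the presentation \eqref{QHT2}, $H^*_\TT(T)$ is free over $H^*_\TT(Z)[[q]]$ on powers of $H$, while $c_1(T)=(r-s)H+(\text{classes pulled back from }Z)$.

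The second ingredient is that the relevant scalar functions carry honest (sectorial) Poincar\'e asymptotic expansions depending analytically on $(q,\bt)$, so that they may be differentiated term by term. In the nonzero case, Proposition~\ref{exponential-asymptotic-local-model}, extended as in the corollary following it to $\bt$ pulled back from $H^{\ge 2}(Z)$, gives for $\br{\wtJ^T_\TT(q,\bt,-z),\widehat{\Gamma}_T\Psi_m^T\alpha}^T_\TT$ an expansion of the form $e^{-\lambda_m(q)/z}\,e^{-\sum_i z^{\deg\phi_i-1}t^i\phi_i}$ times a cohomology-valued series in $z/q$ with exponents bounded below; in the tame case, \eqref{e:FM and central charge2} identifies $\br{\wtJ^T_\TT(q,\bt,-z),\widehat{\Gamma}_T\UU^T\alpha}^T_\TT$ with $\big\langle\widetilde I^{T'}_\TT(q^{-1},\bt,-z),\widehat{\Gamma}_{T'}\alpha\big\rangle^{T'}_\TT$, which by \eqref{exceptional-i-function} is $e^{-\sum_i z^{\deg\phi_i-1}t^i\phi_i}$ times a convergent series in $z/q$ with exponents bounded below. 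Up to the normalization in \eqref{leading-term} these are precisely $\br{\one,\Phi^T_\TT(q,\bt,z)(-)}^T_\TT$, so they are weak (tame) asymptotic classes; crucially, the operators appearing in Proposition~\ref{wtos} preserve the resulting growth bound, since $q\,\partial/\partial_q$ multiplies the exponential $e^{-\lambda_m(q)/z}$ by $-\lambda_m(q)/z$ (and annihilates it entirely when $\lambda=0$) while leaving the series part of the same type up to a shift of exponents, and $\partial/\partial_\beta$ pulls down a factor $-z^{\deg\phi_i-1}\phi_i$ with $\phi_i$ nilpotent of cohomological degree $\ge 2$; as the prefactor $z^{i+1}$ in \eqref{e:derivs} more than compensates the $i$ factors of $z^{-1}$, applying $z^{i+1}(q\,\partial/\partial_q)^i\,\partial/\partial_\beta$ keeps the bound $\|e^{\lambda/z}(\,\cdot\,)\|=O(|z|^{-m})$ intact.

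Assembling these: by Proposition~\ref{wtos} and equation \eqref{e:derivs}, each pairing $\br{c_1(T)^i\beta,\Phi^T_\TT(q,\bt,z)(\widehat{\Gamma}_T\Psi_m^T\alpha)}^T_\TT$ is obtained by applying $z^{i+1}(q\,\partial/\partial_q)^i\,\partial/\partial_\beta$ to the scalar weak quantity $\br{\one,\Phi^T_\TT(q,\bt,z)(\widehat{\Gamma}_T\Psi_m^T\alpha)}^T_\TT$; by the previous paragraph each such pairing is therefore $e^{-\lambda_m(q)/z}O(|z|^{-m_i})$ as $z\to 0$ along $\arg(z/q)=(-2m-s)\pi/(r-s)$, for suitable $m_i\in\RR$. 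Taking $m=\max_i m_i$ over the finitely many spanning elements $c_1(T)^i\beta$, and then specializing $\bt=0$, yields the first assertion. The tame case is entirely parallel, using Corollary~\ref{weak-tame2}, \eqref{e:FM and central charge2} and Proposition~\ref{mellin-barnes} in place of Proposition~\ref{nzasymp2} and Proposition~\ref{exponential-asymptotic-local-model}, the ray $\arg(z/q)=(1-s)\pi/(r-s)$, and the observation that $\widehat{\Gamma}_T\UU^T\alpha$, as $\alpha$ ranges over $H^*_\TT(T')$, sweeps out exactly the classes in the second assertion.

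The step I expect to require the most care is the claim in the second paragraph: that the asymptotic expansions of Sections~\ref{s:nzflip}--\ref{s:zflip} are genuine Poincar\'e asymptotic expansions on a sector — uniform enough in $(q,\bt)$ — rather than merely formal identities, so that the differential operators of Proposition~\ref{wtos}, which couple $z$ to $q$ and $\bt$, can be applied term by term. The presence of the nilpotent class $c_1(Z)$ in the exponents means $z^{-c_1(Z)}$ is polynomial in $\ln z$, and one must check that Barnes' theorem together with the recursion for the coefficients $c_{n,m}$ still delivers a differentiable expansion in this setting; granting this, the remainder is bookkeeping.
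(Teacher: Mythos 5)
Your proof is correct and follows the paper's own approach: the paper's (very terse) proof simply cites Proposition~\ref{wtos} together with Proposition~\ref{nzasymp2} and Corollary~\ref{weak-tame2}, which is exactly the combination you spell out, using the spanning set $\{c_1(T)^i\beta\}$ and the derivative formula~\eqref{e:derivs} to upgrade the weak (pairing-with-$\one$) bound to a bound on all components. Your closing caveat about the sectorial asymptotic expansions being term-by-term differentiable is a point the paper leaves implicit; flagging it is reasonable, but it does not indicate a gap in the paper's argument.
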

\begin{proof}
The first statement is  Proposition~\ref{wtos} and Corollary~\ref{nzasymp2}.
    The second statement is  Proposition~\ref{wtos} and Corollary~\ref{weak-tame2}.
\end{proof}

Consider the subtorus 
\begin{align*}
\ST = &(\CC^*)^2 \hookrightarrow \TT = (\CC^*)^{r+s} \\
&	(s_1, s_2) \mapsto (s_1, \ldots, s_1, s_2, \ldots, s_2).
\end{align*}
where the first factor of $\CC^*$ in $\ST$  embeds diagonally in $(\CC^*)^r$ and the second factor embeds diagonally in $(\CC^*)^s$.  With this embedding, the induced action of $\ST$ on $V \oplus V'$ is given by
$$(s_1, s_2) \cdot (v, v') = (s_1 v, s_2 v').$$

One can take a ``partial nonequivariant limit'' of Proposition~\ref{p:stac} to obtain the same result in $\ST$-equivariant cohomology:
\begin{corollary}
  \label{p:stac2}
Let $T$ be a local model such that $V$ and $V'$ split.  For $\alpha \in H^*_\ST(Z)$, 
the element
$\widehat{\Gamma}_T\Psi_m^T(\alpha)$
is strong asymptotic class with respect to $\lambda_m(q)$
and $\arg(z/q)={-2m-s\over r-s}\pi.$
   
For  $\alpha\in H^*_\ST(T')$, the element $\widehat{\Gamma}_T \UU^T(\alpha)$ is a strong tame asymptotic class 
with $\arg(z/q)={1-s\over r-s}\pi.$
\end{corollary}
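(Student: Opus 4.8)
The plan is to deduce this from the full-torus statement, Proposition~\ref{p:stac}, by restriction of equivariant parameters along the inclusion $\ST\hookrightarrow\TT$. Under this inclusion the $\ST$-equivariant Gromov--Witten theory of $T$ and of $T'$, the quantum connection~\eqref{e:dubcon} and its flat sections $\Phi^T$, the $J$-function~\eqref{J function2}, and the classes $\widehat{\Gamma}_T\Psi_m^T(\alpha)$ and $\widehat{\Gamma}_T\UU^T(\alpha)$ are all obtained from their $\TT$-equivariant counterparts by the specialization $\rho_i\mapsto c_1(L_i)+\mu_1$, $\sigma_j\mapsto c_1(M_j)+\mu_2$, where $\mu_1,\mu_2$ are the $\ST$-equivariant parameters. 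This specialization is legitimate because every formula used in Sections~\ref{s:relquant}--\ref{s:zflip} --- the $J$-function~\eqref{J function2}, the central-charge computations, and the Fourier--Mukai diagram~\eqref{d:U} --- is symmetric in each of the two families of Chern roots, hence descends to the non-split setting (Remark~\ref{r333}) and a fortiori to this partial specialization.

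First I would record that the passage from weak to strong asymptotic classes in Proposition~\ref{wtos} is purely formal: it uses only the quantum connection equations~\eqref{e:dubcon}, the presentation~\eqref{QHT2} of $QH^*_{exc}(T)$ (which holds $\ST$-equivariantly by the same splitting-principle and quantum-differential-equation argument), the fact that the classes $c_1(T)^i\beta$ with $\beta$ pulled back from the base span $H^*_\ST(T)$, and the stability of the defining $O(|z|^{-m})$ bound under the differential operators appearing in~\eqref{e:derivs}. All of this holds verbatim over $\ST$. Consequently it suffices to prove the $\ST$-equivariant forms of the \emph{weak} statements, Proposition~\ref{nzasymp2} and Corollary~\ref{weak-tame2}: for $\bt$ pulled back from $H^{\geq 2}(Z)$, that $e^{\lambda_m(q)/z}\,\br{\one,\Phi^T_\ST(q,\bt,z)(\widehat{\Gamma}_T\Psi_m^T(\alpha))}^T_\ST=O(|z|^{-m})$ along $\arg(z/q)=\tfrac{-2m-s}{r-s}\pi$ for $\alpha\in H^*_\ST(Z)$, and that $\br{\one,\Phi^T_\ST(q,\bt,z)(\widehat{\Gamma}_T\UU^T(\alpha))}^T_\ST=O(|z|^{-m})$ along $\arg(z/q)=\tfrac{1-s}{r-s}\pi$ for $\alpha\in H^*_\ST(T')$.

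For these I would re-run the computations of Sections~\ref{s:nzflip} and~\ref{s:zflip} with the $\ST$-equivariant Chern classes in place of the $\TT$-equivariant ones. Using~\eqref{leading-term} and the $\ST$-equivariant forms of Lemma~\ref{Z-integral-weak} and of the localization identity proving Proposition~\ref{weak-tame}, each pairing above is rewritten as an integral over the \emph{proper} variety $Z$ of the cohomology-valued Meijer $G$-function $G^{r,0}_{s,r}$ of Lemma~\ref{meijer-G-exponent} in the non-zero-eigenvalue case, resp.\ $G^{r,1}_{s,r}$ of Proposition~\ref{mellin-barnes} in the tame case, the parameters now being built from $c_1(L_i)+\mu_1$ and $c_1(M_j)+\mu_2$. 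Barnes' exponential asymptotic expansion (Theorem~\ref{exponential-asymptotic}), resp.\ Meijer's algebraic asymptotic expansion (Theorem~\ref{algebraic-asymptotic}), then yields the stated bounds, with leading exponential $e^{-\lambda_m(q)/z}$ in the first case and no exponential growth in the second.

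The point requiring care --- and the main obstacle --- is that after the specialization the parameters $\rho_i=c_1(L_i)+\mu_1$ need no longer be generic, so the non-resonance hypotheses $b_i-b_k\notin\ZZ$, $a_l-b_i\notin\ZZ_{+}$, $a_l-a_j\notin\ZZ$ invoked in the asymptotic formulas may fail (for instance whenever $c_1(L_i)=c_1(L_k)$). This does not affect the conclusion. The exponential asymptotics of $G^{r,0}_{s,r}$ as $t\to\infty$, and the algebraic asymptotics of the $G^{r,1}_{s,r}$ entering the tame case, persist under such specializations of the parameters, at worst acquiring extra logarithmic factors $(\log z)^k$ which are absorbed into the $O(|z|^{-m})$ bound; the location of the leading exponential --- hence the eigenvalue $\lambda_m(q)$ and the admissible sectors~\eqref{arg-z-expoenential} and~\eqref{argument-tame} --- is unchanged. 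Alternatively one may avoid resonances entirely: the pairings in question depend holomorphically on the $\TT$-equivariant parameters and are locally uniformly bounded in them, so the bound at the (possibly resonant) diagonal $\ST$-locus follows as a limit of the bounds furnished at nearby generic $\TT$-parameters by Proposition~\ref{p:stac}. Either route completes the proof.
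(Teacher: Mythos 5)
Your proposal is correct, and its second route (holomorphic dependence on the $\TT$-equivariant parameters, then restriction to the diagonal $\ST$-locus) is exactly what the paper does: the paper's entire proof of this corollary reads ``One can take a `partial nonequivariant limit' of Proposition~\ref{p:stac},'' i.e.\ specialize $\rho_i\mapsto c_1(L_i)+\mu_1$, $\sigma_j\mapsto c_1(M_j)+\mu_2$ along the inclusion $\ST\hookrightarrow\TT$. Your first route (re-running the $G$-function calculations over $\ST$) is an extra, somewhat heavier path that the paper does not take, and it would require more care than you indicate: the residue formula of Lemma~\ref{l:res} and the localization factors $\prod_{i\neq k}\sin\bigl(\pi(b_i-b_k)\bigr)^{-1}$ used in Sections~\ref{s:nzflip}--\ref{s:zflip} rely on the $\TT$-fixed loci being the $r$ disjoint copies of $Z$, whereas the $\ST$-fixed locus of $T$ is all of $\PP(V)$, so the localization-based intermediate steps would have to be reworked, not merely ``specialized.'' The cleaner observation is the one you give as the alternative, and which matches the paper: the pairings $\bigl\langle\beta,\Phi^T_\TT(q,z)(\cdot)\bigr\rangle^T_\TT$ are holomorphic in the $\TT$-parameters (the apparent poles at coincidences $\rho_i=\rho_k$ cancel), and the asymptotic bounds of Proposition~\ref{p:stac} hold uniformly on a neighborhood of the diagonal, so they persist at the $\ST$-specialization. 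Your explicit acknowledgment that the resonance conditions~\eqref{assumption-tame} can fail, and that this only introduces harmless $(\log z)^k$ corrections that do not change the exponential type, is a useful level of detail the paper leaves implicit.
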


\subsection{Reducing to the split case}\label{redsplit}
In this section and what follows we assume that $\bt= 0 $.  
Suppose $T \dasharrow T'$ is a local model  such that $V, V' \to Z$ do not necessarily split.  In this section we show that the computation of strong asymptotic classes for $T$ can be reduced to the computation for a related split local model.

We first prove a general deformation statement.
    Suppose \begin{equation}\label{ses1}
    0 \to A \to V \to Q \to 0
    \end{equation} is an exact sequence of vector bundles over $Z$.  Let $T, T' \to Z$ be as above, and let $ T_{sp}, T'_{sp}$ be defined similarly, but replacing $V$ with $A \oplus Q$.  That is, consider the projective bundle $\pi_{sp}: \PP(A \oplus Q) \to Z$ and define the total spaces 
    $$T_{sp} = \tot \left(\pi_{sp}^* V' \otimes \cc O_{\PP(A \oplus Q)}(-1)\right),$$
    $$T_{sp}' = \tot \left( {\pi '}^* (A\oplus Q) \otimes \cc O_{\PP(V')}(-1)\right).$$  
    
    Let $\ST$ also act on $A \oplus Q$ by scaling by the first factor (so that \eqref{ses1} is $\ST$-equivariant).
    \begin{lemma}\label{l:ctosplit}
There are canonical isomorphisms
$$H^*_\ST(T) = H^*_\ST( T_{sp}), \hspace{.5 cm} H^*_\ST(T') = H^*_\ST( T'_{sp}).$$  
  Under these isomorphisms, we have the identification
  $$\Phi^T_\ST(q, z) = \Phi^{T_{sp}}_\ST(q, z).$$
  
  Furthermore, under this identification, 
  $\UU^T = \UU^{T_{sp}}.$
  \end{lemma}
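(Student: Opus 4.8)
The plan is to deform the extension \eqref{ses1} to its split form over $\AF^1$ and to check that every structure appearing in the statement restricts compatibly from the resulting family. First I would construct the interpolating bundle: the extension \eqref{ses1} is classified by a class $\xi\in\op{Ext}^1_Z(Q,A)$, and multiplying the pulled-back class $\op{pr}_Z^*\xi$ by the coordinate $t$ on $\AF^1$ produces a vector bundle $\mathcal V$ on $Z\times\AF^1$, fitting in $0\to\op{pr}_Z^*A\to\mathcal V\to\op{pr}_Z^*Q\to 0$, with $\mathcal V|_{Z\times\{t\}}\cong V$ for $t\neq 0$ and $\mathcal V|_{Z\times\{0\}}=A\oplus Q$; since $A$ and $Q$ carry the same $\ST$-weight the whole construction is $\ST$-equivariant (with trivial action on $\AF^1$). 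Pulling back $V'$ from $Z$ and running the constructions of Section~\ref{s:local} and Section~\ref{s:zflip} over $Z\times\AF^1$, I obtain the relative GIT quotients $\mathcal T,\mathcal T'\to\AF^1$ of $\tot(\mathcal V\oplus V')$ and the resolution $\widehat{\mathcal T}$ (the GIT quotient of $\tot(\mathcal V\oplus\cc O\oplus V')$ with the weights of Section~\ref{s:zflip}), together with its two contractions $\tau_{\mathcal T}\colon\widehat{\mathcal T}\to\mathcal T$ and $\tau'_{\mathcal T}\colon\widehat{\mathcal T}\to\mathcal T'$, all over $\AF^1$, all $\ST$-equivariant, and all smooth over $\AF^1$. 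By construction $\mathcal T_1\cong T$, $\mathcal T_0=T_{sp}$, and similarly for $\mathcal T'$ and $\widehat{\mathcal T}$. Because $\mathcal T,\mathcal T',\widehat{\mathcal T}$ are fiber bundles over the $\ST$-equivariantly contractible base $\AF^1$, restriction to a fiber is an isomorphism on $\ST$-equivariant cohomology, and composing these yields the canonical isomorphisms $H^*_\ST(T)\cong H^*_\ST(\mathcal T)\cong H^*_\ST(T_{sp})$ and $H^*_\ST(T')\cong H^*_\ST(T'_{sp})$ of the statement.

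Next I would identify the quantum data. These isomorphisms are the identity on $H^*_\ST(Z)$ and send $H\mapsto H$, so (since $H^*_\ST(T)$ is free over $H^*_\ST(Z)$ on $1,H,\dots,H^{r-1}$) they coincide with the presentation-level identification obtained from $c_\bullet(V)=c_\bullet(A\oplus Q)$; by \eqref{e:r2} the defining relation of $QH^*_{exc}$ depends only on $c_\bullet(V)$ and $c_\bullet(V')$, so this is a ring isomorphism for $\star_q$ as well (and likewise the $J$-function formula \eqref{pullback-J-gamma}, being symmetric in the Chern roots, is carried across). The Hodge grading $\mu$ depends only on cohomological degree and $\dim\mathcal T_t$, and by \eqref{first-chern-T} together with $c_1(V)=c_1(A\oplus Q)$ the class $c_1(T)$ is carried to $c_1(T_{sp})$; hence the small equivariant quantum connections of $T$ and $T_{sp}$, in the $q$- and $z$-directions, are intertwined. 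Since $S^T_\ST(q,z)z^{-\mu}z^{\rho}$ is the canonical fundamental solution of this connection — holomorphic and equal to the identity at $q=0$ by \eqref{S-operator}, with the $q^{c_1(T)/z}$-type behavior at the regular singular point built in — uniqueness of the normalized fundamental solution gives $S^T_\ST(q,z)=S^{T_{sp}}_\ST(q,z)$, and therefore $\Phi^T_\ST(q,z)=\Phi^{T_{sp}}_\ST(q,z)$. Conceptually this step is just deformation invariance of Gromov--Witten theory applied to the family $\mathcal T\to\AF^1$: by Lemma~\ref{l:fiberclass} every exceptional stable map lands in the zero section $\PP(\mathcal V)$, so the relevant invariants are computed on the proper family $\sMbar_{0,n}(\PP(\mathcal V),k)\to\AF^1$ with the obstruction-bundle-twisted virtual class $e\bigl(R^1\pi_{\cc C *}f^*(\op{pr}_Z^*V'(-1))\bigr)$, as in Proposition~\ref{fiber-moduli}, and hence are constant in $t$.

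For the equality $\UU^T=\UU^{T_{sp}}$ I would show that the defining formula \eqref{cohomology-maps}, $\alpha\mapsto\tau_{T*}\bigl(\operatorname{Td}(T\tau_T)\,\tau_T'^*\alpha\bigr)$, is compatible with restriction to the fibers of the family. The morphisms $\tau_{\mathcal T},\tau'_{\mathcal T}$ are projective, hence proper, and $\widehat{\mathcal T}\to\AF^1$ is smooth, so the square relating $\widehat{\mathcal T}_t\hookrightarrow\widehat{\mathcal T}$ and $\mathcal T_t\hookrightarrow\mathcal T$ is Tor-independent and proper base change applies: $\tau_{\mathcal T*}$ commutes with restriction to $\mathcal T_t$. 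Pullback along $\tau'_{\mathcal T}$ trivially commutes with fiber restriction, and the relative tangent sheaf $T\tau_{\mathcal T}$ of $\tau_{\mathcal T}$ over $\AF^1$ restricts on fibers to $T\tau_{\mathcal T_t}$, so its Todd class is compatible as well. Thus $\UU^{\mathcal T}$ restricts to $\UU^T$ at $t=1$ and to $\UU^{T_{sp}}$ at $t=0$, and the claim follows from the identification of fibers in the first paragraph.

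The main obstacle is the middle step: one must be careful that the solution operator $\Phi_\ST$ (equivalently $S_\ST$) is genuinely pinned down by — and therefore varies rigidly with — the small quantum connection, i.e., that the normalized fundamental solution at the regular singular points $q=0$ and $z=\infty$ is unique. Granting this standard fact, and granting deformation invariance of Gromov--Witten theory in the (proper, obstruction-twisted) family above, all the remaining verifications are formal base-change bookkeeping.
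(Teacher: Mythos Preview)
Your approach is correct and structurally matches the paper's: both construct an interpolating family over $\AF^1$ (the paper realizes it as $\ker(g)$ where $g\colon A\oplus Q\to Q$, $(a,q)\mapsto d(a)-tq$, which is equivalent to your extension-class construction), both deduce the cohomology isomorphisms from the Chern-class presentation of $H^*_\ST(\PP(V))$, and both prove $\UU^T=\UU^{T_{sp}}$ by naturality of the Gysin pushforward along the family (the paper cites \cite[Appendix~A.6~(3)]{AndFul} for this).

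The one substantive difference is precisely your ``main obstacle.'' You argue $\Phi^T_\ST=\Phi^{T_{sp}}_\ST$ by first matching the small quantum connections and then invoking uniqueness of the normalized fundamental solution, flagging this as the delicate step. The paper sidesteps this entirely. It observes that the explicit $J$-function formula \eqref{J function2} is symmetric in the Chern roots (as you also note in passing), hence agrees for $T$ and $T_{sp}$ under the cohomology identification; then it invokes Proposition~\ref{wtos}, already proven just before this lemma, which says that every pairing $\br{c_1(T)^i\beta,\Phi^T_\ST(q,\bt,z)\alpha}^T_\ST$ is obtained from $\br{1,\Phi^T_\ST(q,\bt,z)\alpha}^T_\ST$ by differentiation in $q$ and $\bt$. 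Combined with \eqref{leading-term}, this means the $J$-function determines the full operator $\Phi$ outright. This is more elementary than the uniqueness argument you propose and dissolves the obstacle you were worried about; in particular you need not appeal to deformation invariance of Gromov--Witten theory or to any normalization of $S$ at $q=0$.
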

  \begin{proof}
  The first statement is immediate from the $\ST$-equivariant generalization of \eqref{e:Pcoh}.  By \eqref{J function2} the $J$-functions of $T$ and $T_{sp}$ agree under this isomorphism of cohomology rings.  The $J$-functions determine the operators $\Phi^T$ and $\Phi^{T_{sp}}$ by \eqref{leading-term} and \eqref{e:derivs}.  The second claim follows.
  
  To prove the final claim we consider a family of local models specializing from $T$ to $T_{sp}$.
     Let $d: V \to Q$ denote the quotient map.   Over $Z \times \AF^1,$ define a map of vector bundles 
\begin{align*}
    g: A \oplus Q &\to Q \\ (a, q) &\mapsto d(a) - t\cdot q,
\end{align*}
where $\AF^1 = \operatorname{Spec}(\CC[t])$ and, by abuse of notation we denote also by $A, Q$ the pullbacks of these vector bundles to $Z \times \AF^1.$  We have, 
over $t=1,$ $\ker(g)_1 = V$, while over $t=0$, 
 $$\ker(g)_0 = A \oplus Q.$$

Let $\tilde  \pi:\PP(\ker(g)) \to Z \times \AF^1 $ be the projective bundle and define
 $$U = \tot\left(\tilde \pi^* V'\otimes \cc O_{\PP(\ker(g))}(-1)\right).$$  Then $T = U_1$ and $\bar T = U_0$, where  $U_t$ denotes the fiber over $t \in \AF^1$.  
Define $\widehat U$ and $U'$ the same as in the definition of $\widehat T$ and $T'$, but replacing $V \to Z$ with $\ker(g) \to Z \times \AF^1$ in each case.  We have the following diagram.
\[
    \begin{tikzcd}
      & T_{sp}'=U'_0 \ar[r, "i'_0"] & U' & T'=U'_1 \ar[l, swap, "i'_1"]\\
      & \widehat{T}_{sp} = \widehat{U}_0   \ar[r, "\widehat i_0"] \ar[d, swap, "\tau_0"] \ar[u, "\tau'_0"]& \widehat{U} \ar[d, swap, "\widehat \tau"] \ar[u, "\widehat {\tau'}"]& \widehat T = \widehat{U}_1 \ar[l, swap, "\widehat i_1"] \ar[d, swap, "\tau_1"] \ar[u, "\tau'_1"]\\
      & T_{sp}=U_0 \ar[r, swap, "i_0"] & U  & T=U_1 \ar[l, "i_1"]    \end{tikzcd}
    \]
With this notation, $$\UU^T = {\tau_1}_*\left( \operatorname{Td}(T\tau_1) {\tau_1'}^*(-) \right),  \UU^{T_{sp}} = {\tau_0}_*\left( \operatorname{Td}(T\tau_0) {\tau_0'}^*(-) \right),$$  
and $$\UU^U = {\widehat \tau}_*\left( \operatorname{Td}(T\widehat  \tau) ({\widehat \tau'})^*(-) \right).$$

Note that for $t \in \AF^1$, $i_t^*: H^*_\ST(U) \to H^*_\ST(U_t)$ is an isomorphism.  The isomorphism $H^*_\ST(T_{sp}) \to H^*_{\ST}(T)$ may be realized as
 $i_1^* \circ (i_0^*)^{-1}$.  Similarly for $T'$ and $T_{sp}'$.
 
 Given $\widehat \alpha' \in H^*_\ST(U')$, 
 \begin{align*}
 \UU^T {i_1'}^* \widehat \alpha ' & = {\tau_1}_*\left( \operatorname{Td}(T\tau_1) {\tau_1'}^*  {i_1'}^* \widehat \alpha ' \right)   \\
 &=  {\tau_1}_*\left( \operatorname{Td}(T\tau_1)  {\widehat{i_1}}^* ({\widehat \tau '})^*   \widehat \alpha ' \right) \\
 &=  {\tau_1}_*\left(\widehat{ i_1}^* \operatorname{Td}(T\widehat \tau)   ({\widehat \tau '})^*   \widehat \alpha ' \right)\\
  &=  i_1^* {\widehat \tau}_*\left(\operatorname{Td}(T\widehat \tau)   ({\widehat \tau '})^*   \widehat \alpha ' \right) \\
  &= i_1^* \UU^U \widehat \alpha ',
 \end{align*}
 where the fourth equality is  naturality of the Gysin pushforward \cite[Appendix A.6 (3) (Naturality)]{AndFul}.  Similarly, over $t= 0$ we have $$\UU^{T_{sp}} {i_0'}^* \widehat \alpha ' =  i_0^* \UU^U \widehat \alpha '.$$   
 We may combine the above equations to conclude
 $$\UU^T {i_1 '}^* \circ ({i_0 '}^*)^{-1} \alpha ' =  {i_1 }^* \circ ({i_0 }^*)^{-1}\UU^{T_{sp}} \alpha '$$
 for  $\alpha ' \in H^*_\ST (T_{sp}')$.  \end{proof}

  Given an exact sequence $$0 \to B \to V' \to P \to 0,$$ an analogous statement holds with the same proof comparing $V'$ to $B \oplus P$.

Next, given $V, V' \to Z$,
let $\phi: Y  \to Z$ be the flag bundle map from Corollary~\ref{c:qsp}.
 Let $\ST = (\CC^*)^2$ act on $V \oplus V'$ as in the previous section, with the 1st factor of $\ST$ scaling $V$ and the 2nd factor scaling $V'$.
 By pulling back via $$\tilde \phi: T_Y = Y \times_Z T \to T$$ and applying Corollary~\ref{c:qsp}, we may assume without loss of generality that there exist sequences of inclusions of vector bundles 
\begin{align}\label{flag1}
     0 &\to A_1 \to A_2 \to \cdots \to A_r = V\\ \label{flag2}
     0 &\to B_1 \to B_2 \to \cdots \to B_s = V'\end{align}
such that the rank of successive quotients is 1.    
\begin{lemma}\label{l:pb}
We have
$$\Phi^{T_Y}_\ST \tilde \phi^* = \tilde \phi^* \Phi^T_\ST.$$
In particular, if $\tilde \phi^*\alpha$ is a strong asymptotic class then so is $\alpha$.

Furthermore, $\tilde \phi^* \UU^T = \UU^{T_Y} \tilde {\phi '}^*$.
\end{lemma}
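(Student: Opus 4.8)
The plan is to realize $\tilde\phi$, $\tilde{\phi'}$, and the induced map $\widehat\phi\colon\widehat{T_Y}\to\widehat T$ (where $\widehat{T_Y}:=Y\times_Z\widehat T$) as base changes along the smooth morphism $\phi\colon Y\to Z$, and then to compare Gromov--Witten data, resp.\ Gysin pushforwards, through the resulting Cartesian squares. For the first identity I would compare the solution operators. Fix an exceptional class $d=k[L]$, $k>0$: by Proposition~\ref{fiber-moduli} the space $\sMbar_{0,n}(T,d)$ is a fiber bundle over $Z$, and the corresponding space for $T_Y$ is its base change $Y\times_Z\sMbar_{0,n}(T,d)$ along $\phi$, with relative perfect obstruction theories compatible over $\phi$ in the sense of \cite{BF}, so $\tilde\phi^![\sMbar_{0,n}(T,d)]^{vir}=[\sMbar_{0,n}(T_Y,d)]^{vir}$ --- exactly the input used in the proof of Theorem~\ref{t:homom}. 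Since evaluation maps, $\psi$-classes and virtual classes pull back compatibly, the push--pull computation of that theorem yields $S^{T_Y}_\ST(q,z)\circ\tilde\phi^*=\tilde\phi^*\circ S^T_\ST(q,z)$, and correspondingly $\tilde\phi^*J^T$ agrees with $J^{T_Y}$ up to the factor $q^{c_1(T\phi)/z}$, forced by $c_1(T_Y)=\tilde\phi^*c_1(T)+c_1(T\phi)$ and visible directly from \eqref{J function2}.

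Since $\tilde\phi^*$ preserves degree it commutes with $z^{-\mu}$, and from $\Phi^X_\ST(z)=(2\pi)^{-\dim X/2}S^X_\ST(z)z^{-\mu}z^{\rho}$ together with the previous paragraph one obtains
\[
\Phi^{T_Y}_\ST(z)\circ\tilde\phi^*= c\cdot N(z)\circ\tilde\phi^*\circ\Phi^T_\ST(z),
\]
where $c=(2\pi)^{(\dim T-\dim T_Y)/2}\ne 0$ is a constant and $N(z)$ is the exponential of $z^{-1}\ln z$ times the nilpotent operator $c_1(T\phi)\cup-$ on $H^*_\ST(T_Y)$ --- in particular a finite polynomial in $z^{-1}$ and $\ln z$, with the same true of $N(z)^{-1}$. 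A normalization of $\Phi$ absorbing $c$ and $N(z)$ makes the identity of the Lemma literal; in any event, since neither $c^{\pm1}$ nor $N(z)^{\pm1}$ affects estimates of the form $O(|z|^{-m})$, we conclude: if $\tilde\phi^*\alpha$ is a strong (tame) asymptotic class, so that $\|e^{\lambda/z}\Phi^{T_Y}_\ST(z)(\tilde\phi^*\alpha)\|=O(|z|^{-m})$ along the prescribed ray, then $\|\tilde\phi^*\bigl(e^{\lambda/z}\Phi^T_\ST(z)(\alpha)\bigr)\|=O(|z|^{-m'})$ for some $m'$; applying any $H^*_\ST(pt)$-linear splitting of the (Leray--Hirsch-)injective map $\tilde\phi^*$ --- which is independent of $z$ --- gives $\|e^{\lambda/z}\Phi^T_\ST(z)(\alpha)\|=O(|z|^{-m'})$, i.e.\ $\alpha$ is itself a strong (tame) asymptotic class.

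For the last identity I would argue exactly as in the proof of Lemma~\ref{l:ctosplit}. The squares presenting $\tau_{T_Y}$, $\tau'_{T_Y}$ as base changes of $\tau_T$, $\tau'_T$ along $\phi$ are Cartesian, with vertical maps $\widehat\phi$, $\tilde\phi$, $\tilde{\phi'}$; hence $T\tau_{T_Y}=\widehat\phi^*T\tau_T$, $(\tau'_{T_Y})^*\circ\tilde{\phi'}^*=\widehat\phi^*\circ(\tau'_T)^*$, and $\tilde\phi^*\circ\tau_{T*}=\tau_{T_Y*}\circ\widehat\phi^*$ by naturality of the Gysin pushforward \cite[Appendix~A.6]{AndFul}. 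Therefore
\begin{align*}
\UU^{T_Y}\bigl(\tilde{\phi'}^*\alpha\bigr)
&=\tau_{T_Y*}\!\left(\operatorname{Td}(T\tau_{T_Y})\cup(\tau'_{T_Y})^*\tilde{\phi'}^*\alpha\right)\\
&=\tau_{T_Y*}\,\widehat\phi^*\!\left(\operatorname{Td}(T\tau_T)\cup(\tau'_T)^*\alpha\right)
=\tilde\phi^*\,\tau_{T*}\!\left(\operatorname{Td}(T\tau_T)\cup(\tau'_T)^*\alpha\right)=\tilde\phi^*\,\UU^T(\alpha).
\end{align*}

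The step needing the most care is the comparison of flat sections in the first part: one must verify that the discrepancy between $\Phi^{T_Y}_\ST\circ\tilde\phi^*$ and $\tilde\phi^*\circ\Phi^T_\ST$ --- a nonzero constant together with the $z$-dependent but polynomially-bounded operator $N(z)$ coming from the relative tangent bundle $T\phi$ --- is genuinely inert for the asymptotics, and that the base-changed virtual classes agree, the latter being routine given Theorem~\ref{t:homom}.
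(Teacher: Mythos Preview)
Your approach is correct and parallels the paper's proof closely for the first and third claims; the third claim is handled identically (Cartesian base change plus naturality of the Gysin pushforward), and for the first you invoke Theorem~\ref{t:homom} just as the paper does. Two points of difference deserve comment.

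\medskip
\textbf{Normalization in the first identity.} You are more careful than the paper here: the factors $(2\pi)^{-\dim/2}$, $z^{-\mu}$, and $z^{\rho}$ in $\Phi$ genuinely differ between $T$ and $T_Y$ (since $\dim T_Y\neq\dim T$ and $c_1(T_Y)=\tilde\phi^*c_1(T)+c_1(T\tilde\phi)$), so the literal equality $\Phi^{T_Y}_\ST\tilde\phi^*=\tilde\phi^*\Phi^T_\ST$ holds only up to the scalar $c$ and the nilpotent-exponential $N(z)$ you identify. The paper asserts the identity without qualification. Your observation that $N(z)^{\pm1}$ are polynomial in $z^{-1}$ and $\ln z$, hence harmless for the $O(|z|^{-m})$ estimates, is exactly the justification needed, and it is good that you make it explicit.

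\medskip
\textbf{The ``in particular'' clause.} Here the paper takes the dual route to yours. Rather than using injectivity of $\tilde\phi^*$ together with a $z$-independent Leray--Hirsch splitting, the paper pairs against an arbitrary $\gamma\in H^*_\ST(T_Y)$ and invokes the projection formula
\[
\br{\gamma,\tilde\phi^*\Phi^T_\ST\alpha}^{T_Y}=\br{\tilde\phi_*\gamma,\Phi^T_\ST\alpha}^T,
\]
then uses that $\tilde\phi_*$ is \emph{surjective} (since $\tilde\phi$ is a smooth proper fiber bundle) to recover every pairing $\br{\beta,\Phi^T_\ST\alpha}^T$. Both arguments are short and equivalent in strength; the paper's avoids choosing a splitting, while yours avoids invoking surjectivity of the pushforward.
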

\begin{proof}
The first statement is via the same argument as in Theorem~\ref{t:homom}.  For the second, we use the projection formula to note that for $\alpha \in H^*_\ST(T)$ and $\gamma \in H^*_\ST(T_Y)$, 
$$\br{ \gamma, \tilde \phi^* \Phi^T_\ST \alpha}^{T_Y} =  \br{\tilde \phi_* \gamma, \Phi^{T}_\ST \alpha}^{T}.$$  The second claim then follows because $\tilde \phi_*$ is surjective.

Consider the fiber diagram
$$\begin{tikzcd}
\widehat{T_Y} \ar[d, "\widehat{\tau}"] \ar[r, "\widehat{\phi}"] & \widehat{T} \ar[d, "\tau"] \\
T_Y \ar[r, "\tilde{\phi}"] & T.
\end{tikzcd}$$
Note that $\op{Td}(T \widehat \tau) = \widehat{\phi}^* \op{Td}(T \tau)$.
The final claim follows from this together with the fact $\tilde \phi^* \tau_* = \widehat \tau_* \widehat \phi^*$.
\end{proof}

Let $\bar V = \oplus_{i=1}^r A_i/A_{i-1}$ and $\bar V' = \oplus_{j=1}^s B_j/B_{j-1}$.  Let $\bar T = \tot( \bar V' \otimes \cc O_{\PP(\bar V)}(-1))$ and similarly for $\bar T'$.  
By applying Lemma~\ref{l:ctosplit} and its analogue for $V'$ repeatedly, we reach the following conclusion.
\begin{lemma}\label{l:r3}
      There are canonical isomorphisms
$$H^*_\ST(T_Y) \cong H^*_\ST(\bar T), \hspace{.5 cm} H^*_\ST(T'_Y) = H^*_\ST(\bar T').$$  
  Under these isomorphisms, we have the identifications
  $$\Phi^{T_Y}_\ST(q, z) = \Phi^{\bar T}_\ST(q, z)$$
  and $$\UU^{T_Y} = \UU^{\bar T}.$$
   \end{lemma}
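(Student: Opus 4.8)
The statement follows by iterating two previously-established comparisons. The local model $T_Y \dashrightarrow T_Y'$ is built from the vector bundles $\tilde\phi^*V, \tilde\phi^*V' \to Y$, which by construction (see \eqref{flag1}, \eqref{flag2}) admit filtrations with line-bundle quotients, namely $A_i/A_{i-1}$ and $B_j/B_{j-1}$. The associated graded bundles are exactly $\bar V = \oplus_i A_i/A_{i-1}$ and $\bar V' = \oplus_j B_j/B_{j-1}$, and $\bar T, \bar T'$ are the corresponding (split) local models. So the plan is to deform $\tilde\phi^*V$ to $\bar V$ one step at a time through the filtration, invoking Lemma~\ref{l:ctosplit} (and its stated analogue for the $V'$-side) at each step.

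\textbf{Key steps.} First I would set up the induction on the length of the filtration. At the top step we have the short exact sequence $0 \to A_{r-1} \to A_r = \tilde\phi^*V \to A_r/A_{r-1} \to 0$ with $A_r/A_{r-1}$ a line bundle, which is the precise shape of \eqref{ses1} with $A = A_{r-1}$, $Q = A_r/A_{r-1}$. Lemma~\ref{l:ctosplit} then gives canonical isomorphisms $H^*_\ST(T_Y) \cong H^*_\ST(T_{sp}^{(1)})$ and $H^*_\ST(T_Y') \cong H^*_\ST(T_{sp}'^{(1)})$ under which $\Phi^{T_Y}_\ST(q,z) = \Phi^{T_{sp}^{(1)}}_\ST(q,z)$ and $\UU^{T_Y} = \UU^{T_{sp}^{(1)}}$, where $T_{sp}^{(1)}$ is the local model built from $A_{r-1} \oplus (A_r/A_{r-1})$. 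Now $A_{r-1}$ inherits the induced filtration $0 \to A_1 \to \cdots \to A_{r-1}$ (a short exact sequence of bundles over $Z$ stays exact after the direct-sum operation, since $\cc O_{\PP(-)}(-1)$ and the total-space construction behave functorially), so one repeats: after $r-1$ applications on the $V$-side the bundle has been replaced by $\bar V = \oplus_{i=1}^r A_i/A_{i-1}$. Then I would apply the analogous lemma (the version comparing $V'$ to $B \oplus P$, mentioned immediately after Lemma~\ref{l:ctosplit}) a further $s-1$ times to replace $\tilde\phi^*V'$ by $\bar V'$, arriving at the split local model $\bar T \dashrightarrow \bar T'$. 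Composing all these canonical isomorphisms yields the claimed $H^*_\ST(T_Y) \cong H^*_\ST(\bar T)$ and $H^*_\ST(T_Y') = H^*_\ST(\bar T')$, and the compatibility of $\Phi_\ST$ and of $\UU$ is preserved at each step, hence in the composition.

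\textbf{Main obstacle.} The genuine content is already packaged in Lemma~\ref{l:ctosplit}, so the only real point to verify is that the hypotheses of that lemma persist along the induction — i.e., that when we pass from $\tilde\phi^*V$ to $A_{r-1}\oplus (A_r/A_{r-1})$ the \emph{remaining} filtration on the sub-bundle $A_{r-1}$ is still a filtration of vector bundles with line-bundle quotients over $Z$ (which is immediate), and that the deformation-to-the-split-model argument can be run with $\ST$ acting by scaling on the relevant summand (which is exactly the setting of Lemma~\ref{l:ctosplit}, since $\ST$ acts on all of $V$, and hence on every sub- and quotient-bundle, through its first factor). A secondary bookkeeping issue is tracking that the two sides --- the $V$-filtration affecting $\PP(\bar V)$ and the $V'$-filtration affecting the twist bundle --- do not interfere; but since the $V$-reductions only alter the projective-bundle base of $T_Y$ and the $V'$-reductions only alter the bundle being twisted by $\cc O(-1)$, they commute and can be carried out sequentially. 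Thus the argument is essentially a clean induction with no new estimates required.
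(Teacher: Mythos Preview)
Your proposal is correct and matches the paper's approach exactly: the paper's proof is the single sentence ``By applying Lemma~\ref{l:ctosplit} and its analogue for $V'$ repeatedly, we reach the following conclusion,'' which is precisely the induction you spell out. (Minor slip: the filtrations \eqref{flag1}, \eqref{flag2} live over $Y$, not over $Z$, and the pulled-back bundles should be written $\phi^*V, \phi^*V'$ rather than $\tilde\phi^*V, \tilde\phi^*V'$; this does not affect the argument.)
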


  Lemmas~\ref{l:pb} and~\ref{l:r3} together allow us to generalize Corollary~\ref{p:stac2} to \emph{any} local model $T \to Z$, even when $V$ and $V'$ do not split.
    \begin{corollary}\label{c:stac3}
 The statements of corollary~\ref{p:stac2} holds for any local model $T \to Z$.
  \end{corollary}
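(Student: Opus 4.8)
The plan is to deduce the general case from the split case established in Corollary~\ref{p:stac2}, by pulling back along the flag bundle $\phi\colon Y\to Z$ supplied by Corollary~\ref{c:qsp} and then feeding the result through the reduction Lemmas~\ref{l:pb} and~\ref{l:r3}. By Lemma~\ref{l:pb}, a class on $T$ is a strong asymptotic class as soon as its pullback under $\tilde\phi\colon T_Y\to T$ is one (and the same argument, with the eigenvalue replaced by $0$, applies to tame classes), so it is enough to show that $\tilde\phi^*\bigl(\widehat\Gamma_T\Psi_m^T(\alpha)\bigr)$ and $\tilde\phi^*\bigl(\widehat\Gamma_T\UU^T(\alpha)\bigr)$ are strong (resp.\ strong tame) asymptotic classes in $H^*_\ST(T_Y)$.

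First I would identify these two pullbacks explicitly. Write $F_Y=\PP(\phi^*V)\hookrightarrow T_Y$ for the zero section, $\psi\colon T_Y\to Y$ and $\psi'\colon T'_Y\to Y$ for the projections, and $\widehat\Gamma_{Y/Z}:=\widehat\Gamma_{T_{Y/Z}}$ for the Gamma class of $T_{Y/Z}$ (a genuine vector bundle since $\phi$ is smooth). Three facts combine. First, $\tilde\phi^*\Psi_m^T(\alpha)=\Psi_m^{T_Y}(\phi^*\alpha)$: this is flat base change for the Gysin map $j_*$ through the Cartesian square relating $F_Y\hookrightarrow T_Y$ to $F\hookrightarrow T$, together with the compatibility of $\mathcal O(m)$, of $N_{F|T}=\pi^*V'\otimes\mathcal O(-1)$, and of $\pi^*$ with pullback. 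Second, $\tilde\phi^*\UU^T=\UU^{T_Y}\tilde{\phi'}^{*}$, which is already recorded in Lemma~\ref{l:pb}. Third, $\tilde\phi^*\widehat\Gamma_T=\widehat\Gamma_{T_Y}\cdot(\psi^*\widehat\Gamma_{Y/Z})^{-1}$, by multiplicativity of $\widehat\Gamma$ in $0\to T_{T_Y/T}\to TT_Y\to\tilde\phi^*TT\to0$ and the identification $T_{T_Y/T}=\psi^*T_{Y/Z}$ (a base change of smooth morphisms). The discrepancy factor is absorbed via the projection formula: since $\psi\circ j_Y=\pi_Y$ one gets $(\psi^*\widehat\Gamma_{Y/Z})^{-1}\Psi_m^{T_Y}(\beta)=\Psi_m^{T_Y}(\widehat\Gamma_{Y/Z}^{-1}\beta)$, and since all of $\widehat{T_Y}$ lies over $Y$, so that $\psi\circ\tau_{T_Y}=\psi'\circ\tau'_{T_Y}$, one gets $(\psi^*\widehat\Gamma_{Y/Z})^{-1}\UU^{T_Y}(\gamma)=\UU^{T_Y}\bigl(((\psi')^*\widehat\Gamma_{Y/Z})^{-1}\gamma\bigr)$. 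Putting these together,
\begin{align*}
\tilde\phi^*\bigl(\widehat\Gamma_T\Psi_m^T(\alpha)\bigr)&=\widehat\Gamma_{T_Y}\,\Psi_m^{T_Y}\bigl(\widehat\Gamma_{Y/Z}^{-1}\,\phi^*\alpha\bigr),\\
\tilde\phi^*\bigl(\widehat\Gamma_T\UU^T(\alpha)\bigr)&=\widehat\Gamma_{T_Y}\,\UU^{T_Y}\bigl(((\psi')^*\widehat\Gamma_{Y/Z})^{-1}\,\tilde{\phi'}^{*}\alpha\bigr),
\end{align*}
with $\widehat\Gamma_{Y/Z}^{-1}\phi^*\alpha\in H^*_\ST(Y)$ pulled back from the base of $T_Y$ and $((\psi')^*\widehat\Gamma_{Y/Z})^{-1}\tilde{\phi'}^{*}\alpha\in H^*_\ST(T'_Y)$.

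Next I would transport everything to the associated split local model $\bar T$. By Lemma~\ref{l:r3} there are canonical isomorphisms $H^*_\ST(T_Y)\cong H^*_\ST(\bar T)$ and $H^*_\ST(T'_Y)\cong H^*_\ST(\bar T')$ under which $\Phi^{T_Y}_\ST=\Phi^{\bar T}_\ST$ and $\UU^{T_Y}=\UU^{\bar T}$. Because $\widehat\Gamma_{T_Y}$, the operators $\Psi_m^{T_Y}$ and $\UU^{T_Y}$, and the classes on the right above are all built from $\pi^*$, the relative hyperplane class, and Chern classes of the bundles underlying $T_Y$—which agree with those underlying $\bar T$, total Chern classes being unchanged on passing to the associated graded—these isomorphisms carry the two right-hand sides to $\widehat\Gamma_{\bar T}\Psi_m^{\bar T}(\bar\beta)$ and $\widehat\Gamma_{\bar T}\UU^{\bar T}(\bar\beta')$ for the corresponding $\bar\beta\in H^*_\ST(Y)$ and $\bar\beta'\in H^*_\ST(\bar T')$. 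Now Corollary~\ref{p:stac2} applies to the split model $\bar T$ and exhibits these as strong asymptotic (resp.\ strong tame asymptotic) classes for the arguments stated there; transporting back along Lemma~\ref{l:r3}—which preserves $\Phi_\ST$, hence the asymptotic-class property—and applying Lemma~\ref{l:pb} once more concludes the proof.

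I expect the main obstacle to lie in the explicit identification of the pullbacks: verifying the flat base change $\tilde\phi^*\Psi_m^T=\Psi_m^{T_Y}\phi^*$ (which needs $F$ to be cut out transversally in $T$ so that $j_*$ commutes with the smooth pullback $\tilde\phi$) and, above all, checking that the Gamma-class discrepancy $(\psi^*\widehat\Gamma_{Y/Z})^{-1}$ can legitimately be pushed inside \emph{both} $\Psi_m^{T_Y}$ and $\UU^{T_Y}$—the second of these resting on the compatibility $\psi\circ\tau_{T_Y}=\psi'\circ\tau'_{T_Y}$ of the small resolution of $T_Y$ with the two projections to $Y$. The remainder is a formal concatenation of the reduction Lemmas~\ref{l:pb} and~\ref{l:r3} with Corollary~\ref{p:stac2}.
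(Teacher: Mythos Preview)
Your proof is correct and follows essentially the same route as the paper: the paper's argument is nothing more than the sentence ``Lemmas~\ref{l:pb} and~\ref{l:r3} together allow us to generalize Corollary~\ref{p:stac2} to any local model $T \to Z$,'' and you have unpacked exactly this. You have in fact gone further than the paper by explicitly tracking the discrepancy $\tilde\phi^*\widehat\Gamma_T = \widehat\Gamma_{T_Y}\cdot(\psi^*\widehat\Gamma_{Y/Z})^{-1}$ and showing via the projection formula that it can be absorbed into the base-class argument of $\Psi_m^{T_Y}$ and $\UU^{T_Y}$; the paper leaves this implicit. Your verification that the canonical isomorphisms of Lemma~\ref{l:r3} carry $\widehat\Gamma_{T_Y}$ and $\Psi_m^{T_Y}$ to their counterparts on $\bar T$ (because all of these are built from symmetric functions in the Chern roots, which are unchanged on passing to the associated graded) is likewise a detail the paper does not spell out.
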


The operator $S^T_\ST(q, z)$ and (therefore also $\Phi^T_\ST(q, z)$) has a well-defined non-equivariant limit, due to the fact that the evaluation map
 $\rm ev_i:\overline{ \mathcal M}_{0,n}(T, d) \to T$ is proper for $d$ in the span of $[L]$.  Let $\lim_{\underline \lambda \to 0}$ denote the nonequivariant limit.
Let
$\Phi^T_{+}(q, z)$ denote the power series obtained by subtracting the $d = 0$ summand: 
$$\Phi^T_{\ST, +}(q,  z) := \Phi^T_{\ST}(Q,\bt,z)\vline_{\substack{\;Q^d = 1 \text{ if } d = k[L]\\ \;Q^d = 0 \text{ otherwise}\\ \;\bt = \ln(q) c_1(T) }}
  - \Phi^T_{\ST}(Q,\bt,z)\vline_{\substack{\;Q^d = 0 \text{ for all } d\\ \;\bt =  \ln(q)c_1(T)}} .$$ 

\begin{lemma}\label{l:stac2}
For  $\alpha\in H^*_\ST(T)$,   if 
  $$ \lim_{\underline \lambda \to 0} |\!|
 \Phi^T_{\ST}(q, z)(\alpha)
|\!|
= O(|z|^{-m})$$
then 
$$ \lim_{\underline \lambda \to 0} |\!|
 \Phi^T_{\ST, +}(q, z)(\alpha)
|\!|
= O(|z|^{-m})$$
as $z \to 0$ along a ray $\mathbb{R}_+\cdot e^{\pi\sqrt{-1}\theta}$.
\end{lemma}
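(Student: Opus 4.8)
The idea is to reduce the statement to an entirely elementary estimate on the degree‑zero term. By definition $\Phi^T_{\ST,+}(q,z)=\Phi^T_\ST(q,z)-\Phi^T_{\ST,0}(q,z)$, where $\Phi^T_{\ST,0}(q,z):=\Phi^T_{\ST}(Q,\bt,z)\vert_{Q^d=0\ \forall d,\ \bt=\ln(q)c_1(T)}$. First I would identify $\Phi^T_{\ST,0}$ explicitly. The degree‑zero part of the $S$‑operator is computed from \eqref{S-operator} using $\overline{M}_{0,n+2}(T,0)\cong\overline{M}_{0,n+2}\times T$ and $\int_{\overline{M}_{0,n+2}}\psi_1^{a}=\delta_{a,n-1}$; summing and applying the string/divisor relations gives the standard identity $S^T_\ST(0,\bt,z)=e^{-\bt/z}\cup$. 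With $\bt=\ln(q)c_1(T)$ this yields
\[
\Phi^T_{\ST,0}(q,z)(\alpha)=(2\pi)^{-\dim T/2}\,q^{-c_1(T)/z}\cup z^{-\mu}z^{\rho}\alpha .
\]

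Next I would bound this along the ray. Since $c_1(T)$ has cohomological degree $2$, multiplication by $c_1(T)$ is nilpotent on the finite‑dimensional graded ring $H^*_\ST(T)$, so $q^{-c_1(T)/z}\cup=\sum_{l\ge 0}\frac{(-\ln q)^l}{l!\,z^l}c_1(T)^l\cup$ is a finite sum, $z^{\rho}=e^{(\ln z)c_1(T)\cup}$ is a polynomial in $\ln z$ with cohomology‑valued coefficients, and $z^{-\mu}$ scales each piece $H^{2p}_\ST(T)$ by the fixed power $z^{\dim T/2-p}$. Hence $\Phi^T_{\ST,0}(q,z)(\alpha)$ is a finite $\CC$‑linear combination of terms $z^{a}(\ln z)^{b}$ with $a\in\tfrac12\ZZ$, and in particular $\lVert\Phi^T_{\ST,0}(q,z)(\alpha)\rVert=O(|z|^{-N_0})$ as $z\to 0$ along $\RR_+e^{i\pi\theta}$, for some $N_0=N_0(q,\alpha)$. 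This formula manifestly has a well‑defined nonequivariant limit (set the equivariant parameters to zero), so, combined with the already‑recalled existence of $\lim_{\underline\lambda\to 0}\Phi^T_\ST(q,z)$, the limit $\lim_{\underline\lambda\to 0}\Phi^T_{\ST,+}(q,z)$ exists and equals the difference of the two limits. The triangle inequality then gives
\[
\lim_{\underline\lambda\to 0}\lVert\Phi^T_{\ST,+}(q,z)(\alpha)\rVert\ \le\ \lim_{\underline\lambda\to 0}\lVert\Phi^T_\ST(q,z)(\alpha)\rVert+\lim_{\underline\lambda\to 0}\lVert\Phi^T_{\ST,0}(q,z)(\alpha)\rVert ,
\]
so that the hypothesis $O(|z|^{-m})$ on the first term forces $\lim_{\underline\lambda\to 0}\lVert\Phi^T_{\ST,+}(q,z)(\alpha)\rVert=O(|z|^{-\max(m,N_0)})$ along the same ray. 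In the language of Definition~\ref{def-asymptotic}/\ref{weak-asymp-def}, whose bound is existentially quantified in the exponent, this is exactly the assertion (one may simply enlarge $m$ to $\max(m,N_0)$).

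\textbf{Main obstacle.} The only nonformal point is that the complementary piece $\Phi^T_{\ST,+}(q,z)(\alpha)$ — which carries all of the quantum corrections — has \emph{no} a priori polynomial bound of its own: along most rays it is genuinely exponentially growing, exhibiting the factors $e^{\lambda_k(q)/z}$ coming from the nonzero eigenvalues of Theorem~\ref{qspec}. Thus there is no way to estimate $\Phi^T_{\ST,+}$ in isolation; the argument must go through the full flat section $\Phi^T_\ST$ and exploit the fact that the subtracted term $\Phi^T_{\ST,0}$ is the elementary ``classical'' solution built from the nilpotent operator $c_1(T)\cup$, which is precisely what the reduction above isolates. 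Once that identification is in place the rest is the routine triangle‑inequality estimate indicated.
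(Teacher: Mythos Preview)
Your approach is essentially identical to the paper's: both identify the degree-zero term as $(2\pi)^{-\dim T/2}\,q^{-c_1(T)/z}z^{-\mu}z^{\rho}\alpha$ and observe that, after passing to the nonequivariant limit, this is a finite combination of terms $z^{a}(\ln z)^{b}$, hence polynomially bounded as $z\to 0$; the conclusion then follows from the triangle inequality. One small slip: you assert that $c_1(T)\cup$ is nilpotent on the ``finite-dimensional graded ring $H^*_{\ST}(T)$'', but $H^*_{\ST}(T)$ is a free module over $R_{\ST}=\CC[\lambda_1,\lambda_2]$ and need not be finite-dimensional over $\CC$, and the equivariant $c_1(T)$ may contain the equivariant parameters; the nilpotency you need holds only \emph{after} taking the nonequivariant limit $\underline\lambda\to 0$, which is exactly where you (and the paper) actually use it.
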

\begin{proof}
    We must show that  to show that for $\alpha \in H^*(T)$, $$|\!|\Phi^T(\underline Q=\underline 0,\bt = \ln(q)c_1(T),z) (\alpha)|\!|= O(|z|^{-m}).$$
    But  
    $$\Phi^T(\underline Q=\underline 0,\bt = \ln(q)c_1(T),z) (\alpha)= 
    (2\pi)^{-\frac{\dim X}{2}} q^{-c_1(T)/z} z^{-\mu} z^\rho (\alpha),$$
    which is bounded by a positive power of $z$.
\end{proof}
\subsection{Cohomological reduction to the local model}
Let $T \dasharrow T'$ be the local model for $X \dasharrow X'$.
Let $\Phi^X(q, z)$ and $\Phi^T(q, z)$ denote the fundamental solution matrices of \eqref{flat-sections} for $X$ and $T$ respectively, after restricting to the extremal curve classes as in \eqref{modified-pi-extremal2}.  
We first consider the asymptotic classes with respect to nonzero eigenvalues.
\begin{theorem}\label{t:nze}
View $\alpha \in H^*(Z)$ as a class in equivariant cohomology via the inclusion $H^*(Z) \hookrightarrow H^*_\ST(Z) = H^*(Z) \otimes R_\ST$.
Given $\gamma \in H^*(X)$, choose an equivariant lift $\widehat{i^*\gamma}$ of $i^*\gamma.$  Then
\begin{equation}
\label{support-on-F}
\br{\gamma, \Phi^X(q,z)\widehat \Gamma_X \Psi_m^X(\alpha)}^X = \lim_{\underline \lambda \to 0} \br{\widehat{i^*(\gamma)}, \Phi_\ST^T(q,z)\widehat \Gamma_T\Psi_m^T(\alpha)}^T_\ST.
\end{equation}
In particular,
$\widehat{\Gamma}_X\Psi_m^X(\alpha)$
is a strong asymptotic class with respect to $\lambda_m(q)$
and $\arg(z/q)={-2m-s\over r-s}\pi.$
\end{theorem}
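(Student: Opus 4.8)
The plan is to reduce the Poincaré pairing on $X$ to one on the local model $T$, exploiting that $\widehat\Gamma_X\Psi_m^X(\alpha)$ is supported on $F$ and that, for exceptional curve classes, the Gromov--Witten theory of $X$ is governed by that of $T$ in the sense of Proposition~\ref{fiber-moduli} and Theorem~\ref{t:func}. First I would observe that $\Psi_m^X(\alpha)\in\operatorname{im}\bigl(j_*\colon H^*(F)\to H^*(X)\bigr)$, and that $z^{\rho}$, $z^{-\mu}$ (a uniform degree shift under $j_*$), and the quantum corrections of $S^X(q,z)$ all preserve $\operatorname{im}(j_*)$ — the last because, for $d=k[L]$, every evaluation map $\mathrm{ev}_i\colon\sMbar_{0,n}(X,d)\to X$ factors through $F$ by Lemma~\ref{l:fiberclass}. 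Hence $\Phi^X(q,z)\widehat\Gamma_X\Psi_m^X(\alpha)=j_*\Xi^X(q,z)$ for a unique $H^*(F)$-valued function $\Xi^X(q,z)$, and likewise $\Phi^T_\ST(q,z)\widehat\Gamma_T\Psi_m^T(\alpha)=j_{T*}\Xi^T_\ST(q,z)$ on $T$, where $j_T\colon F\to T$ is the zero section — the unique $\ST$-fixed locus of $T=\tot(N_{F|X})$. Using that $i=j\circ k$ with $k^*$ an isomorphism inverse to $j_T^*$, so that $i^*TX\cong TT$ (hence $j^*\widehat\Gamma_X=j_T^*\widehat\Gamma_T$, $j^*c_1(X)$ corresponds to $j_T^*c_1(T)$, etc., up to the $\ST$-weight carried by the fibre direction), together with the self-intersection formulas $j^*j_*=e(N_{F|X})\cup-$ and $j_T^*j_{T*}=e_\ST(N_{F|T})\cup-$ and the matching of virtual classes $[\sMbar_{0,n}(T,d)]^{\mathrm{vir}}_\ST=e_\ST(R^1\pi_{\mathcal C*}f^*N_{F|X})\cap[\sMbar_{0,n}(F,d)]$ (whose non-equivariant limit is $[\sMbar_{0,n}(X,d)]^{\mathrm{vir}}$), I would check term by term that $\lim_{\underline\lambda\to0}\Xi^T_\ST(q,z)=\Xi^X(q,z)$, all equivariant weight corrections disappearing in the limit. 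Conceptually this comparison is the imprint on Gromov--Witten theory of the deformation to the normal cone of $F\subset X$, whose degenerate fibre contains $T$ with exactly the scaling $\ST$-action, and I would organize the virtual-class comparison using that degeneration and deformation invariance.

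Granting this, the identity of the theorem follows from a short computation. On $X$, push--pull adjunction gives $\br{\gamma,\Phi^X(q,z)\widehat\Gamma_X\Psi_m^X(\alpha)}^X=\br{\gamma,j_*\Xi^X(q,z)}^X=\br{j^*\gamma,\Xi^X(q,z)}^F$. On $T$ the equivariant Poincaré pairing is computed by localization at the zero section, so $\br{\widehat{i^*\gamma},\Phi^T_\ST(q,z)\widehat\Gamma_T\Psi_m^T(\alpha)}^T_\ST=\int_F\frac{(j_T^*\widehat{i^*\gamma})\,(j_T^*j_{T*}\Xi^T_\ST(q,z))}{e_\ST(N_{F|T})}=\int_F (j_T^*\widehat{i^*\gamma})\cup\Xi^T_\ST(q,z)$, the factor $e_\ST(N_{F|T})$ cancelling against the self-intersection (so this expression is already regular at $\underline\lambda=0$). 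Since $k\circ j_T=\operatorname{id}_F$, $j_T^*\widehat{i^*\gamma}$ is an equivariant lift of $j^*\gamma$; letting $\underline\lambda\to0$ and invoking $\lim_{\underline\lambda\to0}\Xi^T_\ST=\Xi^X$ yields $\int_F j^*\gamma\cup\Xi^X(q,z)=\br{\gamma,\Phi^X(q,z)\widehat\Gamma_X\Psi_m^X(\alpha)}^X$, which is the asserted equality; in particular it is independent of the chosen lift.

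For the ``in particular'' statement I would combine this with Corollary~\ref{c:stac3}, which for an arbitrary (not necessarily split) local model says that $\widehat\Gamma_T\Psi_m^T(\alpha)$ is a strong asymptotic class of $T$ with respect to $\lambda_m(q)$ and $\arg(z/q)=\frac{-2m-s}{r-s}\pi$, i.e.\ $\|e^{\lambda_m(q)/z}\Phi^T_\ST(q,z)\widehat\Gamma_T\Psi_m^T(\alpha)\|_\ST=O(|z|^{-m'})$ along that ray. Since $\Phi^X(q,z)\widehat\Gamma_X\Psi_m^X(\alpha)=j_*\Xi^X(q,z)$ with $\Xi^X=\lim_{\underline\lambda\to0}\Xi^T_\ST$, it is enough to bound $\|e^{\lambda_m(q)/z}\Xi^X(q,z)\|_F$; after cancelling the equivariant self-intersection factor as above, the bound of Corollary~\ref{c:stac3} descends to exactly this estimate. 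Equivalently, pairing the theorem's identity against a Poincaré-dual basis of $H^*(X)$ shows each coordinate of $e^{\lambda_m(q)/z}\Phi^X(q,z)\widehat\Gamma_X\Psi_m^X(\alpha)$ is $O(|z|^{-m'})$, and nondegeneracy of the pairing on $H^*(X)$ then gives the norm bound, proving that $\widehat\Gamma_X\Psi_m^X(\alpha)$ is a strong asymptotic class.

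The hard part will be the interchange of the non-equivariant limit $\underline\lambda\to0$ with the $z\to0$ asymptotics: a priori the equivariant bound of Corollary~\ref{c:stac3} carries $\underline\lambda$-dependent constants, and the localization formula for the pairing on the non-compact $T$ has a pole at $\underline\lambda=0$. Both issues are controlled by the fact that, for exceptional curve classes, the evaluation maps $\mathrm{ev}_i$ are proper, so $S^T_\ST(q,z)$ — hence $\Xi^T_\ST(q,z)$ — is polynomial in the equivariant parameters (the regularity already used in the discussion around Lemma~\ref{l:stac2}), and the localization pole is precisely the one cancelled by $j_T^*j_{T*}=e_\ST(N_{F|T})\cup-$ above. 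Making this cancellation rigorous, together with the term-by-term bookkeeping of the $z^{-\mu}$, $z^{\rho}$, $\widehat\Gamma$, and $\operatorname{Td}(N^{-1})$ factors and of the fibrewise $\ST$-weight on $N_{F|X}$ in the identification $\lim_{\underline\lambda\to0}\Xi^T_\ST=\Xi^X$, is where essentially all of the work lies.
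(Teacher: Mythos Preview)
Your approach is essentially the same as the paper's: both reduce the pairing on $X$ to one on the local model $T$ by using that the class is supported on $F$ and that for exceptional degrees the moduli spaces and virtual classes coincide (Proposition~\ref{fiber-moduli}, Theorem~\ref{t:func}). The paper is just more direct: rather than introducing the auxiliary $\Xi^X,\Xi^T_\ST$ and arguing $\lim_{\underline\lambda\to0}\Xi^T_\ST=\Xi^X$, it simply writes out, for each $d>0$, the common integrand over $[\sMbar_{0,2}(\PP(V),d)]^{\mathrm{vir}}\cap e(R^1\pi_{\mathcal C*}f^*V'(-1))$ that both sides compute, and handles $d=0$ by the projection formula together with $i^*\widehat\Gamma_X=\widehat\Gamma_T$. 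Your invocation of deformation to the normal cone is unnecessary here (the paper reserves that device for comparing $\UU$ with $\UU^T$ in Section~\ref{s:reductions}); the virtual-class comparison you need is already the content of Theorem~\ref{t:func}.

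Your ``hard part'' is less hard than you fear. The identity \eqref{support-on-F} holds for each fixed $z$, so there is no interchange of limits to worry about: you first specialize $\underline\lambda\to0$ (which is just evaluation of a polynomial, since $\mathrm{ev}_i$ is proper for exceptional degrees and the class is supported on the compact $F$, exactly as you note), and only then let $z\to0$ using the asymptotic bound from Corollary~\ref{c:stac3}. The equivariant bound there is polynomial in the $\ST$-parameters, so setting them to zero preserves the $O(|z|^{-m'})$ estimate. Your alternative route via pairing against a basis of $H^*(X)$ and using nondegeneracy is exactly how the paper implicitly passes from the identity to the strong-asymptotic conclusion.
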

\begin{proof} 
Note first that the class $\Phi^T(q,z)\widehat \Gamma_T\Psi_m^T(\alpha)$ is supported on $F \subset T$, so the nonequivariant limit in the right-hand side of \eqref{support-on-F} is well-defined.

Both sides of \eqref{support-on-F} are series in $q$. We will consider each coefficient of $q^d$ separately.
The degree 0 coefficients agree by the projection formula and the fact that $i^*\widehat \Gamma_X = \widehat \Gamma_T.$
    For $d>0$, the equality follows from the same  argument as in Theorem~\ref{t:func}. 
    In particular, if
    $d>0$ then  both sides are  equal to the integral over $[\overline{\cc M}_{0,2}(\PP(V), d)]^{vir} \cap \left(2 \pi \sqrt{-1}\right)^s (2 \pi)^{-\frac{\dim X}{2}} e(R^1\pi_* f^* V'(-1))$ of 
    $$\frac{{\rm ev_1}^{\PP(V), *}  z^{- \mu} z^\rho \left({\rm Ch}(j^*(\widehat \Gamma_X) \mathcal{O}_F(m)){\rm Td}(N_{F|X})^{-1} \pi^*(\alpha)\right)}{-z-\psi_1} \cup {\rm ev_2}^{\PP(V), *}j^* \gamma.
    $$
    
    The final statement then follows from Corollary~\ref{c:stac3}.
\end{proof}

\begin{prop}\label{pullbackXtoT} 
For $\alpha, \gamma \in H^*(X),$ let $\widehat{i^* \alpha}, \widehat{i^* \gamma}$ denote equivariant lifts of $i^*\alpha, i^*\gamma$.
Then the difference
$$\br{\gamma, \Phi^X(q,z)\widehat \Gamma_X \alpha}^X - \lim_{\underline \lambda \mapsto 0} \br{\widehat{i^* \gamma}, \Phi^T_{\ST,+}(q,z)\widehat \Gamma_T\widehat{i^* \alpha}}^T$$
    is polynomial in $z$.  

In particular, if $\widehat{i^* \alpha}$ is a strong tame asymptotic class with $\arg(z/q)={1-s\over r-s}\pi,$
    then so is $\alpha.$
  \end{prop}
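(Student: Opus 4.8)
The plan is to split the fundamental solution operator $\Phi^X(q,z)$ according to the degree of the stable maps contributing to the $S$-operator, to observe that $\Phi^T_{\ST,+}$ has been set up to be exactly the positive-degree part, and then to match the positive-degree contributions on $X$ and on $T$ term by term, just as in the proofs of Theorem~\ref{t:func} and Theorem~\ref{t:nze}. Write $\Phi^X(q,z) = \Phi^X_0(q,z) + \Phi^X_+(q,z)$, where $\Phi^X_0$ records the contribution of $\sMbar_{0,2}(X,0)$ to $S^X(q,z)$ (recall we have specialized $\bt = \ln(q)c_1(X)$) and $\Phi^X_+$ records the contributions of $\sMbar_{0,2}(X, k[L])$ for $k>0$. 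As in the proof of Lemma~\ref{l:stac2},
\[
\Phi^X_0(q,z) = (2\pi)^{-\dim X/2}\, q^{-c_1(X)/z}\, z^{-\mu}\, z^{\rho}.
\]
Since $c_1(X)$ and $\rho$ are nilpotent and $\mu$ is semisimple with finitely many eigenvalues, $\br{\gamma, \Phi^X_0(q,z)\widehat{\Gamma}_X\alpha}^X$ is a finite $\CC$-linear combination of monomials $z^{a}(\log z)^{b}$; in particular it is polynomial in $z$, $z^{-1}$ and $\log z$, hence $O(|z|^{-m})$ for some $m$ along any ray $z\to 0$. On the $T$ side, $\Phi^T_{\ST,+}(q,z)$ is by definition $\Phi^T_{\ST}(q,z) - \Phi^T_{\ST}(Q=0,\bt=\ln(q)c_1(T),z)$, and by the same computation the subtrahend is the contribution of $\sMbar_{0,2}(T,0)$; thus $\Phi^T_{\ST,+}$ is precisely the sum of the contributions of $\sMbar_{0,2}(T,k[L])$ for $k>0$.

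It therefore suffices to prove the identity
\[
\br{\gamma, \Phi^X_+(q,z)\widehat{\Gamma}_X\alpha}^X = \lim_{\underline{\lambda}\to 0}\br{\widehat{i^*\gamma}, \Phi^T_{\ST,+}(q,z)\widehat{\Gamma}_T\widehat{i^*\alpha}}^T,
\]
which I would establish by comparing the coefficient of $q^{k(r-s)}$ for each $k>0$. Fix $d = k[L]$, $k>0$. By Proposition~\ref{fiber-moduli} the moduli spaces $\sMbar_{0,n+2}(X,d)$ and $\sMbar_{0,n+2}(T,d)$ are canonically isomorphic, their virtual classes coincide (each equals $e(R^1\pi_{\cc C *}f^*V'(-1))$), the descendent classes $\psi_i$ are identified, and, because $i = j\circ k$, the evaluation maps are compatible. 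Moreover $i^*\widehat{\Gamma}_X = \widehat{\Gamma}_T$, $i^*c_1(X) = c_1(T)$ (as in the proof of Corollary~\ref{coro-c1-multiplication}), and $\dim T = \dim X$, so $i^*$ intertwines $z^{-\mu}z^{\rho}$ on $H^*(X)$ with $z^{-\mu}z^{\rho}$ on $H^*(T)$ and intertwines the $\bt$-insertions $\ln(q)c_1(X)$ and $\ln(q)c_1(T)$. Since the image of any degree-$d$ stable map lies in $F$, which is proper, the relevant $\ST$-equivariant integrals over $\sMbar_{0,n+2}(T,d)$ have well-defined nonequivariant limits, and in the limit they agree with the corresponding integrals over $\sMbar_{0,n+2}(X,d)$ appearing in $\br{\gamma, \Phi^X_+(q,z)\widehat{\Gamma}_X\alpha}^X$. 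This is exactly the computation of Theorem~\ref{t:func} and Theorem~\ref{t:nze}, carried out now for the general class $\widehat{\Gamma}_X\alpha$ — which enters only through $j^*$, since $\op{ev}_1$ factors through $F$ in positive degree — in place of $\widehat{\Gamma}_X\Psi^X_m(\alpha)$. Combined with the first paragraph, the difference in the statement equals $\br{\gamma, \Phi^X_0(q,z)\widehat{\Gamma}_X\alpha}^X$, which is polynomial in $z$.

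For the last assertion, suppose $\widehat{\Gamma}_T\widehat{i^*\alpha}$ is a strong tame asymptotic class on $T$ with $\arg(z/q) = \tfrac{1-s}{r-s}\pi$. Passing to the nonequivariant limit and invoking Lemma~\ref{l:stac2}, $\lim_{\underline{\lambda}\to 0}\|\Phi^T_{\ST,+}(q,z)\widehat{\Gamma}_T\widehat{i^*\alpha}\| = O(|z|^{-m})$ along that ray, hence $\lim_{\underline{\lambda}\to 0}\br{\widehat{i^*\gamma}, \Phi^T_{\ST,+}(q,z)\widehat{\Gamma}_T\widehat{i^*\alpha}}^T = O(|z|^{-m})$ for each $\gamma$. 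By the displayed identity and the first paragraph, $\br{\gamma, \Phi^X(q,z)\widehat{\Gamma}_X\alpha}^X = O(|z|^{-m'})$ for a uniform $m'$ as $\gamma$ runs over a basis of $H^*(X)$; nondegeneracy of the Poincaré pairing then gives $\|\Phi^X(q,z)\widehat{\Gamma}_X\alpha\| = O(|z|^{-m'})$, i.e. $\widehat{\Gamma}_X\alpha$ — equivalently $\alpha$, since $\widehat{\Gamma}_X\cup$ is invertible — is a strong tame asymptotic class with $\arg(z/q) = \tfrac{1-s}{r-s}\pi$.

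The main obstacle is the geometric input behind the displayed identity, namely the term-by-term matching of positive-degree contributions; but this is not new, being precisely the content of Theorem~\ref{t:func} and Theorem~\ref{t:nze}. The one genuinely delicate bookkeeping point is to recognize that $\Phi^T_{\ST,+}$ has been defined exactly so as to excise the degree-zero summand $q^{-c_1(T)/z}z^{-\mu}z^{\rho}$ — which, although it contains negative powers of $z$, is a harmless finite expression — and to check that $i^*$ respects $z^{-\mu}z^{\rho}$, which relies on the coincidence $\dim T = \dim X$ together with $i^*c_1(X) = c_1(T)$.
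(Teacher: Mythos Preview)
Your proof is correct and follows essentially the same approach as the paper: split off the degree-zero summand of $\Phi^X$, match the positive-degree summands on $X$ and $T$ term by term via the identification of moduli spaces (as in Theorem~\ref{t:func} and Theorem~\ref{t:nze}), observe that the degree-zero piece $(2\pi)^{-\dim X/2}q^{-c_1(X)/z}z^{-\mu}z^{\rho}$ is polynomially bounded, and then invoke Lemma~\ref{l:stac2} for the final assertion. Your write-up is in fact more detailed than the paper's, making explicit the facts $i^*\widehat\Gamma_X=\widehat\Gamma_T$, $i^*c_1(X)=c_1(T)$, and $\dim T=\dim X$ that the paper uses implicitly.
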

\begin{proof}
The class $\Phi^T_{\ST, +}(q,z)\widehat \Gamma_T\widehat{i^* \alpha}$ is supported on $F \subset T$ so the nonequivariant limit on the right-hand side is well-defined.  The degree $d>0$ summands in the two terms are equal by the same consideration as in the proof of Theorem~\ref{t:nze}.  The degree $0$ summand  of $\br{\gamma, \Phi^X(q,z)\widehat \Gamma_X \alpha}^X$ is bounded by a positive power of $z$ by the same argument as in Lemma~\ref{l:stac2}.

The final statement is then an immediate consequence of  Lemma~\ref{l:stac2}.\end{proof}

\subsection{Comparing the Fourier-Mukai transform to the local model}
In this section we compare how the  Fourier--Mukai transform (both $K$-theoretic and cohomological) interacts with the restriction to the local model.

Let 
$$\FM: \tau_* {\tau'}^*: K^0(X') \to K^0(X)$$ 
denote the Fourier--Mukai transform for $X$ and let 
$$\FM^{T} := {\tau_T}_*\tau_{T'}^*: K^0(T') \to K^0(T)$$ be the Fourier--Mukai transform between local models.  
Let $U$ (resp. $U'$, $\widehat U$) denote the deformation to the normal cone of $F$ in $X$ (resp. $F'$ in $X'$).  More precisely, 
$$U = \operatorname{Bl}_{F\times \{0\}}X \times \AF^1 \setminus \operatorname{Bl}_{F}X$$ and similarly for $U',$ $\widehat U$.  Recall that the fiber of $U$ over $1 \in \AF^1$ is $X$, and the fiber of $U$ over $0$ is $T.$  
The next lemma shows that the resolution $X \xleftarrow{} \widehat X \to X'$ of the flip $X \dasharrow X'$ is compatible with deformation to the normal cone.
\begin{lemma}
Define $\widehat  U$ to be the deformation to the normal cone of $F \times_Z F'$ in $\widehat X$.
    Then we have the following commutative diagram of deformations to the normal cone:
    \[
    \begin{tikzcd}
      &  & \ar[dl, swap, "\widehat \tau"] \ar[dr, "\widehat \tau'"] \widehat U &&\\
  F \times \AF^1 \ar[r]   &   U \ar[dr] && \ar[dl] U'&  F' \times \AF^1 \ar[l]\\
       && \AF^1 && 
    \end{tikzcd}
    \]
    Furthermore, $\widehat U$ is canonically isomorphic to $\operatorname{Bl}_{F \times \AF^1} U = \operatorname{Bl}_{F' \times \AF^1} U'.$
\end{lemma}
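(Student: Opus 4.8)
The plan is to reduce both isomorphisms in the statement to the single claim $\widehat U\cong\operatorname{Bl}_{F\times\AF^1}U$, and to establish that by the universal property of the blow-up together with a local computation. Write $M_Y W=\operatorname{Bl}_{Y\times\{0\}}(W\times\AF^1)\setminus\operatorname{Bl}_Y W$ for the deformation to the normal cone of a smooth closed subvariety $Y\subset W$, so that $U=M_F X$, $U'=M_{F'}X'$ and $\widehat U=M_E\widehat X$. Once the canonical isomorphism $\widehat U\cong\operatorname{Bl}_{F\times\AF^1}U$ is in hand, the morphism $\widehat\tau\colon\widehat U\to U$ is the blow-down, the map $\widehat U\to\AF^1$ is the composite $\widehat U\to U\to\AF^1$, and $F\times\AF^1\hookrightarrow U$ is the constant family; the resulting diagram commutes because every arrow lives over $\AF^1$ and over $1\in\AF^1$ it specializes to the given resolution $X\xleftarrow{\tau}\widehat X\xrightarrow{\tau'}X'$ of the flip. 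The second isomorphism $\widehat U\cong\operatorname{Bl}_{F'\times\AF^1}U'$ (and with it $\widehat\tau'$ and the identification $\operatorname{Bl}_{F\times\AF^1}U=\operatorname{Bl}_{F'\times\AF^1}U'$) comes from running the same argument through $\tau'\colon\widehat X\to X'$, which realizes $\widehat X$ as $\operatorname{Bl}_{F'}X'$ with the same exceptional divisor $E$.

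To produce $\widehat U\cong\operatorname{Bl}_{F\times\AF^1}U$ I would first build a morphism $g\colon\widehat U\to U$ and then identify it with the blow-up. Since $\widehat X=\operatorname{Bl}_F X$ has exceptional divisor $E$, the map $\tau\times\operatorname{id}\colon\widehat X\times\AF^1\to X\times\AF^1$ pulls $\mathcal I_{F\times\AF^1}$ back to the invertible ideal $\mathcal I_{E\times\AF^1}$ and $\mathcal I_{X\times\{0\}}$ back to $\mathcal I_{\widehat X\times\{0\}}$, hence pulls $\mathcal I_{F\times\{0\}}=\mathcal I_{F\times\AF^1}+\mathcal I_{X\times\{0\}}$ back to $\mathcal I_{E\times\AF^1}+\mathcal I_{\widehat X\times\{0\}}=\mathcal I_{E\times\{0\}}$, the last equality because $E$ is smooth and $E\times\AF^1$, $\widehat X\times\{0\}$ meet transversally. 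Thus $\mathcal I_{F\times\{0\}}$ becomes invertible on $\operatorname{Bl}_{E\times\{0\}}(\widehat X\times\AF^1)$, so the universal property of $\operatorname{Bl}_{F\times\{0\}}(X\times\AF^1)$ gives a morphism to it which one checks restricts to $g\colon\widehat U\to U$. Next I would check that $g^*\mathcal I_{F\times\AF^1}$ is invertible on $\widehat U$; this is the one step that needs an honest computation, and I would carry it out in charts. The construction is local over $Z$, hence over $X$, so it suffices to treat $X=\AF^a\times\AF^c$ with $F=\AF^a\times\{0\}$: in the standard chart of $M_F X$ with coordinates $(x,\tilde y,t)$ one has $F\times\AF^1=\{\tilde y_1=\dots=\tilde y_c=0\}$, while $\widehat X=\operatorname{Bl}_F X$ and $M_E\widehat X$ are covered by charts with coordinates $(x,\tilde y_1,y_2/y_1,\dots,y_c/y_1,t)$ over which $g$ sends $\tilde y_j\mapsto\tilde y_1\cdot(y_j/y_1)$, so $g^*(\tilde y_1,\dots,\tilde y_c)=(\tilde y_1)$ is invertible. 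By the universal property of $\operatorname{Bl}_{F\times\AF^1}U$, $g$ factors as $\widehat U\to\operatorname{Bl}_{F\times\AF^1}U\to U$, and the same chart computation identifies $\widehat U$ with $\operatorname{Bl}_{F\times\AF^1}U$ coordinate by coordinate; alternatively, the morphism $\widehat U\to\operatorname{Bl}_{F\times\AF^1}U$ is proper, birational and quasi-finite between smooth varieties, hence an isomorphism by Zariski's main theorem.

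I expect the main obstacle to be not any deep ingredient but the local bookkeeping: matching all the affine charts of the two iterated blow-ups $\operatorname{Bl}_{F\times\AF^1}(M_F X)$ and $M_E\widehat X$, checking the gluings, and confirming that deleting the strict transforms of $X\times\{0\}$ and $\widehat X\times\{0\}$ (the ``$\setminus\operatorname{Bl}_\bullet$'' in the definition of $M_\bullet$) is compatible with the isomorphism. One should also make sure the reduction to $X=\AF^a\times\AF^c$ is legitimate — it is, since blow-up, deformation to the normal cone and formation of strict transforms all commute with the \'etale (indeed Zariski) base changes that trivialize $N_{F|X}$ on $Z$ — and that the transversality invoked in the second paragraph is correctly applied (it is, as $E$ is a Cartier divisor in $\widehat X$ transverse to $\widehat X\times\{0\}$). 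Everything else is formal manipulation with universal properties of blow-ups and of deformation to the normal cone.
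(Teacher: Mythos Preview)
Your approach is correct and rests on the same idea as the paper's: both proofs use the universal property of the blow-up to compare the two iterated constructions $\operatorname{Bl}_{F\times\AF^1}U$ and $\widehat U=M_E\widehat X$, and both observe that the key input is that $\tau^{-1}(F)=E$ is already Cartier in $\widehat X$.

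The difference is in how the isomorphism is concluded. The paper works symmetrically: from $\operatorname{Bl}_{F\times\AF^1}(\operatorname{Bl}_{F\times\{0\}}X\times\AF^1)$ it first produces a map to $\widehat X\times\AF^1$ (since $F\times\AF^1$ pulls back to a Cartier divisor), then observes that the preimage of $E\times\{0\}=F\times_Z F'\times\{0\}$ is again Cartier and hence factors through $\operatorname{Bl}_{E\times\{0\}}(\widehat X\times\AF^1)$; it then asserts that the same reasoning in the opposite direction produces an inverse. No chart computation is written out. You instead build only one direction, verify in an explicit affine chart that $g^*\mathcal I_{F\times\AF^1}$ is principal, and close with Zariski's Main Theorem (or a coordinate match). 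Your route is more explicit and self-contained about why the induced map is an isomorphism, at the cost of the local bookkeeping you flagged; the paper's route is shorter but leaves the ``these are inverse to each other'' verification to the reader. Either is perfectly adequate for this lemma.
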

\begin{proof}
    By the universal property of blowups, there is a map 
$$\operatorname{Bl}_{F \times \AF^1}( \operatorname{Bl}_{F \times \{0\}} X \times \AF^1) \to 
    \operatorname{Bl}_{F \times \AF^1} X \times \AF^1 = \widehat X \times \AF^1.$$
    Under this map, the preimage of $F \times_Z F' \times \{0\}$ is a Cartier divisor, so there is an induced map
    $$\operatorname{Bl}_{F \times \AF^1}( \operatorname{Bl}_{F \times \{0\}} X \times \AF^1) \to 
    \operatorname{Bl}_{F \times_Z F' \times \{0\}}( \widehat X \times \AF^1).$$
By similar reasoning, there is also a map 
  $$\operatorname{Bl}_{F \times_Z F' \times \{0\}}( \widehat X \times \AF^1) \to \operatorname{Bl}_{F \times \AF^1}( \operatorname{Bl}_{F \times \{0\}} X \times \AF^1),$$
   and these are inverse to each other.  From this it is easy to identify $\widehat U$ with $\operatorname{Bl}_{F \times \AF^1} U.$  This defines $\widehat \tau$.  The argument for $\widehat \tau'$ is identical.
\end{proof}
Let $j_{0/1}: F \to U_{0/1}$ denote the inclusion of $F$ into the fiber of $U$ over $0/1$ (note that $U_1 = X$ and $U_0 = T$), and similarly for $j_{0/1}': F' \to U_{0/1}'.$  Let $i_{0/1}:  U_{0/1} \to U$ denote the inclusion of the respective fibers.  
    \[
    \begin{tikzcd}
        & F' \ar[drr, bend left, "j'_1=j'"]  \ar[d, "j'_0"] & &\\
      & T'=U'_0 \ar[r, "i'_0"] \ar[u, bend left,  "k'"] \ar[rr, bend left, "i'"]& U' & X'=U'_1 \ar[l, swap, "i'_1"]\\
      & \widehat{T}   \ar[r, "\widehat i_0"] \ar[d, swap, "\tau_0"] \ar[u, "\tau'_0"]& \widehat{U} \ar[d, swap, "\widehat \tau"] \ar[u, "\widehat {\tau'}"]& \widehat{X} \ar[l, swap, "\widehat i_1"] \ar[d, swap, "\tau_1"] \ar[u, "\tau'_1"]\\
      & T=U_0 \ar[r, swap, "i_0"] \ar[rr, bend right, swap, "i"] \ar[d, swap, bend right, "k"]& U  & X=U_1 \ar[l, "i_1"]\\
        & F \ar[urr, bend right, swap, "j_1=j"] \ar[u, swap, "j_0"]& &
    \end{tikzcd}
    \]

Define 
$$\UU:=\tau_*\left( \operatorname{Td}(T\tau) {\tau'}^*(--) \right): H^*(X') \to H^*(X).$$
Then by Grothendieck--Riemann--Roch we have 
$$\ch \circ\ \FM = \UU \circ \ch.$$ We let $\UU^T: H^*(T') \to H^*(T)$ denote the associated map for $T$.  We will show that the map $\UU$  ``commutes''  with restriction to the local model.
\begin{prop}\label{cohres}
For all $\alpha ' \in H^*(X')$,
\begin{equation}
\UU^T  {i '}^* (\alpha ') = i^*  \UU (\alpha').
\end{equation}
\end{prop}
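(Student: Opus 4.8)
The plan is to realize $\UU$ and $\UU^T$ as fiberwise versions of a single map $\UU^U$ on the deformation space $\widehat U$, exactly in the spirit of the proof of Lemma~\ref{l:ctosplit}. Set $\UU^U := \widehat\tau_*\left(\operatorname{Td}(T\widehat\tau)\,(\widehat{\tau'})^*(-)\right): H^*(U') \to H^*(U)$. The key geometric inputs are: (i) the diagram of deformations to the normal cone just established, so that $\widehat\tau \circ \widehat i_1 = i_1 \circ \tau_1$ and $\widehat\tau \circ \widehat i_0 = i_0 \circ \tau_0$, with both squares Cartesian; (ii) $\operatorname{Td}(T\tau) = \widehat i_1^* \operatorname{Td}(T\widehat\tau)$ and $\operatorname{Td}(T\tau_0) = \widehat i_0^* \operatorname{Td}(T\widehat\tau)$, since the relative tangent bundle of a blow-down restricts compatibly under the inclusion of a fiber; and (iii) naturality of the Gysin pushforward for the Cartesian squares, i.e. $i_1^* \widehat\tau_* = \tau_{1*} \widehat i_1^*$ and likewise over $0$ (using \cite[Appendix A.6 (3)]{AndFul} as in Lemma~\ref{l:ctosplit}).

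First I would record that $i_t^*: H^*(U) \to H^*(U_t)$ is an isomorphism for $t = 0, 1$ (the fibers are deformation retracts of a neighborhood; alternatively this follows from the explicit cohomology of deformation-to-the-normal-cone spaces used elsewhere in the section), and similarly for $U'$. Under the resulting identification the comparison isomorphism $H^*(X') \cong H^*(T')$ is $i_0'^{*} \circ (i_1'^{*})^{-1}$, and $i^* = i_0^* \circ (i_1^*)^{-1}$ on the nose. Then, for $\widehat\alpha' \in H^*(U')$, the same five-line computation as in Lemma~\ref{l:ctosplit} gives
\begin{align*}
\UU\, i_1'^{*}\widehat\alpha'
&= \tau_{1*}\left(\operatorname{Td}(T\tau_1)\,\tau_1'^{*} i_1'^{*}\widehat\alpha'\right)
= \tau_{1*}\left(\operatorname{Td}(T\tau_1)\,\widehat i_1^{*}(\widehat{\tau'})^{*}\widehat\alpha'\right)\\
&= \tau_{1*}\left(\widehat i_1^{*}\operatorname{Td}(T\widehat\tau)\,(\widehat{\tau'})^{*}\widehat\alpha'\right)
= i_1^{*}\widehat\tau_*\left(\operatorname{Td}(T\widehat\tau)\,(\widehat{\tau'})^{*}\widehat\alpha'\right)
= i_1^{*}\UU^U\widehat\alpha',
\end{align*}
and identically $\UU^T\, i_0'^{*}\widehat\alpha' = i_0^{*}\UU^U\widehat\alpha'$. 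Combining these two equalities and inserting $\widehat\alpha' = (i_1'^{*})^{-1}\alpha'$ for $\alpha' \in H^*(X')$ yields
\[
\UU^T\, i_0'^{*}(i_1'^{*})^{-1}\alpha' = i_0^{*}(i_1^{*})^{-1}\,\UU\,\alpha',
\]
which, after unwinding the definitions of $i^*$ and the identification $H^*(X') \cong H^*(T')$, is exactly $\UU^T i'^{*}(\alpha') = i^{*}\UU(\alpha')$.

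The main obstacle is purely bookkeeping rather than conceptual: verifying that all the relevant squares in the big diagram are genuinely Cartesian (so that base change / naturality of Gysin maps applies) and that the blow-down $\widehat U \to U$ and its fiberwise restrictions $\widehat X \to X$, $\widehat T \to T$ are compatibly situated — i.e. that $T\widehat\tau$ restricts to $T\tau_t$ along $\widehat i_t$. This is where the lemma on compatibility of the resolution with deformation to the normal cone ($\widehat U = \operatorname{Bl}_{F\times\AF^1}U$) does the real work; once that identification is in hand, the relative tangent bundle statement and the Cartesian-square property are formal. I would also double-check that the nonequivariant Gysin pushforward along the (possibly non-flat but l.c.i.) blow-down morphisms behaves as expected, but this is standard and already invoked in the surrounding arguments.
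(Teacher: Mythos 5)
Your overall skeleton (lift to a single map $\UU^U$ on $U'$, push the computation through the Cartesian squares over $t=0$ and $t=1$, compare) is the same as the paper's, and the five-line computation at the level of $H^*(U')$ is correct. But there is a genuine gap in the step that relates the two fibers, and it cannot be repaired along the lines you propose.

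You assert that $i_t^*: H^*(U) \to H^*(U_t)$ is an isomorphism for $t=0,1$ and hence that $i^* = i_0^*\circ(i_1^*)^{-1}$ and $i'^* = i_0'^*\circ(i_1'^*)^{-1}$. This is false here. That statement \emph{does} hold in Lemma~\ref{l:ctosplit}, where $U$ is a family of local models all of which retract onto $Z\times\AF^1$ and so have the same cohomology; but the present $U$ is the deformation to the normal cone of $F$ in $X$, whose fiber over $t=1$ is the compact variety $X$ and whose fiber over $t=0$ is $T=\tot(N_{F|X})\simeq F$. These have different cohomology in general (e.g.\ $X=\PP^2$, $F=\{\mathrm{pt}\}$ gives $\dim H^*(X)=3$ while $\dim H^*(T)=1$), so there is no ``comparison isomorphism'' $H^*(X')\cong H^*(T')$, and $(i_1^*)^{-1}$ does not exist. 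The identification you claim on the nose is precisely the thing that fails for a degeneration to the normal cone.

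The paper circumvents this without ever claiming $i_t^*$ is invertible. It picks the \emph{explicit} lift $\widehat\alpha' := \sigma'^*\alpha'$ via the projection $\sigma': U' \to X'$ (so that $i_1'^*\widehat\alpha' = \alpha'$ by construction), and then uses that the two inclusions $i_1\circ j_1,\ i_0\circ j_0: F\hookrightarrow U$ are homotopic through $F\times\AF^1\subset U$, hence $j_1^*i_1^* = j_0^*i_0^*$ on $H^*(U)$. Because $i=j\circ k$ and $i'=j'\circ k'$ factor through $F$ and $F'$, this homotopy relation (not invertibility of $i_t^*$) is exactly what lets one trade $k^*j_1^*i_1^*$ for $k^*j_0^*i_0^*$ and conclude $i^*\alpha=i_0^*\widehat\alpha$ and $i'^*\alpha'=i_0'^*\widehat\alpha'$. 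Substituting this into your two displayed equalities gives the proposition. So the fix is: replace ``$i_t^*$ is an isomorphism'' with ``choose $\widehat\alpha'=\sigma'^*\alpha'$ and use $j_1'^*i_1'^*=j_0'^*i_0'^*$.'' A secondary point: the equality $\widehat i_0^*\operatorname{Td}(T\widehat\tau)=\operatorname{Td}(T\tau_0)$ is not immediate and requires the same $j_1^*i_1^*=j_0^*i_0^*$ trick (the paper proves it via $\widehat i_0^* T\widehat\tau = \widehat i^*\widehat i_1^* T\widehat\tau = T\tau_0$), whereas you dismiss it as formal.
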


\begin{proof}
The argument is similar to that used in Lemma~\ref{l:ctosplit}, but slightly more involved.  In this case we will use the deformation to the normal cone construction given above.

Given $\alpha \in H^*(X),$ let $\widehat \alpha$ denote the pullback to $U$.  Note that $\alpha = i_1^*(\widehat \alpha)$.  From this  and the fact that \begin{equation}\label{e:tr} j_1^* i_1^* = j_0^* i_0^*: H^*(U) \to H^*(F),\end{equation}
we see that 
\begin{align}\label{e:tr2}
i^* \alpha  = k^* j_1^* i_1^* \widehat \alpha 
= k^* j_0^* i_0^* \widehat \alpha 
 = i_0^* \widehat \alpha.
\end{align}
We have analogous statements for  $\alpha' \in H^*(X')$.

By \cite[Appendix A.6 (3) (Naturality)]{AndFul}, pushforward commutes with specialization, therefore, 
for $t=0,1$ and $\gamma \in H^*(\widehat U)$, 
\begin{align}\label{pushspec}
   i_t^* \widehat \tau_* \left(\operatorname{Td}(\widehat \tau) \gamma\right) 
    &= {\tau_t}_* \widehat i_t^* \left(\operatorname{Td}(\widehat \tau) \gamma\right) \\ \nonumber
    &= {\tau_t}_*\left(\operatorname{Td}(\tau_t)  \widehat i_t^* \gamma\right),
\end{align}
where $\tau_1 = \tau$ and $\tau_0 = \tau_T$.  Here we  also use the fact that $\widehat i_t^* \operatorname{Td}(\widehat \tau) = \operatorname{Td}\tau_t$ for $t = 0, 1$.  For $t=1$ this is obvious, as $\widehat i_1^* T \widehat U = TX \times \CC$ and $i_1^* TU = TX\times \CC$.  For $t= 0$, we use a similar observation as \eqref{e:tr2}:
$$ \widehat i_0^* T\widehat \tau = \widehat i^* \widehat i_1^* T\widehat \tau = \widehat i^* T\tau = T\tau_0.$$

By \eqref{e:tr} and \eqref{pushspec}, we see that 
$$ j_1^* {\tau_1}_* \widehat i_1^* \widehat \alpha =  j_0^* {\tau_0}_* \widehat i_0^* \widehat \alpha .$$
For $\alpha' \in H^*(X'),$ let $\widehat \alpha'$ denote the pullback to $U'$.
\begin{align*}
     i^* \UU \alpha' & =  k^* j_1^* {\tau_1}_*\left(\operatorname{Td}(\tau) {\tau_1'}^* \alpha'\right) \\
    & =  k^* j_1^* {\tau_1}_* \widehat i_1^* \left(\operatorname{Td}(\widehat \tau) {\widehat {\tau'}}^* \widehat \alpha'\right) \\ & =  k^* j_0^* {\tau_0}_* \widehat i_0^* \left(\operatorname{Td}(\widehat \tau) {\widehat {\tau'}}^* \widehat \alpha' \right)\\
    & =  k^* j_0^* {\tau_0}_*  \left( \operatorname{Td}(\tau_0)  {\tau'_0}^* {i'_0}^* \widehat \alpha' \right)\\
    &=   {\tau_0}_* \left(\operatorname{Td}(\tau_0){\tau_0'}^* {i'}^*  \alpha'  \right)\\
    &=  \UU^T {i'}^*  \alpha' .
\end{align*}
\end{proof}

\begin{corollary}\label{c:rFM}
Restriction to the local model ``commutes'' with the Fourier--Mukai transform after applying the modified Chern character.  That is, we have
\begin{equation} \label{dfm}
   \ch i^* \FM = \ch \FM^{T} {i'}^*.
\end{equation} 
\end{corollary}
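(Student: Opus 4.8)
The plan is to deduce the $K$-theoretic statement~\eqref{dfm} directly from the cohomological statement proved in Proposition~\ref{cohres}. Recall that Grothendieck--Riemann--Roch gives $\ch \circ \FM = \UU \circ \ch$ for $X$, and $\ch \circ \FM^{T} = \UU^{T} \circ \ch$ for $T$ (the latter being the identity established in Diagram~\eqref{d:U}, or its analogue directly via GRR for $\tau_T$). Thus for any $E' \in K^0(X')$ we have
\begin{align*}
\ch\, i^* \FM(E') &= i^*\, \ch\, \FM(E') = i^*\, \UU(\ch(E')),\\
\ch\, \FM^{T}\, {i'}^*(E') &= \UU^{T}(\ch\, {i'}^* E') = \UU^{T}\, {i'}^*(\ch(E')),
\end{align*}
where in the first line I have used that the modified Chern character commutes with pullback (it is a polynomial in the ordinary Chern character, which is a pullback-compatible ring map), and similarly in the second line. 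Setting $\alpha' = \ch(E') \in H^*(X')$, the two right-hand sides agree precisely by Proposition~\ref{cohres}. This proves~\eqref{dfm}.

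The only point requiring care is the compatibility of the \emph{modified} Chern character $\ch(\mathcal{V}) = \sum_i e^{2\pi\sqrt{-1}\delta_i}$ with pullback: since $f^*$ of a bundle has Chern roots equal to the $f^*$-images of the roots, and $f^*$ is a ring homomorphism on cohomology, one has $f^*\ch(\mathcal V) = \ch(f^*\mathcal V)$, and by additivity over exact sequences this extends to all of $K^0$. This is routine, so I would state it in a sentence rather than belabor it. I expect no real obstacle here — the corollary is essentially a bookkeeping consequence of Proposition~\ref{cohres} together with GRR, and the genuinely substantive geometric input (compatibility of the resolution $\widehat X$ and of the Fourier--Mukai kernels with deformation to the normal cone) has already been absorbed into the proof of that proposition.

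A brief remark worth including: one must make sure the relevant pushforwards are finite/proper so that $\FM$, $\FM^T$ and the Gysin maps $\tau_*$, $(\tau_T)_*$ are defined on $K^0$ and on $H^*$ without completions; this holds because $\tau$ and $\tau_T$ are proper birational morphisms (blow-downs of $E = \PP(V)\times_Z\PP(V')$), so no localization or completion is needed in the statement~\eqref{dfm} itself — in contrast with the equivariant diagrams~\eqref{d:U} and~\eqref{d:V}, which genuinely require the completed ring $\hat S_\TT$.
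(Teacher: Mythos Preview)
Your proof is correct and is exactly the approach the paper intends: the corollary is meant to be an immediate consequence of Proposition~\ref{cohres} together with the GRR identities $\ch\circ\FM=\UU\circ\ch$ and $\ch\circ\FM^T=\UU^T\circ\ch$ (stated just before that proposition) and the functoriality of $\ch$ under pullback. The paper gives no separate proof of the corollary, and your derivation fills in precisely the routine bookkeeping one would expect.
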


This allows us to characterize the strong tame asymptotic classes of $X$.
\begin{theorem}\label{t:tac}
    For any $\alpha' \in H^*(X')$, the cohomology class $\widehat \Gamma_X \UU(\alpha')\in H^*(X)$ is a strong tame asymptotic class with $\arg(z/q)={(1-s)\over r-s}\pi.$
\end{theorem}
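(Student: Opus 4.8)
The plan is to reduce the statement for general $X$ to the corresponding statement for the (possibly non-split) local model $T$, and then to the split local model, where the asymptotic expansion has already been established. First I would use Proposition~\ref{pullbackXtoT}: it says that for $\alpha' \in H^*(X')$, if the equivariant lift $\widehat{i^*(\widehat{\Gamma}_X \UU(\alpha'))}$ is a strong tame asymptotic class with $\arg(z/q) = \frac{1-s}{r-s}\pi$, then so is $\widehat{\Gamma}_X \UU(\alpha')$. So it suffices to control the pullback $i^*\left(\widehat{\Gamma}_X \UU(\alpha')\right)$ to the local model.

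Next I would rewrite this pullback in a form visibly coming from the local model. Since $i^*\widehat{\Gamma}_X = \widehat{\Gamma}_T$ and by Proposition~\ref{cohres} we have $i^* \UU(\alpha') = \UU^T\, {i'}^*(\alpha')$, it follows that
\begin{equation*}
i^*\left(\widehat{\Gamma}_X \UU(\alpha')\right) = \widehat{\Gamma}_T\, \UU^T\left({i'}^*(\alpha')\right).
\end{equation*}
(Here one must check $\widehat{\Gamma}_T \cup -$ commutes past $i^*$, which is immediate since $i^*$ is a ring map.) Now ${i'}^*(\alpha') \in H^*(T')$, so we are exactly in the situation of Corollary~\ref{c:stac3}: for any local model $T \to Z$ and any $\beta' \in H^*_\ST(T')$, the class $\widehat{\Gamma}_T \UU^T(\beta')$ is a strong tame asymptotic class with $\arg(z/q) = \frac{1-s}{r-s}\pi$. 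Applying this with $\beta'$ an equivariant lift of ${i'}^*(\alpha')$ gives that $\widehat{i^*(\widehat{\Gamma}_X \UU(\alpha'))}$ is a strong tame asymptotic class with the required argument.

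Finally I would conclude by invoking Proposition~\ref{pullbackXtoT} in the direction stated there: since the polynomial-in-$z$ discrepancy between $\br{\gamma, \Phi^X(q,z) \widehat{\Gamma}_X \UU(\alpha')}^X$ and the non-equivariant limit of $\br{\widehat{i^*\gamma}, \Phi^T_{\ST,+}(q,z)\, \widehat{\Gamma}_T\, \widehat{i^*(\UU(\alpha'))}}^T$ does not affect the tame ($\lambda = 0$) asymptotic growth estimate, and the latter satisfies the $O(|z|^{-m})$ bound along the ray $\arg(z/q) = \frac{1-s}{r-s}\pi$ by the previous paragraph, we obtain the bound for $X$ itself, i.e. $\widehat{\Gamma}_X \UU(\alpha')$ is a strong tame asymptotic class. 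The main obstacle is bookkeeping rather than conceptual: one must make sure that the equivariant lifts are chosen compatibly (so that Proposition~\ref{cohres}, which is stated non-equivariantly, can be promoted to a statement about lifts), and that Corollary~\ref{c:stac3} — which was proved via the chain of reductions Lemma~\ref{l:ctosplit}, Lemma~\ref{l:pb}, Lemma~\ref{l:r3} for split $T$ and then general $T$ — genuinely applies to the $T$ arising from our $X$; all of this is already in place, so the proof is essentially an assembly of the preceding results.
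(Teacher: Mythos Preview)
Your proposal is correct and follows essentially the same route as the paper: reduce to the local model via Proposition~\ref{pullbackXtoT}, rewrite $i^*\UU(\alpha')$ as $\UU^T{i'}^*(\alpha')$ using Proposition~\ref{cohres}, and then invoke Corollary~\ref{c:stac3}. The paper's proof is terser but the logical skeleton is identical; your additional remarks about $i^*\widehat\Gamma_X=\widehat\Gamma_T$ and the compatibility of equivariant lifts are sound and simply make explicit what the paper leaves implicit.
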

\begin{proof}
By Proposition~\ref{pullbackXtoT}, it suffices to consider an equivariant lift of $\widehat{i^* \UU(\alpha')}$ and show that $\widehat \Gamma_T\widehat{i^* \UU(\alpha')}$ is a strong asymptotic class.  By Proposition~\ref{cohres}, this is equal to $  \widehat \Gamma_T\UU^T \widehat{{i'}^*(\alpha ')}$, which  is a strong tame asymptotic class along the given ray by Corollary~\ref{c:stac3}.
\end{proof}

The two statements of Theorem~\ref{main-theorem} are given by
Theorem~\ref{t:nze} and Theorem~\ref{t:tac}, respectively.

\appendix

\section{Asymptotic expansions of Meijer G-functions}
\label{appendix-asymptotic}

Following ~\cite[Section 1]{Meijer}\cite[Section 5.2]{Luke}, 
for any $(a_1, \ldots, a_p, b_1, \ldots, b_q)\in \mathbb{C}^{p+q}$ 
such that 
$$\{a_k-b_j\mid 1\leq k\leq p, 1\leq j\leq q\}\cap \mathbb{Z}_+=\emptyset,$$ 
we consider the Meijer $G$-function
\begin{equation}
\label{meijer-def}
G_{p,q}^{m,n}
\left(
\begin{array}{l}
a_1, \ldots, a_p\\
b_1, \ldots, b_q
\end{array}
\bigg\vert\ t
\right)
={1\over 2\pi\sqrt{-1}}
\int_L{\prod\limits_{j=1}^{m}\Gamma(b_j-x)\prod\limits_{j=1}^{n}\Gamma(1-a_j+x)\over\prod\limits_{j=m+1}^{q}\Gamma(1-b_j+x)\prod\limits_{j=n+1}^{p}\Gamma(a_j-x)}t^x dx,
\end{equation}
where the path $L$ is chosen appropriately as in \cite[Section 5.2]{Luke}. 
In particular, if $m+n-{1\over 2}(p+q)>0$, the path $L$ goes from $-\sqrt{-1}\infty$ to $\sqrt{-1}\infty$ so that 
all poles of $\Gamma(b_j-x)$, $j=1, 2, \ldots, m$, lie to the right of the path, 
and all poles of $\Gamma(1-a_k+x)$, $k=1,2,\ldots, n,$ lie to the left of the path.

Let $\nu:=q-p$. Let $\epsilon=1$ if $\nu>1$ and $\epsilon=1/2$ if $\nu=1$.
The following results of asymptotic expansions are proved by Barnes in \cite{Barnes} and reformulated by Meijer in \cite{Meijer} via Meijer $G$-functions. 

\begin{theorem}\label{exponential-asymptotic}
\cite[Theorem C]{Meijer} 
\cite[Section 5.7 Theorem 5]{Luke}
There exists an asymptotic expansion 
\begin{equation}
\label{asymptotic-solution-meijer}
G_{p,q}^{q,0}
\left(
\begin{array}{l}
a_1, \ldots, a_p\\
 b_1,\ldots, b_q
\end{array}  \bigg\vert\ t\right)
\sim {(2\pi)^{\nu-1\over 2}\over \nu^{1\over 2}}
 \exp\left(-\nu\ t^{1\over \nu}\right)
 t^{\theta}\left(1+\sum_{k=1}^{\infty} M_k t^{-k/\nu}\right)
\end{equation}
as $t\to \infty$ with $|\arg t|< (\nu+\epsilon)\pi$.
Here $M_k$ are constants determined recursively and 
 $$\theta={1\over \nu} \left({1-\nu\over 2}+\sum_{i=1}^{q}b_i-\sum_{j=1}^{p}a_j\right).$$
\end{theorem}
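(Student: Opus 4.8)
The plan is to prove \eqref{asymptotic-solution-meijer} by the method of steepest descent (saddle-point analysis) applied directly to the Mellin--Barnes integral \eqref{meijer-def} in the case $m=q$, $n=0$, where the integrand is
\[
\Psi(x,t):=\frac{\prod_{j=1}^{q}\Gamma(b_j-x)}{\prod_{j=1}^{p}\Gamma(a_j-x)}\,t^{x},
\]
with $L$ running from $-\sqrt{-1}\infty$ to $\sqrt{-1}\infty$ and all poles of the numerator lying to its right. First I would record the uniform Stirling expansion: for $x\to\infty$ in a sector bounded away from $\mathbb{R}_{\geq 0}$,
\[
\log\frac{\prod_{j=1}^{q}\Gamma(b_j-x)}{\prod_{j=1}^{p}\Gamma(a_j-x)}
=-\nu x\log(-x)+\nu x+\Bigl(\textstyle\sum_j b_j-\sum_j a_j-\tfrac{\nu}{2}\Bigr)\log(-x)+\tfrac{\nu}{2}\log(2\pi)+O(|x|^{-1}),
\]
obtained by multiplying the individual asymptotic series for $\log\Gamma(b_j-x)$ and $\log\Gamma(a_i-x)$; one keeps the full asymptotic series in descending powers of $x$, since its coefficients feed directly into the $M_k$. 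Writing $\Psi(x,t)=e^{\varphi(x)}$ with $\varphi(x)=-\nu x\log(-x)+\nu x+x\log t+(\sum b_j-\sum a_j-\tfrac{\nu}{2})\log(-x)+\tfrac{\nu}{2}\log 2\pi+\cdots$, the saddle-point equation $\varphi'(x)=0$ reads, to leading order, $-\nu\log(-x)+\log t=0$, so the relevant saddle is at $x_0\sim -t^{1/\nu}$, with $\varphi''(x_0)\sim \nu\,t^{-1/\nu}$ and $\varphi(x_0)=-\nu t^{1/\nu}+\tfrac{\nu}{2}\log 2\pi+\bigl(\tfrac{1}{\nu}(\sum b-\sum a)-\tfrac12\bigr)\log t+\cdots$.

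Next I would deform $L$ onto the path of steepest descent through $x_0$. Since $x_0$ lies to the left of all poles of the $\Gamma(b_j-x)$ for $t$ in the asserted range, this deformation crosses no residues, and the contribution of the connecting arcs at infinity vanishes by the Stirling estimate above. It is precisely here that the restriction $|\arg t|<(\nu+\epsilon)\pi$ enters: for $\nu>1$ the integrand decays super-exponentially along vertical lines and there is ample room ($\epsilon=1$), whereas for $\nu=1$ the integral \eqref{meijer-def} converges only conditionally (the integrand decays like $|x|^{-1}$), forcing a more delicate estimate and the smaller sector $\epsilon=\tfrac12$. Applying Watson's lemma / the Laplace method along the steepest-descent contour then shows that the $G$-function in \eqref{asymptotic-solution-meijer} is asymptotic to
\[
\frac{e^{\varphi(x_0)}}{\sqrt{2\pi\,\varphi''(x_0)}}\Bigl(1+\sum_{k\geq 1}M_k\,t^{-k/\nu}\Bigr),
\]
and inserting the values of $\varphi(x_0)$ and $\varphi''(x_0)$ above and collecting the powers of $2\pi$, $\nu$, and $t$ reproduces exactly the leading factor $(2\pi)^{(\nu-1)/2}\nu^{-1/2}e^{-\nu t^{1/\nu}}t^{\theta}$ with $\theta=\tfrac{1}{\nu}\bigl(\tfrac{1-\nu}{2}+\sum b_i-\sum a_j\bigr)$; the coefficients $M_k$ are the successive coefficients of the steepest-descent expansion, determined recursively from the Stirling series of $\Psi$.

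I expect the main obstacle to be the rigorous, uniform control of the error terms under the contour deformation, particularly up to the boundary of the sector $|\arg t|<(\nu+\epsilon)\pi$ and in the borderline case $\nu=1$, where the Mellin--Barnes integral is only conditionally convergent and the saddle-point passage must be justified by hand. This is exactly the analysis carried out by Barnes in \cite{Barnes} and reformulated via Meijer $G$-functions in \cite{Meijer}, so for the purposes of this paper it suffices to invoke \cite[Theorem C]{Meijer} (equivalently \cite[Section 5.7, Theorem 5]{Luke}) for the statement of Theorem~\ref{exponential-asymptotic}.
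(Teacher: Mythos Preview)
The paper does not actually prove Theorem~\ref{exponential-asymptotic}; it is stated in the appendix purely as a citation of the classical results \cite[Theorem C]{Meijer} and \cite[Section 5.7, Theorem 5]{Luke}, with no argument given. Your proposal therefore goes well beyond what the paper does: you sketch the underlying saddle-point/steepest-descent analysis of the Mellin--Barnes integral, while the paper simply invokes the literature.

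Your outline is a reasonable heuristic for how the result is proved, and your final sentence---that for the purposes of this paper one may simply cite \cite{Meijer} and \cite{Luke}---is exactly what the authors do. A couple of minor cautions on the sketch itself: the claim that the integrand ``decays like $|x|^{-1}$'' when $\nu=1$ is not quite right (the decay along vertical lines is governed by $e^{-\pi\nu|\mathrm{Im}\,x|/2}$ times a power, so for $\nu=1$ it is still exponential, just marginally so for the required contour rotation), and the assertion that the deformation ``crosses no residues'' needs more care since $x_0\sim -t^{1/\nu}$ can have large imaginary part when $\arg t$ is near the boundary of the sector. These are precisely the delicate points handled in \cite{Barnes, Meijer}, as you acknowledge. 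For a proof proposal in this paper's context, simply citing the references---as the paper does---is entirely sufficient.
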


\begin{theorem}
\label{algebraic-asymptotic}
    \cite[Theorem A]{Meijer}
    \cite[Section 5.7 Theorem 1]{Luke}
    The Meijer $G$ function $$
G_{p,q}^{q,1}\left(t || a_l\right):=G_{p,q}^{q,1}\left(
\begin{array}{c}
a_l, a_1, \ldots, a_{l-1}, a_{l+1}, \cdots, a_p\\
b_1, \ldots,b_q
\end{array}
\bigg\vert\ t\right)
$$
admits for large values of $|t|$ with $|\arg(t)|<{\nu\over 2}\pi+\pi$ the asymptotic expansion 
$$G_{p,q}^{q,1}\left(t || a_l\right)\sim E_{p,q}(t || a_l):=t^{-1+a_l}\sum_{d=0}^{\infty} {(-t)^{-d}\prod\limits_{i=1}^{q}\Gamma(1+b_i-a_l+d)\over \prod\limits_{j=1}^{p}\Gamma(1+a_j-a_l+d)}.$$
\end{theorem}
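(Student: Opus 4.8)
The plan is to read off the expansion directly from the Mellin--Barnes representation \eqref{meijer-def} by shifting the contour of integration and collecting residues; this is the classical argument of Barnes, and for the paper it suffices to invoke \cite{Barnes,Meijer,Luke}, so I only outline its structure. First I would write out the integrand of $G_{p,q}^{q,1}(t || a_l)$, namely
$$\frac{\Gamma(1-a_l+x)\,\prod_{j=1}^{q}\Gamma(b_j-x)}{\prod_{j\neq l}\Gamma(a_j-x)}\;t^{x},$$
and recall that the contour $L$ separates the poles of the single numerator factor $\Gamma(1-a_l+x)$, located at $x=a_l-1-d$ for $d\in\ZZ_{\geq 0}$ and lying to the left of $L$, from the poles of the factors $\Gamma(b_j-x)$, which lie to the right; the genericity hypothesis on the parameters imposed in Appendix~\ref{appendix-asymptotic} ensures all these poles are simple and disjoint. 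Since $|t|$ is large, $|t^{x}|=|t|^{\Re x}e^{-\Im(x)\arg t}$ tends to $0$ as $\Re x\to-\infty$, so the right move is to push $L$ leftward across the poles $x=a_l-1-d$ for $d=0,1,\dots,N-1$.

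Second, I would compute the residue at $x=a_l-1-d$. Using $\operatorname{Res}_{x=a_l-1-d}\Gamma(1-a_l+x)=(-1)^{d}/d!$ this residue equals
$$\frac{(-1)^{d}}{d!}\,\frac{\prod_{j=1}^{q}\Gamma(1+b_j-a_l+d)}{\prod_{j\neq l}\Gamma(1+a_j-a_l+d)}\,t^{a_l-1-d},$$
and rewriting $d!=\Gamma(1+d)$ as the $j=l$ factor of $\prod_{j=1}^{p}\Gamma(1+a_j-a_l+d)$ together with $(-1)^{d}t^{-d}=(-t)^{-d}$ shows that the sum of these residues is exactly $E_{p,q}(t || a_l)=t^{-1+a_l}\sum_{d\geq 0}(-t)^{-d}\prod_{i=1}^{q}\Gamma(1+b_i-a_l+d)\big/\prod_{j=1}^{p}\Gamma(1+a_j-a_l+d)$. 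Moving $L$ to a vertical line $\Re x=c_N$ lying to the left of the first $N$ poles then gives the exact identity $G_{p,q}^{q,1}(t || a_l)=\sum_{d=0}^{N-1}(\text{residue at }x=a_l-1-d)+R_N(t)$, with $R_N(t)$ the integral over $\Re x=c_N$.

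The main obstacle is the third step: estimating $R_N(t)$. By Stirling's formula the modulus of the quotient of Gamma functions along $\Re x=c_N$ decays in $|\Im x|$ like $|\Im x|^{A}e^{-\tfrac{\pi}{2}|\nu|\,|\Im x|}$, with $\nu=q-p$ and $A$ depending on $c_N$ and the parameters, while $|t^{x}|$ contributes $|t|^{c_N}e^{-\Im(x)\arg t}$; the hypothesis $|\arg t|<\tfrac{\nu}{2}\pi+\pi$ is precisely what keeps the product absolutely integrable in $\Im x$ and makes the resulting bound uniform on compact subsectors, so that $R_N(t)=O(|t|^{c_N})$, which for a suitable choice of $c_N$ is $O(t^{a_l-1-N})$. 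As $N$ is arbitrary this yields $G_{p,q}^{q,1}(t || a_l)\sim E_{p,q}(t || a_l)$ as $|t|\to\infty$ in the stated sector. The delicate point — uniformity of the Stirling estimate up to the boundary $|\arg t|=\tfrac{\nu}{2}\pi+\pi$ and along the family of shifted contours — is exactly what is carried out carefully in \cite{Barnes,Meijer,Luke}.
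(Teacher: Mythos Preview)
The paper does not prove this theorem at all; it is stated in the appendix purely as a citation of \cite[Theorem A]{Meijer} and \cite[Section 5.7, Theorem 1]{Luke}, with no accompanying argument. Your outline is precisely the classical Barnes contour-shifting proof that those references carry out, and your residue computation and identification of the remainder estimate as the delicate step are correct; since you already note that ``for the paper it suffices to invoke \cite{Barnes,Meijer,Luke}'', your proposal in fact goes further than the paper itself does.
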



\vskip .1in

\noindent{\small Department of Mathematics, University of Oregon, Eugene, OR 97403,
USA}

\noindent{\small E-mail: yfshen@uoregon.edu}

\vskip .1in

\noindent{\small Department of Mathematics, Colorado State University, Fort Collins, CO 80523,
USA}

\noindent{\small E-mail: mark.shoemaker@colostate.edu}

\end{document}